\providecommand{\U}[1]{\protect\rule{.1in}{.1in}}
\numberwithin{equation}{section}
\numberwithin{figure}{section}
\numberwithin{equation}{section}
\numberwithin{figure}{section}
\numberwithin{equation}{section}
\numberwithin{figure}{section}
\theoremstyle{plain}
\newtheorem{theorem}{Theorem}[section]
\newtheorem{corollary}[theorem]{Corollary}
\newtheorem*{corollary*}{Corollary}
\newtheorem{lemma}[theorem]{Lemma}
\newtheorem{convention}[theorem]{Convention}
\newtheorem{proposition}[theorem]{Proposition}
\renewenvironment{proof}[1][Proof]{\textbf{#1.} }{\ \rule{0.5em}{0.5em}}
\theoremstyle{definition}
\newtheorem{definition}[theorem]{Definition}
\newtheorem{comments}{Comments}
\newtheorem{notation}[theorem]{Notation}
\newtheorem{example}[theorem]{Example}
\theoremstyle{remark}
\newtheorem{remark}[theorem]{Remark}
\theoremstyle{plain}
\theoremstyle{definition}
\newtheorem{assumption}[]{Assumption}
\numberwithin{equation}{section}
\begin{document}
\author[Driver]{Bruce K. Driver}
\author[Tong]{Pun Wai Tong}

\address[Driver]{Department of Mathematics, University of California, San Diego, La
Jolla, California 92093 USA.}
\email{bdriver@ucsd.edu}
\urladdr{www.math.ucsd.edu/$\sim$bdriver/}

\address[Tong]{Department of Mathematics, University of California, San Diego, La
Jolla, California 92093 USA. }
\email{p1tong@ucsd.edu}
\urladdr{www.math.ucsd.edu/$\sim$p1tong/}

\thanks{The research of Bruce K. Driver was supported in part by NSF Grant
DMS-0739164. The author also greatfully acknowledges the generosity and
hospitality of the Imperial College Mathematics department where the author
was a visiting Nelder fellow in the Fall of 2014.}
\thanks{The research of Pun Wai Tong was supported in part by NSF Grant DMS-0739164}
\title{Powers of Symmetric Differential Operators I.}
\date{October 28, 2015}
\subjclass{Primary 47E05 , 47A63 ; Secondary 81Q10, 35S05}
\keywords{Linear differential operators; operator inequalities; essentially
self-adjointness; L\"{o}wner-Heinz inequality}

\begin{abstract}
Let $L$ be a linear symmetric differential operators on $L^{2}\left(
\mathbb{R}\right)  $ whose domain is the Schwartz test function space,
$\mathcal{S}.$ For the majority of this paper, it is assumed that the
coefficient of $L$ are polynomial functions on $\mathbb{R}.$ We will give
criteria on the polynomial coefficients of $L$ which guarantees that $L$ is
essentially self-adjoint, $\bar{L}\geq-CI$ for some $C<\infty,$ and that
$\mathcal{S}$ is a core for $\left(  \bar{L}+C\right)  ^{r}$ for all $r\geq0.$
Given another polynomial coefficient differential operator, $\tilde{L},$ we
will further give criteria on the coefficients $L$ and $\tilde{L}$ which
implies operator comparison inequalities of the form $\left(  \overline
{\tilde{L}}+\tilde{C}\right)  ^{r}\leq C_{r}\left(  \bar{L}+C\right)  ^{r}$
for all $0\leq r<\infty.$ The last inequality generalized to allow for an
added parameter, $\hbar>0,$ in the coefficients is used to provide a large
class of operators satisfying the hypotheses in \cite{BruceDriver20152nd}
where a strong form of the classical limit of quantum mechanics is shown to
hold. 

\end{abstract}
\maketitle
\tableofcontents


\section{Introduction\label{sec.1}}

Let $L^{2}\left(  \mathbb{R}\right)  :=L^{2}\left(  \mathbb{R},dx\right)  $ be
the Hilbert space of square integrable complex valued functions on
$\mathbb{R}$ relative to Lebesgue measure, $dx.$ The inner product on
$L^{2}\left(  \mathbb{R}\right)  $ is taken to be
\begin{equation}
\left\langle u,v\right\rangle :=\int_{\mathbb{R}}u\left(  x\right)  \bar
{v}\left(  x\right)  dx\text{ for }u,v\in L^{2}\left(  \mathbb{R}\right)
\label{e.1.1}%
\end{equation}
and the corresponding norm is $\left\Vert u\right\Vert =\sqrt{\left\langle
u,u\right\rangle }.$ [Note that we are using the mathematics convention that
$\left\langle u,v\right\rangle $ is linear in the first variable and conjugate
linear in the second.] .

\begin{notation}
\label{not.1.2}Let $C^{\infty}\left(  \mathbb{R}\right)  =C^{\infty
}(\mathbb{R},\mathbb{C})$ denote the smooth functions from $\mathbb{R}$ to
$\mathbb{C},$ $C_{c}^{\infty}\left(  \mathbb{R}\right)  $ denote those $f\in
C^{\infty}\left(  \mathbb{R}\right)  $ which have compact support, and
$\mathcal{S}:=\mathcal{S}\left(  \mathbb{R}\right)  \subset C^{\infty}\left(
\mathbb{R}\right)  $ be the subspace of Schwartz test functions, i.e. those
$f\in C^{\infty}\left(  \mathbb{R}\right)  $ such that $f$ and its derivatives
vanish at infinity faster than $\left\vert x\right\vert ^{-n}$ for all
$n\in\mathbb{N}.$
\end{notation}

\begin{notation}
\label{not.1.3}Let $C^{\infty}\left(  \mathbb{R}\right)  =C^{\infty
}(\mathbb{R},\mathbb{C})$ $.$ Also, let $\partial:C^{\infty}\left(
\mathbb{R}\right)  \rightarrow C^{\infty}\left(  \mathbb{R}\right)  $ denote
the differentiation operator, i.e. $\partial f\left(  x\right)  =f^{\prime
}\left(  x\right)  =\frac{d}{dx}f\left(  x\right)  .$
\end{notation}

\begin{notation}
\label{not.1.4}Given a function $f:\mathbb{R}\rightarrow\mathbb{C},$ we let
$M_{f}g:=fg$ for all functions $g:\mathbb{R}\rightarrow\mathbb{C},$ i.e.
$M_{f}$ denotes the linear operator given by multiplication by $f.$ Notice
that if $f\in C^{\infty}\left(  \mathbb{R}\right)  $ then we may view $M_{f}$
as a linear operator from $C^{\infty}\left(  \mathbb{R}\right)  $ to
$C^{\infty}\left(  \mathbb{R}\right)  .$
\end{notation}

For the purposes of this paper, a $d^{\text{th}}$\textbf{--order linear
differential operator} on $\mathbb{C^{\infty}\left(  R\right)  }$ with
$d\in\mathbb{N}$ is an operator $L:C^{\infty}\left(  \mathbb{R}\right)
\rightarrow C^{\infty}\left(  \mathbb{R}\right)  $ which may be expressed as
\begin{equation}
L=\sum_{k=0}^{d}M_{a_{k}}\partial^{k}=\sum_{k=0}^{d}a_{k}\partial^{k}
\label{e.1.2}%
\end{equation}
for some $\left\{  a_{k}\right\}  _{k=0}^{d}\subset C^{\infty}\left(
\mathbb{R},\mathbb{C}\right)  .$ The\textbf{ symbol} of $L,$ $\sigma
=\sigma_{L},$ is the function on $\mathbb{R}\times\mathbb{R}$ defined by
\begin{equation}
\sigma_{L}\left(  x,\xi\right)  :=\sum_{k=0}^{d}a_{k}\left(  x\right)  \left(
i\xi\right)  ^{k}. \label{e.1.3}%
\end{equation}

\begin{remark}
\label{rem.1.5}The action of $L$ on $C_{c}^{\infty}\left(  \mathbb{R}\right)
$ completely determines the coefficients, $\left\{  a_{k}\right\}  _{k=0}%
^{d}.$ Indeed, suppose that $x_{0}\in\mathbb{R}$ and $0\leq k\leq d$ and let
$\varphi\left(  x\right)  :=\left(  x-x_{0}\right)  ^{k}\chi\left(  x\right)
$ where $\chi\in C_{c}^{\infty}\left(  \mathbb{R}\right)  $ such that $\chi
$\thinspace$=1$ in a neighborhood of $x_{0}.$ Then an elementary computation
shows $k!\cdot a_{k}\left(  x_{0}\right)  =\left(  L\varphi\right)  \left(
x_{0}\right)  .$ In particular if $L\varphi\equiv0$ for all $\varphi\in
C_{c}^{\infty}\left(  \mathbb{R}\right)  $ then $a_{k}\equiv0$ for $0\leq
k\leq d$ and hence $L\varphi\equiv0$ for all $\varphi\in C^{\infty}\left(
\mathbb{R}\right)  .$
\end{remark}

\begin{definition}
[Formal adjoint and symmetry]\label{def.1.6} Suppose $L$ is a linear
differential operator on $C^{\infty}\left(  \mathbb{R}\right)  $ as in Eq.
(\ref{e.1.2}). Then $L^{^{\dag}}:C^{\infty}\left(  \mathbb{R}\right)
\rightarrow C^{\infty}\left(  \mathbb{R}\right)  $ denote the \textbf{formal
adjoint} of $L$ given by the differential operator,
\begin{equation}
L^{\dag}=\sum_{k=0}^{d}\left(  -1\right)  ^{k}\partial^{k}M_{\bar{a}_{k}%
}\text{ on }C^{\infty}\left(  \mathbb{R}\right)  . \label{e.1.4}%
\end{equation}
Moreover $L$ is said to be \textbf{symmetric} if $L^{\dagger}=L$ on
$C^{\infty}\left(  \mathbb{R}\right)  .$
\end{definition}

\begin{remark}
\label{rem.1.7}Using Remark \ref{rem.1.5}, one easily shows $L^{\dag}$ may
alternatively be characterized as that unique $d^{\text{th}}$--order
differential operator on $C^{\infty}\left(  \mathbb{R}\right)  $ such that
\begin{equation}
\left\langle Lf,g\right\rangle =\left\langle f,L^{\dag}g\right\rangle \text{
for all }f,g\in C_{c}^{\infty}\left(  \mathbb{R}\right)  . \label{e.1.5}%
\end{equation}
From this characterization it is then easily verified that;

\begin{enumerate}
\item The dagger operation is an involution, in particular $L^{\dag\dag}=L$
and if $S$ is another linear differential operator on $C^{\infty}\left(
\mathbb{R}\right)  ,$ then $\left(  LS\right)  ^{\dag}=S^{\dag}L^{\dag}.$

\item $L$ is symmetric iff $\left\langle Lf,g\right\rangle =\left\langle
f,Lg\right\rangle $ for all $f,g\in C_{c}^{\infty}\left(  \mathbb{R}\right)
.$
\end{enumerate}
\end{remark}

Proposition \ref{pro.2.2} below shows if $\left\{  a_{k}\right\}  _{k=0}%
^{d}\subset C^{\infty}\left(  \mathbb{R},\mathbb{R}\right)  ,$ then
$L=L^{\dag}$ iff $d=2m$ is even and there exists $\left\{  b_{l}\right\}
_{l=0}^{m}\subset C^{\infty}\left(  \mathbb{R},\mathbb{R}\right)  $ such that
\begin{equation}
L=L\left(  \left\{  b_{l}\right\}  _{l=0}^{m}\right)  :=\sum_{l=0}^{m}%
(-1)^{l}\partial^{l}b_{l}(x)\partial^{l}. \label{e.1.6}%
\end{equation}
The factor of $\left(  -1\right)  ^{l}$ is added for later convenience. The
coefficients $\left\{  b_{l}\right\}  _{l=0}^{m}$ are uniquely determined by
$\left\{  a_{2l}\right\}  _{l=0}^{m}$ (the even coefficients in Eq.
(\ref{e.1.2})) and in turn the coefficients $\left\{  a_{k}\right\}
_{k=0}^{2m}$ are determined by the $\left\{  b_{l}\right\}  _{l=0}^{m},$ see
Theorem \ref{the.2.7} and Lemma \ref{lem.2.4} respectively. We say that $L$ is
written in \textbf{divergence form }when $L$ is expressed as in Eq.
(\ref{e.1.6}).

From now on let us assume that $\left\{  a_{k}\right\}  _{k=0}^{d}\subset
C^{\infty}\left(  \mathbb{R},\mathbb{R}\right)  $ and $L$ is given as in Eq.
(\ref{e.1.2}). For each $n\in\mathbb{N},$ $L^{n}$ is a $dn$ order differential
operator on$\mathbb{\ }C^{\infty}\left(  \mathbb{R}\right)  $ and hence there
exists $\left\{  A_{k}\right\}  _{k=0}^{2mn}\subset C^{\infty}\left(
\mathbb{R},\mathbb{C}\right)  $ such that
\begin{equation}
L^{n}=\sum_{k=0}^{dn}A_{k}\partial^{k}. \label{e.1.7}%
\end{equation}
If we further assume that $L$ is symmetric (so $d=2m$ for some $m\in
\mathbb{N}_{0})$, then by Remark \ref{rem.1.7} $L^{n}$ is a symmetric $2mn$ -
order differential operator. Therefore by Proposition \ref{pro.2.2}, there
exists $\left\{  B_{\ell}\right\}  _{\ell=0}^{mn}\subset C^{\infty}\left(
\mathbb{R},\mathbb{R}\right)  $ so that $L^{n}$ may be written in divergence
form as%
\begin{equation}
L^{n}=\sum_{\ell=0}^{mn}\left(  -1\right)  ^{\ell}\partial^{\ell}B_{\ell
}\partial^{\ell}. \label{e.1.8}%
\end{equation}
Information about the coefficients $\left\{  A_{k}\right\}  _{k=0}^{2mn}$ and
$\left\{  B_{\ell}\right\}  _{\ell=0}^{mn}$ in terms of the divergence form
coefficients $\left\{  b_{l}\right\}  _{l=0}^{m}$ of $L$ may be found in
Propositions \ref{pro.3.9} and Proposition \ref{pro.3.10} respectively.

Let $\mathbb{R}\left[  x\right]  $ be the space of polynomial functions in one
variable, $x,$ with real coefficients.

\begin{remark}
\label{rem.1.8}If the coefficients, $\left\{  a_{k}\right\}  _{k=0}^{d=2m},$
of $L$ in the Eq. (\ref{e.1.2}) are in $\mathbb{R}\left[  x\right]  ,$ then
$L$ and $L^{\dag}$ are both linear differential operator on $C^{\infty}\left(
\mathbb{R}\right)  $ which leave $\mathcal{S}$ invariant. Moreover by simple
integration by parts Eq. (\ref{e.1.5}) holds with $C_{c}^{\infty}\left(
\mathbb{R}\right)  $ replaced by $\mathcal{S},$ i.e. $\left\langle
Lf,g\right\rangle =\left\langle f,L^{\dag}g\right\rangle $ for all
$f,g\in\mathcal{S}.$
\end{remark}

\begin{notation}
\label{not.1.9}For the remainder of this introduction we are going to assume
$L$ is symmetric ($L=L^{\dag}),$ $L$ is given in divergence form as in Eq.
(\ref{e.1.6}) with $\left\{  b_{l}\right\}  _{l=0}^{m}\subset\mathbb{R}\left[
x\right]  ,$ and we now view $L$ as an operator on $L^{2}\left(
\mathbb{R},m\right)  $ with $\mathcal{D}\left(  L\right)  =\mathcal{S}\subset
L^{2}\left(  \mathbb{R},m\right)  .$ In other words, we are going to replace
$L$ by $L|_{\mathcal{S}}.$
\end{notation}

The main results of this paper will now be summarized in the next two subsections.

\subsection{Essential self-adjointness results\label{sec.1.1}}

\begin{theorem}
\label{the.1.10}Let $m\in\mathbb{N},$ $\left\{  b_{l}\right\}  _{l=0}%
^{m}\subset\mathbb{R}\left[  x\right]  $ with $b_{m}\left(  x\right)  \neq0$
and assume;

\begin{enumerate}
\item either $\inf_{x}b_{l}\left(  x\right)  >0$ or $b_{l}\equiv0$ and

\item $\deg\left(  b_{l}\right)  \leq\max\left\{  \deg\left(  b_{0}\right)
,0\right\}  $ whenever $1\leq l\leq m$. [The zero polynomial is defined to be
of degree $-\infty.$]
\end{enumerate}

If $L$ is the unbounded operator on $L^{2}\left(  m\right)  $ as in Notation
\ref{not.1.9}, then $L^{n}$ (for which $\mathcal{D}\left(  L^{n}\right)  $ is
still $\mathcal{S})$ is essentially self-adjoint for all $n\in\mathbb{N}.$
\end{theorem}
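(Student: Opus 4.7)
The initial observation is that $L$ is symmetric and non-negative on $\mathcal{S}$. The divergence form \eqref{e.1.6} combined with integration by parts (fully valid for $f\in\mathcal{S}$) gives
\[
\langle Lf,f\rangle \;=\; \sum_{l=0}^{m}\int_{\mathbb{R}} b_{l}(x)\,|f^{(l)}(x)|^{2}\,dx \;\geq\; 0,
\]
since hypothesis (1) forces each $b_{l}\geq 0$. Moreover, because $b_{m}\not\equiv 0$, hypothesis (1) upgrades to $\inf b_{m}>0$, so $L$ is uniformly elliptic of order $2m$ with polynomial coefficients. Thus $L$ is a symmetric, non-negative, elliptic operator on $\mathcal{S}$, which is invariant under $L$ by Remark \ref{rem.1.8}.

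To establish essential self-adjointness of $L$ itself, I would verify $\ker(L^{*}+I)=\{0\}$. Any $g$ in this kernel is a distributional solution of $(L+I)g=0$, hence $g\in C^{\infty}$ by elliptic regularity (using $\inf b_{m}>0$). The degree hypothesis (2), namely $\deg b_{l}\leq d:=\max(\deg b_{0},0)$ for $1\leq l\leq m$, is precisely what allows a weighted energy estimate: testing $(L+I)g=0$ against $\chi_{R}(x)^{2}g$ with a suitable cutoff $\chi_{R}$ and letting $R\to\infty$ should show that $g$ and all its derivatives decay faster than any polynomial, placing $g\in\mathcal{S}$. Symmetry and non-negativity of $L$ on $\mathcal{S}$ then give $0=\langle(L+I)g,g\rangle = \|g\|^{2}+\langle Lg,g\rangle\geq\|g\|^{2}$, forcing $g=0$ and yielding ESA of $L$.

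For the powers $L^{n}$, the plan is a comparison with the harmonic oscillator $H:=-\partial^{2}+x^{2}+1$, which is self-adjoint and for which $\mathcal{S}$ is a core for every $H^{r}$ ($r\geq 0$) via the Hermite basis. Using the explicit divergence-form representation of $L^{n}$ in \eqref{e.1.8} furnished by Proposition \ref{pro.3.10}, together with the degree hypothesis (2), I would prove an operator inequality of the form
\[
\langle L^{n}f,f\rangle \;\leq\; K_{n}\,\langle H^{p(n)}f,f\rangle \qquad\text{for all }f\in\mathcal{S},
\]
with $p(n)$ linear in $n$ depending only on $m$ and $d$. Combined with the L\"owner-Heinz inequality (emphasized in the paper's keywords), this transfers the core property from $H^{p(n)}$ to the symmetric, non-negative operator $L^{n}$, yielding ESA of $L^{n}$ on $\mathcal{S}$.

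The main obstacle is the polynomial bookkeeping in the last step: one must establish sharp bounds such as $\deg B_{\ell}\leq (n-\ell)d+c(m,n)$ for the divergence-form coefficients $B_{\ell}$ of $L^{n}$, so that each quadratic form term $\int B_{\ell}|f^{(\ell)}|^{2}\,dx$ is dominated by $\langle H^{p(n)}f,f\rangle$. Iterating $L$ mixes derivatives of $b_{l}$ of all orders, making this combinatorics subtle; it is precisely the technical content of Propositions \ref{pro.3.9}--\ref{pro.3.10} referenced in the introduction, and encoding it faithfully is the heart of the proof.
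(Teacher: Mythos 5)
Your proposal takes a genuinely different route from the paper, and the route has a gap that cannot be closed as stated.

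The paper proves Theorem~\ref{the.1.10} by a pseudo-differential parametrix argument in the spirit of Nagase, applied \emph{directly to each power} $L^{n}$: after adding a constant $c$ so that $\operatorname{Re}\sigma_{\mathbb{L}}$ is bounded below by a positive constant (Lemma~\ref{lem.4.10}), one defines $T_{\mu}$ with symbol $(\sigma_{\mathbb{L}}(x,\xi)+i\mu)^{-1}$, checks $T_{\mu}(\mathcal{S})\subset\mathcal{S}$ (Proposition~\ref{pro.4.17}), and shows $(\mathbb{L}+i\mu)T_{\mu}=I+R_{\mu}$ with $\|R_{\mu}\|_{\mathrm{op}}\to0$ as $|\mu|\to\infty$ via the explicit Hilbert--Schmidt bound of Lemma~\ref{lem.4.19}; dense range of $\mathbb{L}+i\mu$ and hence ESA follows from Lemma~\ref{lem.4.2}. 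The degree hypothesis (2) enters through the symbol estimates in Lemmas~\ref{lem.4.6}--\ref{lem.4.7} and Corollaries~\ref{cor.4.9} and~\ref{cor.4.12}. Crucially, this treats $L^{n}$ for every $n$ uniformly, by the same mechanism.

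The central gap in your plan is the passage from a quadratic-form inequality to essential self-adjointness of $L^{n}$. The bound
\[
\langle L^{n}f,f\rangle \;\leq\; K_{n}\,\langle H^{p(n)}f,f\rangle,\qquad f\in\mathcal{S},
\]
gives no information about the deficiency spaces of $L^{n}|_{\mathcal{S}}$ or the range of $L^{n}+i\mu$: a symmetric, non-negative operator dominated on a dense domain by a nice self-adjoint operator can still fail to be essentially self-adjoint. The L\"owner--Heinz inequality compares fractional powers of two operators that are \emph{already known} to be self-adjoint and to satisfy $A\leq B$; it offers no mechanism for transferring the core property of $\mathcal{S}$ from $H^{p(n)}$ to $L^{n}$, and the paper does not use it that way (it appears only in Lemma~\ref{lem.6.14}, after ESA and the operator inequalities have been established independently). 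Nor does ESA of $L$ (your first step) by itself imply that $\mathcal{S}$ is a core for $\bar{L}^{n}$; essential self-adjointness of $A$ on $D$ does not in general propagate to $A^{n}$ on $D$. Your weighted-energy/elliptic-regularity argument for the base case $n=1$ is a recognizable strategy and might be made to work, but it is not what would rescue the proof: the step that is actually missing is a direct handle on the powers, which the paper supplies through the parametrix construction and which your plan replaces with a comparison that simply does not yield the conclusion.
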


\begin{remark}
\label{rem.1.11}Notice that assumption 1 of Theorem \ref{the.1.10} implies
$\deg\left(  b_{l}\right)  $ is even and the leading order coefficient of
$b_{l}$ is positive unless $b_{l}\equiv0.$
\end{remark}

\begin{definition}
[Subspace Symmetry]\label{def.1.12} Let $S$ be a dense subspace of a Hilbert
space $\mathcal{K}$ and $A$ be a linear operators on $\mathcal{K}.$ Then $A$
is said to be symmetric on $S$ if $S\subseteq\mathcal{D}\left(  A\right)  $
and
\[
\left\langle A\psi,\psi\right\rangle _{\mathcal{K}}=\left\langle \psi
,A\psi\right\rangle _{\mathcal{K}}\text{ for all }\psi\in S.
\]
The equality is equivalent to say $A|_{S}\subseteq\left(  A|_{S}\right)
^{\ast}$ or $A\subseteq A^{\ast}$ if $\mathcal{D}\left(  A\right)  =S.$
\end{definition}

\begin{remark}
\label{rem.1.13}Using Remark \ref{rem.1.7}, it is easy to see that $L$ with
polynomial coefficients is symmetric on $C^{\infty}\left(  \mathbb{R}\right)
$ as in Definition \ref{def.1.6} if and only if $L$ is symmetric on
$\mathcal{S}$ as in Definition \ref{def.1.12}.
\end{remark}

We now introduce three different partial ordering on symmetric operators on a
Hilbert space.

\begin{notation}
\label{not.1.14}Let $S$ be a dense subspace of a Hilbert space, $\mathcal{K},$
and $A$ and $B$ be two densely defined operators on $\mathcal{K}.$

\begin{enumerate}
\item We write $A\preceq_{S}B$ if both $A$ and $B$ are symmetric on $S$
(Definition \ref{def.1.12}) and
\[
\left\langle A\psi,\psi\right\rangle _{\mathcal{K}}\leq\left\langle B\psi
,\psi\right\rangle _{\mathcal{K}}\text{ for all }\psi\in S.
\]

\item We write $A\preceq B$ if $A\preceq_{D\left(  B\right)  }B,$ i.e.
$D\left(  B\right)  \subset D\left(  A\right)  ,$ $A$ and $B$ are both
symmetric on $D\left(  B\right)  $, and
\[
\left\langle A\psi,\psi\right\rangle _{\mathcal{K}}\leq\left\langle B\psi
,\psi\right\rangle _{\mathcal{K}}\text{ for all }\psi\in D\left(  B\right)  .
\]

\item If $A$ and $B$ are non-negative (i.e. $0\preceq A$ and $0\preceq B$)
self adjoint operators on a Hilbert space $\mathcal{K},$ then we say $A\leq B$
if and only if $D\left(  \sqrt{B}\right)  \subseteq D\left(  \sqrt{A}\right)
$ and
\[
\left\Vert \sqrt{A}\psi\right\Vert \leq\left\Vert \sqrt{B}\psi\right\Vert
\text{ for all }\psi\in D\left(  \sqrt{B}\right)  .
\]

\end{enumerate}
\end{notation}

There is a sizable literature dealing with similar essential self-adjointness
in Theorem \ref{the.1.10}, see for example
\citep{Chernoff1973,Kato1973,Reed1975}. Suppose that $b_{2},$ $b_{1},$ and
$b_{0}$ are smooth real-valued functions of $x\in\mathbb{R}$ and $T$ is an
differential operator on $C_{c}^{\infty}\left(  \mathbb{R}\right)  \subseteq
L^{2}\left(  \mathbb{R}\right)  $ defined by,
\[
T=-\partial b_{2}\left(  x\right)  \partial+b_{0}\left(  x\right)  +i\left(
b_{1}\left(  x\right)  \partial+\partial b_{1}\left(  x\right)  \right)  .
\]
Kato \citep{Kato1973} shows $T^{n}$ is essentially self-adjoint for all
$n\in\mathbb{N}$ when $b_{2}=1,$ $b_{1}=0$ and $-a-b\left\vert x\right\vert
^{2}\preceq_{C_{c}^{\infty}\left(  \mathbb{R}\right)  }T$ for some constants
$a$ and $b.$ Chernoff \citep{Chernoff1973} gives the same conclusion under
certain assumptions on $b_{2}$ and $T.$ For example, Chernoff's assumptions
would hold if $b_{2}$, $b_{1}$ and $b_{0}$ are real valued polynomial
functions such that $\deg\left(  b_{2}\right)  \leq2$ and $b_{2}$ is positive
and $T$ is semi-bounded on $C_{c}^{\infty}\left(  \mathbb{R}\right)  .$ In
contrast, Theorem \ref{the.1.10} allows for higher order differential
operators but does not allow for non-polynomial coefficients. [However, the
methods in this paper can be pushed further in order to allow for certain
non-polynomial coefficients.]

There are also a number of results regarding essential self-adjointness in the
pseudo-differential operator literature, the reader may be referred to, for
example,
\citep{Folland1989,Nagase1988,Nagase1990,Shubin2001,Zworski2012,Yamazaki1992}.
In fact, our proof of Theorem \ref{the.1.10} will be an adaptation of an
approach found in Theorem 3.1 in \citep{Nagase1988}.

\subsection{Operator Comparison Theorems\label{sec.1.2}}

Motivated by considerations involved in taking the classical limit of quantum
mechanics in \citep{BruceDriver20152nd} and the important paper by
\citep{Hepp1974}, we will define a scaled version of $L$ (see Notation
\ref{not.1.15}) where for any $\hbar>0$ we make the following replacements in
Eq. (\ref{e.1.6}),
\begin{equation}
x\rightarrow\sqrt{\hbar}M_{x}\text{ and }\partial\rightarrow\sqrt{\hbar
}\partial. \label{e.1.9}%
\end{equation}
For reasons explained in Theorem \ref{the.A.2} of the appendix, we are lead to
consider a more general class of operators parametrized by $\hbar>0.$

\begin{notation}
\label{not.1.15}Let
\begin{equation}
\left\{  b_{l,\hbar}\left(  \cdot\right)  :0\leq l\leq m\text{ and }%
\hbar>0\right\}  \subset\mathbb{R}\left[  x\right]  , \label{e.1.10}%
\end{equation}
and then define
\begin{equation}
L_{\hbar}=L\left(  \left\{  \hbar^{l}b_{l,\hbar}(\sqrt{\hbar}\left(
\cdot\right)  )\right\}  _{l=0}^{m}\right)  =\sum_{l=0}^{m}(-\hbar
)^{l}\partial^{l}b_{l,\hbar}(\sqrt{\hbar}\left(  \cdot\right)  )\partial
^{l}\text{ on }\mathcal{S}. \label{e.1.11}%
\end{equation}

\end{notation}

We now record an assumption which is needed in a number of the results stated below.

\begin{assumption}
\label{ass.1}Let $m\in\mathbb{N}_{0}.$ We say $\left\{  b_{l,\hbar}\right\}
_{l=0}^{m}\subset\mathbb{R}\left[  x\right]  $ and $\eta>0$ satisfies
Assumption 1 if the following conditions hold.

\begin{enumerate}
\item For $0\leq l\leq m$, $b_{l,\hbar}(x)=\sum_{j=0}^{2m_{l}}\alpha
_{l,j}\left(  \hbar\right)  x^{j}$ is a real polynomial of $x$ where
$\alpha_{l,j}$ is a real continuous function on $\left[  0,\eta\right]  .$

\item For all $0<\hbar<\eta$,
\begin{equation}
2m_{l}=\deg(b_{l,\hbar})\leq\deg(b_{l-1,\hbar})=2m_{l-1}\text{ for }1\leq
l\leq m. \label{e.1.12}%
\end{equation}

\item We have,%
\begin{align}
c_{b_{m}}  &  :=\inf_{x\in\mathbb{R},0<\hbar<\eta}b_{m,\hbar}(x)>0\text{ and
}\label{e.1.13}\\
c_{\alpha}  &  :=\min_{0\leq l\leq m}\inf_{0<\hbar<\eta}\alpha_{l,2m_{l}%
}\left(  \hbar\right)  >0, \label{e.1.14}%
\end{align}
i.e. $b_{m,\hbar}(x)$ is uniformly in $x\in\mathbb{R}$ and $0<\hbar<\eta$
positive and leading orders, $\alpha_{l,2m_{l}}\left(  \hbar\right)  ,$ of all
$b_{l,\hbar}\in\mathbb{R}\left[  x\right]  $ are uniformly strictly positive.
\end{enumerate}
\end{assumption}

\begin{remark}
\label{rem.1.16}Conditions $\left(  1\right)  $ and $\left(  3\right)  $ of
Assumption \ref{ass.1} implies there exists $A\in\left(  0,\infty\right)  $ so
that
\[
\min_{0\leq l\leq m}\inf_{0<\hbar<\eta}\inf_{\left\vert x\right\vert \geq
A}b_{l,\hbar}(x)>0.
\]
Furthermore, if $k\geq1,$ $0\leq l_{1},\ldots,l_{k}\leq m,$ and $q_{\hbar
}\left(  x\right)  =b_{l_{1},\hbar}\left(  x\right)  \ldots b_{l_{k},\hbar
}\left(  x\right)  \in\mathbb{R}\left[  x\right]  ,$ then
\[
q_{\hbar}\left(  x\right)  =\sum_{i=0}^{2M}Q_{i}\left(  \hbar\right)  \cdot
x^{i}%
\]
where $M=m_{l_{1}}+\ldots+m_{l_{k}},$ each of the coefficients, $Q_{i}\left(
\hbar\right)  $ is uniformly bounded for $0<\hbar<\eta,$ and
\[
\inf_{0<\hbar<\eta}Q_{2M}\left(  \hbar\right)  =\inf_{0<\hbar<\eta}%
\alpha_{l_{1},m_{l_{1}}\ldots}\alpha_{l_{k},m_{l_{k}}}\left(  \hbar\right)
\geq c_{\alpha}^{k}>0.
\]
From these remarks one easily shows $\inf_{0<\hbar<\eta}\inf_{x\in\mathbb{R}%
}q_{\hbar}\left(  x\right)  >-\infty.$
\end{remark}

The second main goal of this paper is to find criteria on two symmetric
differential operators $L_{\hbar}$ and $\tilde{L}_{\hbar}$ so that for each
$n\in\mathbb{N},$ there exists $K_{n}<\infty$ such that
\begin{equation}
L_{\hbar}^{n}\preceq_{\mathcal{S}}K_{n}\left(  \tilde{L}_{\hbar}^{n}+I\right)
. \label{e.1.15}%
\end{equation}
(As usual $I$ denotes the identity operator here and $\preceq_{\mathcal{S}}$
is as in Notation \ref{not.1.14}.) For some perspective let us recall the
L\"{o}wner-Heinz inequality.

\begin{theorem}
[L\"{o}wner-Heinz inequality]\label{the.1.17}If $A$ and $B$ are two
non-negative self-adjoint operators on a Hilbert space, $\mathcal{K},$ such
that $A\leq B,$ then $A^{r}\leq B^{r}$ for $0\leq r\leq1.$
\end{theorem}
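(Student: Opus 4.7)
The plan is to prove the inequality via the integral representation of the fractional power,
\begin{equation*}
t^{r} \;=\; \frac{\sin(\pi r)}{\pi}\int_{0}^{\infty}\lambda^{r-1}\,\frac{t}{t+\lambda}\,d\lambda \qquad (t\geq 0,\ 0<r<1),
\end{equation*}
which, through the spectral theorem, lifts to the quadratic-form identity $\langle A^{r}\psi,\psi\rangle = \frac{\sin(\pi r)}{\pi}\int_{0}^{\infty}\lambda^{r-1}\langle A(A+\lambda)^{-1}\psi,\psi\rangle\,d\lambda$ on a spectral core for $\sqrt{A^{r}}$. The endpoint cases $r=0$ (trivial) and $r=1$ (the hypothesis) bracket the interesting range $0<r<1$. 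The strategy is then to reduce the comparison $A^{r}\leq B^{r}$ to the resolvent comparison $(B+\lambda)^{-1}\leq (A+\lambda)^{-1}$ for each $\lambda>0$ and to integrate.

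The first key step is the resolvent inequality. In the bounded case, $B+\lambda\geq A+\lambda>0$ implies, after conjugation by $(A+\lambda)^{-1/2}$, that $(A+\lambda)^{-1/2}(B+\lambda)(A+\lambda)^{-1/2}\geq I$; inverting and conjugating back gives $(B+\lambda)^{-1}\leq (A+\lambda)^{-1}$. In the unbounded self-adjoint setting of Notation \ref{not.1.14}(3), the hypothesis $A\leq B$ is precisely the closed form inequality $q_{A}\leq q_{B}$ with $D(q_{B})\subseteq D(q_{A})$ (where $q_{T}(\psi):=\|\sqrt{T}\psi\|^{2}$), and the resolvent inequality follows from the standard form--operator inversion principle applied to the closed strictly positive forms $q_{A}+\lambda\|\cdot\|^{2}$ and $q_{B}+\lambda\|\cdot\|^{2}$, whose associated self-adjoint operators are $A+\lambda$ and $B+\lambda$.

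Rearranging $(B+\lambda)^{-1}\leq (A+\lambda)^{-1}$ as $\langle A(A+\lambda)^{-1}\psi,\psi\rangle\leq \langle B(B+\lambda)^{-1}\psi,\psi\rangle$, multiplying by the positive weight $\lambda^{r-1}$, and integrating over $(0,\infty)$, I would invoke Fubini (for the product of Lebesgue measure with the spectral measure of $B$) together with the identity above to conclude $\|\sqrt{A^{r}}\psi\|^{2}\leq \|\sqrt{B^{r}}\psi\|^{2}$ for $\psi\in D(\sqrt{B^{r}})$; the inclusion $D(\sqrt{B^{r}})\subseteq D(\sqrt{A^{r}})$ is then a byproduct, because the integral on the left side is finite whenever the one on the right is.

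The main obstacle is the bookkeeping with unbounded operators: ensuring that the integral representation converges in the quadratic-form sense, that Fubini applies, and that the target domain inclusion is actually produced rather than postulated. A clean workaround is to first prove $A^{r}\leq B^{r}$ for the spectrally truncated operators $A_{n}:=AE_{A}([0,n])$ and $B_{n}:=BE_{B}([0,n])$---which are bounded, satisfy $A_{n}\leq B_{n}$ by a direct form computation against the hypothesis, and therefore fall under the bounded case just described---and then pass to $n\to\infty$ using monotone convergence of the closed forms $q_{A_{n}^{r}}\uparrow q_{A^{r}}$ and $q_{B_{n}^{r}}\uparrow q_{B^{r}}$, which gives both the limiting form inequality and the required domain inclusion simultaneously.
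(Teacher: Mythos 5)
The paper does not actually prove Theorem \ref{the.1.17}; it quotes it from the literature (L\"owner, Heinz, Kato, and \cite[Proposition 10.14]{Schmuedgen2012}), so the relevant comparison is with the standard proofs. Your main line of argument (paragraphs one through three) is essentially the standard one found in Schm\"udgen: reduce $A\leq B$ to the resolvent comparison $(B+\lambda)^{-1}\leq(A+\lambda)^{-1}$ for every $\lambda>0$ via the form--operator correspondence, then integrate against $\frac{\sin(\pi r)}{\pi}\lambda^{r-1}\,d\lambda$ using $t^{r}=\frac{\sin(\pi r)}{\pi}\int_{0}^{\infty}\lambda^{r-1}\frac{t}{t+\lambda}\,d\lambda$. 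That part is sound, and the ``main obstacle'' you worry about is not really one: since $A(A+\lambda)^{-1}$ is a bounded non-negative operator, the identity $\int_{0}^{\infty}\lambda^{r-1}\langle A(A+\lambda)^{-1}\psi,\psi\rangle\,d\lambda=\frac{\pi}{\sin(\pi r)}\int_{[0,\infty)}t^{r}\,d\mu^{A}_{\psi}(t)$ holds for \emph{every} $\psi\in\mathcal{K}$ with values in $[0,\infty]$ by Tonelli (all integrands are non-negative), and finiteness of the right side is equivalent to $\psi\in D(\sqrt{A^{r}})$. So for $\psi\in D(\sqrt{B^{r}})$ the inequality between the two integrals delivers both $\psi\in D(\sqrt{A^{r}})$ and $\|\sqrt{A^{r}}\psi\|\leq\|\sqrt{B^{r}}\psi\|$, exactly as required by Notation \ref{not.1.14}(3); no truncation is needed.

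The proposed ``clean workaround'' in your last paragraph, however, is genuinely wrong: it is false that $A_{n}:=AE_{A}([0,n])\leq B_{n}:=BE_{B}([0,n])$ follows from $A\leq B$. The function $t\mapsto t\,1_{[0,n]}(t)$ is not even monotone, and already on $\mathcal{K}=\mathbb{C}$ with $A=1$, $B=2$, $n=3/2$ one gets $A_{n}=1$ and $B_{n}=0$, so $A_{n}\not\leq B_{n}$. Truncating $B$ from above only \emph{decreases} it, so the hypothesis cannot be transferred to the truncated pair by ``a direct form computation.'' If you want a regularization by bounded operators, use an operator-monotone cutoff such as $A_{\varepsilon}:=A(I+\varepsilon A)^{-1}$ and $B_{\varepsilon}:=B(I+\varepsilon B)^{-1}$ (here $A_{\varepsilon}\leq B_{\varepsilon}$ does follow, since $t\mapsto t/(1+\varepsilon t)$ is operator monotone, e.g.\ again by the resolvent comparison), and then let $\varepsilon\downarrow0$ by monotone convergence of the forms; or simply delete the last paragraph and keep your direct argument, which already stands on its own.
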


L\"{o}wner proved this result for finite dimensional matrices in
\citep{Loewner1934} and Heinz extended it to bounded operators in a Hilbert
space in \citep{Heinz1951}. Later, both Heinz in \cite{Heinz1951} and Kato in
\citep[Theorem 2 of ][]{Kato1952} extended the result for unbounded operators,
also see \citep[Proposition 10.14 of][]{Schmuedgen2012}. There is a large
literature on so called \textquotedblleft operator monotone
functions,\textquotedblright\ e.g.
\citep{Pedersen1972,Fujii2005,Elst2009,Ando1994} and
\citep[Theorem 18]{Pecaric2005}. It is well known (see \cite[Section
10.3]{Schmuedgen2012} for more background) that $f\left(  x\right)  =x^{r}$ is
not an operator monotone for $r>1,$ see \cite[Example 10.3]{Schmuedgen2012}
for example. This indicates that proving operator inequalities of the form in
Eq. (\ref{e.1.15}) is somewhat delicate. Our main result in this direction is
the subject of the next theorem.

\begin{theorem}
[Operator Comparison Theorem]\label{the.1.18}Suppose that $\tilde{L}_{\hbar}$
and $L_{\hbar}$ are two linear differential operators on $\mathcal{S}$ given
by
\[
\tilde{L}_{\hbar}=\sum_{l=0}^{m_{\tilde{L}}}(-\hbar)^{l}\partial^{l}\tilde
{b}_{l,\hbar}(\sqrt{\hbar}x)\partial^{l}\text{ and }L_{\hbar}=\sum
_{l=0}^{m_{L}}(-\hbar)^{l}\partial^{l}b_{l,\hbar}(\sqrt{\hbar}x)\partial^{l},
\]
with polynomial coefficients, $\left\{  \tilde{b}_{l,\hbar}(x)\right\}
_{l=0}^{m_{\tilde{L}}}$ and $\left\{  b_{l,\hbar}(x)\right\}  _{l=0}^{m_{L}}$
satisfying Assumption \ref{ass.1} with constants $\eta_{\tilde{L}}$ and
$\eta_{L}$ respectively. Let $\eta=\min\{\eta_{\tilde{L}},\eta_{L}\}.$ If we
further assume that $m_{\tilde{L}}\leq m_{L}$ and there exists $c_{1}$ and
$c_{2}$ such that
\begin{equation}
\left\vert \tilde{b}_{l,\hbar}\left(  x\right)  \right\vert \leq c_{1}\left(
b_{l,\hbar}\left(  x\right)  +c_{2}\right)  \text{~}\forall~0\leq l\leq
m_{\tilde{L}}~\text{and~}0<\hbar<\eta, \label{e.1.16}%
\end{equation}
then for any $n\in\mathbb{N}$ there exists $C_{1}$ and $C_{2}$ such that
\begin{equation}
\tilde{L}_{\hbar}^{n}\preceq_{\mathcal{S}}C_{1}\left(  L_{\hbar}^{n}%
+C_{2}\right)  \text{ for all }0<\hbar<\eta. \label{e.1.17}%
\end{equation}

\end{theorem}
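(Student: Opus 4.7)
The plan is to reduce the theorem for general $n$ to the case $n=1$ by rewriting $L_\hbar^n$ and $\tilde L_\hbar^n$ in divergence form via Proposition \ref{pro.3.10}. Once both iterated operators are expressed as symmetric polynomial-coefficient operators in divergence form (of orders $2nm_L$ and $2nm_{\tilde L}$, respectively), and once the coefficient-dominance hypothesis \eqref{e.1.16} is shown to carry over to their divergence coefficients, essentially the same argument that handles $n=1$ applies to the iterated operators.

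\textbf{The case $n=1$.} Integration by parts on $\mathcal{S}$, justified by Remark \ref{rem.1.8}, gives the quadratic-form identity
\[
\langle L_\hbar\psi,\psi\rangle \;=\; \sum_{l=0}^{m_L}\hbar^l\int_{\mathbb{R}} b_{l,\hbar}(\sqrt{\hbar}\,x)\,|\partial^l\psi(x)|^2\,dx,
\]
and analogously for $\tilde L_\hbar$. Applying \eqref{e.1.16} pointwise bounds $\langle\tilde L_\hbar\psi,\psi\rangle$ by $c_1\sum_{l=0}^{m_{\tilde L}}\hbar^l\int (b_{l,\hbar}(\sqrt{\hbar} x)+c_2)|\partial^l\psi|^2\,dx$. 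Since $m_{\tilde L}\le m_L$ and Remark \ref{rem.1.16} furnishes a uniform lower bound for each $b_{l,\hbar}$ on $\mathbb{R}\times(0,\eta)$, the $b_{l,\hbar}$-terms can be completed to $\langle L_\hbar\psi,\psi\rangle$ modulo a term controllable by $\sum_l \hbar^l\|\partial^l\psi\|^2$. The remaining $c_2$-contributions are handled by the elliptic estimate
\[
\sum_{l=0}^{m_L}\hbar^l\|\partial^l\psi\|^2 \;\le\; C\,\langle L_\hbar\psi,\psi\rangle + C\|\psi\|^2,
\]
which follows from the uniform positivity $b_{m_L,\hbar}\ge c_{b_m}>0$ (giving $c_{b_m}\hbar^{m_L}\|\partial^{m_L}\psi\|^2$ on the right) combined with standard Young-type interpolation $\hbar^l\|\partial^l\psi\|^2\le\delta\,\hbar^{m_L}\|\partial^{m_L}\psi\|^2+C_\delta\|\psi\|^2$ used to absorb the intermediate-order terms.

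\textbf{The case $n\ge 2$ and the main obstacle.} Proposition \ref{pro.3.10} expresses
\[
L_\hbar^n \;=\; \sum_{\ell=0}^{nm_L}(-1)^\ell\partial^\ell B_{\ell,\hbar}(\sqrt{\hbar}\,x)\partial^\ell,
\]
where each $B_{\ell,\hbar}\in\mathbb{R}[x]$ is an explicit polynomial combination of $\{b_{l,\hbar}\}$ and their derivatives; the analogous expansion holds for $\tilde L_\hbar^n$. The top coefficient $B_{nm_L,\hbar}=b_{m_L,\hbar}^n\ge c_{b_m}^n>0$, and the continuity of the base coefficients on $[0,\eta]$, ensure that $(L_\hbar^n,\tilde L_\hbar^n)$ again satisfies a version of Assumption \ref{ass.1} with constants depending on $n$. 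The principal technical hurdle is to prove the transferred coefficient dominance
\[
|\tilde B_{\ell,\hbar}(x)| \;\le\; C_1'\bigl(B_{\ell,\hbar}(x)+C_2'\bigr),\qquad 0\le\ell\le nm_{\tilde L},\ 0<\hbar<\eta,
\]
with $C_1',C_2'$ independent of $\hbar$. This step is delicate because $B_\ell$ and $\tilde B_\ell$ are signed sums of products of the $b_l$'s and their derivatives, and the pointwise bound \eqref{e.1.16} is not closed under general polynomial operations (in particular sign cancellation could conspire against the comparison). The resolution exploits Assumption \ref{ass.1}(3) together with the degree ordering \eqref{e.1.12}: uniform positivity of the leading coefficients of every $b_{l,\hbar}$ forces the leading coefficient of $B_{\ell,\hbar}$ to be uniformly positive and $\deg(B_{\ell,\hbar})\ge\deg(\tilde B_{\ell,\hbar})$, whereupon a standard polynomial comparison lemma (with constants depending continuously on $\hbar\in[0,\eta]$) yields the required dominance. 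Once this is in hand, applying the Case $n=1$ argument to the divergence-form pair $(L_\hbar^n,\tilde L_\hbar^n)$ completes the proof.
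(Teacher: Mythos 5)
Your proposal is correct, but it packages the argument differently than the paper. The paper does \emph{not} directly compare the full divergence-form coefficients $B_{\ell,\hbar}$ and $\tilde B_{\ell,\hbar}$ of $L_\hbar^n$ and $\tilde L_\hbar^n$. Instead, it first proves Theorem \ref{the.6.4} (using Lemma \ref{lem.6.3}), which shows $L_\hbar^n+c$ is uniformly sandwiched between $\tfrac12(\mathcal{L}_\hbar^{(n)}+c)$ and $\tfrac32(\mathcal{L}_\hbar^{(n)}+c)$, where $\mathcal{L}_\hbar^{(n)}$ is the ``frozen-coefficient'' operator built from $\mathcal{B}_{\ell,\hbar}=\sum_{|\mathbf{k}|=\ell}b_{k_1,\hbar}\cdots b_{k_n,\hbar}$; it then applies the two-variable polynomial comparison (Lemma \ref{lem.6.2}) only to the frozen coefficients $\mathcal{\tilde B}_{\ell,\hbar}$ versus $\mathcal{B}_{\ell,\hbar}$ (Eq.\ (\ref{e.6.24})), with the resulting constant-coefficient excess absorbed via Lemma \ref{lem.6.6}. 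Your route skips this factorization and applies the polynomial comparison directly to $\tilde B_{\ell,\hbar}$ versus $B_{\ell,\hbar}$; this is legitimate precisely because Proposition \ref{pro.5.1}(1) (built on Assumption \ref{ass.1}(2)) ensures the derivative-bearing remainder $R_{\ell,\hbar}=B_{\ell,\hbar}-\mathcal{B}_{\ell,\hbar}$ has strictly smaller degree than $\mathcal{B}_{\ell,\hbar}$, so $B_{\ell,\hbar}$ has even degree with uniformly positive leading coefficient, and because (\ref{e.1.16}) forces $\deg\tilde b_{l,\hbar}\le\deg b_{l,\hbar}$, hence $\deg\tilde B_{\ell,\hbar}\le\deg B_{\ell,\hbar}$, so Lemma \ref{lem.6.2} applies with the roles $p=B_{\ell,\hbar}$, $q=\tilde B_{\ell,\hbar}$, $I=[0,\eta]$. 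Both routes ultimately rest on the same three ingredients --- the degree analysis of Proposition \ref{pro.5.1}/Lemma \ref{lem.6.3}, the compact-parameter polynomial comparison Lemma \ref{lem.6.2}, and the Young/elliptic interpolation Lemma \ref{lem.6.1} --- but the paper's intermediate Theorem \ref{the.6.4} is reused elsewhere (e.g.\ in proving Corollary \ref{cor.1.20}), which is why the authors factor through the frozen operators; your direct comparison is a somewhat leaner argument if one only wants Theorem \ref{the.1.18} itself.
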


\begin{corollary}
\label{cor.1.19}If $\left\{  b_{l,\hbar}(x)\right\}  _{l=0}^{m_{L}}$ and
$\eta>0$ satisfy Assumption \ref{ass.1}, then there exists $C\in\mathbb{R}$
such that $CI\preceq_{\mathcal{S}}L_{\hbar}$ for all $0<\hbar<\eta.$
\end{corollary}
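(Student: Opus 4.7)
The plan is to deduce this directly from Theorem \ref{the.1.18} by applying it with the simplest possible choice of $\tilde{L}_{\hbar}$, namely the identity operator, so that the comparison inequality (\ref{e.1.17}) in the case $n=1$ collapses to a lower bound for $L_{\hbar}$.

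More precisely, first I would set $m_{\tilde{L}}=0$ and $\tilde{b}_{0,\hbar}(x)\equiv 1$, so that $\tilde{L}_{\hbar}=I$. I would then verify that this trivial choice satisfies Assumption \ref{ass.1}: the single coefficient is the constant polynomial $1$, the degree condition (\ref{e.1.12}) is vacuous since $m_{\tilde{L}}=0$, and both $c_{\tilde{b}_{m_{\tilde{L}}}}$ and $c_{\alpha}$ equal $1>0$. The hypothesis $m_{\tilde{L}}\leq m_{L}$ is automatic since $m_{\tilde{L}}=0$.

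Next I would check the pointwise comparison (\ref{e.1.16}) for this choice. It reduces to the requirement that
\[
1\leq c_{1}\bigl(b_{0,\hbar}(x)+c_{2}\bigr)\qquad\forall\,x\in\mathbb{R},\ 0<\hbar<\eta.
\]
By the last statement of Remark \ref{rem.1.16} (applied with $k=1$ to $q_{\hbar}=b_{0,\hbar}$), the quantity $\inf_{0<\hbar<\eta}\inf_{x\in\mathbb{R}}b_{0,\hbar}(x)$ is finite, so we may choose $c_{2}$ large enough that $b_{0,\hbar}(x)+c_{2}\geq 1$ uniformly, and then take $c_{1}=1$.

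With these choices Theorem \ref{the.1.18} (for $n=1$) yields constants $C_{1},C_{2}$ such that $I\preceq_{\mathcal{S}}C_{1}(L_{\hbar}+C_{2})$ for all $0<\hbar<\eta$. Unwinding the definition of $\preceq_{\mathcal{S}}$, this reads $\langle\psi,\psi\rangle\leq C_{1}\langle L_{\hbar}\psi,\psi\rangle+C_{1}C_{2}\langle\psi,\psi\rangle$ for every $\psi\in\mathcal{S}$, which is equivalent to $C\langle\psi,\psi\rangle\leq\langle L_{\hbar}\psi,\psi\rangle$ with $C:=C_{1}^{-1}-C_{2}$; that is, $CI\preceq_{\mathcal{S}}L_{\hbar}$, as desired. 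There is no real obstacle here beyond the bookkeeping check that $\tilde{L}_{\hbar}=I$ is an allowed choice in Theorem \ref{the.1.18}; the substance of the corollary is entirely inherited from that theorem.
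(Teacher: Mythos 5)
Your proposal is correct and takes essentially the same route as the paper: set $\tilde{L}_{\hbar}=I$ (so $m_{\tilde{L}}=0$, $\tilde{b}_{0,\hbar}\equiv1$) and apply Theorem \ref{the.1.18} with $n=1$. You simply spell out the verification of Assumption \ref{ass.1} and of inequality (\ref{e.1.16}) in more detail than the paper's one-line proof does.
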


\begin{proof}
Define $\tilde{L}_{\hbar}=I$, i.e. we are taking $m_{\tilde{L}}=0$ and
$\tilde{b}_{0,\hbar}\left(  x\right)  =1.$ It then follows from Theorem
\ref{the.1.18} with $n=1$ that there exists $C_{1},C_{2}\in\left(
0,\,\infty\right)  $ such that $I=\tilde{L}_{\hbar}\preceq_{\mathcal{S}}%
C_{1}L_{\hbar}+C_{1}C_{2}$ and hence $L_{\hbar}+C_{2}\succeq_{\mathcal{S}%
}C_{1}^{-1}I.$
\end{proof}

A similar result to Theorem \ref{the.1.18} may be found in the
\citep[Theorem 1.1 of][]{Elst2009}. The paper \citep{Elst2009} compares the
standard Laplacian $-\triangle$ with an operator of $H_{0}$ in the form of
$-\sum_{i,j}^{d}\partial_{i}c_{ij}\left(  x\right)  \partial_{j}$ with
coefficients $\left\{  {c_{ij}}\right\}  _{i,j=1}^{d}$ lying in a Sobolev
spaces $W^{m+1,\infty}\left(  \mathbb{R}^{d}\right)  $ for some $m\in
\mathbb{N}$ and $\mathcal{D}\left(  H_{0}\right)  =W^{\infty,2}\left(
\mathbb{R}^{d}\right)  .$ The theorem shows that if $H_{0}$ is a symmetric,
positive and subelliptic of order $\gamma\in(0,1]$ then $\overline{H_{0}}$ is
positive self-adjoint and for all $\alpha\in\left[  0,\frac{m+1+\gamma^{-1}%
}{2}\right]  ,$ there exists $C_{\alpha}$ such that
\[
\left(  -\Delta\right)  ^{2\alpha\gamma}\leq C_{\alpha}\left(  I+\overline
{H_{0}}\right)  ^{2\alpha}
\]

As a corollary of Theorem \ref{the.1.10} and aspects of the proof of Theorem
\ref{the.1.18} given in Section \ref{sec.6} below, we have the following
corollaries which are proved in subsection \ref{sec.6.3} below.

\begin{corollary}
\label{cor.1.20}Supposed $\left\{  b_{l,\hbar}\left(  x\right)  \right\}
_{l=0}^{m}\subset\mathbb{R}\left[  x\right]  $ and $\eta>0$ satisfies
Assumption \ref{ass.1}, $L_{\hbar}$ is the operator in the Eq. (\ref{e.1.11}),
and suppose that $C\geq0$ has been chosen so that $0\preceq_{\mathcal{S}%
}L_{\hbar}+CI$ for all $0<\hbar<\eta.$ (The existence of $C$ is guaranteed by
Corollary \ref{cor.1.19}.) Then for any $0<\hbar<\eta,$ $\bar{L}_{\hbar}+CI$
is a non-negative self-adjoint operator on $L^{2}\left(  m\right)  $ and
$\mathcal{S}$ is a core for $\left(  \bar{L}_{\hbar}+C\right)  ^{r}$ for all
$r\geq0.$
\end{corollary}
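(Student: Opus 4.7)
The overall plan is to first establish that $L_{\hbar}^{n}|_{\mathcal{S}}$ is essentially self-adjoint for every $n\in\mathbb{N}$, and then to deduce the non-negative self-adjointness of $\bar{L}_{\hbar}+CI$ and the core property at all real powers $r\geq 0$ by standard spectral-theoretic arguments.

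\emph{From integer-power essential self-adjointness to the claimed conclusions.} Granting essential self-adjointness of $L_{\hbar}^{n}|_{\mathcal{S}}$ for each $n\in\mathbb{N}$, the case $n=1$ yields $\bar{L}_{\hbar}$ self-adjoint, and the form hypothesis $L_{\hbar}+CI\succeq_{\mathcal{S}}0$ then lifts to $\bar{L}_{\hbar}+CI\geq 0$ because $\mathcal{S}$ is a core for $\bar{L}_{\hbar}+CI$. For the core property at integer powers $n$, note that $L_{\hbar}$ preserves $\mathcal{S}$, so $\mathcal{S}\subseteq D((\bar{L}_{\hbar}+CI)^{n})$ and $(\bar{L}_{\hbar}+CI)^{n}|_{\mathcal{S}}=(L_{\hbar}+CI)^{n}|_{\mathcal{S}}$. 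Since the latter is essentially self-adjoint and $(\bar{L}_{\hbar}+CI)^{n}$ is a self-adjoint extension of it, uniqueness of self-adjoint extensions forces $(\bar{L}_{\hbar}+CI)^{n}=\overline{(L_{\hbar}+CI)^{n}|_{\mathcal{S}}}$, which is exactly the required core property.

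\emph{Descent to real powers $r\geq 0$.} Set $A:=\bar{L}_{\hbar}+CI\geq 0$ with spectral resolution $\{E_{\lambda}\}$, and pick $n\in\mathbb{N}$ with $n\geq r$. For $\psi\in D(A^{r})$, the spectral cutoff $\psi_{N}:=E_{[0,N]}\psi$ lies in $\bigcap_{k}D(A^{k})$, and dominated convergence in the spectral representation gives $\psi_{N}\to\psi$ and $A^{r}\psi_{N}\to A^{r}\psi$. By the integer-power core result, each $\psi_{N}$ is approximable in the $A^{n}$-graph norm by elements of $\mathcal{S}$, and the elementary bound $\lambda^{2r}\leq 1+\lambda^{2n}$ on $[0,\infty)$ yields $\|A^{r}\phi\|^{2}\leq\|\phi\|^{2}+\|A^{n}\phi\|^{2}$, so convergence in the $A^{n}$-graph norm implies convergence in the (weaker) $A^{r}$-graph norm. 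A standard diagonal extraction then produces approximants in $\mathcal{S}$ to $\psi$ in the $A^{r}$-graph norm.

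\emph{The essential self-adjointness step and the main obstacle.} The delicate point is to verify essential self-adjointness of $L_{\hbar}^{n}|_{\mathcal{S}}$ under Assumption \ref{ass.1}: Assumption \ref{ass.1} only secures strict positivity of $b_{m,\hbar}$ and of the leading coefficients of each $b_{l,\hbar}$, which is weaker than the pointwise condition \textquotedblleft$\inf_{x}b_{l}(x)>0$ or $b_{l}\equiv 0$\textquotedblright\ required by Theorem \ref{the.1.10}. To bridge this gap, I would introduce a canonical Theorem-\ref{the.1.10}-admissible comparison operator $T_{\hbar}$ built from strictly positive polynomial coefficients of the appropriate degrees, for instance $\tilde{b}_{l,\hbar}(x)=\beta_{l}(1+x^{2})^{m_{l}}$ with $\beta_{l}>0$. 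Theorem \ref{the.1.18} applied in both directions (the comparison hypothesis \eqref{e.1.16} being verified by Remark \ref{rem.1.16}, which controls $b_{l,\hbar}$ from above and below in terms of the leading-coefficient data) gives the two-sided form bounds
\[
T_{\hbar}^{n}\preceq_{\mathcal{S}}K_{n}\bigl(L_{\hbar}^{n}+K_{n}'\bigr)\quad\text{and}\quad L_{\hbar}^{n}\preceq_{\mathcal{S}}K_{n}\bigl(T_{\hbar}^{n}+K_{n}'\bigr),
\]
and the interpolation-type machinery from the proof of Theorem \ref{the.1.18} in Section \ref{sec.6} then transfers essential self-adjointness from $T_{\hbar}^{n}$ (which is covered by Theorem \ref{the.1.10}) to $L_{\hbar}^{n}$. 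This transfer step is the only substantive input; the rest of the argument is formal.
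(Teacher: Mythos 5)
Your overall reduction is fine: deducing the corollary from essential self-adjointness of $L_{\hbar}^{n}|_{\mathcal{S}}$ for all $n\in\mathbb{N}$, identifying $(\bar{L}_{\hbar}+C)^{n}$ with $\overline{(L_{\hbar}+C)^{n}|_{\mathcal{S}}}$ by uniqueness of self-adjoint extensions, and descending to real powers $r\geq 0$ by spectral cutoffs and the bound $\lambda^{2r}\leq 1+\lambda^{2n}$ is sound and essentially parallels the paper's Lemmas \ref{lem.6.12} and \ref{lem.6.13}. The genuine gap is precisely at the point you call the only substantive input. Two-sided quadratic-form bounds $T_{\hbar}^{n}\preceq_{\mathcal{S}}K_{n}(L_{\hbar}^{n}+K_{n}')$ and $L_{\hbar}^{n}\preceq_{\mathcal{S}}K_{n}(T_{\hbar}^{n}+K_{n}')$ do \emph{not} transfer essential self-adjointness from $T_{\hbar}^{n}$ to $L_{\hbar}^{n}$: form domination controls quadratic forms (and hence, say, Friedrichs extensions) but says nothing about deficiency indices or about the operator graph norm of the difference, and the proof of Theorem \ref{the.1.18} in Section \ref{sec.6} contains no machinery that upgrades $\preceq_{\mathcal{S}}$ to a relative operator bound $\Vert (L_{\hbar}^{n}-T_{\hbar}^{n})\psi\Vert\leq a\Vert T_{\hbar}^{n}\psi\Vert+b\Vert\psi\Vert$ with $a<1$ of the kind a Kato--Rellich or W\"{u}st argument would need. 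Worse, with your choice $\tilde{b}_{l,\hbar}(x)=\beta_{l}(1+x^{2})^{m_{l}}$ the difference $L_{\hbar}^{n}-T_{\hbar}^{n}$ is generically of full order $2mn$ (its top-order coefficient is $b_{m,\hbar}^{n}-\tilde{b}_{m,\hbar}^{n}$, which is not small), so there is no hope of making it relatively bounded with small bound; the transfer step as proposed cannot be carried out.

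What the paper does instead (Proposition \ref{pro.6.11}) is to perturb $L_{\hbar}^{n}$ itself rather than compare it with an unrelated operator: write $L_{\hbar}^{n}$ in divergence form with coefficients $B_{\ell,\hbar}$ and define $\hat{L}_{\hbar}$ (Notation \ref{not.6.8}) by replacing $B_{\ell,\hbar}$ with $B_{\ell,\hbar}+C$ for $\ell<mn$, with $C$ large enough that all coefficients are strictly positive; then $\hat{L}_{\hbar}$ satisfies the hypotheses of Theorem \ref{the.1.10} directly, while the difference $\hat{L}_{\hbar}-L_{\hbar}^{n}=\sum_{\ell<mn}(-\hbar)^{\ell}C\partial^{2\ell}$ is a constant-coefficient operator of strictly lower order. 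The needed relative bound with arbitrarily small constant, Eq. (\ref{e.6.30}), is then obtained by comparing \emph{squares}: the comparison Theorem \ref{the.1.18} applied with power $2$ converts form inequalities such as Eq. (\ref{e.6.29}) into norm inequalities because $\langle A^{2}\psi,\psi\rangle=\Vert A\psi\Vert^{2}$ for symmetric $A$, and the lower order of the difference supplies the $\delta$-smallness via Lemma \ref{lem.6.1}. Kato--Rellich then gives self-adjointness of $\overline{L_{\hbar}^{n}}$, and the identification $\overline{L_{\hbar}^{n}}=\bar{L}_{\hbar}^{n}$ yields the core statement. To repair your argument you should replace the ``transfer via two-sided form bounds'' step by this lower-order perturbation together with a genuine relative operator bound.
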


\begin{corollary}
\label{cor.1.21}Suppose that $\tilde{L}_{\hbar}$ and $L_{\hbar}$ are two
linear differential operators and $\eta>0$ as in Theorem \ref{the.1.18}. If
$C\geq0$ and $\tilde{C}\geq0$ are chosen so that $L_{\hbar}+C\succeq
_{\mathcal{S}}I$ and $\tilde{L}_{\hbar}+\tilde{C}\succeq_{\mathcal{S}}0$ (as
is possible by Corollary \ref{cor.1.19}), then $\overline{\tilde{L}_{\hbar}%
}+\tilde{C}$ and $\bar{L}_{\hbar}+C$ are non-negative self adjoint operators
and for each $r\geq0$ there exists $C_{r}$ such that%
\begin{equation}
\left(  \overline{\tilde{L}_{\hbar}}+\tilde{C}\right)  ^{r}\preceq
C_{r}\left(  \bar{L}_{\hbar}+C\right)  ^{r}~\forall~0<\hbar<\eta.
\label{e.1.18}%
\end{equation}

\end{corollary}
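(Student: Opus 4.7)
The plan is to chain three results already in place: the self-adjointness/core package from Corollary \ref{cor.1.20}, the integer-power domination of Theorem \ref{the.1.18}, and the L\"owner--Heinz inequality (Theorem \ref{the.1.17}). The key observation is that replacing $b_{0,\hbar}$ by $b_{0,\hbar}+C$ and $\tilde{b}_{0,\hbar}$ by $\tilde{b}_{0,\hbar}+\tilde{C}$ leaves Assumption \ref{ass.1} intact -- only the constant terms of these polynomials change, so the leading-order positivity conditions (\ref{e.1.13})--(\ref{e.1.14}) are preserved -- and preserves the domination (\ref{e.1.16}) after enlarging $c_2$. Hence Corollary \ref{cor.1.20} applies to the shifted pair, giving the first conclusion of the corollary: $\bar{L}_\hbar+C$ and $\overline{\tilde{L}_\hbar}+\tilde{C}$ are non-negative self-adjoint and $\mathcal{S}$ is a core for all of their non-negative powers.

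Next I would apply Theorem \ref{the.1.18} to the same shifted operators. This produces, for each $n\in\mathbb{N}$, constants $C_1^{(n)}, C_2^{(n)}$ uniform in $\hbar\in(0,\eta)$ with
\[
(\tilde{L}_\hbar+\tilde{C})^n \preceq_{\mathcal{S}} C_1^{(n)}\bigl((L_\hbar+C)^n+C_2^{(n)}\bigr).
\]
Since $L_\hbar+C\succeq_{\mathcal{S}} I$ extends through closure to $\bar{L}_\hbar+C\geq I$, functional calculus gives $(\bar{L}_\hbar+C)^n\geq I$; evaluating on $\mathcal{S}\subseteq D((\bar{L}_\hbar+C)^n)$ returns $(L_\hbar+C)^n\succeq_{\mathcal{S}} I$, so the $C_2^{(n)}$ term above is absorbed into the leading one to yield
\[
(\tilde{L}_\hbar+\tilde{C})^n \preceq_{\mathcal{S}} K_n\,(L_\hbar+C)^n.
\]

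I would then upgrade this $\mathcal{S}$-form bound to the L\"owner--Heinz inequality $(\overline{\tilde{L}_\hbar}+\tilde{C})^n \leq K_n(\bar{L}_\hbar+C)^n$ on the full Hilbert space. Using self-adjointness together with $\mathcal{S}\subseteq D((\overline{\tilde{L}_\hbar}+\tilde{C})^n)\cap D((\bar{L}_\hbar+C)^n)$, the preceding display rewrites as
\[
\|(\overline{\tilde{L}_\hbar}+\tilde{C})^{n/2}\psi\|^2 \leq K_n\,\|(\bar{L}_\hbar+C)^{n/2}\psi\|^2,\qquad\psi\in\mathcal{S}.
\]
Because $\mathcal{S}$ is a core for $(\bar{L}_\hbar+C)^{n/2}$, any $\psi\in D((\bar{L}_\hbar+C)^{n/2})$ admits approximants $\psi_k\in\mathcal{S}$ with $(\bar{L}_\hbar+C)^{n/2}\psi_k\to(\bar{L}_\hbar+C)^{n/2}\psi$; the bound forces $\{(\overline{\tilde{L}_\hbar}+\tilde{C})^{n/2}\psi_k\}$ to be Cauchy, and closedness of $(\overline{\tilde{L}_\hbar}+\tilde{C})^{n/2}$ places $\psi$ in its domain with the inequality surviving in the limit.

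Finally, for arbitrary $r\geq 0$, I would choose $n\in\mathbb{N}$ with $n\geq r$ and invoke Theorem \ref{the.1.17} at exponent $s=r/n\in[0,1]$ applied to the self-adjoint pair $(\overline{\tilde{L}_\hbar}+\tilde{C})^n \leq K_n(\bar{L}_\hbar+C)^n$; this gives (\ref{e.1.18}) with $C_r:=K_n^{r/n}$, uniform in $\hbar\in(0,\eta)$. The main obstacle -- the only step beyond bookkeeping -- is the third one, where the subspace form inequality $\preceq_{\mathcal{S}}$ must be converted into a L\"owner--Heinz inequality on the closures; this is exactly where the core property delivered by Corollary \ref{cor.1.20} becomes indispensable.
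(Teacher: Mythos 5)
Your proposal is correct and follows essentially the same route as the paper: shift the constant coefficients to preserve Assumption \ref{ass.1} and the domination hypothesis, invoke Corollary \ref{cor.1.20} for self-adjointness and the core property, use Theorem \ref{the.1.18} plus absorption of the additive constant for integer powers, extend $\preceq_{\mathcal{S}}$ to the full domain via the core/closedness argument, and finish with L\"owner--Heinz. The paper packages your last two steps into Lemma \ref{lem.6.14}, which you unfold inline. One small overclaim at the very end: after L\"owner--Heinz you have $\bigl(\overline{\tilde{L}_\hbar}+\tilde{C}\bigr)^r \leq C_r\bigl(\bar{L}_\hbar+C\bigr)^r$ in the sense of Notation \ref{not.1.14}.3, while Eq.~(\ref{e.1.18}) asserts the $\preceq$-ordering of Notation \ref{not.1.14}.2, which additionally requires $\mathcal{D}\bigl((\bar{L}_\hbar+C)^r\bigr)\subseteq\mathcal{D}\bigl((\overline{\tilde{L}_\hbar}+\tilde{C})^r\bigr)$; that inclusion follows by applying your $\leq$ conclusion once more at exponent $2r$, which is exactly the ($2\Rightarrow3$) step of Lemma \ref{lem.6.14} and should not be omitted.
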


\begin{definition}
[Number operator]\label{def.1.22}The \textbf{number operator}, $\mathcal{N},$
on $L^{2}\left(  \mathbb{R}\right)  $ is defined as the closure of
\begin{equation}
-\frac{1}{2}\partial^{2}+\frac{1}{2}x^{2}-\frac{1}{2}\text{ on }\mathcal{S}.
\label{e.1.19}%
\end{equation}
For any $\hbar>0$ we let $\mathcal{N}_{\hbar}:=\hbar\mathcal{N}$ which
commonly known as the \textbf{harmonic oscillator Hamiltonian}.
\end{definition}

The properties of the number operator are very well known. The next theorem
summarizes two such basic properties which we need for this paper.

\begin{theorem}
\label{the.1.23}The number operator, $\mathcal{N},$ is non-negative (i.e.
$0\preceq\mathcal{N}$) self-adjoint operator on $L^{2}\left(  \mathbb{R}%
\right)  $.
\end{theorem}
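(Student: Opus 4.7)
The plan is to combine the standard creation/annihilation factorization of the harmonic oscillator with Theorem \ref{the.1.10} applied to a suitable shift of $\mathcal{N}$.

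First I would establish non-negativity at the level of $\mathcal{S}$ via the annihilation operator $a := \tfrac{1}{\sqrt{2}}(M_{x} + \partial)$, whose formal adjoint (from Remark \ref{rem.1.7}) is $a^{\dagger} = \tfrac{1}{\sqrt{2}}(M_{x} - \partial)$. Using $[\partial, M_{x}] = I$ on $C^{\infty}(\mathbb{R})$, a one-line computation gives
\[
(M_{x}-\partial)(M_{x}+\partial) = M_{x^{2}} - I - \partial^{2} \quad \text{on } \mathcal{S},
\]
so that $a^{\dagger}a = -\tfrac{1}{2}\partial^{2} + \tfrac{1}{2}M_{x^{2}} - \tfrac{1}{2}$, which is exactly the operator of Eq. (\ref{e.1.19}). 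Since $a$ maps $\mathcal{S}$ into $\mathcal{S}$, Remark \ref{rem.1.8} yields
\[
\langle \mathcal{N}\psi,\psi\rangle = \langle a^{\dagger}a\psi,\psi\rangle = \|a\psi\|^{2} \geq 0 \quad \forall\,\psi\in\mathcal{S},
\]
which shows that the symmetric operator $\mathcal{N}|_{\mathcal{S}}$ is non-negative in the sense $0\preceq_{\mathcal{S}}\mathcal{N}|_{\mathcal{S}}$.

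Next I would obtain essential self-adjointness by applying Theorem \ref{the.1.10} to the shifted operator $L := \mathcal{N}|_{\mathcal{S}} + I$. The operator $L$ is symmetric with polynomial coefficients and admits the divergence-form representation (\ref{e.1.6}) with $m=1$, $b_{1}(x) \equiv \tfrac{1}{2}$, and $b_{0}(x) = \tfrac{1}{2}x^{2} + \tfrac{1}{2}$. Both coefficients are strictly positive (so hypothesis~(1) holds), and $\deg(b_{1}) = 0 \leq 2 = \deg(b_{0})$ so hypothesis~(2) holds as well. Theorem \ref{the.1.10} with $n=1$ then gives that $L$, and therefore $\mathcal{N}|_{\mathcal{S}} = L - I$, is essentially self-adjoint on $\mathcal{S}$. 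In particular the closure $\mathcal{N}$ of $\mathcal{N}|_{\mathcal{S}}$ is genuinely self-adjoint.

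Finally I would transfer non-negativity to the closure by an approximation argument: any $\psi \in \mathcal{D}(\mathcal{N})$ admits a sequence $\psi_{n}\in\mathcal{S}$ with $\psi_{n}\to\psi$ and $\mathcal{N}|_{\mathcal{S}}\psi_{n}\to\mathcal{N}\psi$, so
\[
\langle \mathcal{N}\psi,\psi\rangle = \lim_{n\to\infty}\langle \mathcal{N}|_{\mathcal{S}}\psi_{n},\psi_{n}\rangle = \lim_{n\to\infty}\|a\psi_{n}\|^{2} \geq 0,
\]
proving $0\preceq \mathcal{N}$. There is no real obstacle here — the main point is simply to invoke Theorem \ref{the.1.10} on the correct shift (since $b_{0}(x) = \tfrac{1}{2}x^{2}-\tfrac{1}{2}$ itself fails hypothesis (1)), and to recall the classical identity $\mathcal{N} = a^{\dagger}a$ to supply the inequality.
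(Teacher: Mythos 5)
Your proof is correct, and for the positivity half it is the same as the paper's: the paper also deduces $0\preceq\mathcal{N}$ from the factorization $\mathcal{N}|_{\mathcal{S}}=a_{1}^{\dagger}a_{1}$ with the $\hbar=1$ operators of Eq. (\ref{e.A.1}); you simply spell out the passage from $\left\langle \mathcal{N}\psi,\psi\right\rangle =\left\Vert a\psi\right\Vert ^{2}\geq0$ on $\mathcal{S}$ to the closure via an approximating sequence, a routine step the paper leaves implicit. For self-adjointness the routes differ mildly: the paper either cites Theorem X.28 of Reed--Simon or invokes Corollary \ref{cor.1.20} (whose proof runs through Assumption \ref{ass.1}, Corollary \ref{cor.1.19} and Proposition \ref{pro.6.11}), whereas you apply Theorem \ref{the.1.10} directly to the shifted operator $\mathcal{N}|_{\mathcal{S}}+I$, taking $m=1,$ $b_{1}=\tfrac{1}{2}$ and $b_{0}=\tfrac{1}{2}x^{2}+\tfrac{1}{2},$ correctly observing that the shift is forced because $b_{0}=\tfrac{1}{2}x^{2}-\tfrac{1}{2}$ violates hypothesis (1) of that theorem, and that essential self-adjointness is unaffected by adding a real multiple of the identity. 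This is legitimate, and in fact it mirrors the device the paper itself uses inside the proof of Corollary \ref{cor.1.20}: the operator $\hat{L}_{\hbar}$ of Notation \ref{not.6.8} is obtained by adding a constant to the lower-order coefficients precisely so that Theorem \ref{the.1.10} applies. Your version is more self-contained and avoids the operator-comparison machinery altogether, at the cost of yielding only the self-adjointness of $\mathcal{N},$ whereas the route through Corollary \ref{cor.1.20} additionally gives that $\mathcal{S}$ is a core for all powers $\left(  \mathcal{N}+C\right)  ^{r},$ $r\geq0.$
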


\begin{proof}
The self-adjointness of $\mathcal{N}$ is an easy consequence of Theorem X.28
in \citep{Reed1975} on p.184 or may be seen as a consequence of Corollary
\ref{cor.1.20}. The positivity is a simple consequence of the fact that
$\mathcal{N}|_{\mathcal{S}}=a_{1}^{\dagger}a_{1}$ where $a_{1}$ and
$a_{1}^{\dagger}$ are annihilation and creation operators (for $\hbar=1)$ as
given in Eq. (\ref{e.A.1}) of the appendix.
\end{proof}

The next corollary is a direct consequence from Corollaries \ref{cor.1.20} and
\ref{cor.1.21} where $\overline{\tilde{L}_{\hbar}}=\mathcal{N}_{\hbar}$ in Eq.
(\ref{e.1.18}).

\begin{corollary}
\label{cor.1.24}Suppose $m\geq1,$ $\left\{  b_{l,\hbar}(\cdot)\right\}
_{l=0}^{m}\subset\mathbb{R}\left[  x\right]  $ and $\eta>0$ satisfy Assumption
\ref{ass.1}, and $L_{\hbar}$ is the operator on $\mathcal{S}$ defined in Eq.
(\ref{e.1.11}).If $C\geq0$ is chosen so that $I\preceq_{\mathcal{S}}L_{\hbar}+C$ (see
Corollary \ref{cor.1.19}), then;

\begin{enumerate}
\item $\bar{L}_{\hbar}+C$ is a non-negative self-adjoint operator on
$L^{2}\left(  m\right)  $ for all $0<\hbar<\eta.$

\item $\mathcal{S}$ is a core for $\left(  \bar{L}_{\hbar}+C\right)  ^{r}$ for
all $r\geq0$ and $0<\hbar<\eta.$

\item If we further supposed $\deg(b_{0,\hbar})\geq2$, then there exists
$C_{r}>0$ such that
\begin{equation}
\mathcal{N}_{\hbar}^{r}\preceq C_{r}\left(  \bar{L}_{\hbar}+C\right)  ^{r}
\label{e.1.20}%
\end{equation}
for all $0<\hbar<\eta$ and $r\geq0.$\footnote{By the spectral theorem one
shows $D\left(  \left\vert \bar{L}_{\hbar}\right\vert ^{r}\right)  =D\left(
\left(  \bar{L}_{\hbar}+C\right)  ^{r}\right)  .$}
\end{enumerate}
\end{corollary}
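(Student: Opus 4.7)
The plan is to dispose of parts (1) and (2) by direct appeal to Corollary~\ref{cor.1.20}, whose hypotheses (Assumption~\ref{ass.1} and the choice of $C$ with $L_{\hbar}+C \succeq_{\mathcal{S}} I$) are exactly what is in force here. All of the real content sits in part~(3), and the strategy is to exhibit $\mathcal{N}_{\hbar}$ as the closure of a suitably chosen $\tilde{L}_{\hbar}$ and then invoke Corollary~\ref{cor.1.21}.

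To that end I take $m_{\tilde{L}}=1$ (allowed since $m \geq 1$) together with
\[
\tilde{b}_{0,\hbar}(x) := \tfrac{1}{2}x^{2} - \tfrac{\hbar}{2}, \qquad \tilde{b}_{1,\hbar}(x) := \tfrac{1}{2}.
\]
A direct substitution into~\eqref{e.1.11} should give
\[
\tilde{L}_{\hbar} = \tfrac{\hbar}{2}x^{2} - \tfrac{\hbar}{2} - \tfrac{\hbar}{2}\partial^{2} = \hbar\,\mathcal{N}\big|_{\mathcal{S}},
\]
so Definition~\ref{def.1.22} yields $\overline{\tilde{L}_{\hbar}} = \mathcal{N}_{\hbar}$, and Theorem~\ref{the.1.23} lets me take $\tilde{C}=0$ in Corollary~\ref{cor.1.21}. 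Verifying Assumption~\ref{ass.1} for $\{\tilde{b}_{l,\hbar}\}$ is routine: continuity in $\hbar$ is manifest, the degree inequality reads $0 \leq 2$, and both leading coefficients are the constant $\tfrac{1}{2}$, which is also $\inf \tilde{b}_{1,\hbar}$.

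The crux is checking the domination hypothesis~\eqref{e.1.16} with these $\tilde{b}_{l,\hbar}$ against the given $b_{l,\hbar}$, and this is precisely where $\deg(b_{0,\hbar}) \geq 2$ is indispensable. For $l=1$, Remark~\ref{rem.1.16} supplies a uniform lower bound $b_{1,\hbar}(x) \geq -M$, which makes $|\tilde{b}_{1,\hbar}| = \tfrac{1}{2} \leq b_{1,\hbar}(x) + (M+\tfrac{1}{2})$ immediate. For $l=0$, I must dominate $\tfrac{1}{2}x^{2} + \tfrac{\hbar}{2}$ by $c_{1}(b_{0,\hbar}(x) + c_{2})$; since $\deg(b_{0,\hbar}) = 2m_{0} \geq 2$ with leading coefficient $\alpha_{0,2m_{0}}(\hbar) \geq c_{\alpha} > 0$, I split: on $|x| \geq R$ for $R$ large enough (uniformly in $\hbar\in(0,\eta)$) one has $b_{0,\hbar}(x) \geq \tfrac{c_{\alpha}}{2}x^{2m_{0}} \geq \tfrac{c_{\alpha}}{2}x^{2}$, and on $|x| \leq R$ the required inequality is swallowed into $c_{2}$ by the uniform lower bound of Remark~\ref{rem.1.16}.

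Once~\eqref{e.1.16} is established, Corollary~\ref{cor.1.21} applies verbatim and gives $\mathcal{N}_{\hbar}^{r} = \overline{\tilde{L}_{\hbar}}^{r} \preceq C_{r}(\bar{L}_{\hbar}+C)^{r}$ for all $r \geq 0$ and $0 < \hbar < \eta$, which is exactly~\eqref{e.1.20}. The only step with any substance is the $l=0$ domination: without $\deg(b_{0,\hbar}) \geq 2$ the polynomial $b_{0,\hbar}$ cannot control the quadratic potential of the harmonic oscillator, and the comparison would fail. Every other ingredient is bookkeeping on Assumption~\ref{ass.1} together with the identification $\overline{\tilde{L}_{\hbar}} = \mathcal{N}_{\hbar}$.
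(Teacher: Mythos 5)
Your proposal is correct and takes exactly the route the paper intends: parts (1) and (2) are Corollary~\ref{cor.1.20} verbatim, and part (3) is Corollary~\ref{cor.1.21} with $\tilde{L}_{\hbar}$ chosen so that $\overline{\tilde{L}_{\hbar}}=\mathcal{N}_{\hbar}$, the paper even stating this explicitly in the sentence preceding the corollary. Your choice $\tilde b_{0,\hbar}(x)=\tfrac12 x^{2}-\tfrac{\hbar}{2}$, $\tilde b_{1,\hbar}=\tfrac12$, together with the verification of Assumption~\ref{ass.1} and of Eq.~\eqref{e.1.16} via the degree hypothesis $\deg(b_{0,\hbar})\geq 2$, is exactly the bookkeeping the paper leaves implicit.
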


\section{A Structure Theorem for Symmetric Differential Operators\label{sec.2}%
}

\begin{remark}
\label{rem.2.1}It is useful to observe if $A$ and $B$ are two linear
transformation from a vector space, $V,$ to itself, then
\[
AB^{2}+B^{2}A=2BAB+\left[  B,\left[  B,A\right]  \right]  ,
\]
where $\left[  A,B\right]  :=AB-BA$ denotes the commutator of $A$ and $B.$
\end{remark}

\begin{proposition}
\label{pro.2.2}Suppose $\left\{  a_{k}\right\}  _{k=0}^{d}\subset C^{\infty
}\left(  \mathbb{R},\mathbb{R}\right)  $ and $L$ is the $d^{th}$ -order
differential operator on $C^{\infty}(\mathbb{R})$ as defined in Eq.
(\ref{e.1.2}). If $L$ is symmetric according to Definition \ref{def.1.6} (i.e.
$L=L^{\dag}$ where $L^{\dag}$ is as in Eq. (\ref{e.1.4})), then $d$ is even
(let $m=d/2)$ and there exists $\left\{  b_{l}\right\}  _{l=0}^{m}\subset
C^{\infty}\left(  \mathbb{R},\mathbb{R}\right)  $ such that
\begin{equation}
L=\sum_{l=0}^{m}\left(  -1\right)  ^{l}\partial^{l}M_{b_{l}}\partial^{l}
\label{e.2.1}%
\end{equation}
where $M_{b_{l}}$ is as in Notation \ref{not.1.4}. Moreover, $b_{m}=\left(
-1\right)  ^{m}a_{2m}=\left(  -1\right)  ^{m}a_{d}.$
\end{proposition}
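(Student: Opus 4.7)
The plan is to induct on $m$ (equivalently on the order $d=2m$), peeling off the top-order divergence piece at each step. Before setting up the induction, I would first establish that $d$ must be even. Writing out the formal adjoint formula in Eq.~(\ref{e.1.4}) and expanding each $\partial^{k}M_{\bar a_k}$ via the Leibniz rule, the only contribution to the $\partial^{d}$-coefficient of $L^{\dag}$ comes from all $k=d$ derivatives passing through $\bar a_d$, giving $(-1)^{d}\bar a_{d}=(-1)^{d}a_{d}$. By Remark \ref{rem.1.5} the coefficients of a differential operator are uniquely determined by its action, so $L=L^{\dag}$ forces $a_{d}=(-1)^{d}a_{d}$; since $a_{d}\not\equiv 0$ (else $L$ is of lower order), $d$ is even. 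Write $d=2m$.

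The base case $m=0$ is immediate: $L=M_{a_0}$, $L^{\dag}=M_{\bar a_0}$, so $a_0$ is real and we take $b_0:=a_0$. For the inductive step, define
\[
b_{m}:=(-1)^{m}a_{2m}\in C^{\infty}(\mathbb{R},\mathbb{R}),\qquad D:=(-1)^{m}\partial^{m}M_{b_{m}}\partial^{m}.
\]
I would then verify two facts about $D$. First, $D$ is symmetric: using the product rule $(AB)^{\dag}=B^{\dag}A^{\dag}$ from Remark \ref{rem.1.7}, together with $\partial^{\dag}=-\partial$ and $M_{b_{m}}^{\dag}=M_{\bar b_{m}}=M_{b_{m}}$, one gets $D^{\dag}=(-1)^{m}(-\partial)^{m}M_{b_{m}}(-\partial)^{m}=(-1)^{m}(-1)^{2m}\partial^{m}M_{b_{m}}\partial^{m}=D$. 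Second, a Leibniz expansion of $\partial^{m}M_{b_{m}}\partial^{m}$ yields $b_{m}\partial^{2m}$ as its only $\partial^{2m}$-term, so the $\partial^{2m}$-coefficient of $D$ is $(-1)^{m}b_{m}=a_{2m}$, matching the leading coefficient of $L$.

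Now set $\tilde L:=L-D$. It is a symmetric (difference of two symmetric operators) differential operator with real smooth coefficients, and by construction its $\partial^{2m}$-coefficient vanishes, so its order is at most $2m-1$. Applying the parity argument of the first paragraph to $\tilde L$ (and using that if $\tilde L\equiv 0$ we are done trivially), its order must be even and hence at most $2(m-1)$. The inductive hypothesis then provides $\{b_l\}_{l=0}^{m-1}\subset C^{\infty}(\mathbb{R},\mathbb{R})$ with $\tilde L=\sum_{l=0}^{m-1}(-1)^{l}\partial^{l}M_{b_{l}}\partial^{l}$, and adding back $D$ gives Eq.~(\ref{e.2.1}). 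The identification $b_{m}=(-1)^{m}a_{2m}$ is built into our choice in Step~2.

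The main point of care — I wouldn't call it a real obstacle — is the bookkeeping of leading coefficients under Leibniz: verifying in both $L^{\dag}$ (to pin down parity) and $D$ (to match $a_{2m}$) that the top-order contribution is the claimed one, with no collateral contribution of the same order from commutator terms. These follow because each commutator $[\partial, M_f]=M_{f'}$ strictly drops the effective differentiation count by one, so only the ``all derivatives through the coefficient'' term survives at top order. Everything else is algebraic and the induction closes cleanly.
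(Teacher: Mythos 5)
Your proposal is correct and takes essentially the same route as the paper: in both, parity of the top-order coefficient forces $d$ even, then one peels off $\partial^{m}M_{a_{2m}}\partial^{m}$, observes the residual is symmetric of order $\le 2m-2$ (again by the parity argument), and inducts. The only cosmetic difference is that you directly verify $D=\partial^{m}M_{a_{2m}}\partial^{m}$ is symmetric and has matching leading coefficient, whereas the paper reaches the same decomposition by first rewriting $L=\tfrac12(L+L^{\dagger})$ and applying the double-commutator identity of Remark \ref{rem.2.1}.
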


\begin{proof}
Since $L=L^{\dag},$ we have
\begin{align}
L  &  =\frac{1}{2}(L+L^{\dag})=\frac{1}{2}\sum_{k=0}^{d}[a_{k}\partial
^{k}+(-1)^{k}\partial^{k}M_{a_{k}}]\label{e.2.2}\\
&  =\frac{1}{2}[a_{d}\partial^{d}+(-1)^{d}\partial^{d}M_{a_{d}}]+[\text{diff.
operator of order }d-1]. \label{e.2.3}%
\end{align}
If $d$ were odd, then $\left(  -1\right)  ^{d}=-1$ and hence (using the
product rule),
\begin{align*}
\frac{1}{2}[a_{d}\partial^{d}+(-1)^{d}\partial^{d}M_{a_{d}}]  &  =\frac{1}%
{2}[M_{a_{d}},\partial^{d}]\\
&  =[\text{diff. operator of order }d-1]
\end{align*}
which combined with Eq. (\ref{e.2.3}) would imply that $L$ was in fact a
differential operator of order no greater than $d-1$. This shows that $L$ must
be an even order operator.

Now knowing that $d$ is even, let $m:=d/2\in\mathbb{N}.$ From Eq.
(\ref{e.2.2}), we learn that
\begin{align*}
L  &  =\frac{1}{2}\sum_{k=0}^{2m}\left[  a_{k}\partial^{k}+\left(  -1\right)
^{k}\partial^{k}M_{a_{k}}\right] \\
&  =\frac{1}{2}\left[  a_{2m}\partial^{2m}+\partial^{2m}M_{a_{2m}}\right]  +R
\end{align*}
where $R$ is given by
\[
R=\frac{1}{2}\sum_{k=0}^{2m-1}\left[  M_{a_{k}}\partial^{k}+\left(  -1\right)
^{k}\partial^{k}M_{a_{k}}\right]  .
\]
Moreover by Remark \ref{rem.1.7}, $R$ is still symmetric. As in the previous
paragraph $R$ is in fact an even order differential operator and its order is
at most $2m-2.$ Using Remark \ref{rem.2.1} with $A=M_{a_{2m}},$ $B=\partial
^{m},$ and $V=C^{\infty}\left(  \mathbb{R}\right)  ,$ we learn that
\begin{align*}
\frac{1}{2}\left[  a_{2m}\partial^{2m}+\partial^{2m}M_{a_{2m}}\right]   &
=\partial^{m}M_{a_{2m}}\partial^{m}+\frac{1}{2}\left[  \partial^{m},\left[
\partial^{m},M_{a_{2m}}\right]  \right] \\
&  =\partial^{m}M_{a_{2m}}\partial^{m}+[\text{diff. operator of order at most
}2m-2].
\end{align*}
Combining the last three displayed equations together shows
\[
L=\partial^{m}a_{2m}\partial^{m}+S
\]
where $S=L-\partial^{m}a_{2m}\partial^{m}$ is a symmetric (by Remark
\ref{rem.1.7}) even order differential operator or at most $2m-2$. It now
follows by the induction hypothesis that
\[
S=\sum_{l=0}^{m-1}\left(  -1\right)  ^{l}\partial^{l}M_{b_{l}}\partial
^{l}\implies L=\sum_{l=0}^{m}\left(  -1\right)  ^{l}\partial^{l}M_{b_{l}%
}\partial^{l}%
\]
where $b_{m}:=\left(  -1\right)  ^{m}a_{2m}.$
\end{proof}

Our next goal is to record the explicit relationship between $\left\{
a_{k}\right\}  _{k=0}^{2m}$ in $\text{Eq. (\ref{e.1.2})}$ and $\left\{
b_{k}\right\}  _{k=0}^{m}$ in $\text{Eq. (}$\ref{e.2.1}).

\begin{convention}
\label{con.2.3}To simplify notation in what follows, for $k$, $l\in\mathbb{Z}%
$, let
\[
\dbinom{l}{k}:=%
\begin{cases}
\frac{l!}{k!(l-k)!} & \text{if }0\leq k\leq l\\
0 & \text{otherwise.}%
\end{cases}
\]

\end{convention}

\begin{lemma}
\label{lem.2.4}If $\left\{  a_{k}\right\}  _{k=0}^{2m}\cup\left\{
b_{l}\right\}  _{l=0}^{m}\subset C^{\infty}\left(  \mathbb{R},\mathbb{R}%
\right)  $ and
\begin{equation}
\sum_{l=0}^{m}\left(  -1\right)  ^{l}\partial^{l}b_{l}\left(  x\right)
\partial^{l}=\sum_{k=0}^{2m}a_{k}\left(  x\right)  \partial^{k}%
\text{\thinspace}, \label{e.2.4}%
\end{equation}
then
\begin{equation}
a_{k}:=\sum_{l=0}^{m}\dbinom{l}{k-l}\left(  -1\right)  ^{l}\partial
^{2l-k}b_{l}. \label{e.2.5}%
\end{equation}

\end{lemma}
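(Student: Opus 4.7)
The plan is to expand the left-hand side of Eq.~(\ref{e.2.4}) into the standard form $\sum_k a_k \partial^k$ by pushing all the $b_l$ multiplications to the left of the $\partial^{l}$'s, then read off the coefficient of $\partial^k$ and invoke the uniqueness assertion from Remark~\ref{rem.1.5} to identify it with $a_k$.

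First I would record the operator version of the Leibniz rule: for any $b \in C^\infty(\mathbb{R},\mathbb{R})$ and $l \in \mathbb{N}_0$,
\begin{equation*}
\partial^l M_b = \sum_{j=0}^{l} \binom{l}{j}\, M_{\partial^j b}\, \partial^{l-j},
\end{equation*}
which is an easy induction on $l$ using $\partial M_b = M_b \partial + M_{\partial b}$. Applying this identity inside the $l$-th summand of the left side of Eq.~(\ref{e.2.4}) gives
\begin{equation*}
(-1)^l \partial^l M_{b_l} \partial^l
= (-1)^l \sum_{j=0}^{l} \binom{l}{j}\, M_{\partial^j b_l}\, \partial^{2l-j}.
\end{equation*}

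Next I would re-index each inner sum by setting $k := 2l-j$, so that $j = 2l-k$ and the constraint $0 \le j \le l$ becomes $l \le k \le 2l$. Using the symmetry $\binom{l}{2l-k} = \binom{l}{k-l}$ together with Convention~\ref{con.2.3} (which makes $\binom{l}{k-l}$ automatically vanish outside $l \le k \le 2l$), one obtains
\begin{equation*}
\sum_{l=0}^{m}(-1)^l \partial^l b_l \partial^l
= \sum_{l=0}^{m}\sum_{k=0}^{2m} (-1)^l \binom{l}{k-l} (\partial^{2l-k} b_l)\, \partial^k
= \sum_{k=0}^{2m} \Bigl( \sum_{l=0}^{m} \binom{l}{k-l} (-1)^l \partial^{2l-k} b_l \Bigr) \partial^k.
\end{equation*}
Setting this equal to $\sum_{k=0}^{2m} a_k \partial^k$ and invoking the uniqueness of differential-operator coefficients from Remark~\ref{rem.1.5} then yields Eq.~(\ref{e.2.5}).

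There is no real obstacle here; the whole proof is a bookkeeping exercise. The only things to watch are (i) the direction of the Leibniz rule and the resulting index range, and (ii) the rewriting $\binom{l}{2l-k} = \binom{l}{k-l}$ together with Convention~\ref{con.2.3}, which together let the single sum in Eq.~(\ref{e.2.5}) run over all $0 \le l \le m$ without explicit range restrictions.
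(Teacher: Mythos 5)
Your proof is correct and follows essentially the same route as the paper's: expand each $(-1)^l\partial^l M_{b_l}\partial^l$ via the operator Leibniz rule, re-index to collect the coefficient of $\partial^k$ (the paper writes the Leibniz sum as $\partial^l M_{b_l}=\sum_r\binom{l}{r}(\partial^{l-r}b_l)\partial^r$ and sets $k=l+r$, which is the same computation as your $k=2l-j$ up to relabelling), and identify coefficients, which the paper does implicitly and you make explicit by citing Remark~\ref{rem.1.5}. No gaps.
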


\begin{proof}
By the product rule,
\[
\partial^{l}M_{b_{l}}=\sum_{r=0}^{l}\dbinom{l}{r}\left(  \partial^{l-r}%
b_{l}\right)  \partial^{r}\text{,}
\]
and therefore,
\begin{align*}
\sum_{l=0}^{m}\left(  -1\right)  ^{l}\partial^{l}M_{b_{l}}\partial^{l}  &
=\sum_{l=0}^{m}\sum_{r=0}^{l}\dbinom{l}{r}\left(  -1\right)  ^{l}\left(
\partial^{l-r}b_{l}\right)  \partial^{l+r}\\
&  =\sum_{k=0}^{2m}\left[  \sum_{l=0}^{m}\sum_{r=0}^{m}\dbinom{l}{r}\left(
-1\right)  ^{l}\left(  \partial^{l-r}b_{l}\right)  1_{k=l+r}\right]
\partial^{k}\\
&  =\sum_{k=0}^{2m}\left[  \sum_{l=0}^{m}\dbinom{l}{k-l}\left(  -1\right)
^{l}\left(  \partial^{2l-k}b_{l}\right)  \right]  \partial^{k}\text{.}%
\end{align*}
Combining this result with Eq. (\ref{e.2.4}) gives the identities in Eq.
(\ref{e.2.5}).
\end{proof}

Let us observe that the binomial coefficient of $a_{l}$ is zero unless $0\leq
k-l\leq l$, i.e. $l\leq k\leq2l$. To emphasize this restriction, we may write
Eq. (\ref{e.2.5}) as
\begin{equation}
a_{k}=\sum_{l=0}^{m}1_{l\leq k\leq2l}\dbinom{l}{k-l}\left(  -1\right)
^{l}\partial^{2l-k}b_{l}. \label{e.2.6}%
\end{equation}

Taking $k=2p$ in Eq. (\ref{e.2.6}) and multiplying the result by $\left(
-1\right)  ^{p}=\left(  -1\right)  ^{-p}$ leads to the following corollary.

\begin{corollary}
\label{cor.2.5}For $0\leq p\leq m,$
\begin{equation}
\left(  -1\right)  ^{p}a_{2p}=\sum_{l=0}^{m}1_{p\leq l\leq2p}\dbinom{l}%
{2p-l}\left(  -1\right)  ^{\left(  l-p\right)  }\partial^{2(l-p)}b_{l}.
\label{e.2.7}%
\end{equation}

\end{corollary}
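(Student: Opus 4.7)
The plan is to specialize equation (2.6) from Lemma \ref{lem.2.4} to even indices; the corollary is essentially a bookkeeping consequence once $k$ is replaced by $2p$ and both sides are multiplied by $(-1)^p$. There is no genuine obstacle, only the need to rewrite the indicator constraint and collect the sign.

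First I would substitute $k=2p$ into the identity
\[
a_{k}=\sum_{l=0}^{m}1_{l\leq k\leq2l}\dbinom{l}{k-l}\left(  -1\right)
^{l}\partial^{2l-k}b_{l}
\]
from equation (\ref{e.2.6}). This gives
\[
a_{2p}=\sum_{l=0}^{m}1_{l\leq2p\leq2l}\dbinom{l}{2p-l}(-1)^{l}\partial^{2l-2p}b_{l}.
\]
Next I would rewrite the indicator: the condition $l\leq2p\leq2l$ is equivalent to the two inequalities $l\leq2p$ and $p\leq l$, i.e.\ $p\leq l\leq2p$. Substituting this form of the indicator gives
\[
a_{2p}=\sum_{l=0}^{m}1_{p\leq l\leq2p}\dbinom{l}{2p-l}(-1)^{l}\partial^{2(l-p)}b_{l}.
\]

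Finally I would multiply both sides by $(-1)^{p}$, using $(-1)^{p}(-1)^{l}=(-1)^{l-p}$ (since $(-1)^{p}=(-1)^{-p}$), to obtain
\[
(-1)^{p}a_{2p}=\sum_{l=0}^{m}1_{p\leq l\leq2p}\dbinom{l}{2p-l}(-1)^{l-p}\partial^{2(l-p)}b_{l},
\]
which is exactly equation (\ref{e.2.7}). The only potentially subtle step is the translation of the indicator constraint, but this is immediate from the two defining inequalities, so the corollary follows entirely by elementary substitution from Lemma \ref{lem.2.4}.
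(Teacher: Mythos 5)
Your proof is correct and follows exactly the route indicated in the paper: substitute $k=2p$ into Eq.\ (\ref{e.2.6}), rewrite the indicator $1_{l\leq 2p\leq 2l}$ as $1_{p\leq l\leq 2p}$, and multiply by $(-1)^{p}=(-1)^{-p}$. No difference in approach and no gaps.
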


We will see in Theorem \ref{the.2.7} below that the relations in Eq.
(\ref{e.2.7}) may be used to uniquely write the $\left\{  b_{l}\right\}
_{l=0}^{m}$ in terms of linear combinations for the $\left\{  a_{2k}\right\}
_{k=0}^{m}.$ In particular this shows if the operator $L$ described in Eq.
(\ref{e.1.2}) is symmetric then $\left\{  b_{l}\right\}  _{l=0}^{m}$ is
completely determined by the $a_{k}$ with $k$ even.

\subsection{The divergence form of $L$\label{sec.2.1}}

\begin{notation}
\label{not.2.6}For $r,s,n\in\mathbb{N}_{0}$ and $0\leq r,s\leq m$, let
\[
C_{n}(r,s)=\sum\dbinom{k_{1}}{2r-k_{1}}\dbinom{k_{2}}{2k_{1}-k_{2}}%
\dots\dbinom{k_{n-1}}{2k_{n-2}-k_{n-1}}\dbinom{s}{2k_{n-1}-s}
\]
where the sum is over $r<k_{1}<k_{2}<\dots<k_{n-1}<s.$ We also let
\begin{equation}
K_{m}(r,s)=\sum_{n=1}^{m-1}(-1)^{n}C_{n}(r,s). \label{e.2.8}%
\end{equation}
In particular, $C_{n}\left(  0,s\right)  =C_{n}\left(  m,s\right)
=K_{m}\left(  0,s\right)  =K_{m}\left(  m,s\right)  =0$ for all $0\leq s\leq
m.$
\end{notation}

\begin{theorem}
\label{the.2.7}If Eq. (\ref{e.2.4}) holds then
\begin{equation}
\left(  -1\right)  ^{r}b_{r}=a_{2r}+\sum_{r<s\leq m}K_{m}(r,s)\partial
^{2(s-r)}a_{2s}\text{ }\forall~0\leq r\leq m. \label{e.2.9}%
\end{equation}

\end{theorem}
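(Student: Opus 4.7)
The plan is to invert the upper--triangular linear system provided by Corollary \ref{cor.2.5}. First, I introduce $\beta_{l}:=(-1)^{l}b_{l}$ for $0\leq l\leq m$, so the target identity (\ref{e.2.9}) becomes
\[
\beta_{r}=a_{2r}+\sum_{r<s\leq m}K_{m}(r,s)\,\partial^{2(s-r)}a_{2s}.
\]
Multiplying Eq.~(\ref{e.2.7}) by $(-1)^{p}$ and isolating the $l=p$ term (which equals $\beta_{p}$ since $\binom{p}{p}=1$) yields the recursion
\[
\beta_{r}=a_{2r}-\sum_{s=r+1}^{m}\binom{s}{2r-s}\,\partial^{2(s-r)}\beta_{s}\qquad(0\leq r\leq m), \tag{$\star$}
\]
where only $s\leq 2r$ contributes by Convention~\ref{con.2.3}.

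I then prove the desired formula by downward induction on $r$. The base case $r=m$ is immediate: $(\star)$ collapses to $\beta_{m}=a_{2m}$, matching (\ref{e.2.9}) (both sums are empty). For the inductive step, assume the formula holds for every index $r'>r$, substitute those expressions for $\beta_{s}$ on the right-hand side of $(\star)$, and collect terms of the form $\partial^{2(t-r)}a_{2t}$. The coefficient of such a term equals
\[
-\binom{t}{2r-t}-\sum_{s=r+1}^{t-1}\binom{s}{2r-s}\,K_{m}(s,t).
\]
The combinatorial heart of the argument is the identity
\[
\sum_{s=r+1}^{t-1}\binom{s}{2r-s}\,C_{n}(s,t)=C_{n+1}(r,t),
\]
which is immediate from Notation~\ref{not.2.6}: pre-pending $s$ to a chain $s<k_{1}<\dots<k_{n-1}<t$ produces exactly a chain $r<s<k_{1}<\dots<k_{n-1}<t$ of length $n+1$, and summing over $s$ sweeps out all such chains. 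Plugging this into the double sum and using $C_{1}(r,t)=\binom{t}{2r-t}$, the coefficient becomes
\[
-C_{1}(r,t)+\sum_{n=2}^{m}(-1)^{n}C_{n}(r,t)=\sum_{n=1}^{m}(-1)^{n}C_{n}(r,t).
\]

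The main (and only) subtlety is that the natural summation range ends at $n=m$, whereas $K_{m}(r,t)$ is defined only up to $n=m-1$. This is resolved by showing $C_{m}(r,t)=0$ for all admissible $0\leq r<t\leq m$: such a chain requires $m-1$ strictly increasing integers between $r$ and $t$, which by pigeonhole forces $r=0$, $t=m$, and $k_{j}=j$; then the first factor is $\binom{k_{1}}{2r-k_{1}}=\binom{1}{-1}=0$, killing every summand. Thus $\sum_{n=1}^{m}(-1)^{n}C_{n}(r,t)=K_{m}(r,t)$, which closes the induction and gives Eq.~(\ref{e.2.9}).
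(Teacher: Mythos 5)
Your proof is correct, and it takes a genuinely different route from the paper. The paper's argument is algebraic: it packages Eq.~(\ref{e.2.7}) as a vector identity $\mathbf{a}=(I+U)\mathbf{b}$, where $U$ is a strictly upper triangular $(m+1)\times(m+1)$ matrix with constant-coefficient differential-operator entries $U_{r,k}=1_{r<k\leq 2r}\binom{k}{2r-k}\partial^{2(k-r)}$, observes that $U$ is nilpotent with $U^{m}=0$, and inverts via the finite Neumann series $(I+U)^{-1}=\sum_{n=0}^{m-1}(-U)^{n}$; Lemma~\ref{lem.2.8} then unwinds the matrix powers into the $C_{n}(r,s)$. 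Your proof instead does a downward induction on $r$ using the scalar recursion $(\star)$, with the chain-gluing identity $\sum_{s}\binom{s}{2r-s}C_{n}(s,t)=C_{n+1}(r,t)$ playing the role of matrix multiplication. Both approaches hinge on exactly the same vanishing fact at the top end of the sum --- you phrase it as $C_{m}(r,t)=0$, the paper as $U^{m}=0$, and the reason is identical (the first factor $\binom{1}{-1}$ is zero because $U_{0,k}\equiv 0$). Your version is a bit more elementary in that it avoids introducing an operator-valued matrix and makes the combinatorics explicit, at the modest cost of having to verify the gluing identity by hand rather than having it absorbed into the definition of $U^{n}$.
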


\begin{proof}
For $x\in\mathbb{R}$ let $\mathbf{b}\left(  x\right)  $ and $\mathbf{a}\left(
x\right)  $ denote the column vectors in $\mathbb{R}^{m+1}$ defined by
\begin{align*}
\mathbf{b}\left(  x\right)   &  =(\left(  -1\right)  ^{0}b_{0}\left(
x\right)  ,\left(  -1\right)  ^{1}b_{1}\left(  x\right)  \dots,\left(
-1\right)  ^{m}b_{m}\left(  x\right)  )^{\operatorname{tr}}\text{ and}\\
\mathbf{a}\left(  x\right)   &  =(a_{0}\left(  x\right)  ,a_{2}\left(
x\right)  ,a_{4}\left(  x\right)  ,\dots,a_{2m}\left(  x\right)  )^{tr}.
\end{align*}
Further let $U$ be the $(m+1)\times(m+1)$ matrix with entries $\{U_{r,k}%
\}_{r,k=0}^{m}$ which are linear constant coefficient differential operators
given by
\[
U_{r,k}:=1_{r<k\leq2r}\dbinom{k}{2r-k}\partial^{2(k-r)}.
\]
Notice that by definition, $U_{r,k}=0$ unless $k>r$ and $U_{0,k}=0$ for $0\leq
k\leq m.$ Hence $U$ is nilpotent and $U^{m}=0$. Further observe that Eq.
(\ref{e.2.7}) may be written as
\begin{align*}
a_{2r}  &  =\left(  -1\right)  ^{r}b_{r}+\sum_{r<k\leq m}\dbinom{k}%
{2r-k}\left(  -1\right)  ^{k}\partial^{2(k-r)}b_{k}\\
&  =\left(  -1\right)  ^{r}b_{r}+\sum_{r<k\leq m}U_{r,k}\left(  -1\right)
^{k}b_{k}%
\end{align*}
or equivalently stated \textbf{$\mathbf{a}$}$=(I+U)\mathbf{b.}$ As $U$ is
nilpotent with $U^{m}=0,$ this last equation may be solved for $\mathbf{b}$
using
\begin{equation}
\mathbf{b}=(I+U)^{-1}\mathbf{a}=\mathbf{a}+\sum_{n=1}^{m-1}(-U)^{n}\mathbf{a}.
\label{e.2.10}%
\end{equation}
In components this equation reads,
\begin{equation}
\left(  -1\right)  ^{r}b_{r}=a_{r}+\sum_{n=1}^{m-1}\left(  -1\right)  ^{n}%
\sum_{s=0}^{m}U_{r,s}^{n}a_{2s} \label{e.2.11}%
\end{equation}
However, with the aid of Lemma \ref{lem.2.8} below and the definition of
$K_{m}\left(  r,s\right)  $ in Eq. (\ref{e.2.8}) it follows that
\[
\sum_{n=1}^{m-1}(-1)^{n}U_{r,s}^{n}=\sum_{n=1}^{m-1}(-1)^{n}C_{n}%
(r,s)\partial^{2(s-r)}=K_{m}(r,s)\partial^{2(s-r)}
\]
which combined with Eq. (\ref{e.2.11}) and the fact that $K_{m}\left(
r,s\right)  =0$ unless $0<r<s\leq m$ proves Eq. (\ref{e.2.9}).
\end{proof}

\begin{lemma}
\label{lem.2.8}Let $1\leq n\leq m$ and $0\leq r,s\leq m$, then $U^{m}=0$ and
\begin{equation}
U_{r,s}^{n}=C_{n}(r,s)\partial^{2(s-r)}. \label{e.2.12}%
\end{equation}

\end{lemma}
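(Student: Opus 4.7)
The plan is to prove the identity $U^n_{r,s}=C_n(r,s)\,\partial^{2(s-r)}$ by induction on $n\ge 1$, and then to read off $U^m=0$ by showing that $C_m(r,s)=0$ for every admissible pair $(r,s)$.

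For the base case $n=1$, the defining sum of $C_1(r,s)$ has no intermediate indices, so the product collapses to the single binomial $\binom{s}{2r-s}$, present only when $r<s$. By Convention \ref{con.2.3} this coefficient already vanishes unless $r\le s\le 2r$, which exactly reproduces the indicator $1_{r<s\le 2r}$ built into the definition of $U_{r,s}$. Hence $C_1(r,s)\,\partial^{2(s-r)}=U_{r,s}$ on the nose.

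For the inductive step, each entry $U_{r,k}$ is a constant-coefficient differential operator, so it commutes with the derivative factor in $U^n_{k,s}$. This allows derivatives to be pulled past scalars and yields
\[
U^{n+1}_{r,s}=\sum_{k=0}^{m}U_{r,k}\,U^n_{k,s}=\Bigl[\sum_{k=0}^{m}1_{r<k\le 2r}\binom{k}{2r-k}\,C_n(k,s)\Bigr]\partial^{2(s-r)}.
\]
On the combinatorial side, grouping the sequences $r<k_1<\cdots<k_n<s$ defining $C_{n+1}(r,s)$ by their first entry $k_1=k$ gives the recursion $C_{n+1}(r,s)=\sum_{r<k<s}\binom{k}{2r-k}\,C_n(k,s)$. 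The two sums then match once one observes that $\binom{k}{2r-k}$ already enforces $k\le 2r$, and $C_n(k,s)$ vanishes whenever $k\ge s$ (its defining index set is empty).

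To conclude $U^m=0$, I use a length count: $C_m(r,s)$ sums over strictly increasing sequences of length $m+1$ inside $\{0,1,\ldots,m\}$ running from $r$ to $s$, which forces $s-r\ge m$, and hence $(r,s)=(0,m)$. The unique admissible sequence is then $k_i=i$, and its first binomial factor is $\binom{1}{2\cdot 0-1}=\binom{1}{-1}=0$ by Convention \ref{con.2.3}. Hence $C_m\equiv 0$ and $U^m=0$. The only real hurdle in the argument is the bookkeeping of indicator ranges in the induction step, confirming that the $U_{r,k}$-indicator combined with the implicit vanishing of $C_n(k,s)$ for $k\ge s$ yields exactly the range $r<k<s$ that appears in the definition of $C_{n+1}$; this is routine rather than conceptually difficult.
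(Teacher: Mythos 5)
Your proof is correct and amounts to essentially the same computation as the paper's: the paper expands the $n$-fold matrix product over all intermediate indices in one step, letting the binomial convention absorb the range indicators while the constant-coefficient derivatives telescope to $\partial^{2(s-r)}$, and you carry out exactly this expansion one factor at a time as an induction via the recursion $C_{n+1}(r,s)=\sum_{r<k<s}\binom{k}{2r-k}C_{n}(k,s)$. Your explicit check that $C_{m}\equiv 0$ (a strictly increasing chain of length $m+1$ in $\{0,\dots,m\}$ forces $(r,s)=(0,m)$, and then the first factor $\binom{1}{-1}$ vanishes) is likewise equivalent to the paper's observation, made in the proof of Theorem \ref{the.2.7}, that $U_{0,k}=0$ for all $k$ and $U$ is strictly upper triangular, hence nilpotent with $U^{m}=0$.
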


\begin{proof}
By definition of matrix multiplication,
\begin{align*}
U_{r,s}^{n}=  &  \sum_{k_{1},\dots,k_{n-1}=1}^{m}1_{r<k_{1}\leq2r}%
\dbinom{k_{1}}{2r-k_{1}}\partial^{2(k_{1}-r)}1_{k_{1}<k_{2}\leq2k_{1}}%
\dbinom{k_{2}}{2k_{1}-k_{2}}\partial^{2(k_{2}-k_{1})}\dots\\
&  \qquad\qquad\dots1_{k_{n-1}<k_{n}\leq2k_{n-1}}\dbinom{k_{n}}{2k_{n-1}%
-k_{n}}\partial^{2(k_{n}-k_{n-1})}1_{k_{n}=s}\\
=  &  \sum_{r<k_{1}<k_{2}<\dots<k_{n-1}<s}\dbinom{k_{1}}{2r-k_{1}}%
\dbinom{k_{2}}{2k_{1}-k_{2}}\dots\dbinom{k_{n-1}}{2k_{n-2}-k_{n-1}}\dbinom
{s}{2k_{n-1}-s}\partial^{2(s-r)}\\
=  &  C_{n}(r,s)\partial^{2(s-r)}.
\end{align*}

\end{proof}

\section{The structure of $L^{n}$\label{sec.3}}

In this section let us fix a $2m$ -- order symmetric differential operator,
$L,$ acting on $C^{\infty}(\mathbb{R})\ $which can be written as in both of
the equations (\ref{e.1.2}) and (\ref{e.2.1}) where the coefficients,
$\left\{  a_{k}\right\}  _{k=0}^{2m}$ and $\left\{  b_{l}\right\}  _{l=0}^{m}$
are all real valued smooth functions on $\mathbb{R}.$ If $n\in\mathbb{N},$
$L^{n}$ is a $2mn$ -- order symmetric linear differential operator on
$C^{\infty}\left(  \mathbb{R}\right)  $ and hence there exits $\left\{
A_{k}\right\}  _{k=0}^{2mn}\subset C^{\infty}\left(  \mathbb{R},\mathbb{R}%
\right)  $ and (using Proposition \ref{pro.2.2}) $\left\{  B_{l}\right\}
_{l=0}^{mn}\subset C^{\infty}\left(  \mathbb{R},\mathbb{R}\right)  $ such
that
\begin{equation}
L^{n}=\sum_{k=0}^{2mn}A_{k}\partial^{k}=\sum_{\ell=0}^{mn}\left(  -1\right)
^{\ell}\partial^{\ell}B_{\ell}\partial^{\ell}. \label{e.3.1}%
\end{equation}

Our goal in this section is to compute the coefficients $\left\{
A_{k}\right\}  _{k=0}^{2mn}$ in terms of the coefficients $\left\{
b_{l}\right\}  _{l=0}^{m}$ defining $L$ as in Eq. (\ref{e.2.1}). It turns out
that it is useful to compare $L^{n}$ to the operators which is constructed by
writing out $L^{n}$ while pretending that the coefficients $\left\{
a_{k}\right\}  _{k=0}^{2m}$ or $\left\{  b_{l}\right\}  _{l=0}^{m}$ are
constant. This is explained in the following notations.

\begin{notation}
\label{not.3.1}For $n\in\mathbb{N}$ and $m\in\mathbb{N},$ let $\Lambda
_{m}:=\left\{  0,1,\dots,m\right\}  \subset\mathbb{N}_{0}$ and for
$\mathbf{j=}\left(  j_{1},\dots,j_{n}\right)  \in\Lambda_{m}^{n},$ let
$\left\vert \mathbf{j}\right\vert =j_{1}+j_{2}+\dots+j_{n}.$ If $\mathbf{k=}%
\left(  k_{1},\dots,k_{n}\right)  \in\Lambda_{m}^{n}$ is another multi-index,
we write $\mathbf{k\leq j}$ to mean $k_{i}\leq j_{i}$ for $1\leq i\leq n.$ [We
will use this notation when $m=\infty$ as well in which case $\Lambda_{\infty
}=\mathbb{N}_{0}.]$
\end{notation}

\begin{notation}
\label{not.3.2}Given $n\in\mathbb{N},$ and $L$ as in Eq. (\ref{e.2.1}), let
$\left\{  \mathcal{B}_{\ell}\right\}  _{\ell=0}^{mn}$ be $C^{\infty}\left(
\mathbb{R},\mathbb{R}\right)  $ -- functions defined by
\begin{equation}
\mathcal{B}_{\ell}:=\sum_{\mathbf{j}\in\Lambda_{m}^{n}}1_{\left\vert
\mathbf{j}\right\vert =\ell}b_{j_{1}}\dots b_{j_{n}} \label{e.3.2}%
\end{equation}
and $\mathcal{L}_{\mathcal{B}}^{\left(  n\right)  }$ be the differential
operator given by
\begin{equation}
\mathcal{L}_{\mathcal{B}}^{\left(  n\right)  }:=\sum_{\ell=0}^{nm}\left(
-1\right)  ^{\ell}\partial^{\ell}\mathcal{B}_{\ell}\partial^{\ell}.
\label{e.3.3}%
\end{equation}
It will also be convenient later to set $\mathcal{B}_{k/2}\equiv0$ when $k$ is
an odd integer.
\end{notation}

\begin{example}
\label{exa.3.4}If $m=1$ and $n=2,$ then
\begin{align*}
L  &  =-\partial b_{1}\partial+b_{0}\text{ and }\\
L^{2}  &  =\partial b_{1}\partial^{2}b_{1}\partial-\partial b_{1}\partial
b_{0}-b_{0}\partial b_{1}\partial+b_{0}^{2}.
\end{align*}
To put $L^{2}$ into divergence form we repeatedly use the product rule,
$\partial V=V\partial+V^{\prime}.$ Thus
\begin{align*}
\partial b_{1}\partial b_{0}+b_{0}\partial b_{1}\partial &  =\partial
b_{1}b_{0}\partial+\partial b_{1}b_{0}^{\prime}+\partial b_{0}b_{1}%
\partial-b_{0}^{\prime}b_{1}\partial\\
&  =2\partial b_{1}b_{0}\partial+\left(  b_{1}b_{0}^{\prime}\right)  ^{\prime}%
\end{align*}
and
\begin{align*}
\partial b_{1}\partial^{2}b_{1}\partial &  =\partial^{2}b_{1}\partial
b_{1}\partial-\partial b_{1}^{\prime}\partial b_{1}\partial\\
&  =\partial^{2}b_{1}b_{1}\partial^{2}+\partial^{2}b_{1}b_{1}^{\prime}%
\partial-\partial b_{1}^{\prime}b_{1}^{\prime}\partial-\partial b_{1}^{\prime
}b_{1}\partial^{2}\\
&  =\partial^{2}b_{1}b_{1}\partial^{2}+\partial\left(  b_{1}b_{1}^{\prime
}\right)  ^{\prime}\partial-\partial b_{1}^{\prime}b_{1}^{\prime}\partial.
\end{align*}
Combining the last three displayed equations together shows
\begin{align}
L^{2}  &  =\partial^{2}b_{1}^{2}\partial^{2}+\partial\left[  -2b_{1}%
b_{0}+\left(  b_{1}b_{1}^{\prime}\right)  ^{\prime}-\left(  b_{1}^{\prime
}\right)  ^{2}\right]  \partial+b_{0}^{2}-\left(  b_{1}b_{0}^{\prime}\right)
^{\prime}\nonumber\\
&  =\partial^{2}b_{1}^{2}\partial^{2}+\partial\left[  -2b_{1}b_{0}+b_{1}%
b_{1}^{\prime\prime}\right]  \partial+b_{0}^{2}-\left(  b_{1}b_{0}^{\prime
}\right)  ^{\prime}. \label{e.3.6}%
\end{align}

Dropping all terms in Eq. (\ref{e.3.6}) which contain a derivative of $b_{1}$
or $b_{0}$ shows
\begin{equation}
\mathcal{L}_{\mathcal{B}}^{\left(  2\right)  }=\partial^{2}b_{1}^{2}%
\partial^{2}-\partial\left[  2b_{0}b_{1}\right]  \partial+b_{0}^{2}.
\label{e.3.7}%
\end{equation}

\end{example}

\begin{notation}
\label{not.3.6}For $\mathbf{j}\in\mathbb{N}_{0}^{n}$\textbf{ }and
$\mathbf{k}\in\mathbb{N}_{0}^{n},$\textbf{ }let
\[
\dbinom{\mathbf{k}}{\mathbf{j}}:=\prod_{i=1}^{n}\binom{k_{i}}{j_{i}}
\]
where the binomial coefficients are as in Convention \ref{con.2.3}.
\end{notation}

\begin{lemma}
\label{lem.3.7}If $L$ is as in Eq. (\ref{e.2.1}),
\begin{equation}
M_{e^{-i\xi\left(  \cdot\right)  }}LM_{e^{i\xi\left(  \cdot\right)  }}%
=\sum_{l=0}^{m}\left(  -1\right)  ^{l}\left(  \partial+i\xi\right)
^{l}M_{b_{l}\left(  \cdot\right)  }\left(  \partial+i\xi\right)  ^{l}.
\label{e.3.10}%
\end{equation}

\end{lemma}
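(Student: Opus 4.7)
The plan is to reduce the identity to the single core computation
\[
M_{e^{-i\xi(\cdot)}}\,\partial\,M_{e^{i\xi(\cdot)}} = \partial + i\xi
\]
as operators on $C^{\infty}(\mathbb{R})$, and then propagate this through the sum defining $L$ in divergence form (Eq.~(\ref{e.2.1})).

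First, I would verify the basic conjugation identity above by a one-line product-rule calculation: for any $f \in C^{\infty}(\mathbb{R})$,
\[
e^{-i\xi x}\,\partial\bigl(e^{i\xi x} f(x)\bigr) = e^{-i\xi x}\bigl(i\xi e^{i\xi x} f + e^{i\xi x} f'\bigr) = (\partial + i\xi)f.
\]
Next, since conjugation by an invertible operator is multiplicative (i.e.\ $M_{e^{-i\xi(\cdot)}} A B M_{e^{i\xi(\cdot)}} = (M_{e^{-i\xi(\cdot)}} A M_{e^{i\xi(\cdot)}})(M_{e^{-i\xi(\cdot)}} B M_{e^{i\xi(\cdot)}})$ because $M_{e^{i\xi(\cdot)}}M_{e^{-i\xi(\cdot)}} = I$), a trivial induction on $l$ gives
\[
M_{e^{-i\xi(\cdot)}}\,\partial^{l}\,M_{e^{i\xi(\cdot)}} = (\partial + i\xi)^{l} \quad \text{for all } l \in \mathbb{N}_0.
\]
Moreover, multiplication operators commute with each other, so $M_{e^{-i\xi(\cdot)}}\,M_{b_l}\,M_{e^{i\xi(\cdot)}} = M_{b_l}$.

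Finally, I would apply these facts term by term to Eq.~(\ref{e.2.1}): insert $M_{e^{i\xi(\cdot)}}M_{e^{-i\xi(\cdot)}} = I$ between each factor $\partial^{l}$, $M_{b_l}$, $\partial^{l}$ in the summand $(-1)^{l}\partial^{l} M_{b_l}\partial^{l}$, then conjugate the three resulting blocks individually. This yields exactly $(-1)^{l}(\partial + i\xi)^{l} M_{b_l}(\partial + i\xi)^{l}$, and summing over $0 \leq l \leq m$ produces Eq.~(\ref{e.3.10}).

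There is no real obstacle here: the lemma is essentially the statement that conjugation of $\partial$ by the phase $e^{i\xi x}$ implements the Fourier-style shift $\partial \mapsto \partial + i\xi$, and the divergence-form expression for $L$ is built from operators to which this conjugation rule applies transparently. The only care needed is in correctly inserting the identity $M_{e^{i\xi(\cdot)}}M_{e^{-i\xi(\cdot)}} = I$ twice within each summand so that the three factors conjugate separately.
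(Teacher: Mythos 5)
Your proposal is correct and follows essentially the same approach as the paper: establish $M_{e^{-i\xi(\cdot)}}\partial M_{e^{i\xi(\cdot)}}=\partial+i\xi$ by the product rule, observe that multiplication operators are fixed by this conjugation, and then insert the identity $M_{e^{i\xi(\cdot)}}M_{e^{-i\xi(\cdot)}}=I$ between factors in each summand of the divergence form so the three blocks conjugate separately.
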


\begin{proof}
If $f\in C^{\infty}\left(  \mathbb{R}\right)  ,$ the product rule gives,
\[
\partial_{x}\left[  e^{i\xi x}f\left(  x\right)  \right]  =e^{i\xi x}\left(
\partial_{x}+i\xi\right)  f\left(  x\right)  ,
\]
which is to say,
\begin{equation}
M_{e^{-i\xi\left(  \cdot\right)  }}\partial M_{e^{i\xi\left(  \cdot\right)  }%
}=\left(  \partial+i\xi\right)  . \label{e.3.11}%
\end{equation}
Combining Eq. (\ref{e.3.11}) with the fact that
\[
M_{e^{-i\xi\left(  \cdot\right)  }}M_{b_{l}}M_{e^{i\xi\left(  \cdot\right)  }%
}=M_{b_{l}}
\]
allows us to conclude,
\begin{align*}
M_{e^{-i\xi\left(  \cdot\right)  }}LM_{e^{i\xi\left(  \cdot\right)  }}  &
=\sum_{l=0}^{m}\left(  -1\right)  ^{l}M_{e^{-i\xi\left(  \cdot\right)  }%
}\partial^{l}M_{b_{l}}\partial^{l}M_{e^{i\xi\left(  \cdot\right)  }}\\
&  =\sum_{l=0}^{m}\left(  -1\right)  ^{l}M_{e^{-i\xi\left(  \cdot\right)  }%
}\partial^{l}M_{e^{i\xi\left(  \cdot\right)  }}M_{b_{l}}M_{e^{-i\xi\left(
\cdot\right)  }}\partial^{l}M_{e^{i\xi\left(  \cdot\right)  }}\\
&  =\sum_{l=0}^{m}\left(  -1\right)  ^{l}\left(  \partial+i\xi\right)
^{l}M_{b_{l}\left(  \cdot\right)  }\left(  \partial+i\xi\right)  ^{l}.
\end{align*}

\end{proof}

\begin{notation}
\label{not.3.8} For $\mathbf{q,l,j\in}\Lambda_{m}^{n},$ let
\begin{equation}
C_{k}\left(  \mathbf{q,l,j}\right)  :=\left(  -1\right)  ^{\left\vert
\mathbf{q}\right\vert }\binom{\mathbf{q}}{\mathbf{l}}\binom{\mathbf{q}%
}{\mathbf{j}}1_{j_{1}=0}1_{2\left\vert \mathbf{q}\right\vert -k=\left\vert
\mathbf{l}\right\vert +\left\vert \mathbf{j}\right\vert >0}, \label{e.3.12}%
\end{equation}
and for $k\in\Lambda_{2m}$ let
\begin{equation}
T_{k}:=\sum_{\mathbf{q,l,j}\in\Lambda_{m}^{n}}C_{k}\left(  \mathbf{q,l,j}%
\right)  \left(  \partial^{l_{n}}M_{b_{q_{n}}}\partial^{j_{n}}\right)  \left(
\partial^{l_{n-1}}M_{b_{q_{n-1}}}\partial^{j_{n-1}}\right)  \dots\left(
\partial^{l_{1}}M_{b_{q_{1}}}\partial^{j_{1}}\right)  \mathbf{1}
\label{e.3.13}%
\end{equation}
where $\mathbf{1}$ is a function constantly equal to $1.$ We will often abuse
notation and write this last equation as,
\[
T_{k}\left(  x\right)  :=\sum_{\mathbf{q,l,j}\in\Lambda_{m}^{n}}C_{k}\left(
\mathbf{q,l,j}\right)  \left(  \partial_{x}^{l_{n}}b_{q_{n}}\left(  x\right)
\partial_{x}^{j_{n}}\right)  \dots\left(  \partial_{x}^{l_{2}}b_{q_{2}%
}\partial_{x}^{j_{2}}\right)  \partial_{x}^{l_{1}}b_{q_{1}}\left(  x\right)
.
\]

\end{notation}

\begin{proposition}
[$A_{k}=A_{k}\left(  \left\{  b_{l}\right\}  _{l=0}^{m}\right)  $%
]\label{pro.3.9}If $L$ is given as in Eq. (\ref{e.2.1}), then coefficients
$\left\{  A_{k}\right\}  _{k=0}^{2mn}$ of $L^{n}$ in Eq. (\ref{e.3.1}) are
given by
\begin{equation}
A_{k}=1_{k\in2\mathbb{N}_{0}}\cdot\left(  -1\right)  ^{k/2}\mathcal{B}%
_{k/2}+T_{k}, \label{e.3.14}%
\end{equation}
where $\mathcal{B}_{\ell}$ and $T_{k}$ are as in Notations \ref{not.3.2} and
\ref{not.3.8} respectively. Moreover, if we further assume $\left\{
b_{l}\right\}  _{l=0}^{m}$ are polynomial functions such that
\begin{equation}
\deg\left(  b_{l}\right)  \leq\max\left\{  \deg\left(  b_{0}\right)
,0\right\}  \text{ for }1\leq\ell\leq m, \label{e.3.15}%
\end{equation}
then $\left\{  \mathcal{B}_{\ell}\right\}  _{\ell=0}^{mn}$ and $\left\{
T_{k}\right\}  _{k=0}^{2mn}$ are polynomials such that
\[
\deg\left(  T_{k}\right)  <\max\left\{  n\deg\left(  b_{0}\right)  ,0\right\}
=\max\left\{  \deg\left(  \mathcal{B}_{0}\right)  ,0\right\}  \text{ for
}0\leq k\leq2mn.
\]

\end{proposition}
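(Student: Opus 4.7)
\bigskip

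\noindent\textbf{Proof proposal for Proposition \ref{pro.3.9}.} The plan is to compute the symbol of $L^n$ via Lemma \ref{lem.3.7} applied iteratively, then expand and collect by the power of $i\xi$ to read off $A_k$. Recall that the symbol $\sigma_{L^n}(x,\xi)=\sum_{k}A_k(x)(i\xi)^k$ can be recovered by acting on the constant function $\mathbf{1}$:
\[
\sigma_{L^n}(x,\xi)=\bigl[M_{e^{-i\xi(\cdot)}}L^n M_{e^{i\xi(\cdot)}}\mathbf{1}\bigr](x)=\Bigl[\bigl(M_{e^{-i\xi(\cdot)}}L M_{e^{i\xi(\cdot)}}\bigr)^n\mathbf{1}\Bigr](x).
\]
By Lemma \ref{lem.3.7} this equals $\bigl[\bigl(\sum_{l=0}^m(-1)^l(\partial+i\xi)^l b_l(\partial+i\xi)^l\bigr)^n\mathbf{1}\bigr](x).$

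Next, I would expand the $n$-fold product into a sum over $\mathbf{q}\in\Lambda_m^n$ (choosing the index $q_i$ in the $i$-th factor from the right) and then apply the binomial theorem $(\partial+i\xi)^{q}=\sum_{\alpha=0}^{q}\binom{q}{\alpha}(i\xi)^{q-\alpha}\partial^{\alpha}$ to each of the $2n$ occurrences, with the left/right choices recorded by multi-indices $\mathbf{l},\mathbf{j}\in\Lambda_m^n$. Since $(i\xi)^p$ is a scalar commuting with everything in sight, this gives
\[
\sigma_{L^n}(x,\xi)=\sum_{\mathbf{q},\mathbf{l},\mathbf{j}}(-1)^{|\mathbf{q}|}\binom{\mathbf{q}}{\mathbf{l}}\binom{\mathbf{q}}{\mathbf{j}}(i\xi)^{2|\mathbf{q}|-|\mathbf{l}|-|\mathbf{j}|}\bigl[(\partial^{l_n}b_{q_n}\partial^{j_n})\cdots(\partial^{l_1}b_{q_1}\partial^{j_1})\mathbf{1}\bigr](x).
\]
Since the rightmost operator acts on $\mathbf{1}$, the bracketed quantity vanishes unless $j_1=0$, producing the indicator $\mathbf{1}_{j_1=0}$. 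Collecting coefficients of $(i\xi)^k$ and separating the unique sub-sum with $|\mathbf{l}|+|\mathbf{j}|=0$ (which forces $k$ even, $|\mathbf{q}|=k/2$, and contributes $(-1)^{k/2}\sum_{|\mathbf{q}|=k/2}b_{q_n}\cdots b_{q_1}=(-1)^{k/2}\mathcal{B}_{k/2}$) from the remaining contributions with $|\mathbf{l}|+|\mathbf{j}|>0$ (which match the definition of $T_k$ exactly), one obtains Eq.~(\ref{e.3.14}).

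For the polynomial degree statement, let $D:=\max\{\deg(b_0),0\}$. Clearly $\mathcal{B}_\ell$ is a sum of products of $n$ polynomials each of degree $\leq D$, so $\deg(\mathcal{B}_\ell)\leq nD$, with $\mathcal{B}_0=b_0^n$ realizing the bound. For each summand of $T_k$, process the expression $(\partial^{l_n}b_{q_n}\partial^{j_n})\cdots(\partial^{l_1}b_{q_1})\mathbf{1}$ from right to left: each multiplication by $b_{q_i}$ increases the current polynomial's degree by at most $D$, and each intervening $\partial^{l_i}$ or $\partial^{j_i}$ reduces it by exactly that many (or annihilates it). Since $j_1=0$ is forced and the total number of applied derivatives is $|\mathbf{l}|+|\mathbf{j}|$, the resulting polynomial has degree at most $nD-(|\mathbf{l}|+|\mathbf{j}|)$. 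Because the definition of $T_k$ requires $|\mathbf{l}|+|\mathbf{j}|>0$, every such term has degree strictly less than $nD$; when $D=0$ all $b_l$ are constants, every non-trivial derivative annihilates the term, and $T_k\equiv 0$. In either case $\deg(T_k)<\max\{nD,0\}=\max\{\deg(\mathcal{B}_0),0\}$.

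The main obstacle I expect is simply the bookkeeping: disentangling the three multi-indices $(\mathbf{q},\mathbf{l},\mathbf{j})$, respecting the non-commuting order of the $n$ factors of $L$, and correctly identifying that the indicator $\mathbf{1}_{j_1=0}$ (but no analogous constraint on $j_2,\ldots,j_n$ or any $l_i$) is the only restriction inherited from acting on $\mathbf{1}$. Once this combinatorial matching with Notation \ref{not.3.8} is pinned down, both the explicit formula and the degree estimate follow mechanically.
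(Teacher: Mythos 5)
Your proposal is correct and follows essentially the same route as the paper: conjugate $L$ by $M_{e^{i\xi(\cdot)}}$ via Lemma \ref{lem.3.7}, take the $n$-th power, act on $\mathbf{1}$, expand each $(\partial+i\xi)^q$ by the binomial theorem, isolate the indicator $1_{j_1=0}$, read off the coefficient of $(i\xi)^k$, and split the resulting sum into the $|\mathbf{l}|+|\mathbf{j}|=0$ part (yielding $(-1)^{k/2}\mathcal{B}_{k/2}$ for even $k$) and the $|\mathbf{l}|+|\mathbf{j}|>0$ part (matching $T_k$). Your degree bookkeeping for $T_k$ — each multiplication adds at most $D$, each derivative subtracts (or annihilates), and the strict positivity of $|\mathbf{l}|+|\mathbf{j}|$ in $C_k$ forces strict inequality — is a correct and slightly more explicit justification of the step the paper dismisses as easily following.
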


\begin{proof}
First observe that if $L^{n}$ is described as in Eq. (\ref{e.3.1}), then
\[
\sum_{k=0}^{2mn}A_{k}\left(  x\right)  \left(  i\xi\right)  ^{k}=\sigma
_{n}\left(  x,\xi\right)  =e^{-i\xi x}L_{x}^{n}\left(  e^{i\xi x}\right)
\]
where $\sigma_{n}:=\sigma_{L^{n}}$ is a symbol of $L^{n}$ defined in Eq.
(\ref{e.1.3}) and $L_{x}^{n}$ is a differential operator with respect to $x.$
To compute the right side of this equation, take the $n^{\text{th}}$ -- power
of Eq. (\ref{e.3.10}) to learn
\begin{align*}
&  M_{e^{-i\xi\left(  \cdot\right)  }}L^{n}M_{e^{i\xi\left(  \cdot\right)  }%
}=\left(  M_{e^{-i\xi\left(  \cdot\right)  }}LM_{e^{i\xi\left(  \cdot\right)
}}\right)  ^{n}\\
&  =\sum_{q_{1},\dots q_{n}=0}^{m}\left(  \partial+i\xi\right)  ^{q_{n}%
}\left(  -1\right)  ^{q_{n}}M_{b_{q_{n}}}\left(  \partial+i\xi\right)
^{q_{n}}\dots\left(  \partial+i\xi\right)  ^{q_{1}}\left(  -1\right)  ^{q_{1}%
}M_{b_{q_{1}}}\left(  \partial+i\xi\right)  ^{q_{1}}\\
&  =\sum_{\mathbf{q\in\Lambda}_{m}}\left(  -1\right)  ^{\left\vert
\mathbf{q}\right\vert }\left(  \partial+i\xi\right)  ^{q_{n}}M_{b_{q_{n}}%
}\left(  \partial+i\xi\right)  ^{q_{n}}\dots\left(  \partial+i\xi\right)
^{q_{1}}M_{b_{q_{1}}}\left(  \partial+i\xi\right)  ^{q_{1}}.
\end{align*}
Applying this result to the constant function $\mathbf{1}$ then shows
\begin{align*}
\sigma_{n}\left(  x,\xi\right)   &  =e^{-i\xi x}L_{x}^{n}\left(  e^{i\xi
x}\right)  =M_{e^{-i\xi x}}L_{x}^{n}M_{e^{i\xi x}}\mathbf{1}\\
&  =\sum_{\mathbf{q\in\Lambda}_{m}}\left(  -1\right)  ^{\left\vert
\mathbf{q}\right\vert }\left(  \partial+i\xi\right)  ^{q_{n}}M_{b_{q_{n}}%
}\left(  \partial+i\xi\right)  ^{q_{n}}\dots\left(  \partial+i\xi\right)
^{q_{1}}M_{b_{q_{1}}}\left(  \partial+i\xi\right)  ^{q_{1}}\mathbf{1.}%
\end{align*}
Making repeatedly used of the binomial formula to expand out all the terms
$\left(  \partial+i\xi\right)  ^{q}$ appearing above then gives,
\begin{align*}
\sum_{k=0}^{2mn}  &  A_{k}\left(  x\right)  \left(  i\xi\right)  ^{k}%
=\sigma_{n}\left(  x,\xi\right) \\
&  =\sum_{\mathbf{q,l,j\in\Lambda}_{m}}\left(  -1\right)  ^{\left\vert
\mathbf{q}\right\vert }\binom{\mathbf{q}}{\mathbf{l}}\binom{\mathbf{q}%
}{\mathbf{j}}\left(  i\xi\right)  ^{2\left\vert \mathbf{q}\right\vert
-\left\vert \mathbf{l}\right\vert -\left\vert \mathbf{j}\right\vert }%
\partial_{x}^{l_{n}}b_{q_{n}}\left(  x\right)  \partial_{x}^{j_{n}}%
\dots\partial_{x}^{l_{1}}b_{q_{1}}\left(  x\right)  \partial_{x}^{j_{1}%
}\mathbf{1.}%
\end{align*}
Looking the coefficient of $\left(  i\xi\right)  ^{k}$ on the right side of
this expression shows,
\begin{align*}
A_{k}\left(  x\right)  :=  &  \sum_{\mathbf{q,l,j\in\Lambda}_{m}}%
\binom{\mathbf{q}}{\mathbf{l}}\binom{\mathbf{q}}{\mathbf{j}}\left(  -1\right)
^{\left\vert \mathbf{q}\right\vert }1_{2\left\vert \mathbf{q}\right\vert
-\left\vert \mathbf{l}\right\vert -\left\vert \mathbf{j}\right\vert
=k}\partial_{x}^{l_{n}}b_{q_{n}}\left(  x\right)  \partial_{x}^{j_{n}}%
\dots\partial_{x}^{l_{1}}b_{q_{1}}\left(  x\right)  \partial_{x}^{j_{1}%
}\mathbf{1}\\
=  &  \sum_{\mathbf{q,l,j\in\Lambda}_{m}}\binom{\mathbf{q}}{\mathbf{l}}%
\binom{\mathbf{q}}{\mathbf{j}}\left(  -1\right)  ^{\left\vert \mathbf{q}%
\right\vert }1_{j_{1}=0}1_{2\left\vert \mathbf{q}\right\vert -\left\vert
\mathbf{l}\right\vert -\left\vert \mathbf{j}\right\vert =k}\partial_{x}%
^{l_{n}}b_{q_{n}}\left(  x\right)  \partial_{x}^{j_{n}}\dots\partial
_{x}^{l_{1}}b_{q_{1}}\left(  x\right) \\
=  &  \sum_{\mathbf{q\in\Lambda}_{m}}1_{2\left\vert \mathbf{q}\right\vert
=k}\left(  -1\right)  ^{\left\vert \mathbf{q}\right\vert }b_{q_{n}}\left(
x\right)  \dots b_{q_{1}}\left(  x\right) \\
&  +\sum_{\mathbf{q,l,j\in\Lambda}_{m}}\binom{\mathbf{q}}{\mathbf{l}}%
\binom{\mathbf{q}}{\mathbf{j}}\left(  -1\right)  ^{\left\vert \mathbf{q}%
\right\vert }1_{j_{1}=0}1_{2\left\vert \mathbf{q}\right\vert -k=\left\vert
\mathbf{l}\right\vert +\left\vert \mathbf{j}\right\vert >0}\partial_{x}%
^{l_{n}}b_{q_{n}}\left(  x\right)  \partial_{x}^{j_{n}}\dots\partial
_{x}^{l_{1}}b_{q_{1}}\left(  x\right)
\end{align*}
which completes the proof of Eq. (\ref{e.3.14}). The remaining assertions now
easily follow from the formulas for $\left\{  \mathcal{B}_{\ell}\right\}
_{\ell=0}^{mn}$ and $\left\{  T_{k}\right\}  _{k=0}^{2mn}$ in Notations
\ref{not.3.2} and \ref{not.3.8} and the assumption in Eq. (\ref{e.3.15}).
\end{proof}

As we can see from Example \ref{exa.3.4}, computing the coefficients $\left\{
B_{\ell}\right\}  _{\ell=0}^{nm}$ in Eq. (\ref{e.3.1}) can be tedious in terms
of in terms of the coefficients $\left\{  b_{l}\right\}  _{l=0}^{m}$ defining
$L$ as in Eq. (\ref{e.2.1}). Although we do not need the explicit formula for
the $\left\{  B_{\ell}\right\}  _{\ell=0}^{nm},$ we will need some general
properties of these coefficients which we develop below.

\begin{proposition}
\label{pro.3.10} Given $B_{\ell}=B_{\ell}\left(  \left\{  b_{l}\right\}
_{l=0}^{m}\right)  $ of $L^{n}$ in Eq. (\ref{e.3.1}). There are constants
$\hat{C}\left(  n,\ell,\mathbf{k},\mathbf{p}\right)  $ for $n\in
\mathbb{\mathbb{N}}_{0}$, $0\leq\ell\leq mn$, $\mathbf{k}\in\Lambda_{m}^{n}$
and $\mathbf{p\in}\Lambda_{2m}^{n}$ such that;

\begin{enumerate}
\item $\hat{C}\left(  n,\ell,\mathbf{k},\mathbf{p}\right)  =0$ unless
$0<\left\vert \mathbf{p}\right\vert =2\left\vert \mathbf{k}\right\vert -2\ell$ and

\item if $L=\sum_{l=0}^{m}\left(  -1\right)  ^{l}\partial^{l}b_{l}\partial
^{l}$ then $L^{n}=\sum_{\ell=0}^{mn}\left(  -1\right)  ^{\ell}\partial^{\ell
}B_{\ell}\partial^{\ell},$ where
\begin{equation}
B_{\ell}=\mathcal{B}_{\ell}+R_{\ell} \label{e.3.16}%
\end{equation}
with $\mathcal{B}_{\ell}$ as in Eq. (\ref{e.3.2}) and $R_{\ell}$ is defined
by
\begin{equation}
R_{\ell}=\sum_{\mathbf{k}\in\Lambda_{m}^{n},\mathbf{p\in}\Lambda_{2m}^{n}}%
\hat{C}\left(  n,\ell,\mathbf{k},\mathbf{p}\right)  \left(  \partial^{p_{1}%
}b_{k_{1}}\right)  \left(  \partial^{p_{2}}b_{k_{2}}\right)  \dots\left(
\partial^{p_{n}}b_{k_{n}}\right)  . \label{e.3.17}%
\end{equation}
[Notice that $2\left\vert \mathbf{k}\right\vert -2\ell\leq2mn-2\ell$ and so if
$\ell=mn,$ we must have $\left\vert \mathbf{p}\right\vert =2\left\vert
\mathbf{k}\right\vert -2\ell=0$ and so $\hat{C}\left(  n,\ell,\mathbf{k}%
,\mathbf{p}\right)  =0.$ This shows that $R_{mn}=0$ which can easily be
verified independently if the reader so desires.]
\end{enumerate}
\end{proposition}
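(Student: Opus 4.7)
My plan is to deduce $B_\ell$ from the non-divergence coefficients $A_k$ of $L^n$ found in Proposition \ref{pro.3.9}. Since $L^n$ is a symmetric $2mn$-th order differential operator, Theorem \ref{the.2.7} (applied with $m$ replaced by $mn$) gives the inversion formula
\begin{equation*}
(-1)^\ell B_\ell = A_{2\ell} + \sum_{\ell < s \le mn} K_{mn}(\ell, s)\, \partial^{2(s-\ell)} A_{2s}, \qquad 0 \le \ell \le mn.
\end{equation*}
Substituting $A_{2k} = (-1)^k \mathcal{B}_k + T_{2k}$ from Eq.\ (\ref{e.3.14}) and multiplying by $(-1)^\ell$ isolates the $\mathcal{B}_\ell$ piece and gives $B_\ell = \mathcal{B}_\ell + R_\ell$, where $R_\ell$ is the sum of three families of terms: $(-1)^\ell T_{2\ell}$, $(-1)^{\ell+s} K_{mn}(\ell,s)\,\partial^{2(s-\ell)}\mathcal{B}_s$ for $\ell < s \le mn$, and $(-1)^\ell K_{mn}(\ell,s)\,\partial^{2(s-\ell)} T_{2s}$ for $\ell < s \le mn$.

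It then remains to verify that each family, after expansion by the Leibniz rule, is a linear combination of monomials $\prod_{i=1}^n (\partial^{p_i} b_{k_i})$ with $\mathbf{k} \in \Lambda_m^n$, $\mathbf{p} \in \Lambda_{2m}^n$, and $|\mathbf{p}| = 2|\mathbf{k}| - 2\ell > 0$. For $T_{2\ell}$, formula (\ref{e.3.13}) already distributes a total of $|\mathbf{l}|+|\mathbf{j}|$ derivatives across the $b_{q_i}$'s; the indicator in Eq.\ (\ref{e.3.12}) with $k = 2\ell$ forces this total to equal $2|\mathbf{q}|-2\ell$ and to be strictly positive, so setting $\mathbf{k}:=\mathbf{q}$ gives the desired form. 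For $\partial^{2(s-\ell)}\mathcal{B}_s$, a direct Leibniz expansion of $\mathcal{B}_s=\sum_{|\mathbf{j}|=s}b_{j_1}\cdots b_{j_n}$ produces monomials with total derivative count $2(s-\ell) = 2|\mathbf{j}|-2\ell > 0$. For $\partial^{2(s-\ell)} T_{2s}$ one iterates: by the first case $T_{2s}$ is already a combination of monomials with $|\mathbf{p}_{\mathrm{inner}}| = 2|\mathbf{k}|-2s > 0$, and the outer $\partial^{2(s-\ell)}$ adds $2(s-\ell)$ further derivatives, preserving the constraint $|\mathbf{p}| = 2|\mathbf{k}|-2\ell > 0$ (positive because $|\mathbf{k}|\ge s>\ell$). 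Collecting like monomials from all three families defines $\hat{C}(n,\ell,\mathbf{k},\mathbf{p})$ and proves (1); moreover $R_{mn}=0$ because $|\mathbf{k}|\le mn = \ell$ then makes $|\mathbf{p}|=2|\mathbf{k}|-2\ell>0$ impossible.

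The proof is mostly combinatorial bookkeeping, and the main obstacle is confirming that the invariant $|\mathbf{p}|+2\ell=2|\mathbf{k}|$ is preserved uniformly by each of the three distinct Leibniz expansions above; this is really a homogeneity statement reflecting the natural grading in which the factor $(-1)^l \partial^l b_l \partial^l$ of $L$ carries weight $2l$. The coordinate-wise bound $p_i \le 2m$ implicit in $\mathbf{p}\in\Lambda_{2m}^n$ is the most delicate point; I would argue it by tracking that each factor $\partial^{l_j} M_{b_{q_j}} \partial^{j_j}$ in $T_k$ has $l_j, j_j \le m$, so at each stage of the Leibniz expansion no single $b_{k_i}$ can accumulate more than $2m$ derivatives from the operators immediately flanking it.
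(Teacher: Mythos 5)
Your main line is exactly the paper's proof: apply Theorem \ref{the.2.7} with $m$ replaced by $mn$ to get $(-1)^{\ell}B_{\ell}=A_{2\ell}+\sum_{\ell<s\leq mn}K_{mn}(\ell,s)\partial^{2(s-\ell)}A_{2s}$, substitute $A_{2s}=(-1)^{s}\mathcal{B}_{s}+T_{2s}$ from Proposition \ref{pro.3.9}, and check by Leibniz bookkeeping that each of the three resulting families consists of monomials $\left(\partial^{p_{1}}b_{k_{1}}\right)\cdots\left(\partial^{p_{n}}b_{k_{n}}\right)$ with $0<\left\vert \mathbf{p}\right\vert =2\left\vert \mathbf{k}\right\vert -2\ell$; that part, including the remark that $R_{mn}=0$, is correct and coincides with the paper's argument.

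The one place you go beyond the paper is the coordinate-wise bound $p_{i}\leq2m$ implicit in \textquotedblleft$\mathbf{p}\in\Lambda_{2m}^{n}$,\textquotedblright\ and the justification you sketch (\textquotedblleft no single $b_{k_{i}}$ can accumulate more than $2m$ derivatives from the operators immediately flanking it\textquotedblright) is not valid: when $\left(\partial^{l_{n}}M_{b_{q_{n}}}\partial^{j_{n}}\right)\cdots\left(\partial^{l_{1}}M_{b_{q_{1}}}\partial^{j_{1}}\right)\mathbf{1}$ is expanded, derivatives from non-adjacent factors pass through and pile up on a single $b$, and the same happens when $\partial^{2(s-\ell)}$ hits $\mathcal{B}_{s}$ or $T_{2s}$. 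In fact the coordinate-wise bound is false. Take $m=1,$ $n=3,$ $b_{0}=0,$ $b_{1}=b,$ so $L^{3}=-\partial S\partial$ with $S=M_{b}\partial^{2}M_{b}\partial^{2}M_{b};$ since $K_{2}(0,s)=0,$ the zeroth divergence coefficient of $S$ is $S\mathbf{1}=b\left(bb''\right)'',$ and it equals $B_{1}$ of $L^{3}.$ As $\mathcal{B}_{1}=3b_{1}b_{0}^{2}=0,$ one gets $R_{1}=b\left(b''\right)^{2}+2b\,b'\,b'''+b^{2}b'''',$ whose monomials $b\,b'\,b'''$ and $b^{2}b''''$ have $p_{i}=3,4>2m=2$ and cannot be rewritten with all $p_{i}\leq2$ (distinct differential monomials are linearly independent). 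So the index set should be read as $\Lambda_{2mn}^{n}$ (automatic, since $\left\vert \mathbf{p}\right\vert \leq2mn$); note the paper's own proof likewise only establishes this, and nothing downstream (Proposition \ref{pro.5.1}, Lemma \ref{lem.6.3}, Eq. (\ref{e.5.6})) uses more than $0<\left\vert \mathbf{p}\right\vert =2\left\vert \mathbf{k}\right\vert -2\ell.$ With that reading, your proof is complete; just drop (or do not claim) the per-coordinate bound.
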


\begin{proof}
By Theorem \ref{the.2.7}, we know that $L^{n}=\sum_{\ell=0}^{mn}\left(
-1\right)  ^{\ell}\partial^{\ell}B_{\ell}\partial^{\ell}$ where
\begin{equation}
\left(  -1\right)  ^{\ell}B_{\ell}=A_{2\ell}+\sum_{\ell<s\leq mn}K_{mn}%
(\ell,s)\partial^{2(s-\ell)}A_{2s}\text{ }\forall~0\leq\ell\leq mn
\label{e.3.18}%
\end{equation}
and $\left\{  A_{k}\right\}  _{k=0}^{2mn}$ are the coefficients in Eq.
(\ref{e.3.1}). Using the formula for the $\left\{  A_{k}\right\}  $ from
Proposition \ref{pro.3.9} in Eq. (\ref{e.3.18}) implies,
\[
\left(  -1\right)  ^{\ell}B_{\ell}=\left(  -1\right)  ^{\ell}\mathcal{B}%
_{\ell}+T_{2\ell}+\sum_{\ell<s\leq mn}K_{mn}(\ell,s)\partial^{2(s-\ell
)}\left[  \left(  -1\right)  ^{s}\mathcal{B}_{s}+T_{2s}\right]  ,
\]
i.e. $B_{\ell}=\mathcal{B}_{\ell}+R_{\ell}$ where
\[
R_{\ell}=\left(  -1\right)  ^{\ell}T_{2\ell}+\sum_{\ell<s\leq mn}\left(
-1\right)  ^{\ell}K_{mn}(\ell,s)\partial^{2(s-\ell)}\left[  \left(  -1\right)
^{s}\mathcal{B}_{s}+T_{2s}\right]  .
\]
It now only remains to see that this remainder term may be written as in Eq.
(\ref{e.3.17}).

Recall from Eq. (\ref{e.3.2}) that $\mathcal{B}_{s}:=\sum_{\mathbf{j}%
\in\Lambda_{m}^{n}}1_{\left\vert \mathbf{j}\right\vert =s}b_{j_{1}}\dots
b_{j_{n}}$ and so
\[
\partial^{2(s-\ell)}\mathcal{B}_{s}=\sum_{\mathbf{j}\in\Lambda_{m}^{n}%
}1_{\left\vert \mathbf{j}\right\vert =s}\partial^{2(s-\ell)}\left[  b_{j_{1}%
}\dots b_{j_{n}}\right]  .
\]
For $s>\ell,$ $\partial^{2(s-\ell)}\left[  b_{j_{1}}\dots b_{j_{n}}\right]  $
is a linear combination of terms of the form,
\[
\left(  \partial^{p_{1}}b_{j_{1}}\right)  \left(  \partial^{p_{2}}b_{j_{2}%
}\right)  \dots\left(  \partial^{p_{n}}b_{j_{n}}\right)
\]
where $0<\left\vert \mathbf{p}\right\vert =2(s-\ell)=2\left\vert
\mathbf{j}\right\vert -2\ell$ as desired. Similarly, from Eq. (\ref{e.3.13}),
$T_{2s}$ is a linear combination of monomials of the form,
\[
\partial^{l_{n}}M_{b_{q_{n}}}\partial^{j_{n}}\dots\partial^{l_{2}}M_{b_{q_{n}%
}}\partial^{j_{2}}\partial^{l_{1}}b_{q_{1}}\text{ with }2\left\vert
\mathbf{q}\right\vert -2s=\left\vert \mathbf{l}\right\vert +\left\vert
\mathbf{j}\right\vert >0\text{ and }j_{1}=0.
\]
It then follows that
\[
\partial^{2(s-\ell)}\partial^{l_{n}}M_{b_{q_{n}}}\partial^{j_{n}}\dots
\partial^{l_{2}}M_{b_{q_{n}}}\partial^{j_{2}}\partial^{l_{1}}b_{q_{1}}
\]
is a linear combination of monomials of the form,
\[
\left(  \partial^{p_{1}}b_{q_{1}}\right)  \left(  \partial^{p_{2}}b_{q_{2}%
}\right)  \dots\left(  \partial^{p_{n}}b_{q_{n}}\right)
\]
where
\[
0<\left\vert \mathbf{p}\right\vert =2(s-\ell)+\left\vert \mathbf{l}\right\vert
+\left\vert \mathbf{j}\right\vert =2(s-\ell)+2\left\vert \mathbf{q}\right\vert
-2s=2\left\vert \mathbf{q}\right\vert -2\ell.
\]
Putting all of these comments together completes the proof.
\end{proof}

\section{The Essential Self Adjointness Proof\label{sec.4}}

This section is devoted to the proof of Theorem \ref{the.1.10}. Lemma
\ref{lem.4.2} records a simple sufficient condition for showing a symmetric
operator on a Hilbert space is in fact essentially self-adjoint. For the
remainder of this paper, we assume that the coefficients, $\left\{
a_{k}\right\}  _{k=0}^{d},$ of $L$ in Eq. (\ref{e.1.2}) are all in
$\mathbb{R}\left[  x\right]  $ and we now restrict $L$ to $\mathcal{S}$ as
described in Notation \ref{not.1.9}. The operators, $L^{n},$ are then defined
for all $n\in\mathbb{N}$ and we still have $\mathcal{D}\left(  L^{n}\right)
=\mathcal{S}$, see Remark \ref{rem.1.8}.

\begin{lemma}
[Self-Adjointness Criteria]\label{lem.4.2}Let $\mathbb{L}:\mathcal{K}%
\rightarrow\mathcal{K}$ be a densely defined symmetric operator on a Hilbert
space $\mathcal{K}$ and let $S=\mathcal{D}\left(  \mathbb{L}\right)  $ be the
domain of $\mathbb{L}.$ Assume there exists linear operators $T_{\mu
}:S\rightarrow S$ and bounded operators $R_{\mu}:\mathcal{K}\rightarrow
\mathcal{K}$ for $\mu\in\mathbb{R}$ such that;

\begin{enumerate}
\item $(\mathbb{L}+i\mu)T_{\mu}u=(I+R_{\mu})u$ for all $u\in S$, and

\item there exists $M>0$ such that $\left\Vert R_{\mu}\right\Vert _{op}<1$ for
$\left\vert \mu\right\vert >M$.
\end{enumerate}

Under these assumptions, $\mathbb{L}|_{S}$ is essentially self-adjoint.
\end{lemma}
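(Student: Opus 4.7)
The plan is to invoke the standard essential self-adjointness criterion: a densely defined symmetric operator $\mathbb{L}$ on a Hilbert space is essentially self-adjoint if and only if the ranges $\mathrm{Ran}(\mathbb{L}+i\mu)$ and $\mathrm{Ran}(\mathbb{L}-i\mu)$ are both dense in $\mathcal{K}$ for some (equivalently any) $\mu\neq 0$ (see, e.g., Reed--Simon, Theorem VIII.3, adapted from $\pm i$ to $\pm i\mu$ via rescaling). Fixing $\mu\in\mathbb{R}$ with $\left|\mu\right|>M$, hypothesis (2) gives $\left\Vert R_{\mu}\right\Vert _{op}<1$, so by the Neumann series $I+R_{\mu}$ is a bounded invertible operator on $\mathcal{K}$ with a bounded inverse. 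The hypothesis $\left|-\mu\right|>M$ also holds, so the same applies to $R_{-\mu}$.

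Next, I would use hypothesis (1) to exhibit a dense subset of $\mathrm{Ran}(\mathbb{L}+i\mu)$. Since $T_{\mu}$ maps $S$ into $S=\mathcal{D}(\mathbb{L})$, the expression $(\mathbb{L}+i\mu)T_{\mu}u$ is well-defined for every $u\in S$, and the identity
\[
(\mathbb{L}+i\mu)T_{\mu}u = (I+R_{\mu})u
\]
shows that $(I+R_{\mu})S \subseteq \mathrm{Ran}(\mathbb{L}+i\mu)$. Because $S$ is dense in $\mathcal{K}$ (as $\mathbb{L}$ is densely defined) and $I+R_{\mu}$ is a continuous bijection of $\mathcal{K}$ onto itself with continuous inverse, $(I+R_{\mu})S$ is dense in $\mathcal{K}$. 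Concretely, given $v\in\mathcal{K}$, set $w:=(I+R_{\mu})^{-1}v$ and choose $u_{n}\in S$ with $u_{n}\to w$; then $(\mathbb{L}+i\mu)T_{\mu}u_{n}=(I+R_{\mu})u_{n}\to v$, exhibiting $v$ as a limit of elements of $\mathrm{Ran}(\mathbb{L}+i\mu)$.

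Applying the same reasoning with $-\mu$ in place of $\mu$ (permissible since $\left|-\mu\right|>M$) yields that $\mathrm{Ran}(\mathbb{L}-i\mu)$ is also dense in $\mathcal{K}$. By the self-adjointness criterion quoted above, $\mathbb{L}|_{S}$ is essentially self-adjoint. There is really no hard step here: the work has been packaged into hypotheses (1) and (2), and the proof is a two-line application of the Neumann series and the standard range criterion. The genuine difficulty will appear later, in constructing the approximate inverses $T_{\mu}$ and controlling $R_{\mu}$ for the specific differential operators $L^{n}$ of Theorem \ref{the.1.10}.
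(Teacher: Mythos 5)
Your proof is correct and follows essentially the same route as the paper: use $\Vert R_{\mu}\Vert_{op}<1$ to invert $I+R_{\mu}$, deduce that $\operatorname{Ran}(\mathbb{L}\pm i\mu)$ is dense via approximating $(I+R_{\mu})^{-1}v$ by elements of $S$ and applying hypothesis (1), and then invoke the standard range criterion for essential self-adjointness. Your explicit handling of both signs $\pm\mu$ is a nice touch that the paper leaves implicit in its phrase \textquotedblleft for all $\left\vert \mu\right\vert >M$.\textquotedblright
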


\begin{proof}
$\left\Vert R_{\mu}\right\Vert _{op}<1$ for $\left\vert \mu\right\vert >M$ is
assumed in condition $2$ which implies $I+R_{\mu}$ is invertible. Therefore,
if $f\in\mathcal{K},$ then $g:=\left(  I+R_{\mu}\right)  ^{-1}f\in\mathcal{K}$
satisfies $\left(  I+R_{\mu}\right)  g=f.$ We may then choose $\left\{
g_{n}\right\}  _{n=1}^{\infty}\subset S$ such that $g_{n}\rightarrow g$ in
$\mathcal{K}.$ Let $s_{n}=T_{\mu}g_{n}\in S.$ We have, by condition $1$, that
\begin{align*}
\left\Vert \left(  \mathbb{L}+i\mu\right)  s_{n}-f\right\Vert  &  =\left\Vert
(I+R_{\mu})g_{n}-f\right\Vert \\
&  \leq||(I+R_{\mu})\left(  g_{n}-g\right)  ||\\
&  \leq\left\Vert I+R_{\mu}\right\Vert _{op}||g_{n}-g||\rightarrow0\text{ as
}n\rightarrow\infty.
\end{align*}
We have thus verified that $\operatorname*{Ran}(\mathbb{L}+i\mu)|_{S}$ is
dense in $K$ for all $\left\vert \mu\right\vert >M$ from which it follows that
$\mathbb{L}|_{S}$ is essentially self-adjoint, see for example the corollary
on p.257 in Reed and Simon \citep{Reed1980}.
\end{proof}

\begin{notation}
\label{not.4.4}Let $\left\{  \mathcal{B}_{\ell}\left(  x\right)  \right\}
_{\ell=0}^{mn}$ be the coefficients defined in Eq. (\ref{e.3.2}) and define
\begin{equation}
\Sigma\left(  x,\xi\right)  :=\sum_{\ell=0}^{mn}\mathcal{B}_{\ell}\left(
x\right)  \xi^{2\ell}. \label{e.4.2}%
\end{equation}

\end{notation}

From Eqs. (\ref{e.3.14}) and (\ref{e.1.3}) the symbol, $\sigma_{n}\left(
x,\xi\right)  :=\sigma_{L^{n}}\left(  x,\xi\right)  ,$ of $L^{n}$ presented as
in Eq. (\ref{e.3.1}) may be written as
\begin{align}
\sigma_{n}\left(  x,\xi\right)   &  =\sum_{\ell=0}^{mn}\left[  \mathcal{B}%
_{\ell}\left(  x\right)  +\left(  -1\right)  ^{\ell}T_{2\ell}\left(  x\right)
\right]  \xi^{2\ell}-i\sum_{\ell=1}^{mn}\left(  -1\right)  ^{\ell}T_{2\ell
-1}\left(  x\right)  \xi^{2\ell-1}\label{e.4.3}\\
&  =\Sigma\left(  x,\xi\right)  +\sum_{\ell=0}^{mn}\left[  \left(  -1\right)
^{\ell}T_{2\ell}\left(  x\right)  \right]  \xi^{2\ell}-i\sum_{\ell=1}%
^{mn}\left(  -1\right)  ^{\ell}T_{2\ell-1}\left(  x\right)  \xi^{2\ell-1},
\label{e.4.4}%
\end{align}
where the coefficients $\left\{  T_{k}\right\}  _{k=0}^{2mn}$ are as in Eq.
(\ref{e.3.13}). More importantly, for our purposes,
\begin{equation}
\operatorname{Re}\sigma_{n}\left(  x,\xi\right)  =\Sigma\left(  x,\xi\right)
+\sum_{\ell=0}^{mn}\left[  \left(  -1\right)  ^{\ell}T_{2\ell}\left(
x\right)  \right]  \xi^{2\ell}. \label{e.4.5}%
\end{equation}
The following lemma will be useful in estimating all of these functions of
$\left(  x,\xi\right)  .$

\begin{lemma}
\label{lem.4.5}Let $0\leq k_{1}<k_{2}<\infty$ and $p(x)$, $q(x)$ and $r(x)$ be
real polynomials such that $\deg p\leq\deg q,$ $q>0,$ and $r$ is bounded from below.

\begin{enumerate}
\item If $\deg\left(  p\right)  <\deg\left(  r\right)  $ or $p$ is a constant
function, then, for any $k_{1}<k_{2}$ and $\lambda>0$, there exists
$c_{\lambda}$ such that
\begin{equation}
\left\vert p\left(  x\right)  \xi^{k_{1}}\right\vert \leq\lambda\left(
q\left(  x\right)  \left\vert \xi\right\vert ^{k_{2}}+r\left(  x\right)
\right)  +c_{\lambda}. \label{e.4.6}%
\end{equation}

\item If $\deg\left(  p\right)  \leq\deg\left(  r\right)  ,$ then for any
$k_{1}<k_{2}$ and $\lambda>0$, there exists constants $c_{\lambda}$ and
$d_{\lambda}$ such that
\begin{equation}
\left\vert p\left(  x\right)  \xi^{k_{1}}\right\vert \leq\lambda q\left(
x\right)  \left\vert \xi\right\vert ^{k_{2}}+c_{\lambda}r\left(  x\right)
+d_{\lambda}. \label{e.4.7}%
\end{equation}

\end{enumerate}
\end{lemma}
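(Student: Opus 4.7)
The plan is to apply Young's inequality to separate the $\xi^{k_1}$ factor from $|p(x)|$ and then to exploit the polynomial-growth hypotheses to absorb $|p(x)|$ into $q(x)$ and $r(x)$. The elementary inequality $|\xi|^{k_1}\le\epsilon|\xi|^{k_2}+C_\epsilon$ (valid for any $\epsilon>0$ with $C_\epsilon=C_\epsilon(\epsilon,k_1,k_2)$) gives, upon multiplication by $|p(x)|$,
$$
\bigl|p(x)\xi^{k_1}\bigr|\le \epsilon\,|p(x)|\,|\xi|^{k_2}+C_\epsilon|p(x)|.
$$
It then remains to dominate the two terms on the right by the quantities appearing in Eq.~(\ref{e.4.6}) or Eq.~(\ref{e.4.7}).

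For the first term, observe that a polynomial $q$ with $q>0$ everywhere is either a positive constant or has even degree with positive leading coefficient, so $q_{\min}:=\inf_x q(x)>0$. Together with $\deg p\le\deg q$ this forces $M:=\sup_x|p(x)|/q(x)<\infty$, whence $\epsilon|p(x)||\xi|^{k_2}\le\epsilon M\,q(x)|\xi|^{k_2}$. Choosing $\epsilon=\lambda/M$ bounds the first term by $\lambda\,q(x)|\xi|^{k_2}$ as required in both parts.

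For the residual $C_\epsilon|p(x)|$ the two parts diverge. Since $r$ is a polynomial bounded below, either $r$ is constant or $r$ has even degree with positive leading coefficient; in either case there exists $C_0\ge 0$ with $r(x)\ge -C_0$. In part~2 the hypothesis $\deg p\le\deg r$ gives constants $K,K'\ge 0$ with $|p(x)|\le K r(x)+K'$ globally, by leading-order comparison for $|x|$ large together with compactness and $r\ge -C_0$ on $|x|$ bounded; hence $C_\epsilon|p(x)|\le c_\lambda r(x)+d_\lambda$. In part~1 the stronger hypothesis $\deg p<\deg r$ yields the \emph{decay} $|p(x)|/r(x)\to 0$ as $|x|\to\infty$, so for every $\delta>0$ one has the global bound $|p(x)|\le\delta r(x)+M_\delta$; taking $\delta=\lambda/C_\epsilon$ produces $C_\epsilon|p(x)|\le\lambda r(x)+c_\lambda$, matching the identical $\lambda$ on both the $q|\xi|^{k_2}$ and $r(x)$ terms in Eq.~(\ref{e.4.6}). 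The alternative case in part~1 that $p$ is constant is immediate because $|p(x)|$ is then simply a number and $C_\epsilon|p|$ together with $\lambda C_0$ is absorbed into $c_\lambda$.

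The main obstacle is purely bookkeeping: one must order the choices so that $\epsilon$ is fixed first in terms of $\lambda$ and $M$, this determines $C_\epsilon$, and only afterwards is the parameter $\delta$ (or the constants $K,K'$ in part~2) chosen in terms of $\lambda$ and $C_\epsilon$. With this ordering every step is a routine polynomial-growth estimate, so no genuinely analytical difficulty arises.
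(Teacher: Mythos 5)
Your proposal is correct and follows essentially the same route as the paper's proof: split off $|\xi|^{k_1}\le\epsilon|\xi|^{k_2}+C_\epsilon$, absorb $|p|$ into $q$ via $\sup_x|p|/q<\infty$ with $\epsilon$ chosen in terms of $\lambda$, and then dominate the residual $C_\epsilon|p(x)|$ by $r(x)$ using the degree hypothesis, choosing $\delta$ (or the constants $K,K'$) only after $C_\epsilon$ is fixed. The bookkeeping of the order of choices matches the paper's.
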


\begin{proof}
Since $\deg p\leq\deg q$ and $q>0,$ $\deg q\in2\mathbb{N}_{0}$ and
$K:=\sup_{x\in\mathbb{R}}\left\vert p\left(  x\right)  \right\vert /q\left(
x\right)  <\infty,$ i.e. $p\left(  x\right)  \leq Kq\left(  x\right)  .$ One
also has for every $\tau>0,$ there exists $0<a_{\tau}<\infty$ such $\left\vert
\xi^{k_{1}}\right\vert \leq\tau\left\vert \xi\right\vert ^{k_{2}}+a_{\tau}.$
Combining these estimates shows,
\begin{align}
\left\vert p\left(  x\right)  \xi^{k_{1}}\right\vert  &  \leq\tau\left\vert
p\left(  x\right)  \right\vert \left\vert \xi\right\vert ^{k_{2}}+a_{\tau
}\left\vert p\left(  x\right)  \right\vert \label{e.4.8}\\
&  \leq\tau Kq\left(  x\right)  \left\vert \xi\right\vert ^{k_{2}}+a_{\tau
}\left\vert p\left(  x\right)  \right\vert . \label{e.4.9}%
\end{align}

If $\deg p<\deg r,$ for every $\delta>0$ there exists $0<b_{\delta}<\infty$
such that $\left\vert p\left(  x\right)  \right\vert \leq\delta r\left(
x\right)  +b_{\delta}$ which combined with Eq. (\ref{e.4.9}) implies
\[
\left\vert p\left(  x\right)  \xi^{k_{1}}\right\vert \leq\tau Kq\left(
x\right)  \left\vert \xi\right\vert ^{k_{2}}+a_{\tau}\left(  \delta r\left(
x\right)  +b_{\delta}\right)
\]
and Eq. (\ref{e.4.6}) follows by choosing $\tau=\lambda/K$ and then
$\delta=\lambda/a_{\tau}$ so that $c_{\lambda}=a_{\tau}b_{\delta}.$

If $\deg p\leq\deg r,$ then there exists $C_{1},C_{2}<\infty$ such that
$\left\vert p\left(  x\right)  \right\vert \leq C_{1}r\left(  x\right)
+C_{2}$ which combined with Eq. (\ref{e.4.9}) with $\tau=\lambda/K$ shows
\[
\left\vert p\left(  x\right)  \xi^{k_{1}}\right\vert \leq\lambda q\left(
x\right)  \left\vert \xi\right\vert ^{k_{2}}+a_{\lambda/K}\left(
C_{1}r\left(  x\right)  +C_{2}\right)
\]
from which Eq. (\ref{e.4.7}) follows.
\end{proof}

With the use of Lemma \ref{lem.4.5}, the following Lemma helps us to estimate
the growth of $T_{k}\left(  x\right)  $ (see Notation \ref{not.3.8}) and its
derivatives of $L^{n}$ in Eq. (\ref{e.3.1}) for $0\leq k\leq2mn$

\begin{lemma}
\label{lem.4.6}Suppose that $\left\{  b_{l}\right\}  _{l=0}^{m}$ are
polynomials satisfying the assumptions in Theorem \ref{the.1.10}. Then for
each $0\leq k\leq2mn,$ $\beta\in\mathbb{N}_{0},$ and $\delta>0$, there exists
$C=C\left(  k,\beta,\delta\right)  <\infty$ such that
\begin{equation}
\left(  1+\left\vert \xi\right\vert ^{k}\right)  \left(  1+\left\vert
x\right\vert ^{\beta}\right)  \left\vert \partial_{x}^{\beta}T_{k}\left(
x\right)  \right\vert \leq\delta\Sigma\left(  x,\xi\right)  +C\left(
k,\beta,\delta\right)  . \label{e.4.10}%
\end{equation}

\end{lemma}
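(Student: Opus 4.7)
The plan is to reduce the inequality to a monomial estimate, treating the factors $1$ and $|\xi|^k$ inside $(1+|\xi|^k)$ separately. First I would dispose of degenerate cases: if $b_0\equiv 0$ or $\deg b_0 = 0$, then Theorem~\ref{the.1.10}(2) forces every $b_l$ to be constant, and since Notation~\ref{not.3.8} requires $|\mathbf{l}|+|\mathbf{j}| = 2|\mathbf{q}|-k > 0$, every contributing term of $T_k$ contains a derivative of a constant and vanishes. So I assume $N := \deg b_0 \ge 2$ (even by Remark~\ref{rem.1.11}) and $\inf_x b_0(x) > 0$. Carrying out the iterated operator action in Eq.~(\ref{e.3.13}) on $\mathbf{1}$ and expanding all derivatives by Leibniz, every contributing term of $T_k$ equals a scalar multiple of $\prod_{i=1}^n (\partial^{\alpha_i}b_{q_i})(x)$ with $\mathbf{q}\in\Lambda_m^n$ and nonnegative integers $\alpha_i$ satisfying $\sum_i\alpha_i = 2|\mathbf{q}|-k \ge 1$, where each $b_{q_i}\not\equiv 0$ (else the term vanishes), so $c_{q_i}:=\inf_x b_{q_i}(x) > 0$. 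Applying $\partial^\beta$ by a further Leibniz expansion and multiplying by $(1+|x|^\beta)$ produces a finite sum of such terms, each dominated in absolute value by a constant times $|x|^a$ with $a \le \sum_i \deg b_{q_i} - 1 < \deg\bigl(\prod_i b_{q_i}\bigr)$.

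The heart of the argument is a monomial estimate for each such pair $(a,\mathbf{q})$. Set $B(x) := \prod_{i=1}^n b_{q_i}(x)$, which has $\inf_x B \ge \prod_i c_{q_i} > 0$ and $\deg B > a$, so there exists $C_q<\infty$ with $|x|^a \le C_q\, B(x)$ for all $x\in\mathbb R$. Because $B$ appears as a nonnegative summand in $\mathcal B_{|\mathbf{q}|}$ (Eq.~(\ref{e.3.2})), we get $B(x)|\xi|^{2|\mathbf{q}|} \le \mathcal B_{|\mathbf{q}|}(x)|\xi|^{2|\mathbf{q}|} \le \Sigma(x,\xi)$. The ``$1$'' factor in $(1+|\xi|^k)$ is handled by $|x|^a \le \delta\, b_0(x)^n + C_0$ (since $a < nN$ and $\inf b_0^n > 0$) combined with $b_0^n = \mathcal B_0 \le \Sigma$. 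For the ``$|\xi|^k$'' factor I split at a threshold $M\ge 1$ to be chosen: on $\{|\xi|\ge M\}$, using $k - 2|\mathbf{q}| = -\sum_i\alpha_i \le -1$,
\[
|x|^a|\xi|^k \;\le\; C_q\, B(x)|\xi|^{2|\mathbf{q}|}\cdot |\xi|^{-1} \;\le\; (C_q/M)\,\Sigma(x,\xi);
\]
on $\{|\xi|\le M\}$, $|\xi|^k \le M^k$ and $|x|^a \le \eta\, b_0^n + C_\eta$ for any $\eta>0$. Choosing $M := C_q/\delta$ and $\eta := \delta/M^k$ collapses both estimates into $|x|^a|\xi|^k \le \delta\,\Sigma + C_\delta$ uniformly in $(x,\xi)$. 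Summing over the finitely many monomials of each term and the finitely many terms of $T_k$ (with the proportional redistribution of $\delta$) finishes the proof.

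The main obstacle I anticipate is that $\Sigma = \sigma^n$ need not exhibit full joint growth in $(x,\xi)$: when only $b_0$ achieves the maximal $x$-degree $N$ (while the other $b_l$ are of lower degree), the Newton polygon of $\Sigma$ collapses from the rectangle $[0,nN]\times[0,2mn]$ to the triangle with vertices $(nN,0)$, $(0,0)$, $(0,2mn)$, and a naive Young's inequality between $x^{nN}$ and $\xi^{2mn}$ cannot dominate every $|x|^a|\xi|^k$ with $a<nN$ and $k\le 2mn$. The argument above bypasses this by exploiting the specific ``origin'' $B(x)|\xi|^{2|\mathbf{q}|}$ of each term of $T_k$ inside $\Sigma$: this $\mathbf{q}$-dependent comparison, together with the strict inequality $k < 2|\mathbf{q}|$ that supplies the decay factor $|\xi|^{-1}$, is what converts the $O(\Sigma)$ bound into the desired $\delta\,\Sigma + C_\delta$ bound after splitting in $|\xi|$.
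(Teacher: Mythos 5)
Your proof is correct and follows essentially the same route as the paper's: both arguments reduce to the observation that each monomial of $T_k$ comes with a degree drop $\left\vert\mathbf{p}\right\vert = 2\left\vert\mathbf{q}\right\vert - k > 0$ relative to its parent $b_{q_1}\cdots b_{q_n}$ in $\mathcal{B}_{\left\vert\mathbf{q}\right\vert}$, and both absorb this into $\Sigma$ via a Young-type inequality in $\xi$ together with the domination of the lower $x$-degree piece by $\mathcal{B}_{0} = b_0^n$. The only stylistic difference is that the paper packages the $\xi$-estimate into the separate Lemma~\ref{lem.4.5} and compares against the full $\mathcal{B}_{\left\vert\mathbf{q}\right\vert}$, whereas you carry out the threshold splitting in $\left\vert\xi\right\vert$ by hand and compare against the single nonnegative summand $B=\prod_i b_{q_i}$, which is a cosmetic variation of the same estimate.
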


\begin{proof}
If $\deg b_{0}\leq0,$ then by condition 2 in Theorem \ref{the.1.10} it follows
that $\left\{  b_{l}\right\}  _{l=0}^{m}$ are all constant in which case
$T_{k}\equiv0$ and the Lemma is trivial. So for the rest of the proof we
assume $\deg b_{0}>0.$

According to Eq. (\ref{e.3.13}), $T_{k}$ may be expressed as a linear
combination of terms of the form, $\left(  \partial^{p_{1}}b_{j_{1}}\right)
\dots\left(  \partial^{p_{n}}b_{j_{n}}\right)  ,$ where $\mathbf{j}$ and
$\mathbf{p}$ are multi-indices such that $2\left\vert \mathbf{j}\right\vert
-k=\left\vert \mathbf{p}\right\vert >0.$ If $\mathbf{j}$ and $\mathbf{p}$ are
multi-indices such that $2\left\vert \mathbf{j}\right\vert -k=\left\vert
\mathbf{p}\right\vert >0$ and $\left(  \partial^{p_{1}}b_{j_{1}}\right)
\dots\left(  \partial^{p_{n}}b_{j_{n}}\right)  \neq0,$ then $b_{j_{1}}\dots
b_{j_{n}}$ is strictly positive and
\[
\deg\left(  b_{j_{1}}\dots b_{j_{n}}\right)  \geq\deg\left[  \left(
\partial^{p_{1}}b_{j_{1}}\right)  \dots\left(  \partial^{p_{n}}b_{j_{n}%
}\right)  \right]  +\left\vert \mathbf{p}\right\vert >0.
\]
Given the term, $b_{j_{1}}\dots b_{j_{n}},$ appears in $\mathcal{B}%
_{\left\vert \mathbf{j}\right\vert },$ we conclude that $\mathcal{B}%
_{\left\vert \mathbf{j}\right\vert }$ is strictly positive and
\[
\deg\left(  \left(  \partial^{p_{1}}b_{j_{1}}\right)  \dots\left(
\partial^{p_{n}}b_{j_{n}}\right)  \right)  <\deg\left(  b_{j_{1}}\dots
b_{j_{n}}\right)  \leq\deg\left(  \mathcal{B}_{\left\vert \mathbf{j}%
\right\vert }\right)  .
\]
Moreover from condition 2 in Theorem \ref{the.1.10}, $\deg b_{j}\leq\deg
b_{0}$ for all $j$ and therefore we also have
\[
\deg\left(  \left(  \partial^{p_{1}}b_{j_{1}}\right)  \dots\left(
\partial^{p_{n}}b_{j_{n}}\right)  \right)  <\deg\left(  b_{j_{1}}\dots
b_{j_{n}}\right)  \leq\deg\left(  b_{0}^{n}\right)  =\deg\mathcal{B}_{0}.
\]

Moreover, for any $r,\beta\in\mathbb{N}_{0}$ with $r\leq\beta$ we still have
\begin{align*}
\text{$\deg$}\left\{  x^{r}\partial_{x}^{\beta}\left[  \left(  \partial
^{p_{1}}b_{j_{1}}\right)  \dots\left(  \partial^{p_{n}}b_{j_{n}}\right)
\right]  \right\}   &  \leq\deg\left(  \left(  \partial^{p_{1}}b_{j_{1}%
}\right)  \dots\left(  \partial^{p_{n}}b_{j_{n}}\right)  \right) \\
&  <\min\left\{  \deg\left(  \mathcal{B}_{\left\vert \mathbf{j}\right\vert
}\right)  ,\deg\left(  \mathcal{B}_{0}\right)  \right\}  .
\end{align*}
Hence by substituting
\[
p\left(  x\right)  =x^{r}\partial_{x}^{\beta}\left[  \left(  \partial^{p_{1}%
}b_{j_{1}}\right)  \dots\left(  \partial^{p_{n}}b_{j_{n}}\right)  \right]
\text{, }q\left(  x\right)  =\mathcal{B}_{\left\vert \mathbf{j}\right\vert
}\left(  x\right)  \text{ and }r\left(  x\right)  =\mathcal{B}_{0}\left(
x\right)
\]
in Lemma \ref{lem.4.5}, for every $\lambda>0$ there exists $C_{\lambda}%
<\infty$ such that
\begin{align*}
\left\vert x^{r}\partial_{x}^{\beta}\left[  \left(  \partial^{p_{1}}b_{j_{1}%
}\right)  \dots\left(  \partial^{p_{n}}b_{j_{n}}\right)  \right]  \xi
^{k}\right\vert  &  \leq\lambda\left[  \mathcal{B}_{\left\vert \mathbf{j}%
\right\vert }\left(  x\right)  \xi^{2\left\vert \mathbf{j}\right\vert
}+\mathcal{B}_{0}\left(  x\right)  \right]  +C_{\lambda}\\
&  \leq\lambda\cdot\Sigma\left(  x,\xi\right)  +C_{\lambda}%
\end{align*}
and similarly,
\[
\left\vert x^{r}\partial_{x}^{\beta}\left[  \left(  \partial^{p_{1}}b_{j_{1}%
}\right)  \dots\left(  \partial^{p_{n}}b_{j_{n}}\right)  \right]  \right\vert
\leq\lambda\Sigma\left(  x,\xi\right)  +C_{\lambda}.
\]
These last two equations with $r=0$ and $r=\beta$ combine to show, for all
$\lambda>0,$ there exists $C_{\lambda}<\infty$ such that
\[
\left(  1+\left\vert \xi\right\vert ^{k}\right)  \left(  1+\left\vert
x\right\vert ^{\beta}\right)  \left\vert \partial_{x}^{\beta}\left[  \left(
\partial^{p_{1}}b_{j_{1}}\right)  \dots\left(  \partial^{p_{n}}b_{j_{n}%
}\right)  \right]  \right\vert \leq4\left(  \lambda\Sigma\left(  x,\xi\right)
+C_{\lambda}\right)  .
\]
By using this result in Eq. (\ref{e.3.13}), one then sees there is a constant
$K<\infty$ such that
\[
\left(  1+\left\vert \xi\right\vert ^{k}\right)  \left(  1+\left\vert
x\right\vert ^{\beta}\right)  \left\vert \partial_{x}^{\beta}T_{k}\left(
x\right)  \right\vert \leq K\lambda\Sigma\left(  x,\xi\right)  +KC_{\lambda}.
\]
Equation (\ref{e.4.10}) now follows by replacing $\lambda$ by $\delta/K$ in
the above equation.
\end{proof}

The following Lemma is to study the growth of $\mathcal{B}_{\ell}\left(
x\right)  $ (see Notation \ref{not.3.2}) and its derivatives of $L^{n}$ in Eq.
(\ref{e.3.1}) for $0\leq l\leq mn$

\begin{lemma}
\label{lem.4.7}Again suppose that $\left\{  b_{l}\right\}  _{l=0}^{m}$ are
polynomials satisfying the assumptions in Theorem \ref{the.1.10}. For all
$\ell\in\Lambda_{mn},$ and $\beta\in\mathbb{N}_{0},\ $there exists $C<\infty$
such that
\begin{equation}
\left\vert \partial_{x}^{\beta}\mathcal{B}_{\ell}\left(  x\right)  \right\vert
\left(  \left\vert \xi\right\vert ^{2\ell}+1\right)  \left(  \left\vert
x\right\vert ^{\beta}+1\right)  \leq C\Sigma\left(  x,\xi\right)  +C.
\label{e.4.11}%
\end{equation}
Moreover, if we assume $b_{0}$ is not the zero polynomial, then we may drop
the second $C$ in Eq. (\ref{e.4.11}), i.e. there exists $C<\infty$ such that
\begin{equation}
\left\vert \partial_{x}^{\beta}\mathcal{B}_{\ell}\left(  x\right)  \right\vert
\left(  \left\vert \xi\right\vert ^{2\ell}+1\right)  \left(  \left\vert
x\right\vert ^{\beta}+1\right)  \leq C\Sigma\left(  x,\xi\right)  .
\label{e.4.12}%
\end{equation}

\end{lemma}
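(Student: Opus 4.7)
The plan is to exploit two structural consequences of conditions (1)--(2) of Theorem~\ref{the.1.10}. By Remark~\ref{rem.1.11}, each $b_l$ is either identically zero or a polynomial of even degree with strictly positive leading coefficient and strictly positive infimum. Therefore each product $b_{j_1}\cdots b_{j_n}$ appearing in the sum defining $\mathcal{B}_\ell$ is either $0$ or bounded below by a positive constant, of even degree, and with positive leading coefficient. Summing over $|\mathbf{j}|=\ell$ shows $\mathcal{B}_\ell\geq 0$, and if $\mathcal{B}_\ell\not\equiv 0$ then $2d_\ell:=\deg\mathcal{B}_\ell$ is even, its leading coefficient is positive, and there exist constants $c_\ell,C_\ell>0$ with
\[
c_\ell(1+|x|^{2d_\ell})\leq\mathcal{B}_\ell(x)\leq C_\ell(1+|x|^{2d_\ell})\qquad\text{for all }x\in\mathbb{R}.
\]
Moreover condition (2) gives $\deg\mathcal{B}_\ell\leq n\deg b_0=\deg\mathcal{B}_0$ whenever $b_0\not\equiv 0$.

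Assuming $\mathcal{B}_\ell\not\equiv 0$ (otherwise the inequality is trivial), I would first establish the polynomial estimate
\[
(1+|x|^\beta)\left\vert \partial_x^\beta\mathcal{B}_\ell(x)\right\vert \leq C'\mathcal{B}_\ell(x).
\]
This follows from the crude bound $\left\vert \partial_x^\beta\mathcal{B}_\ell(x)\right\vert \leq C(1+|x|^{(2d_\ell-\beta)_+})$ (with the left side identically zero when $\beta>2d_\ell$), the elementary inequality $(1+|x|^\beta)(1+|x|^{2d_\ell-\beta})\leq 4(1+|x|^{2d_\ell})$ valid for $\beta\leq 2d_\ell$, and the two-sided bound above. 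Multiplying by $(1+|\xi|^{2\ell})$ and splitting then gives
\[
(1+|\xi|^{2\ell})(1+|x|^\beta)\left\vert \partial_x^\beta\mathcal{B}_\ell(x)\right\vert \leq C'\mathcal{B}_\ell(x)+C'\mathcal{B}_\ell(x)|\xi|^{2\ell}.
\]
Since every $\mathcal{B}_{\ell'}\geq 0$, the second term is dominated by $C'\Sigma(x,\xi)$ (it is the $\ell$-th summand of $\Sigma$).

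For the first term I would distinguish two cases. If $b_0\not\equiv 0$, then $\deg\mathcal{B}_\ell\leq\deg\mathcal{B}_0$ together with the two-sided polynomial bounds yields $\mathcal{B}_\ell(x)\leq C''\mathcal{B}_0(x)$, and since $\Sigma(x,\xi)\geq\mathcal{B}_0(x)$ (all summands non-negative), the first term is also absorbed into $C\Sigma(x,\xi)$, giving the strict form (\ref{e.4.12}). If instead $b_0\equiv 0$, condition (2) forces $\deg b_l\leq 0$ for every $l$, so every $b_l$, and hence every $\mathcal{B}_{\ell'}$, is a constant; then $\partial_x^\beta\mathcal{B}_\ell\equiv 0$ for $\beta\geq 1$, while for $\beta=0$ one has $(1+|\xi|^{2\ell})\mathcal{B}_\ell=\mathcal{B}_\ell+\mathcal{B}_\ell|\xi|^{2\ell}\leq\mathcal{B}_\ell+\Sigma(x,\xi)$, producing the constant-plus-$\Sigma$ estimate (\ref{e.4.11}). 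The only real obstacle is the algebraic observation in the first paragraph: I need the sign/positivity hypothesis on the $b_l$'s to survive multiplication and summation, guaranteeing that cancellations among the monomials cannot destroy the coercive lower bound $\mathcal{B}_\ell(x)\gtrsim 1+|x|^{2d_\ell}$; once this is in hand the rest of the argument is routine polynomial bookkeeping.
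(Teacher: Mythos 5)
Your proof is correct and follows essentially the same route as the paper's: both reduce to the polynomial estimate $(1+|x|^\beta)\,|\partial_x^\beta\mathcal{B}_\ell(x)|\leq C\,\mathcal{B}_\ell(x)$ (the paper derives this directly from degree counting and $\mathcal{B}_\ell>0$, while you go through the explicit two-sided bound $c_\ell(1+|x|^{2d_\ell})\leq\mathcal{B}_\ell\leq C_\ell(1+|x|^{2d_\ell})$, which amounts to the same thing), then split the $(1+|\xi|^{2\ell})$ factor, absorb the $\mathcal{B}_\ell\,\xi^{2\ell}$ part directly into $\Sigma$, and control the remaining constant-in-$\xi$ part by $\mathcal{B}_\ell\lesssim\mathcal{B}_0\leq\Sigma$ when $b_0\not\equiv 0$. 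The $b_0\equiv 0$ case is handled identically in both, noting all $\mathcal{B}_\ell$ are then constants so derivatives vanish and the $\beta=0$ case gives the extra additive constant.
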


\begin{proof}
Case 1. If $b_{0}=0$ then by the assumption 2 of Theorem \ref{the.1.10} each
$b_{l}$ is a constant for $1\leq l\leq m$ and therefore $\partial_{x}^{\beta
}\mathcal{B}_{\ell}\left(  x\right)  =0$ for all $\beta>0,$ i.e.
$\mathcal{B}_{\ell}$ are constant for all $\ell.$ Moreover, if $\beta=0,$ from
the definition of $\Sigma\left(  x,\xi\right)  $ in Eq. (\ref{e.4.2}) it
follows that $\mathcal{B}_{\ell}\xi^{2\ell}\leq\Sigma\left(  x,\xi\right)  $
and hence
\[
\left\vert \mathcal{B}_{\ell}\left(  x\right)  \right\vert \left(  \left\vert
\xi\right\vert ^{2\ell}+1\right)  =\mathcal{B}_{\ell}\left(  \xi^{2\ell
}+1\right)  \leq\Sigma\left(  x,\xi\right)  +\mathcal{B}_{\ell}
\]
and so Eq. (\ref{e.4.11}) holds with $C=\max_{1\leq\ell\leq mn}\max\left(
1,\mathcal{B}_{\ell}\right)  .$

Case 2. If $b_{0}\neq0,$ let us assume $\mathcal{B}_{\ell}$ is not the zero
polynomial for otherwise there is nothing to prove. Since $x^{\beta}%
\partial_{x}^{\beta}\mathcal{B}_{\ell}\left(  x\right)  $ and $\partial
_{x}^{\beta}\mathcal{B}_{\ell}$ $\left(  x\right)  $ both have degree no more
than $\deg\mathcal{B}_{\ell}$ and $\mathcal{B}_{\ell}>0,$ we may conclude
there exists $C<\infty$ such that
\begin{equation}
\left(  1+\left\vert x\right\vert ^{\beta}\right)  \left\vert \partial
_{x}^{\beta}\mathcal{B}_{\ell}\left(  x\right)  \right\vert =\left\vert
\partial_{x}^{\beta}\mathcal{B}_{\ell}\left(  x\right)  \right\vert
+\left\vert x^{\beta}\partial_{x}^{\beta}\mathcal{B}_{\ell}\left(  x\right)
\right\vert \leq C\mathcal{B}_{\ell}\left(  x\right)  . \label{e.4.13}%
\end{equation}
Multiplying this equation by $\xi^{2\ell}$ then shows,
\begin{equation}
\left(  1+\left\vert x\right\vert ^{\beta}\right)  \left\vert \partial
_{x}^{\beta}\mathcal{B}_{\ell}\left(  x\right)  \right\vert \xi^{2\ell}\leq
C\mathcal{B}_{\ell}\left(  x\right)  \xi^{2\ell}\leq C\Sigma\left(
x,\xi\right)  \label{e.4.14}%
\end{equation}
while $\deg\left(  \mathcal{B}_{\ell}\right)  \leq\deg\left(  \mathcal{B}%
_{0}\right)  $ and $\mathcal{B}_{0}>0$ , then there exists $C_{1}<\infty$ such
that
\[
\mathcal{B}_{\ell}\left(  x\right)  \leq C_{1}\mathcal{B}_{0}\left(  x\right)
\leq C_{1}\Sigma\left(  x,\xi\right)
\]
which combined with Eq. (\ref{e.4.13}) shows
\[
\left(  1+\left\vert x\right\vert ^{\beta}\right)  \left\vert \partial
_{x}^{\beta}\mathcal{B}_{\ell}\left(  x\right)  \right\vert \leq C_{1}%
\Sigma\left(  x,\xi\right)  .
\]
This estimate along with Eq. (\ref{e.4.14}) then completes the proof of Eq.
(\ref{e.4.11}) with no second $C.$
\end{proof}

\begin{notation}
\label{not.4.8}For any non-negative real-valued functions $f$ and $g$ on some
domain $U$, we write $f\lesssim g$ to mean there exists $C>0$ such that
$f\left(  y\right)  \leq Cg\left(  y\right)  $ for all $y\in U$.
\end{notation}

The following result is an immediate corollary of Proposition \ref{pro.3.9}
and Lemmas \ref{lem.4.6} and \ref{lem.4.7}.

\begin{corollary}
\label{cor.4.9}Suppose that $\left\{  b_{l}\right\}  _{l=0}^{m}$ are
polynomials satisfying the assumptions in Theorem \ref{the.1.10}. If $\left\{
A_{k}\right\}  _{k=0}^{2mn}$ are the coefficients of $L^{n}$ as in Eq.
(\ref{e.3.1}), then for all $\beta\in\mathbb{N}_{0}$ and $0\leq k\leq2mn,$
\begin{equation}
\left\vert \partial_{x}^{\beta}A_{k}\left(  x\right)  \right\vert \left(
1+\left\vert \xi\right\vert ^{k}\right)  \left(  1+\left\vert x\right\vert
^{\beta}\right)  \lesssim\Sigma\left(  x,\xi\right)  +1. \label{e.4.15}%
\end{equation}

\end{corollary}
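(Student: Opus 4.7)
The plan is to combine the structural decomposition of $A_k$ from Proposition \ref{pro.3.9} with the pointwise bounds on $\mathcal{B}_{k/2}$ and $T_k$ already established in Lemmas \ref{lem.4.6} and \ref{lem.4.7}. The key observation is that everything we need is essentially already proven; the corollary is just assembly.

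First I would recall Proposition \ref{pro.3.9}, which says
\[
A_k = 1_{k\in 2\mathbb{N}_0}\,(-1)^{k/2}\mathcal{B}_{k/2} + T_k,
\]
so by the triangle inequality, for any $\beta\in\mathbb{N}_0$,
\[
|\partial_x^\beta A_k(x)| \leq 1_{k\in 2\mathbb{N}_0}\,|\partial_x^\beta \mathcal{B}_{k/2}(x)| + |\partial_x^\beta T_k(x)|.
\]
Multiplying through by the weight $(1+|\xi|^k)(1+|x|^\beta)$, it suffices to bound each of the two resulting terms by a constant multiple of $\Sigma(x,\xi)+1$.

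For the $T_k$ term, I would apply Lemma \ref{lem.4.6} with any fixed choice of $\delta$ (say $\delta=1$) to obtain a constant $C_1$ so that
\[
(1+|\xi|^k)(1+|x|^\beta)\,|\partial_x^\beta T_k(x)| \leq \Sigma(x,\xi) + C_1.
\]
For the $\mathcal{B}_{k/2}$ term (only present when $k$ is even, say $k=2\ell$), the weight $1+|\xi|^k = 1+|\xi|^{2\ell}$ is exactly the weight appearing in Lemma \ref{lem.4.7}, so that lemma gives a constant $C_2$ with
\[
(1+|\xi|^{2\ell})(1+|x|^\beta)\,|\partial_x^\beta \mathcal{B}_\ell(x)| \leq C_2\,\Sigma(x,\xi) + C_2.
\]
Adding the two estimates (and using the convention $\mathcal{B}_{k/2}\equiv 0$ for $k$ odd, which makes the first contribution vanish), we conclude that there exists $C<\infty$, depending on $k$ and $\beta$, such that
\[
|\partial_x^\beta A_k(x)|\,(1+|\xi|^k)(1+|x|^\beta) \leq C\bigl(\Sigma(x,\xi)+1\bigr),
\]
which is precisely Eq. (\ref{e.4.15}) in the $\lesssim$ notation of Notation \ref{not.4.8}.

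There is no substantial obstacle: the only care required is bookkeeping of the two cases (even vs.\ odd $k$), matching the weight $(1+|\xi|^k)$ with $(1+|\xi|^{2\ell})$ when $k=2\ell$, and verifying that Lemma \ref{lem.4.6} is invoked on the $T_k$ piece with the same weight $(1+|\xi|^k)(1+|x|^\beta)$ that appears in the statement. All of this is immediate from the cited results.
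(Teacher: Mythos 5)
Your proof is correct and is exactly the argument the paper intends: the paper states Corollary \ref{cor.4.9} is an immediate consequence of Proposition \ref{pro.3.9} together with Lemmas \ref{lem.4.6} and \ref{lem.4.7}, and your assembly via the decomposition $A_k=1_{k\in2\mathbb{N}_0}(-1)^{k/2}\mathcal{B}_{k/2}+T_k$, the triangle inequality, and the two cited estimates (with the even/odd $k$ bookkeeping) is precisely that. No gaps.
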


The next lemma is a direct consequence of Lemmas \ref{lem.4.6}.

\begin{lemma}
\label{lem.4.10}Let $L$ be the operator in Eq. (\ref{e.1.6}), where we now
assume that $\left\{  b_{l}\right\}  _{l=0}^{m}$ are polynomials satisfying
the assumptions of Theorem \ref{the.1.10}. Then there exists $c>0$ such that
the following hold:
\begin{align}
\sum_{\ell=0}^{mn}\left\vert T_{2\ell}\left(  x\right)  \xi^{2\ell
}\right\vert  &  \leq\sum_{k=0}^{2mn}\text{$\left\vert T_{k}\left(  x\right)
\xi^{k}\right\vert $}\leq\frac{1}{2}\left(  \Sigma\left(  x,\xi\right)
+c\right)  ,\text{ and}\label{e.4.16}\\
\frac{3}{2}\Sigma\left(  x,\xi\right)  +\frac{1}{2}c  &  \geq\operatorname{Re}%
\sigma_{n}\left(  x,\xi\right)  \geq\frac{1}{2}\Sigma\left(  x,\xi\right)
-\frac{1}{2}c. \label{e.4.17}%
\end{align}
Alternatively, adding $c$ to both sides of Eq. (\ref{e.4.17}) shows
\begin{equation}
\frac{3}{2}\left(  \Sigma\left(  x,\xi\right)  +c\right)  \geq
\operatorname{Re}\sigma_{L^{n}+c}\left(  x,\xi\right)  \geq\frac{1}{2}\left(
\Sigma\left(  x,\xi\right)  +c\right)  . \label{e.4.18}%
\end{equation}
A key point is that $\operatorname{Re}\sigma_{n}\left(  x,\xi\right)
:=\operatorname{Re}\sigma_{L^{n}}\left(  x,\xi\right)  $ (see Notation
\ref{not.4.4}) is bounded from below.
\end{lemma}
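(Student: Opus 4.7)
The plan is to apply Lemma \ref{lem.4.6} with $\beta=0$ term-by-term, then sum the resulting pointwise estimates, and finally plug the result into the explicit formula for $\operatorname{Re}\sigma_n$ in Eq. \eqref{e.4.5}.

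First I would invoke Lemma \ref{lem.4.6} with $\beta=0$. For each $0\leq k\leq 2mn$ and each $\delta>0$, that lemma produces a constant $C(k,0,\delta)$ with
\[
\left\vert T_{k}(x)\right\vert \left(1+\left\vert \xi\right\vert^{k}\right)\leq \delta\,\Sigma(x,\xi)+C(k,0,\delta).
\]
Since $\left\vert T_k(x)\xi^k\right\vert\leq \left\vert T_k(x)\right\vert (1+\left\vert \xi\right\vert^k)$, the same bound holds with $\xi^k$ in place of $1+\left\vert \xi\right\vert^k$. Now I would choose $\delta:=\frac{1}{2(2mn+1)}$, sum over $0\leq k\leq 2mn$, and set
\[
c:=2\sum_{k=0}^{2mn}C(k,0,\delta).
\]
Adding the $2mn+1$ inequalities yields
\[
\sum_{k=0}^{2mn}\left\vert T_{k}(x)\xi^{k}\right\vert \leq (2mn+1)\delta\,\Sigma(x,\xi)+\tfrac{c}{2}=\tfrac{1}{2}\Sigma(x,\xi)+\tfrac{c}{2}=\tfrac{1}{2}\bigl(\Sigma(x,\xi)+c\bigr),
\]
which is Eq. \eqref{e.4.16} (the first inequality there is trivial, since it just drops the odd indices from the sum).

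For the second assertion I would use the identity in Eq. \eqref{e.4.5}, namely
\[
\operatorname{Re}\sigma_{n}(x,\xi)=\Sigma(x,\xi)+\sum_{\ell=0}^{mn}(-1)^{\ell}T_{2\ell}(x)\xi^{2\ell}.
\]
Applying the triangle inequality together with the bound on $\sum_{\ell}\left\vert T_{2\ell}\xi^{2\ell}\right\vert$ from Eq. \eqref{e.4.16} gives
\[
\left\vert \operatorname{Re}\sigma_{n}(x,\xi)-\Sigma(x,\xi)\right\vert \leq \tfrac{1}{2}\bigl(\Sigma(x,\xi)+c\bigr),
\]
and rearranging produces the two-sided bound in Eq. \eqref{e.4.17}.

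Finally, for Eq. \eqref{e.4.18} I would note that $\sigma_{L^n+c}(x,\xi)=\sigma_n(x,\xi)+c$, so $\operatorname{Re}\sigma_{L^n+c}=\operatorname{Re}\sigma_n+c$, and adding $c$ to every term of Eq. \eqref{e.4.17} gives the stated inequality. There is no real obstacle here; the only non-automatic step is the choice of $\delta$ in terms of the number of summands $2mn+1$ so that the $\Sigma$-contributions accumulate to at most $\tfrac{1}{2}\Sigma$, and the accompanying choice of $c$ to absorb all the constants $C(k,0,\delta)$ simultaneously.
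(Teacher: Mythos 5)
Your proof is correct and follows the paper's intended route exactly: the paper itself only says that Lemma \ref{lem.4.10} is ``a direct consequence of Lemma \ref{lem.4.6}'' without supplying details, and your argument (apply Lemma \ref{lem.4.6} with $\beta=0$, choose $\delta$ so the $2mn+1$ summands contribute at most $\tfrac12\Sigma$, absorb the constants into $c$, then read off Eq.~\eqref{e.4.17} from the identity in Eq.~\eqref{e.4.5} and Eq.~\eqref{e.4.18} by adding $c$) is precisely the omitted bookkeeping.
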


\begin{notation}
\label{not.4.11}For the rest of this section, we fix a $c>0$ as in Lemma
\ref{lem.4.10} and then define $\mathbb{L}:=L^{n}+c$ where $\mathcal{D}\left(
\mathbb{L}\right)  =$ $\mathcal{S}$ and $\left\{  b_{l}\right\}  _{l=1}^{m}$
in Eq. (\ref{e.1.6}) satisfies the assumptions in Theorem \ref{the.1.10}.
According to the definition of symbol in Eq. (\ref{e.1.3}),
\[
\sigma_{\mathbb{L}}\left(  x,\xi\right)  :=\sigma_{L^{n}+c}\left(
x,\xi\right)  =\sigma_{n}\left(  x,\xi\right)  +c
\]

\end{notation}

Because of our choice of $c>0$ we know that
\begin{equation}
\kappa:=\inf_{\left(  x,\xi\right)  }\operatorname{Re}\sigma_{\mathbb{L}%
}\left(  x,\xi\right)  >0. \label{e.4.19}%
\end{equation}

\begin{corollary}
\label{cor.4.12}For all $l,k\in\mathbb{N}_{0},$
\begin{equation}
\left\vert \partial_{\xi}^{l}\partial_{x}^{k}\sigma_{\mathbb{L}}\left(
x,\xi\right)  \right\vert \left(  1+\left\vert \xi\right\vert \right)
^{l}\left(  1+\left\vert x\right\vert \right)  ^{k}\lesssim\Sigma\left(
x,\xi\right)  +1 \label{e.4.20}%
\end{equation}
or equivalently,
\begin{equation}
\left\vert \frac{\partial_{\xi}^{l}\partial_{x}^{k}\sigma_{\mathbb{L}}\left(
x,\xi\right)  }{\operatorname{Re}\sigma_{\mathbb{L}}\left(  x,\xi\right)
}\right\vert \lesssim\frac{1}{\left(  1+\left\vert \xi\right\vert \right)
^{l}\left(  1+\left\vert x\right\vert \right)  ^{k}} \label{e.4.21}%
\end{equation}
if Eq. (\ref{e.4.18}) is applied.
\end{corollary}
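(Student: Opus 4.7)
The plan is to reduce the inequality (\ref{e.4.20}) to the already-established estimate (\ref{e.4.15}) by a straightforward differentiation of the symbol, and then to deduce (\ref{e.4.21}) from (\ref{e.4.20}) using the lower bound (\ref{e.4.18}) on $\operatorname{Re}\sigma_{\mathbb{L}}$.

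First, I would write the symbol explicitly. By Notation \ref{not.4.11} and Eq. (\ref{e.1.3}),
\[
\sigma_{\mathbb{L}}(x,\xi)=c+\sum_{j=0}^{2mn}A_{j}(x)(i\xi)^{j},
\]
where $\{A_j\}_{j=0}^{2mn}$ are the coefficients of $L^n$ from Eq. (\ref{e.3.1}). Differentiating termwise gives
\[
\partial_{\xi}^{l}\partial_{x}^{k}\sigma_{\mathbb{L}}(x,\xi)=\sum_{j=l}^{2mn}\frac{j!}{(j-l)!}\,i^{j}\,\bigl(\partial_{x}^{k}A_{j}(x)\bigr)\xi^{j-l},
\]
so
\[
\bigl|\partial_{\xi}^{l}\partial_{x}^{k}\sigma_{\mathbb{L}}(x,\xi)\bigr|\lesssim\sum_{j=l}^{2mn}\bigl|\partial_{x}^{k}A_{j}(x)\bigr|\,|\xi|^{j-l}.
\]

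Next I would multiply by $(1+|\xi|)^{l}(1+|x|)^{k}$ and estimate each term by Corollary \ref{cor.4.9}. For each fixed $j\geq l$, expand via the binomial theorem:
\[
|\xi|^{j-l}(1+|\xi|)^{l}=\sum_{i=0}^{l}\binom{l}{i}|\xi|^{j-l+i},
\]
and since every exponent $j-l+i$ lies in $[j-l,j]$, each monomial is bounded by $1+|\xi|^{j}$; hence $|\xi|^{j-l}(1+|\xi|)^{l}\leq 2^{l}\bigl(1+|\xi|^{j}\bigr)$. Similarly, $(1+|x|)^{k}\leq 2^{k}\bigl(1+|x|^{k}\bigr)$. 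Applying Corollary \ref{cor.4.9} (with $\beta=k$ and the running index $k$ there replaced by $j$) term-by-term gives
\[
\bigl|\partial_{x}^{k}A_{j}(x)\bigr|\,\bigl(1+|\xi|^{j}\bigr)\bigl(1+|x|^{k}\bigr)\lesssim\Sigma(x,\xi)+1,
\]
and summing the finitely many $j$'s yields Eq. (\ref{e.4.20}).

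For the equivalence with Eq. (\ref{e.4.21}), I would invoke Eq. (\ref{e.4.18}) of Lemma \ref{lem.4.10}, which gives
\[
\operatorname{Re}\sigma_{\mathbb{L}}(x,\xi)=\operatorname{Re}\sigma_{L^{n}+c}(x,\xi)\geq\tfrac{1}{2}\bigl(\Sigma(x,\xi)+c\bigr).
\]
Since $c>0$ is a fixed positive constant, $\Sigma(x,\xi)+1\leq\max(1,1/c)\bigl(\Sigma(x,\xi)+c\bigr)$, so $\Sigma+1\lesssim\operatorname{Re}\sigma_{\mathbb{L}}$. Dividing (\ref{e.4.20}) through by $\operatorname{Re}\sigma_{\mathbb{L}}$ then produces (\ref{e.4.21}), completing the proof.

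The argument is almost entirely bookkeeping; the only non-trivial ingredients are Corollary \ref{cor.4.9} (which packages the real work done in Lemmas \ref{lem.4.6} and \ref{lem.4.7}) and the positivity/lower bound in Eq. (\ref{e.4.18}). The minor conceptual point to get right is the elementary comparison $|\xi|^{j-l}(1+|\xi|)^{l}\lesssim 1+|\xi|^{j}$, which must be verified for each $j\geq l$ and not only for the top order $j=2mn$; the binomial expansion handles this uniformly.
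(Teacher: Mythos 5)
Your proof is correct and follows essentially the same route as the paper: differentiate the symbol termwise to reduce to $\partial_x^k A_j$, multiply by the weights, absorb them into $(1+|\xi|^j)(1+|x|^k)$, and apply Corollary \ref{cor.4.9}, then deduce (\ref{e.4.21}) by dividing through and using the lower bound (\ref{e.4.18}). The only difference is that you spell out the elementary estimate $|\xi|^{j-l}(1+|\xi|)^{l}\lesssim 1+|\xi|^{j}$ via the binomial theorem, which the paper simply asserts.
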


\begin{proof}
We have
\[
\partial_{\xi}^{l}\partial_{x}^{k}\sigma_{\mathbb{L}}\left(  x,\xi\right)
=\sum_{j=l}^{2mn}\left(  i\right)  ^{j}\frac{j!}{\left(  j-l\right)
!}\partial_{x}^{k}A_{j}\left(  x\right)  \xi^{j-l}
\]
and therefore,
\begin{align*}
\left\vert \partial_{\xi}^{l}\partial_{x}^{k}\sigma_{\mathbb{L}}\left(
x,\xi\right)  \right\vert  &  \left(  1+\left\vert \xi\right\vert \right)
^{l}\left(  1+\left\vert x\right\vert \right)  ^{k}\\
&  \leq\sum_{j=l}^{2mn}\frac{j!}{\left(  j-l\right)  !}\left\vert \partial
_{x}^{k}A_{j}\left(  x\right)  \right\vert \left\vert \xi^{j-l}\right\vert
\left(  1+\left\vert \xi\right\vert \right)  ^{l}\left(  1+\left\vert
x\right\vert \right)  ^{k}\\
&  \lesssim\sum_{j=l}^{2mn}\left\vert \partial_{x}^{k}A_{j}\left(  x\right)
\right\vert \left(  1+\left\vert \xi\right\vert ^{j}\right)  \left(
1+\left\vert x\right\vert ^{k}\right) \\
&  \lesssim\Sigma\left(  x,\xi\right)  +1.
\end{align*}
The last step is asserted by Corollary \ref{cor.4.9}. Equation (\ref{e.4.21})
follows directly from Eqs. (\ref{e.4.18}) and (\ref{e.4.20}).
\end{proof}

By the Fourier inversion formula, if $\psi\in\mathcal{S},$ then
\begin{equation}
\psi\left(  x\right)  =\int_{\mathbb{R}}\widehat{\psi}\left(  \xi\right)
e^{ix\xi}d\xi, \label{e.4.22}%
\end{equation}
where $\widehat{\psi}$ is the Fourier transform of $\psi$ defined by
\begin{equation}
\widehat{\psi}\left(  \xi\right)  =\frac{1}{2\pi}\int_{\mathbb{R}}e^{-iy\xi
}\psi(y)dy. \label{e.4.23}%
\end{equation}
Recall that, with these normalizations, that
\begin{equation}
\left\Vert \psi\right\Vert =\sqrt{2\pi}\left\Vert \hat{\psi}\right\Vert
~\forall~\psi\in L^{2}\left(  \mathbb{R}\right)  . \label{e.4.24}%
\end{equation}

Letting $\mu\in\mathbb{R}$ and then applying $\mathbb{L}+i\mu$ to Eq.
(\ref{e.4.22}) gives the following pseudo-differential operator representation
of $\left(  \mathbb{L}+i\mu\right)  \psi,$
\begin{equation}
\left(  \mathbb{L}+i\mu\right)  \psi\left(  x\right)  =\int_{\mathbb{R}%
}\left[  \sigma_{\mathbb{L}}\left(  x,\xi\right)  +i\mu\right]  e^{ix\xi
}\widehat{\psi(\xi)}d\xi. \label{e.4.25}%
\end{equation}

Let $\kappa$ be as in Eq. (\ref{e.4.19}), it follows that for any $\mu
\in\mathbb{R},$
\[
\left\vert \sigma_{\mathbb{L}}\left(  x,\xi\right)  +i\mu\right\vert
\geq\left\vert \operatorname{Re}\sigma_{\mathbb{L}}\left(  x,\xi\right)
\right\vert \geq\kappa>0
\]
for all $\left(  x,\xi\right)  \in\mathbb{R}^{2}.$ Therefore, the following
integrand in Eq. (\ref{e.4.26}) is integrable for $u\in\mathcal{S}$ and we may
define
\begin{equation}
\left(  T_{\mu}u\right)  \left(  x\right)  =\int_{\mathbb{R}}\frac{1}%
{\sigma_{\mathbb{L}}\left(  x,\xi\right)  +i\mu}e^{ix\xi}\hat{u}(\xi)d\xi.
\label{e.4.26}%
\end{equation}
Furthermore, we will show that $T_{\mu}$ actually preserves $\mathcal{S}$
later in this section (see Proposition \ref{pro.4.17}).

\begin{notation}
\label{not.4.13}If $\left\{  q_{k}\left(  x\right)  \right\}  _{k=0}^{j}$ is a
collection of smooth functions and
\begin{equation}
q\left(  x,\theta\right)  =\sum_{k=0}^{j}q_{k}\left(  x\right)  \theta^{k},
\label{e.4.27}%
\end{equation}
then $q\left(  x,\partial\right)  $ is defined to be the $j^{\text{th}}$ --
order differential operator given by
\begin{equation}
q\left(  x,\partial\right)  :=\sum_{k=0}^{j}q_{k}\left(  x\right)
\partial_{x}^{k}. \label{e.4.28}%
\end{equation}
Similarly, for $\xi\in\mathbb{R},$ we let
\begin{equation}
q\left(  x,\frac{1}{i}\partial_{x}+\xi\right)  :=\sum_{k=0}^{j}q_{k}\left(
x\right)  \left(  \frac{1}{i}\partial_{x}+\xi\right)  ^{k}. \label{e.4.29}%
\end{equation}

\end{notation}

For the proofs below, recall from Eq. (\ref{e.3.11}) that
\begin{equation}
q\left(  \partial_{x}\right)  M_{e^{i\xi\cdot x}}=M_{e^{i\xi\cdot x}}q\left(
\partial_{x}+i\xi\right)  . \label{e.4.30}%
\end{equation}
whenever $q\left(  \theta\right)  $ is a polynomial in $\theta.$

\begin{lemma}
\label{lem.4.14}Let $q\left(  x,\theta\right)  $ be as in Eq. (\ref{e.4.27})
where the coefficients $\left\{  q_{k}\left(  x\right)  \right\}  _{k=0}^{j}$
are now assumed to be polynomials in $x.$ Further let
\begin{equation}
\left(  Su\right)  \left(  x\right)  :=\int_{\mathbb{R}}\Gamma\left(
x,\xi\right)  e^{ix\xi}\hat{u}(\xi)d\xi\text{ }\forall~u\in\mathcal{S},
\label{e.4.31}%
\end{equation}
where $\Gamma\left(  x,\xi\right)  $ is a smooth function such that
$\Gamma\left(  x,\xi\right)  $ and all of its derivatives in both $x$ and
$\xi$ have at most polynomial growth in $\xi$ for any fixed $x.$ Then
\begin{equation}
\left(  q\left(  x,\partial_{x}\right)  Su\right)  \left(  x\right)
=\int_{\mathbb{R}}e^{i\xi\cdot x}q\left(  i\partial_{\xi},\partial_{x}%
+i\xi\right)  \left[  \Gamma\left(  x,\xi\right)  \hat{u}\left(  \xi\right)
\right]  d\xi, \label{e.4.32}%
\end{equation}
where
\begin{equation}
q\left(  i\partial_{\xi},\partial_{x}+i\xi\right)  :=\sum_{k=0}^{j}%
q_{k}\left(  i\partial_{\xi}\right)  \left(  \partial_{x}+iM_{\xi}\right)
^{k}. \label{e.4.33}%
\end{equation}

\end{lemma}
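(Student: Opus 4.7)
The plan is to move the differential operator $q(x,\partial_x)$ under the integral sign in (4.31) by combining two elementary identities. The first is the intertwining relation (4.30), $\partial_x^k M_{e^{ix\xi}} = M_{e^{ix\xi}}(\partial_x+i\xi)^k$, which absorbs the $x$-derivatives into the phase $e^{ix\xi}$. The second is integration by parts in $\xi$: from $x^\alpha e^{ix\xi} = (-i)^\alpha \partial_\xi^\alpha e^{ix\xi}$ one obtains, for any Schwartz function $g$ of $\xi$,
\begin{equation*}
\int_{\mathbb{R}} x^\alpha e^{ix\xi}\, g(\xi)\, d\xi = \int_{\mathbb{R}} e^{ix\xi}\, (i\partial_\xi)^\alpha g(\xi)\, d\xi,
\end{equation*}
and applying this coefficient-by-coefficient to the polynomial $q_k$ converts multiplication by $q_k(x)$ into the operator $q_k(i\partial_\xi)$ once placed inside the integral.

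Writing $q(x,\partial_x) = \sum_{k=0}^{j} q_k(x)\partial_x^k$ and treating each term in turn, I would first differentiate (4.31) under the integral sign $k$ times. Since $\hat u(\xi)$ does not depend on $x$, the intertwining identity gives
\begin{equation*}
\partial_x^k (Su)(x) = \int_{\mathbb{R}} e^{ix\xi}(\partial_x+i\xi)^k\bigl[\Gamma(x,\xi)\hat u(\xi)\bigr] d\xi.
\end{equation*}
Then I would multiply by $q_k(x)$, bring it under the integral, expand $q_k$ as a sum of monomials in $x$, and apply the integration-by-parts identity with $g(\xi)$ taken to be $f(x,\xi) := (\partial_x+i\xi)^k[\Gamma(x,\xi)\hat u(\xi)]$ viewed as a function of $\xi$ for fixed $x$. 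This turns $M_{q_k(x)}$ into $q_k(i\partial_\xi)$ acting on $f(x,\xi)$, yielding
\begin{equation*}
q_k(x)\,\partial_x^k (Su)(x) = \int_{\mathbb{R}} e^{ix\xi}\, q_k(i\partial_\xi)(\partial_x+i\xi)^k\bigl[\Gamma(x,\xi)\hat u(\xi)\bigr] d\xi,
\end{equation*}
and summing over $0\le k\le j$ produces precisely (4.32) once the right-hand side is recognized, via (4.33), as $q(i\partial_\xi,\partial_x+i\xi)[\Gamma(x,\xi)\hat u(\xi)]$.

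The only delicate matter, and the one place where the hypotheses on $\Gamma$ are used, is the justification of differentiation under the integral and the vanishing of the boundary terms in each integration by parts. Since $u\in\mathcal{S}$ one has $\hat u\in\mathcal{S}$, and by assumption every mixed partial derivative of $\Gamma(x,\xi)$ has at most polynomial growth in $\xi$ for fixed $x$. Therefore every intermediate integrand of the form $\partial_\xi^\beta\partial_x^\gamma[\Gamma(x,\xi)\hat u(\xi)]$ that appears is Schwartz in $\xi$ for each fixed $x$, with local uniformity in $x$. This both justifies the dominated-convergence argument needed to pass $\partial_x^k$ inside the integral and makes the boundary contributions at $\xi = \pm\infty$ vanish in each integration by parts. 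Once this bookkeeping is in place the identity (4.32) follows immediately from the two commutation steps outlined above.
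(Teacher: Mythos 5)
Your proposal is correct and follows essentially the same route as the paper's proof: differentiate under the integral, use the intertwining identity (\ref{e.4.30}) to replace $\partial_x^k$ by $(\partial_x+i\xi)^k$ inside the integral, write $q_k(x)e^{ix\xi}=q_k(-i\partial_\xi)e^{ix\xi}$ and integrate by parts in $\xi$ to obtain $q_k(i\partial_\xi)$ acting on $\Gamma\hat{u}$. Your justification of the two analytic steps (passing $\partial_x$ through the integral and the vanishing of boundary terms, using that $\xi\mapsto(\partial_x+i\xi)^k[\Gamma(x,\xi)\hat{u}(\xi)]$ is Schwartz) is exactly the point the paper makes at the end of its proof.
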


\begin{proof}
Using Eq. (\ref{e.4.30}) we find,
\begin{align*}
\left(  q\left(  x,\partial\right)  Su\right)  \left(  x\right)   &
=\int_{\mathbb{R}}q\left(  x,\partial_{x}\right)  \left[  \Gamma\left(
x,\xi\right)  \hat{u}\left(  \xi\right)  e^{i\xi\cdot x}\right]  d\xi\\
&  =\sum_{k=0}^{j}\int_{\mathbb{R}}q_{k}\left(  x\right)  \partial_{x}%
^{k}\left[  \Gamma\left(  x,\xi\right)  \hat{u}\left(  \xi\right)
e^{i\xi\cdot x}\right]  d\xi\\
&  =\sum_{k=0}^{j}\int_{\mathbb{R}}q_{k}\left(  x\right)  e^{i\xi\cdot
x}\left(  \partial_{x}+i\xi\right)  ^{k}\left[  \Gamma\left(  x,\xi\right)
\hat{u}\left(  \xi\right)  \right]  d\xi\\
&  =\sum_{k=0}^{j}\int_{\mathbb{R}}\left[  q_{k}\left(  -i\partial_{\xi
}\right)  e^{i\xi\cdot x}\right]  \left(  \partial_{x}+i\xi\right)
^{k}\left[  \Gamma\left(  x,\xi\right)  \hat{u}\left(  \xi\right)  \right]
d\xi\\
&  =\sum_{k=0}^{j}\int_{\mathbb{R}}e^{i\xi\cdot x}q_{k}\left(  i\partial_{\xi
}\right)  \left(  \partial_{x}+i\xi\right)  ^{k}\left[  \Gamma\left(
x,\xi\right)  \hat{u}\left(  \xi\right)  \right]  d\xi\\
&  =\int_{\mathbb{R}}e^{i\xi\cdot x}q\left(  i\partial_{\xi},\partial_{x}%
+i\xi\right)  \left[  \Gamma\left(  x,\xi\right)  \hat{u}\left(  \xi\right)
\right]  d\xi.
\end{align*}
We have used the assumptions on $\Gamma$ to show; (1) that $\partial_{x}$
commutes with the integral giving the first equality above, and (2) that
\[
\xi\rightarrow\left(  \partial_{x}+i\xi\right)  ^{k}\left[  \Gamma\left(
x,\xi\right)  \hat{u}\left(  \xi\right)  \right]  \in\mathcal{S}%
\]
which is used to justify the integration by parts used in the in the second to
last equality.
\end{proof}

\begin{lemma}
\label{lem.4.15}Suppose that $f\in\mathbb{C}^{\infty}\left(  \mathbb{R}%
^{j},\left(  0,\infty\right)  \right)  ,$ then for every multi-index,
$\alpha=\left(  \alpha_{1},\dots,\alpha_{j}\right)  \in\mathbb{N}_{0}^{j}$
with $\alpha\neq0$ there exists a polynomial function, $P_{\alpha},$ with no
constant term such that
\[
\partial^{\alpha}\frac{1}{f}=\frac{1}{f}P_{\alpha}\left(  \left\{
\frac{\partial^{\beta}f}{f}:0<\beta\leq\alpha\right\}  \right)
\]
where $\partial^{\alpha}:=\partial_{1}^{\alpha_{1}}\dots\partial_{j}%
^{\alpha_{j}}.$ Moreover, $P_{\alpha}\left(  \left\{  \frac{\partial^{\beta}%
f}{f}:0<\beta\leq\alpha\right\}  \right)  $ is a linear combination of
monomials of the form $\prod_{i=1}^{k}\frac{\partial^{\beta^{\left(  i\right)
}}f}{f}$ where $\beta^{\left(  i\right)  }\in\mathbb{N}_{0}^{j}$ for $1\leq
i\leq k$ and $1\leq k\leq\left\vert \alpha\right\vert $ such that $\sum
_{i=1}^{k}\beta^{\left(  i\right)  }=\alpha,$ and $\beta^{\left(  i\right)
}\neq0$ for all $i.$
\end{lemma}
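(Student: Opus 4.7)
The plan is a straightforward induction on $n := |\alpha|$. The base case $n=1$ is immediate: if $\alpha = e_r$ (the $r$th standard basis vector), then
\[
\partial_r \tfrac{1}{f} = -\tfrac{\partial_r f}{f^{2}} = \tfrac{1}{f}\cdot\!\left(-\tfrac{\partial_r f}{f}\right),
\]
so $P_{e_r}$ is the monomial $-X_{e_r}$ with $k=1$ and $\beta^{(1)} = e_r = \alpha$, consistent with all the stated properties.

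For the inductive step, assume the formula holds for every nonzero multi-index of size at most $n-1$. Given $\alpha$ with $|\alpha| = n$, pick any index $r$ with $\alpha_r \geq 1$ and write $\alpha = \gamma + e_r$, so $|\gamma| = n-1$. By induction
\[
\partial^{\gamma}\tfrac{1}{f} = \tfrac{1}{f}\,P_{\gamma},
\]
where $P_\gamma$ is a linear combination of monomials $M = \prod_{i=1}^{k}\tfrac{\partial^{\beta^{(i)}}f}{f}$ with $k \leq n-1$, each $\beta^{(i)}\neq 0$, and $\sum_i \beta^{(i)} = \gamma$. Applying $\partial_r$ and using $\partial_r(1/f) = (1/f)(-\partial_r f/f)$, the product rule gives
\[
\partial^{\alpha}\tfrac{1}{f} \;=\; \tfrac{1}{f}\!\left(-\tfrac{\partial_r f}{f}\right)\!P_{\gamma} \;+\; \tfrac{1}{f}\,\partial_r P_{\gamma}.
\]
The first summand is already of the required type: multiplying each monomial $M$ by $-\partial_r f/f$ adds one more factor (of multi-index $e_r \neq 0$), raising the factor count from $k$ to $k+1 \leq n$, and updating the total degree from $\gamma$ to $\gamma + e_r = \alpha$.

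For the second summand, I would apply $\partial_r$ to a generic monomial $M = \prod_{i=1}^{k}\tfrac{\partial^{\beta^{(i)}}f}{f}$ by the Leibniz rule, using the basic identity
\[
\partial_r\!\left(\tfrac{\partial^{\beta^{(i)}}f}{f}\right) \;=\; \tfrac{\partial^{\beta^{(i)}+e_r}f}{f} \;-\; \tfrac{\partial^{\beta^{(i)}}f}{f}\cdot\tfrac{\partial_r f}{f}.
\]
Each resulting term is again a monomial in the variables $\partial^{\delta}f/f$ ($\delta \neq 0$): in the first alternative the number of factors stays at $k$ and one exponent $\beta^{(i)}$ is replaced by $\beta^{(i)} + e_r$; in the second alternative the number of factors becomes $k+1$ with a new factor of multi-index $e_r$. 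In both cases the sum of the multi-indices increases from $\gamma$ to $\gamma + e_r = \alpha$, the factor count stays $\leq k+1 \leq n$, and every exponent remains nonzero. There is no constant term because every monomial contains at least one factor of $\partial^\delta f/f$. Combining the two summands yields the desired polynomial $P_\alpha$ with all the stated structural properties, closing the induction.

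There is no real obstacle here; the only point that needs care is the simultaneous bookkeeping of the three invariants (the factor count bound $k \leq |\alpha|$, the degree identity $\sum_i \beta^{(i)} = \alpha$, and the nonvanishing of each $\beta^{(i)}$) as one passes from $P_\gamma$ to $P_\alpha$. The identity for $\partial_r(\partial^{\beta^{(i)}}f/f)$ displayed above is precisely what keeps all three invariants intact under the inductive step.
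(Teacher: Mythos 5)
Your induction on $|\alpha|$ is correct and complete, and it is precisely the ``straight forward induction argument'' the paper leaves to the reader: the key identity $\partial_r\bigl(\partial^{\beta}f/f\bigr)=\partial^{\beta+e_r}f/f-(\partial^{\beta}f/f)(\partial_r f/f)$ together with the product rule preserves the three invariants (factor count bounded by $|\alpha|$, exponent sum equal to $\alpha$, each $\beta^{(i)}\neq 0$), and $\beta^{(i)}\leq\alpha$ follows automatically from $\sum_i\beta^{(i)}=\alpha$ with nonnegative entries. As a side remark, the paper's illustrative identity has a small typo and should read $\partial_1\partial_2\tfrac{1}{f}=\tfrac{1}{f}\bigl[2\tfrac{\partial_1 f}{f}\tfrac{\partial_2 f}{f}-\tfrac{\partial_1\partial_2 f}{f}\bigr]$, with a coefficient of $2$ on the cross term, exactly as your inductive step produces.
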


\begin{proof}
The proof is a straight forward induction argument which will be left to the
reader. However, by way of example one easily shows,
\[
\partial_{1}\partial_{2}\frac{1}{f}=\frac{1}{f}\cdot\left[  \frac{\partial
_{1}f}{f}\frac{\partial_{2}f}{f}-\frac{\partial_{1}\partial_{2}f}{f}\right]
.
\]

\end{proof}

\begin{corollary}
\label{cor.4.16} Let $\mu\in\mathbb{R}.$ If
\begin{equation}
\Gamma\left(  x,\xi\right)  :=\frac{1}{\sigma_{\mathbb{L}}\left(
x,\xi\right)  +i\mu}, \label{e.4.34}%
\end{equation}
then
\begin{equation}
\left\vert \Gamma\left(  x,\xi\right)  \right\vert \leq\frac{1}{\left\vert
\operatorname{Re}\sigma_{\mathbb{L}}\left(  x,\xi\right)  \right\vert
}\lesssim\frac{1}{b_{m}^{n}\left(  x\right)  \xi^{2mn}+b_{0}^{n}\left(
x\right)  +c}\lesssim1 \label{e.4.35}%
\end{equation}
and for any $\alpha,\beta\in\mathbb{N}_{0}$ with $\alpha+\beta>0,$ there
exists a constant $c_{\alpha,\beta}>0$ such that
\begin{equation}
\left\vert \partial_{x}^{\alpha}\partial_{\xi}^{\beta}\Gamma\left(
x,\xi\right)  \right\vert \leq\left\vert \Gamma\left(  x,\xi\right)
\right\vert \cdot c_{\alpha,\beta}\left(  1+\left\vert \xi\right\vert \right)
^{-\beta}\left(  1+\left\vert x\right\vert \right)  ^{-\alpha}. \label{e.4.36}%
\end{equation}

\end{corollary}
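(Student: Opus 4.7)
The plan has two essentially independent pieces, both resting on the elementary observation that for any real $\mu$, $|\sigma_{\mathbb{L}}(x,\xi)+i\mu| \geq |\operatorname{Re}\sigma_{\mathbb{L}}(x,\xi)|$, so in particular $|\Gamma(x,\xi)| \leq 1/|\operatorname{Re}\sigma_{\mathbb{L}}(x,\xi)|$. For the pointwise bound \eqref{e.4.35}, I would invoke Eq.\ \eqref{e.4.18} and note that since $b_m \geq c_{b_m} > 0$ and every $b_l$ is either identically zero or uniformly positive (assumption 1 of Theorem \ref{the.1.10}), all coefficients $\mathcal{B}_\ell$ of $\Sigma$ in \eqref{e.4.2} are non-negative. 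Dropping everything except the $\ell=0$ and $\ell=mn$ terms yields $\Sigma(x,\xi) \geq \mathcal{B}_{mn}(x)\xi^{2mn} + \mathcal{B}_0(x) = b_m^n(x)\xi^{2mn} + b_0^n(x)$, hence $\operatorname{Re}\sigma_{\mathbb{L}}(x,\xi) \geq \tfrac12\bigl(b_m^n(x)\xi^{2mn} + b_0^n(x) + c\bigr)$, which proves the displayed chain in \eqref{e.4.35}; the concluding $\lesssim 1$ is just $c>0$.

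For the derivative estimate \eqref{e.4.36}, I would apply Lemma \ref{lem.4.15} to $f(x,\xi) := \sigma_{\mathbb{L}}(x,\xi) + i\mu$. The lemma is stated for real positive $f$, but its proof is a purely algebraic quotient/chain-rule induction and extends verbatim to any smooth non-vanishing (possibly complex-valued) $f$; non-vanishing is secured by \eqref{e.4.19} via $|f| \geq |\operatorname{Re}\sigma_{\mathbb{L}}| \geq \kappa > 0$. For a multi-index $(\alpha,\beta) \neq (0,0)$ this yields
\[
\partial_x^\alpha \partial_\xi^\beta \Gamma \;=\; \Gamma \cdot P_{(\alpha,\beta)}\!\left(\left\{\frac{\partial_x^{\alpha'}\partial_\xi^{\beta'}\sigma_{\mathbb{L}}}{\sigma_{\mathbb{L}}+i\mu} : 0 < (\alpha',\beta') \leq (\alpha,\beta)\right\}\right),
\]
where I have already used that $\partial_x^{\alpha'}\partial_\xi^{\beta'}(\sigma_{\mathbb{L}}+i\mu) = \partial_x^{\alpha'}\partial_\xi^{\beta'}\sigma_{\mathbb{L}}$ once $(\alpha',\beta')\neq 0$, since $i\mu$ is constant.

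By the structural clause of Lemma \ref{lem.4.15}, $P_{(\alpha,\beta)}$ is a linear combination of monomials of the form $\prod_{i=1}^k \frac{\partial_x^{\alpha^{(i)}}\partial_\xi^{\beta^{(i)}}\sigma_{\mathbb{L}}}{\sigma_{\mathbb{L}}+i\mu}$ with $\sum_i(\alpha^{(i)},\beta^{(i)}) = (\alpha,\beta)$ and every $(\alpha^{(i)},\beta^{(i)}) \neq 0$. Combining $|\sigma_{\mathbb{L}}+i\mu| \geq |\operatorname{Re}\sigma_{\mathbb{L}}|$ with the ratio bound \eqref{e.4.21} of Corollary \ref{cor.4.12}, each factor satisfies
\[
\left|\frac{\partial_x^{\alpha^{(i)}}\partial_\xi^{\beta^{(i)}}\sigma_{\mathbb{L}}}{\sigma_{\mathbb{L}}+i\mu}\right| \;\lesssim\; (1+|\xi|)^{-\beta^{(i)}}(1+|x|)^{-\alpha^{(i)}},
\]
so the product over $i$ collapses the exponents to exactly $-\beta$ and $-\alpha$. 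Summing over the finitely many monomials in $P_{(\alpha,\beta)}$ produces a constant $c_{\alpha,\beta}$ and gives \eqref{e.4.36}. The only genuinely non-mechanical point is the complex extension of Lemma \ref{lem.4.15}; I would either paste a one-line induction (the base case $\partial(1/f) = -(1/f)(\partial f/f)$ needs no positivity) or simply remark that the lemma's proof never uses $f > 0$, only $f \neq 0$. Everything after that is bookkeeping on top of the polynomial estimates of Corollaries \ref{cor.4.9} and \ref{cor.4.12}.
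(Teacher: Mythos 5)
Your proposal is correct and follows essentially the same route as the paper: the first chain of inequalities via Eq.\ (\ref{e.4.18}) (which the paper leaves to the reader), and the derivative bound via Lemma \ref{lem.4.15} applied to $\sigma_{\mathbb{L}}+i\mu$ together with the factor-by-factor estimate from Eq.\ (\ref{e.4.21}). Your explicit remark that Lemma \ref{lem.4.15} extends to non-vanishing complex-valued $f$ is a point the paper glosses over, and it is a worthwhile addition.
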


\begin{proof}
The estimate in Eq. (\ref{e.4.35}) is elementary from Eq.(\ref{e.4.18}) in
Lemma \ref{lem.4.10} and will be left to the reader. From Lemma \ref{lem.4.15}%
,
\begin{align*}
\partial_{x}^{\alpha}\partial_{\xi}^{\beta}\Gamma\left(  x,\xi\right)   &
=\partial_{x}^{\alpha}\partial_{\xi}^{\beta}\frac{1}{\sigma_{\mathbb{L}%
}\left(  x,\xi\right)  +i\mu}=\frac{1}{\sigma_{\mathbb{L}}\left(
x,\xi\right)  +i\mu}\cdot\Theta_{\left(  \alpha,\beta\right)  }\left(
x,\xi\right)  \\
&  =\Gamma\left(  x,\xi\right)  \cdot\Theta_{\left(  \alpha,\beta\right)
}\left(  x,\xi\right)
\end{align*}
where $\Theta_{\left(  \alpha,\beta\right)  }\left(  x,\xi\right)  $ is a
linear combination of the following functions,
\[
\prod_{j=1}^{J}\frac{\partial_{x}^{k_{j}}\partial_{\xi}^{l_{j}}\sigma
_{\mathbb{L}}\left(  x,\xi\right)  }{\sigma_{\mathbb{L}}\left(  x,\xi\right)
+i\mu}%
\]
where $0\leq k_{j}\leq\alpha,~0\leq l_{j}\leq\beta,~k_{j}+l_{j}>0,~\sum
_{j=1}^{J}k_{j}=\alpha,~\sum_{j=1}^{J}l_{j}=\beta$ and $1\leq J\leq
\alpha+\beta$. The estimate in Eq. (\ref{e.4.21}) implies,
\begin{align*}
\prod_{j=1}^{J}\left\vert \frac{\partial_{x}^{k_{j}}\partial_{\xi}^{l_{j}%
}\sigma_{\mathbb{L}}\left(  x,\xi\right)  }{\sigma_{\mathbb{L}}\left(
x,\xi\right)  +i\mu}\right\vert  & \leq\prod_{j=1}^{J}\left\vert
\frac{\partial_{x}^{k_{j}}\partial_{\xi}^{l_{j}}\sigma_{\mathbb{L}}\left(
x,\xi\right)  }{\operatorname{Re}\sigma_{\mathbb{L}}\left(  x,\xi\right)
}\right\vert \\
& \lesssim\prod_{j=1}^{J}\frac{1}{\left(  1+\left\vert \xi\right\vert \right)
^{l_{j}}\left(  1+\left\vert x\right\vert \right)  ^{k_{j}}}\lesssim\left(
1+\left\vert \xi\right\vert \right)  ^{-\beta}\left(  1+\left\vert
x\right\vert \right)  ^{-\alpha}%
\end{align*}
which altogether gives the estimated in Eq. (\ref{e.4.36}).
\end{proof}

\begin{proposition}
[$T_{\mu}\mathcal{\ }$preserves $\mathcal{S}$]\label{pro.4.17} If $T_{\mu}$ is
as defined in Eq. (\ref{e.4.26}), then $T_{\mu}\left(  \mathcal{S}\right)
\subset\mathcal{S}$ for all $\mu\in\mathbb{R}.$
\end{proposition}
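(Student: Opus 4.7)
The plan is to verify the Schwartz condition directly: we will show that for every $\alpha,\beta\in\mathbb{N}_0$, the function $x\mapsto x^{\alpha}\partial_x^{\beta}(T_\mu u)(x)$ is bounded on $\mathbb{R}$ whenever $u\in\mathcal{S}$. This, together with continuity of $T_\mu u$ (which follows from dominated convergence applied to \eqref{e.4.26}), implies $T_\mu u\in\mathcal{S}$.

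First I would apply Lemma~\ref{lem.4.14} with the trivial polynomial $q(x,\theta)=\theta^{\beta}$ and $\Gamma=\Gamma_\mu$ defined in \eqref{e.4.34}. The required polynomial growth hypothesis on $\Gamma$ and its derivatives is supplied by Corollary~\ref{cor.4.16} (indeed the derivatives of $\Gamma$ are uniformly bounded in $\xi$). This yields
\[
\partial_x^{\beta}(T_\mu u)(x)=\int_{\mathbb{R}}e^{ix\xi}\,(\partial_x+i\xi)^{\beta}\bigl[\Gamma(x,\xi)\hat{u}(\xi)\bigr]\,d\xi
=\sum_{k=0}^{\beta}\binom{\beta}{k}\int_{\mathbb{R}}e^{ix\xi}(i\xi)^{\beta-k}\,\partial_x^{k}\Gamma(x,\xi)\,\hat{u}(\xi)\,d\xi.
\]
Next, to deal with the factor $x^{\alpha}$, I would use the identity $x^{\alpha}e^{ix\xi}=(-i\partial_\xi)^{\alpha}e^{ix\xi}$ and integrate by parts $\alpha$ times in $\xi$. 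Because $\hat{u}\in\mathcal{S}$ and $\partial_x^{k}\Gamma$ together with its $\xi$--derivatives are at most polynomially bounded in $\xi$ (by Corollary~\ref{cor.4.16}), all boundary terms at $\xi=\pm\infty$ vanish. The outcome is
\[
x^{\alpha}\partial_x^{\beta}(T_\mu u)(x)=\sum_{k=0}^{\beta}\binom{\beta}{k}\int_{\mathbb{R}}e^{ix\xi}\,(i\partial_\xi)^{\alpha}\!\Bigl[(i\xi)^{\beta-k}\partial_x^{k}\Gamma(x,\xi)\,\hat{u}(\xi)\Bigr]\,d\xi.
\]

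Expanding by the Leibniz rule, each integrand is a finite linear combination of terms
\[
\bigl[\partial_\xi^{\alpha_1}(i\xi)^{\beta-k}\bigr]\cdot\bigl[\partial_\xi^{\alpha_2}\partial_x^{k}\Gamma(x,\xi)\bigr]\cdot\bigl[\partial_\xi^{\alpha_3}\hat{u}(\xi)\bigr],\qquad \alpha_1+\alpha_2+\alpha_3=\alpha.
\]
The first bracket is a polynomial in $\xi$ of degree at most $\beta-k-\alpha_1$; for the second, Corollary~\ref{cor.4.16} combined with $|\Gamma|\lesssim 1$ gives
\[
\bigl|\partial_\xi^{\alpha_2}\partial_x^{k}\Gamma(x,\xi)\bigr|\lesssim (1+|\xi|)^{-\alpha_2}(1+|x|)^{-k}\leq (1+|\xi|)^{-\alpha_2};
\]
and the third is Schwartz in $\xi$. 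Therefore the absolute value of the integrand is majorized by $C_{\alpha,\beta}\,(1+|\xi|)^{N}\,|\partial_\xi^{\alpha_3}\hat{u}(\xi)|$ for some finite $N=N(\alpha,\beta,k,\alpha_1,\alpha_2)$, uniformly in $x$. Since $(1+|\xi|)^{N}|\partial_\xi^{\alpha_3}\hat{u}(\xi)|$ is integrable on $\mathbb{R}$, we conclude that $x^{\alpha}\partial_x^{\beta}(T_\mu u)(x)$ is bounded uniformly in $x\in\mathbb{R}$. As $\alpha,\beta$ are arbitrary, $T_\mu u\in\mathcal{S}$.

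The only subtle step, and the one I would take care to justify, is the iterated integration by parts in $\xi$: this requires knowing that the products of polynomial factors in $\xi$, derivatives of $\Gamma$, and $\hat{u}$ (together with all of their $\xi$-derivatives) decay faster than any polynomial as $|\xi|\to\infty$. This decay comes cleanly from $\hat u\in\mathcal{S}$ together with the polynomial bounds on $\partial_\xi^j\partial_x^k\Gamma$ supplied by Corollary~\ref{cor.4.16}. Everything else is an application of dominated convergence to differentiate under the integral sign and the pointwise estimates already established.
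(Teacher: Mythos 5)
Your proof is correct and follows essentially the same approach as the paper: both rely on Lemma~\ref{lem.4.14}, the uniform bounds $\left\vert\partial_x^\alpha\partial_\xi^\beta\Gamma\right\vert\leq C_{\alpha,\beta}$ from Corollary~\ref{cor.4.16}, and the rapid decay of $\hat u\in\mathcal{S}$ to show every $x^\alpha\partial_x^\beta T_\mu u$ is bounded. The only cosmetic difference is that the paper invokes Lemma~\ref{lem.4.14} once with a general polynomial $q(x,\theta)$, whose $q_k(i\partial_\xi)$ factor already encodes the $\xi$--integration by parts, whereas you apply it only with $q=\theta^\beta$ and then carry out that integration by parts for $x^\alpha$ explicitly by hand.
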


\begin{proof}
Let $\Gamma$ be as in Eq. (\ref{e.4.34}) so that $T_{\mu}=S$ where $S$ is as
in Lemma \ref{lem.4.14}. According Corollary \ref{cor.4.16}, for all
$\alpha,\beta\in\mathbb{N}_{0}$, we know that $\left\vert \partial_{x}%
^{\alpha}\partial_{\xi}^{\beta}\Gamma\left(  x,\xi\right)  \right\vert \leq
C_{\alpha,\beta}$ for some constants $C_{\alpha,\beta}$ and hence, from Lemma
\ref{lem.4.14}, if $q\left(  x,\theta\right)  $ is as in Eq. (\ref{e.4.27}),
then
\begin{equation}
\left\vert q\left(  x,\partial_{x}\right)  T_{\mu}u\left(  x\right)
\right\vert \leq\int_{\mathbb{R}}\left\vert q\left(  i\partial_{\xi}%
,\partial_{x}+i\xi\right)  \left[  \Gamma\left(  x,\xi\right)  \hat{u}\left(
\xi\right)  \right]  \right\vert d\xi. \label{e.4.37}%
\end{equation}
The integrand in Eq. (\ref{e.4.37}) may be bounded by a finite linear
combination of terms of the form
\[
\left\vert \partial_{x}^{\alpha}\partial_{\xi}^{\beta}\Gamma\left(
x,\xi\right)  \right\vert \cdot\left\vert \xi^{j}\partial_{\xi}^{l}\hat
{u}\right\vert \left(  \xi\right)  \lesssim\left\vert \xi^{j}\partial_{\xi
}^{l}\hat{u}\right\vert \left(  \xi\right)  .
\]
Since $\hat{u}\in\mathcal{S},$ $\left\vert \xi^{j}\partial_{\xi}^{l}\hat
{u}\right\vert \left(  \xi\right)  $ is integrable and therefore we may
conclude that
\[
\sup_{x\in\mathbb{R}}\left\vert \left(  q\left(  x,\partial_{x}\right)
T_{\mu}u\right)  \left(  x\right)  \right\vert <\infty.
\]
As $q\left(  x,\theta\right)  $ was an arbitrary polynomial in $\left(
x,\theta\right)  $ we conclude that $T_{\mu}u\in\mathcal{S}.$
\end{proof}

We assume $\Gamma$ is as in Eq. (\ref{e.4.34}) for the remainder of this paper.

\begin{lemma}
\label{lem.4.18}For all $\mu\in\mathbb{R}$ and $u\in\mathcal{S},$
\begin{equation}
\left[  \mathbb{L}+i\mu\right]  T_{\mu}u=\left[  I+R_{\mu}\right]  u
\label{e.4.38}%
\end{equation}
where
\begin{equation}
\left(  R_{\mu}u\right)  \left(  x\right)  =\int_{\mathbb{R}}\rho_{\mu}\left(
x,\xi\right)  \hat{u}(\xi)e^{ix\xi}d\xi, \label{e.4.39}%
\end{equation}
\begin{equation}
\rho_{\mu}\left(  x,\xi\right)  :=\left(  \left[  \sigma_{\mathbb{L}}\left(
x,\frac{1}{i}\partial_{x}+\xi\right)  -\sigma_{\mathbb{L}}\left(
x,\xi\right)  \right]  \frac{1}{\sigma_{\mathbb{L}}\left(  x,\xi\right)
+i\mu}\right)  , \label{e.4.40}%
\end{equation}
and $\sigma_{\mathbb{L}}\left(  x,\frac{1}{i}\partial_{x}+\xi\right)  $ is as
in Eq. (\ref{e.4.29}).
\end{lemma}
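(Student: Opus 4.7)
The plan is to apply $\mathbb{L}+i\mu$ directly to the definition of $T_{\mu}u$ in Eq.~(\ref{e.4.26}), differentiate under the integral sign, use the intertwining relation Eq.~(\ref{e.4.30}) to convert $\partial_{x}^{k}$ acting on $\Gamma(x,\xi)e^{ix\xi}$ into $e^{ix\xi}(\partial_{x}+i\xi)^{k}\Gamma(x,\xi)$, recognize the resulting amplitude as the ``full-symbol'' operator $\sigma_{\mathbb{L}}(x,\tfrac{1}{i}\partial_{x}+\xi)$ acting on $\Gamma$, and finally isolate the ``principal'' part by adding and subtracting $\sigma_{\mathbb{L}}(x,\xi)\Gamma(x,\xi)$.

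More precisely, I would first justify that $\partial_{x}^{k}$ commutes with $\int_{\mathbb{R}}\cdots d\xi$ for each $0\le k\le 2mn$: by Corollary~\ref{cor.4.16} all $(x,\xi)$-derivatives of $\Gamma$ are bounded (for fixed $\mu$) with polynomial control in $\xi$, while $\hat{u}\in\mathcal{S}$ provides rapid decay, so dominated convergence applies. Writing $\mathbb{L}=\sum_{k=0}^{2mn}A_{k}(x)\partial_{x}^{k}$ and differentiating under the integral gives
\begin{equation*}
\mathbb{L}T_{\mu}u(x)=\sum_{k=0}^{2mn}A_{k}(x)\int_{\mathbb{R}}\partial_{x}^{k}\bigl[\Gamma(x,\xi)\,e^{ix\xi}\bigr]\hat{u}(\xi)\,d\xi.
\end{equation*}
Next, applying Eq.~(\ref{e.4.30}) with $q(\theta)=\theta^{k}$ yields $\partial_{x}^{k}[\Gamma(x,\xi)e^{ix\xi}]=e^{ix\xi}(\partial_{x}+i\xi)^{k}\Gamma(x,\xi)$, and summing $A_{k}(x)$ against $(\partial_{x}+i\xi)^{k}$ reassembles the operator $\sigma_{\mathbb{L}}(x,\tfrac{1}{i}\partial_{x}+\xi)$ in the sense of Notation~\ref{not.4.13}, because $\sigma_{\mathbb{L}}(x,\eta)=\sum_{k}A_{k}(x)(i\eta)^{k}$ and setting $\eta=\tfrac{1}{i}\partial_{x}+\xi$ produces exactly $\sum_{k}A_{k}(x)(\partial_{x}+i\xi)^{k}$. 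Consequently,
\begin{equation*}
\mathbb{L}T_{\mu}u(x)=\int_{\mathbb{R}}e^{ix\xi}\,\sigma_{\mathbb{L}}\bigl(x,\tfrac{1}{i}\partial_{x}+\xi\bigr)\Gamma(x,\xi)\,\hat{u}(\xi)\,d\xi.
\end{equation*}

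Adding $i\mu T_{\mu}u(x)$ to both sides and using $i\mu\,\Gamma=i\mu/(\sigma_{\mathbb{L}}+i\mu)$ inside the integrand, I would then split the amplitude via the algebraic identity
\begin{equation*}
\bigl[\sigma_{\mathbb{L}}(x,\tfrac{1}{i}\partial_{x}+\xi)+i\mu\bigr]\Gamma(x,\xi)=\bigl[\sigma_{\mathbb{L}}(x,\tfrac{1}{i}\partial_{x}+\xi)-\sigma_{\mathbb{L}}(x,\xi)\bigr]\Gamma(x,\xi)+\bigl[\sigma_{\mathbb{L}}(x,\xi)+i\mu\bigr]\Gamma(x,\xi).
\end{equation*}
The second bracket on the right is just the scalar function $1$ by the definition $\Gamma=1/(\sigma_{\mathbb{L}}+i\mu)$, while the first bracket is precisely $\rho_{\mu}(x,\xi)$ of Eq.~(\ref{e.4.40}). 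Finally, the $1$ contribution integrates via the Fourier inversion formula Eq.~(\ref{e.4.22}) to give $u(x)$, and the $\rho_{\mu}$ contribution is exactly $R_{\mu}u(x)$ in Eq.~(\ref{e.4.39}), which proves Eq.~(\ref{e.4.38}).

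The only nontrivial step is the interchange of differentiation and integration, together with the associated ``symbolic calculus'' manipulation: one has to be a bit careful that $\sigma_{\mathbb{L}}(x,\tfrac{1}{i}\partial_{x}+\xi)\Gamma(x,\xi)$ genuinely denotes the operator acting in the $x$-variable on the smooth function $\Gamma(\cdot,\xi)$, which is where the bounds of Corollary~\ref{cor.4.16} are needed to guarantee integrability of each of the finitely many terms $A_{k}(x)\,\partial_{x}^{j}\Gamma(x,\xi)\,\xi^{k-j}\hat{u}(\xi)$ appearing after expanding $(\partial_{x}+i\xi)^{k}$. Once those bounds are in place, every equality above is a finite sum of absolutely convergent integrals, and the proof is essentially a bookkeeping computation.
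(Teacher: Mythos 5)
Your proposal is correct and follows essentially the same route as the paper's proof: apply $\mathbb{L}+i\mu$ under the integral, use the intertwining identity (\ref{e.4.30}) to replace $\partial_{x}$ by $\partial_{x}+i\xi$ so that the amplitude becomes $\bigl[\sigma_{\mathbb{L}}\bigl(x,\tfrac{1}{i}\partial_{x}+\xi\bigr)+i\mu\bigr]\Gamma(x,\xi)$, and then add and subtract $\sigma_{\mathbb{L}}(x,\xi)$ so that the $\bigl(\sigma_{\mathbb{L}}(x,\xi)+i\mu\bigr)\Gamma=1$ term reproduces $u$ by Fourier inversion and the remainder is exactly $R_{\mu}u$. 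Your extra care about interchanging differentiation and integration (via the bounds of Corollary \ref{cor.4.16} and the rapid decay of $\hat{u}$) is just a more explicit version of the justification the paper passes over briefly.
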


\begin{proof}
As $\sigma_{\mathbb{L}}\left(  x,\xi\right)  $ is a polynomial in the $\xi$ --
variables with smooth coefficients in the $x$ -- variables, there is no
problem justifying the identity,
\begin{equation}
\left(  \left[  \mathbb{L}+i\mu\right]  T_{\mu}u\right)  \left(  x\right)
=\int_{\mathbb{R}}\left[  \mathbb{L}_{x}+i\mu\right]  \left(  \Gamma\left(
x,\xi\right)  e^{ix\xi}\right)  \hat{u}(\xi)d\xi, \label{e.4.41}%
\end{equation}
where the subscript $x$ on $\mathbb{L}$ indicates that $\mathbb{L}$ acts on
$x$ -- variables only. Using $\mathbb{L}=\sigma_{\mathbb{L}}\left(  x,\frac
{1}{i}\partial_{x}\right)  $ along with Eq. (\ref{e.4.30}) shows,
\begin{align*}
\left[  \mathbb{L}_{x}+i\mu\right]   &  \left(  \Gamma\left(  x,\xi\right)
e^{ix\xi}\right) \\
&  =\left[  \sigma_{\mathbb{L}}\left(  x,\frac{1}{i}\partial_{x}\right)
+i\mu\right]  \left(  e^{ix\xi}\Gamma\left(  x,\xi\right)  \right) \\
&  =e^{ix\xi}\left[  \sigma_{\mathbb{L}}\left(  x,\frac{1}{i}\partial_{x}%
+\xi\right)  +i\mu\right]  \Gamma\left(  x,\xi\right) \\
&  =e^{ix\xi}\left[  \sigma_{\mathbb{L}}\left(  x,\frac{1}{i}\partial_{x}%
+\xi\right)  -\sigma_{\mathbb{L}}\left(  x,\xi\right)  +\left(  \sigma
_{\mathbb{L}}\left(  x,\xi\right)  +i\mu\right)  \right]  \Gamma\left(
x,\xi\right) \\
&  =e^{ix\xi}\left[  \sigma_{\mathbb{L}}\left(  x,\frac{1}{i}\partial_{x}%
+\xi\right)  -\sigma_{\mathbb{L}}\left(  x,\xi\right)  \right]  \Gamma\left(
x,\xi\right)  +e^{ix\xi}%
\end{align*}
which combined with Eq. (\ref{e.4.41}) gives Eq. (\ref{e.4.38}).
\end{proof}

\begin{lemma}
\label{lem.4.19} $\rho_{\mu}\left(  x,\xi\right)  $ in Eq. (\ref{e.4.40}) can
be explicitly written as
\begin{equation}
\rho_{\mu}\left(  x,\xi\right)  =\sum_{k=1}^{2mn}\sum_{j=1}^{k}\binom{k}%
{j}A_{k}\left(  x\right)  \left(  i\xi\right)  ^{k-j}\partial_{x}^{j}%
\Gamma\left(  x,\xi\right)  . \label{e.4.42}%
\end{equation}
where $A_{k}\left(  x\right)  $ and $\Gamma\left(  x,\xi\right)  $ as in Eq.
(\ref{e.3.1}) and Eq. (\ref{e.4.34}) respectively. Moreover, there exists
$C<\infty$ independent of $\mu$ so that
\begin{equation}
\left\vert \rho_{\mu}\left(  x,\xi\right)  \right\vert \leq C\frac
{1}{1+\left\vert x\right\vert }\cdot\frac{1}{1+\left\vert \xi\right\vert }.
\label{e.4.43}%
\end{equation}

\end{lemma}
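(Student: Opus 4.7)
The statement has two parts: an explicit identity for $\rho_\mu$ and a uniform (in $\mu$) pointwise bound. I plan to handle them in sequence.

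For the identity, I would start from the defining formula
\[
\rho_\mu(x,\xi) = \bigl[\sigma_{\mathbb{L}}(x,\tfrac{1}{i}\partial_x+\xi) - \sigma_{\mathbb{L}}(x,\xi)\bigr]\Gamma(x,\xi),
\]
and use the fact that $\sigma_{\mathbb{L}}(x,\xi) = c + \sum_{k=0}^{2mn} A_k(x)(i\xi)^k$ (the constant $c$ only alters the $k=0$ coefficient). Interpreting the symbol substitution via Notation \ref{not.4.13} gives
\[
\sigma_{\mathbb{L}}(x,\tfrac{1}{i}\partial_x + \xi) = c + \sum_{k=0}^{2mn} A_k(x)(\partial_x + i\xi)^k,
\]
because $i(\tfrac{1}{i}\partial_x+\xi) = \partial_x + i\xi$. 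With $\xi$ held fixed as a parameter while acting on $\Gamma(x,\xi)$, the operators $\partial_x$ and $i\xi$ commute, so the binomial theorem yields $(\partial_x+i\xi)^k = \sum_{j=0}^{k}\binom{k}{j}(i\xi)^{k-j}\partial_x^j$. The $j=0$ piece of the resulting expansion reconstructs precisely $\sigma_{\mathbb{L}}(x,\xi)\Gamma(x,\xi)$ (including the constant $c$), and subtracting it off leaves the double sum Eq.~(\ref{e.4.42}); the outer sum starts at $k=1$ because the inner sum over $j\ge 1$ is empty when $k=0$.

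For the bound, fix any pair $(k,j)$ with $1\le j\le k\le 2mn$ and estimate $|A_k(x)(i\xi)^{k-j}\partial_x^j\Gamma(x,\xi)|$. Corollary \ref{cor.4.9} (with $\beta=0$) provides $|A_k(x)| \lesssim (\Sigma(x,\xi)+1)/(1+|\xi|^k)$, while Corollary \ref{cor.4.16} (with $\alpha=j$, $\beta=0$) gives $|\partial_x^j\Gamma(x,\xi)| \lesssim |\Gamma(x,\xi)|(1+|x|)^{-j}$. Finally Eq.~(\ref{e.4.18}) bounds $|\Gamma(x,\xi)|$ by $1/|\operatorname{Re}\sigma_{\mathbb{L}}(x,\xi)| \lesssim 1/(\Sigma(x,\xi)+c)$, which cancels the $\Sigma+1$ factor uniformly. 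Crucially, each of these three inputs is independent of $\mu$: Eq.~(\ref{e.4.18}) ignores the imaginary part $\mu$, and the constants $c_{\alpha,\beta}$ in Corollary \ref{cor.4.16} were derived from the $\mu$-free ratio bound Eq.~(\ref{e.4.21}).

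Multiplying these estimates together reduces everything to the elementary inequality
\[
\frac{|\xi|^{k-j}}{1+|\xi|^k} \lesssim \frac{1}{1+|\xi|} \quad \text{for } j\ge 1,
\]
which is verified by splitting into $|\xi|\le 1$ and $|\xi|\ge 1$. Combined with $(1+|x|)^{-j}\le (1+|x|)^{-1}$ for $j\ge 1$, this gives each summand of $\rho_\mu$ the pointwise bound $C_{k,j}(1+|x|)^{-1}(1+|\xi|)^{-1}$ independent of $\mu$, and summing over the finitely many $(k,j)$ pairs yields Eq.~(\ref{e.4.43}). The proof is essentially bookkeeping; there is no real obstacle, only the need to align the hypotheses of Corollaries \ref{cor.4.9} and \ref{cor.4.16} with the $j\ge 1$ factor that provides the decay in both variables.
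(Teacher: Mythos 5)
Your proof is correct and follows essentially the same route as the paper: the identity comes from the binomial expansion of $(\partial_x+i\xi)^k - (i\xi)^k$ applied to $\Gamma$, and the bound is obtained by combining Corollary \ref{cor.4.9} (for $|A_k|$), Corollary \ref{cor.4.16} with $\alpha=j\geq 1,\beta=0$ (for $|\partial_x^j\Gamma|$, yielding the $(1+|x|)^{-1}$ decay), and the lower bound on $\operatorname{Re}\sigma_{\mathbb{L}}$ from Eqs.~(\ref{e.4.35}) and (\ref{e.4.18}) to absorb the $\Sigma+1$ factor. The paper passes through the intermediate bound $1/(1+|\xi|^j)$ whereas you collapse the $\xi$-estimate into the single elementary inequality $|\xi|^{k-j}/(1+|\xi|^k)\lesssim 1/(1+|\xi|)$ for $j\geq 1$, but this is a cosmetic difference only.
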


\begin{proof}
Using Eq. (\ref{e.1.7}) and the formula of $\sigma_{\mathbb{L}}$ in Notation
\ref{not.4.11}, we may write Eq. (\ref{e.4.40}) more explicitly as,
\begin{align*}
\rho_{\mu}\left(  x,\xi\right)   &  =\sum_{k=0}^{2mn}A_{k}\left(  x\right)
\left[  \left(  \partial_{x}+i\xi\right)  ^{k}-\left(  i\xi\right)
^{k}\right]  \Gamma\left(  x,\xi\right)  \\
&  =\sum_{k=1}^{2mn}\sum_{j=1}^{k}\binom{k}{j}A_{k}\left(  x\right)  \left(
i\xi\right)  ^{k-j}\partial_{x}^{j}\Gamma\left(  x,\xi\right)  .
\end{align*}
Therefore, using the estimate in Eq. (\ref{e.4.36}) of Corollary
\ref{cor.4.16} with $\beta=0$ and $\alpha=j,$ we learn
\begin{align*}
\left\vert \rho_{\mu}\left(  x,\xi\right)  \right\vert  &  \leq\sum
_{k=1}^{2mn}\sum_{j=1}^{k}\binom{k}{j}\left\vert A_{k}\left(  x\right)
\right\vert \left\vert \xi\right\vert ^{k-j}\left\vert \partial_{x}^{j}%
\Gamma\left(  x,\xi\right)  \right\vert \\
&  \lesssim\sum_{k=1}^{2mn}\sum_{j=1}^{k}\binom{k}{j}\left\vert A_{k}\left(
x\right)  \right\vert \left\vert \xi\right\vert ^{k-j}\left\vert \Gamma\left(
x,\xi\right)  \right\vert \frac{1}{1+\left\vert x\right\vert }.
\end{align*}
Moreover, for any $1\leq j\leq k,$
\begin{align*}
\left\vert A_{k}\left(  x\right)  \right\vert \left\vert \xi\right\vert
^{k-j}\left\vert \Gamma\left(  x,\xi\right)  \right\vert  &  \lesssim
\frac{\Sigma\left(  x,\xi\right)  +1}{1+\left\vert \xi\right\vert ^{k}%
}\left\vert \xi\right\vert ^{k-j}\left\vert \Gamma\left(  x,\xi\right)
\right\vert \\
&  \leq\frac{1}{1+\left\vert \xi\right\vert ^{j}}\frac{\Sigma\left(
x,\xi\right)  +1}{\left\vert \operatorname{Re}\sigma_{\mathbb{L}}\left(
x,\xi\right)  \right\vert }\\
&  \lesssim\frac{1}{1+\left\vert \xi\right\vert ^{j}}\lesssim\frac
{1}{1+\left\vert \xi\right\vert },
\end{align*}
wherein we have used the estimates in Eq. (\ref{e.4.15}) with $\beta=0$ in the
first step, and the left inequality in Eq. (\ref{e.4.35}) in the second step,
and Eq. (\ref{e.4.18}) in the third step.
\end{proof}

We are now prepared to complete the proof of Theorem \ref{the.1.10}. The
following notation will be used in the proof.

\begin{notation}
\label{not.4.20}If $g:\mathbb{R}^{2}\rightarrow\mathbb{C}$ is a measurable
function we let
\begin{align*}
\left\Vert g\left(  x,\xi\right)  \right\Vert _{L^{2}(d\xi)}  &  :=\left(
\int_{\mathbb{R}}\left\vert g\left(  x,\xi\right)  \right\vert ^{2}%
d\xi\right)  ^{1/2}\text{ and }\\
\left\Vert g\left(  x,\xi\right)  \right\Vert _{L^{2}(dx\otimes d\xi)}  &
:=\left(  \int_{\mathbb{R}^{2}}\left\vert g\left(  x,\xi\right)  \right\vert
^{2}dxd\xi\right)  ^{1/2}.
\end{align*}

\end{notation}

\begin{proof}
[Proof of Theorem \ref{the.1.10}]The only thing left to show is that condition
2 in Lemma \ref{lem.4.2} is verified. Thus we have to estimate the operator
norm of the error term,
\[
\left(  R_{\mu}u\right)  \left(  x\right)  =\int_{\mathbb{R}}\rho_{\mu}\left(
x,\xi\right)  e^{ix\xi}\hat{u}(\xi)d\xi.
\]

Using the Cauchy--Schwarz inequality and the isometry property (see Eq.
(\ref{e.4.24}) of the Fourier transform it follows that
\[
\left\Vert R_{\mu}u\right\Vert _{L^{2}\left(  dx\right)  }\leq\frac{1}%
{\sqrt{2\pi}}\left\Vert \rho_{\mu}\right\Vert _{L^{2}\left(  dx\otimes
d\xi\right)  }\cdot\left\Vert u\right\Vert _{L^{2}\left(  dx\right)  }%
\]
where $\rho_{\mu}$ is the symbol of $R_{\mu}$ as defined in Eq. (\ref{e.4.40}%
). Since, by Lemma \ref{lem.4.15} and Eq. (\ref{e.4.42}), $\lim_{\mu
\rightarrow\pm\infty}\rho_{\mu}\left(  x,\xi\right)  =0$ and, from Eq.
(\ref{e.4.43}), $\rho_{\mu}$ is dominated by
\[
C\left(  1+\left\vert x\right\vert \right)  ^{-1}\left(  1+\left\vert
\xi\right\vert \right)  ^{-1}\in L^{2}\left(  dx\otimes d\xi\right)  ,
\]
it follows that $\left\Vert R_{\mu}\right\Vert _{op}\leq\frac{1}{\sqrt{2\pi}%
}\left\Vert \rho_{\mu}\right\Vert _{L^{2}\left(  dx\otimes d\xi\right)
}\rightarrow0$ as $\mu\rightarrow\pm\infty$ and in particular, $\left\Vert
R_{\mu}\right\Vert _{op}<1$ when $\left\vert \mu\right\vert $ is sufficiently
large. Therefore, $\mathbb{L}|_{\mathcal{S}}$ is essentially self-adjoint from
Lemma \ref{lem.4.2} and hence $L^{n}|_{\mathcal{S}}=\left(  \mathbb{L}%
-c\right)  |_{\mathcal{S}}$ ($c$ from Notation \ref{not.4.11}) is also
essentially self-adjoint.
\end{proof}

\section{The Divergence Form of $L^{n}$ and $L_{\hbar}^{n}$\label{sec.5}}

Suppose now that $L$ in Eq. (\ref{e.2.1}) with polynomial coefficients
$\left\{  b_{l}\right\}  _{l=0}^{m}$ is a symmetric differential operator on
$\mathcal{S}.$ In section \ref{sec.3}, we have expressed the symmetric
differential operators on $\mathcal{S}$, $L^{n},$ in the divergence form with
the polynomial coefficients $\left\{  B_{\ell}\right\}  $ as in Eq.
(\ref{e.3.1}) for $n\in\mathbb{N}$. The goal of this section is to derive some
basic properties of the polynomial coefficients $\left\{  B_{\ell}\right\}  $
and generalize coefficients properties for a scaled version $L_{\hbar}^{n}$
where $L_{\hbar}$ is in Eq. (\ref{e.1.11}).

\begin{proposition}
\label{pro.5.1}Suppose that $\left\{  b_{l}\right\}  _{l=0}^{m}$ are real
polynomials. Let $\mathcal{B}_{\ell}$ and $R_{\ell}$ are in Eqs. (\ref{e.3.2})
and (\ref{e.3.17}) respectively.

\begin{enumerate}
\item If $\deg b_{l}\leq\deg b_{l-1}$ for $1\leq l\leq m,$ then
\[
\deg\left(  R_{\ell}\right)  \leq\deg\mathcal{B}_{\ell}-2\text{ and
}\operatorname{deg}B_{\ell}=\operatorname{deg}\mathcal{B}_{\ell}\text{ for
}0\leq\ell\leq mn.
\]

\item If we only assume that $\deg b_{l}\leq\deg b_{l-1}+2$ for $1\leq l\leq
m,$ then
\[
\deg R_{\ell}\leq\deg\mathcal{B}_{\ell}\quad\text{for }0\leq\ell\leq mn.
\]

\end{enumerate}
\end{proposition}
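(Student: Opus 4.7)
The plan is to work directly from Proposition~\ref{pro.3.10}, which decomposes $B_\ell = \mathcal{B}_\ell + R_\ell$ with
\[
R_\ell = \sum_{\mathbf{k}\in\Lambda_m^n,\ \mathbf{p}\in\Lambda_{2m}^n} \hat C(n,\ell,\mathbf{k},\mathbf{p})\,(\partial^{p_1}b_{k_1})\cdots(\partial^{p_n}b_{k_n}),
\]
where the non-vanishing of $\hat C$ forces $0 < |\mathbf{p}| = 2(|\mathbf{k}|-\ell)$, and in particular $|\mathbf{k}| > \ell$. Writing $d_l := \deg b_l$, each non-zero summand of $R_\ell$ has degree at most $\sum_i d_{k_i} - |\mathbf{p}| = \sum_i d_{k_i} - 2(|\mathbf{k}|-\ell)$. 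Since $\mathcal{B}_\ell = \sum_{|\mathbf{j}|=\ell} b_{j_1}\cdots b_{j_n}$ has degree bounded by $D_\ell := \max_{|\mathbf{j}|=\ell,\ \mathbf{j}\in\Lambda_m^n}\sum_i d_{j_i}$, the proof is reduced to a combinatorial comparison of these two quantities.

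The key step is a coordinate-descent argument. Given any $\mathbf{k}\in\Lambda_m^n$ with $|\mathbf{k}| = s > \ell$, I would construct an $\mathbf{j}\in\Lambda_m^n$ with $\mathbf{j}\le\mathbf{k}$ (componentwise) and $|\mathbf{j}| = \ell$ by successively decrementing a positive coordinate of $\mathbf{k}$ exactly $s-\ell$ times; this is always possible since at each intermediate step the total is at least $\ell\ge 0$. The monotonicity hypothesis on the $d_l$ now controls the total degree along the descent: under assumption~(1), the inequality $d_l\le d_{l-1}$ says each single decrement $d_{k_i}\mapsto d_{k_i-1}$ weakly \emph{increases} the total degree, so $\sum_i d_{j_i}\ge\sum_i d_{k_i}$; under assumption~(2), the weaker inequality $d_l\le d_{l-1}+2$ (equivalently $d_{l-1}\ge d_l - 2$) says each decrement costs at most $2$, giving $\sum_i d_{j_i}\ge\sum_i d_{k_i}-2(s-\ell)$.

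Feeding $\sum_i d_{j_i}\le D_\ell$ into these inequalities yields, for every summand of $R_\ell$,
\[
\sum_i d_{k_i} - 2(s-\ell) \le D_\ell - 2 \text{ under (1), using }s-\ell\ge 1,\qquad \le D_\ell \text{ under (2)},
\]
which gives the stated degree bounds on $R_\ell$ once $D_\ell$ is identified with $\deg\mathcal{B}_\ell$. The equality $\deg B_\ell = \deg\mathcal{B}_\ell$ in part~(1) then follows from the strict gap $\deg R_\ell \le \deg\mathcal{B}_\ell - 2$: the leading term of $B_\ell = \mathcal{B}_\ell + R_\ell$ is inherited from $\mathcal{B}_\ell$ and cannot be cancelled by $R_\ell$, while conversely $\deg B_\ell \le \max(\deg\mathcal{B}_\ell,\deg R_\ell)=\deg\mathcal{B}_\ell$.

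The main (minor) obstacle I expect to encounter is the implicit identification of $\deg\mathcal{B}_\ell$ with $D_\ell$: a priori, the top-degree terms in $\mathcal{B}_\ell = \sum_{|\mathbf{j}|=\ell}b_{j_1}\cdots b_{j_n}$ could cancel and make the polynomial degree strictly less than $D_\ell$. However, the gap of $2$ in case~(1) is robust enough to handle this: since $R_\ell$ vanishes in both degrees $D_\ell$ and $D_\ell-1$, the coefficients of $x^{D_\ell}$ and $x^{D_\ell-1}$ in $B_\ell$ coincide with those of $\mathcal{B}_\ell$, so the two polynomials have identical polynomial degree. In the applications to Assumption~\ref{ass.1} where all relevant leading coefficients are strictly positive, no such cancellation occurs and the identification $\deg\mathcal{B}_\ell = D_\ell$ is automatic.
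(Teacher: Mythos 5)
Your argument is correct and is essentially the paper's own proof: bounding each summand $(\partial^{p_1}b_{k_1})\cdots(\partial^{p_n}b_{k_n})$ of $R_\ell$ by $\sum_i\deg b_{k_i}-|\mathbf{p}|$ and then comparing with a componentwise smaller $\mathbf{j}\le\mathbf{k}$, $|\mathbf{j}|=\ell$ (your coordinate descent), is exactly the paper's inequality chain in both cases, as is deducing $\deg B_\ell=\deg\mathcal{B}_\ell$ from the gap of $2$. The one caveat you flag — identifying $\deg\mathcal{B}_\ell$ with $\max_{|\mathbf{j}|=\ell}\sum_i\deg b_{j_i}$ — is tacitly assumed in the paper's proof as well (your "agreement in the top two coefficients" remark does not by itself repair a cancellation, but your observation that the positivity in Assumption \ref{ass.1} rules cancellation out in all uses of the proposition is the right resolution).
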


\begin{proof}
From Eq. (\ref{e.3.16}), $\deg\left(  B_{\ell}\right)  =\deg\mathcal{B}_{\ell
}$ follows automatically if
\begin{equation}
\deg\left(  R_{\ell}\right)  \leq\deg\mathcal{B}_{\ell}-2 \label{e.5.1}%
\end{equation}
holds. Therefore, the only thing to prove in the item 1 is Eq. (\ref{e.5.1}).
From Proposition \ref{pro.3.10}, $R_{\ell}$ is a linear combination of
$\left(  \partial^{p_{1}}b_{k_{1}}\right)  \left(  \partial^{p_{2}}b_{k_{2}%
}\right)  \dots\left(  \partial^{p_{n}}b_{k_{n}}\right)  $ with $0<\left\vert
\mathbf{p}\right\vert =2\left\vert \mathbf{k}\right\vert -2\ell.$ For each
index $\mathbf{k},$ there exists $\mathbf{j}$ with $\mathbf{j\leq k}$ such
that $\left\vert \mathbf{j}\right\vert =\ell$ and for this $\mathbf{j}$ we
have
\begin{align*}
\deg\left(  \left(  \partial^{p_{1}}b_{k_{1}}\right)  \left(  \partial^{p_{2}%
}b_{k_{2}}\right)  \dots\left(  \partial^{p_{n}}b_{k_{n}}\right)  \right)   &
\leq\sum_{i=1}^{n}\deg\left(  b_{k_{i}}\right)  -\left\vert \mathbf{p}%
\right\vert \\
&  \leq\sum_{i=1}^{n}\deg\left(  b_{j_{i}}\right)  -\left\vert \mathbf{p}%
\right\vert \\
&  \leq\deg\left(  \mathcal{B}_{\ell}\right)  -2,
\end{align*}
wherein we have used $\left\vert \mathbf{p}\right\vert \geq2$ ($\left\vert
\mathbf{p}\right\vert $ is positive even) and
\[
\deg\left(  \mathcal{B}_{\ell}\right)  =\max_{\left\vert \mathbf{j}\right\vert
=\ell}\deg\left(  b_{j_{1}}\dots b_{j_{n}}\right)  =\max_{\left\vert
\mathbf{j}\right\vert =\ell}\sum_{i=1}^{n}\deg\left(  b_{j_{i}}\right)  .
\]

Now suppose that we only assume $\deg\left(  b_{k+1}\right)  \leq\deg\left(
b_{k}\right)  +2$ (which then implies $\deg\left(  b_{k+r}\right)  \leq
\deg\left(  b_{k}\right)  +2r$ for $0\leq r\leq m-k).$ Working as above and
remember that $0<\left\vert \mathbf{p}\right\vert =2\left\vert \mathbf{k}%
\right\vert -2\ell$ and $\left|  j\right|  =\ell$ we find
\begin{align*}
\sum_{i=1}^{n}\deg\left(  b_{k_{i}}\right)  -\left\vert \mathbf{p}\right\vert
&  \leq\sum_{i=1}^{n}\left[  \deg\left(  b_{j_{i}}\right)  +2\left(
k_{i}-j_{i}\right)  \right]  -\left\vert \mathbf{p}\right\vert \\
&  =\sum_{i=1}^{n}\deg\left(  b_{j_{i}}\right)  +2\left(  \left\vert
\mathbf{k}\right\vert -\left\vert \mathbf{j}\right\vert \right)  -\left\vert
\mathbf{p}\right\vert \\
&  =\sum_{i=1}^{n}\deg\left(  b_{j_{i}}\right)  \leq\deg\left(  \mathcal{B}%
_{\ell}\right)  .
\end{align*}

\end{proof}

\subsection{Scaled Version of Divergence Form}

We now take $\hbar>0$ and let $L_{\hbar}$ be defined as in Eq. (\ref{e.1.11})
where the $\hbar$ -- dependent coefficients, $\left\{  b_{l,\hbar}\left(
x\right)  \right\}  _{l=0}^{m},$ satisfy Assumption \ref{ass.1}. To apply the
previous formula already developed (for $\hbar=1)$ we need only make the
replacements,
\begin{equation}
b_{l}\left(  x\right)  \rightarrow\hbar^{l}b_{l,\hbar}\left(  \sqrt{\hbar
}x\right)  \text{ for }0\leq l\leq m. \label{e.5.2}%
\end{equation}
The result of this transformation on $L^{n}$ is recorded in the following lemma.

\begin{notation}
\label{not.5.2} Let $x_{1},\dots,x_{j}$ be variables on $\mathbb{R}.$ We
denote $\mathbb{R}\left[  x_{1},\dots,x_{j}\right]  $ be a collection of
polynomials in $x_{1},\dots,x_{j}$ with real-valued coefficients.
\end{notation}

\begin{proposition}
\label{pro.5.3} Let $n\in\mathbb{N},$ $\hbar>0,$ and $\left\{  b_{l,\hbar
}\left(  x\right)  \right\}  _{l=0}^{m}\subset\mathbb{R}\left[  x\right]  $
and let $L_{\hbar}$ be as in Eq. (\ref{e.1.11}) . Then $L_{\hbar}^{n}$ is an
operator on $\mathcal{S}$ and
\begin{equation}
L_{\hbar}^{n}=\sum_{\ell=0}^{mn}\left(  -\hbar\right)  ^{\ell}\partial^{\ell
}B_{\ell,\hbar}\left(  \sqrt{\hbar}\left(  \cdot\right)  \right)
\partial^{\ell}, \label{e.5.3}%
\end{equation}
where\footnote{Below, we use $\hat{C}\left(  n,\ell,\mathbf{k},\mathbf{p}%
\right)  =0$ unless $0<\left\vert \mathbf{p}\right\vert =2\left\vert
\mathbf{k}\right\vert -2\ell,$ i.e. $\left\vert \mathbf{k}\right\vert
=\ell+\left\vert \mathbf{p}\right\vert /2.$}
\begin{equation}
B_{\ell,\hbar}:=\mathcal{B}_{\ell,\hbar}+R_{\ell,\hbar}\in\mathbb{R}\left[
x\right]  , \label{e.5.4}%
\end{equation}
\begin{align}
\mathcal{B}_{\ell,\hbar}  &  =\sum_{\mathbf{k}\in\Lambda_{m}^{n}}1_{\left\vert
\mathbf{k}\right\vert =\ell}b_{k_{1},\hbar}b_{k_{2},\hbar}\dots b_{k_{n}%
,\hbar},\label{e.5.5}\\
R_{\ell,\hbar}  &  =\underset{\mathbf{p\in}\Lambda_{2m}^{n}}{\sum
_{\mathbf{k}\in\Lambda_{m}^{n},}}\hat{C}\left(  n,\ell,\mathbf{k}%
,\mathbf{p}\right)  \hbar^{\left\vert \mathbf{p}\right\vert }\left(
\partial^{p_{1}}b_{k_{1},\hbar}\right)  \dots\left(  \partial^{p_{n}}%
b_{k_{n},\hbar}\right)  , \label{e.5.6}%
\end{align}

\end{proposition}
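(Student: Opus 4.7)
The plan is to obtain Proposition \ref{pro.5.3} by reducing it to the unscaled formulas of Proposition \ref{pro.3.10} via the substitution $b_l(x) \rightsquigarrow \hbar^l b_{l,\hbar}(\sqrt{\hbar}\,x)$, and then tracking how the $\hbar$--powers combine using the degree constraint $|\mathbf{p}| = 2|\mathbf{k}| - 2\ell$ supplied by the coefficients $\hat{C}(n,\ell,\mathbf{k},\mathbf{p})$.

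First I would set $\tilde{b}_l(x) := \hbar^l b_{l,\hbar}(\sqrt{\hbar}\,x)$ so that $L_\hbar = \sum_{l=0}^m (-1)^l \partial^l \tilde{b}_l \partial^l$ has exactly the form of the operator $L$ in Eq.~(\ref{e.2.1}), but with polynomial coefficients $\{\tilde{b}_l\}$. Proposition \ref{pro.3.10} then applies verbatim to $L_\hbar$, giving
\[
L_\hbar^n = \sum_{\ell=0}^{mn}(-1)^\ell \partial^\ell \tilde{B}_\ell \partial^\ell, \qquad \tilde{B}_\ell = \tilde{\mathcal{B}}_\ell + \tilde{R}_\ell,
\]
where $\tilde{\mathcal{B}}_\ell$ and $\tilde{R}_\ell$ are built out of $\{\tilde{b}_l\}$ via Eqs.~(\ref{e.3.2}) and (\ref{e.3.17}) with the same universal constants $\hat{C}(n,\ell,\mathbf{k},\mathbf{p})$.

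Next I would unwind the substitution. For $\tilde{\mathcal{B}}_\ell$ the factorization is immediate:
\[
\tilde{\mathcal{B}}_\ell(x) = \sum_{|\mathbf{k}|=\ell} \prod_{i=1}^n \hbar^{k_i} b_{k_i,\hbar}(\sqrt{\hbar}\,x) = \hbar^{\ell}\, \mathcal{B}_{\ell,\hbar}(\sqrt{\hbar}\,x).
\]
For $\tilde{R}_\ell$ the chain rule gives $\partial^{p_i} \tilde{b}_{k_i}(x) = \hbar^{k_i}\hbar^{p_i/2}(\partial^{p_i} b_{k_i,\hbar})(\sqrt{\hbar}\,x)$, so each summand carries the prefactor $\hbar^{|\mathbf{k}| + |\mathbf{p}|/2}$. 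The key observation is that $\hat{C}(n,\ell,\mathbf{k},\mathbf{p})$ vanishes unless $|\mathbf{p}| = 2|\mathbf{k}| - 2\ell$, i.e.\ $|\mathbf{k}| = \ell + |\mathbf{p}|/2$, so on the support of $\hat{C}$ we have $\hbar^{|\mathbf{k}|+|\mathbf{p}|/2} = \hbar^{\ell + |\mathbf{p}|}$. Factoring out $\hbar^\ell$ yields
\[
\tilde{R}_\ell(x) = \hbar^\ell \sum_{\mathbf{k},\mathbf{p}} \hat{C}(n,\ell,\mathbf{k},\mathbf{p})\, \hbar^{|\mathbf{p}|} \prod_{i=1}^n (\partial^{p_i} b_{k_i,\hbar})(\sqrt{\hbar}\,x) = \hbar^\ell R_{\ell,\hbar}(\sqrt{\hbar}\,x),
\]
which is exactly Eq.~(\ref{e.5.6}) rescaled. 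Combining the two pieces gives $\tilde{B}_\ell(x) = \hbar^\ell B_{\ell,\hbar}(\sqrt{\hbar}\,x)$, and absorbing the $\hbar^\ell$ into $(-1)^\ell$ to form $(-\hbar)^\ell$ produces the claimed Eq.~(\ref{e.5.3}).

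The only mild obstacle is bookkeeping the $\hbar$--exponents correctly; once the support condition $|\mathbf{p}|=2|\mathbf{k}|-2\ell$ (which is pointed out in Proposition \ref{pro.3.10} and in the footnote attached to Eq.~(\ref{e.5.3})) is used, all powers collapse to a single overall $\hbar^\ell$ per term and the scaling argument goes through without any further estimates. The fact that $B_{\ell,\hbar} \in \mathbb{R}[x]$ follows because $\{b_{l,\hbar}\} \subset \mathbb{R}[x]$ and both $\mathcal{B}_{\ell,\hbar}$ and $R_{\ell,\hbar}$ are polynomial combinations of the $b_{l,\hbar}$ and their derivatives.
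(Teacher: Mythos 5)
Your proof is correct and follows essentially the same route as the paper: substitute $b_l(x)\rightsquigarrow \hbar^l b_{l,\hbar}(\sqrt{\hbar}\,x)$ into Proposition \ref{pro.3.10}, track the resulting $\hbar$--powers term by term, and use the support constraint $|\mathbf{p}|=2|\mathbf{k}|-2\ell$ on $\hat C$ to collapse $\hbar^{|\mathbf{k}|+|\mathbf{p}|/2}$ to $\hbar^{\ell+|\mathbf{p}|}$ and peel off the common factor $\hbar^\ell$. (Incidentally, your bookkeeping is cleaner than the paper's: the paper's Eq.~(\ref{e.5.8}) carries a spurious $(-1)^{|\mathbf{k}|}$ through its intermediate lines even though it does not belong there and does not appear in the definition~(\ref{e.5.6}); your version omits it correctly.)
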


\begin{proof}
Making the replacements $b_{l}\left(  x\right)  \rightarrow\hbar^{l}%
b_{l,\hbar}\left(  \sqrt{\hbar}x\right)  $ in Eqs. (\ref{e.3.17}) and
(\ref{e.3.2}) shows
\begin{equation}
\mathcal{B}_{\ell}\left(  x\right)  \rightarrow\sum_{\mathbf{j}\in\Lambda
_{m}^{n}}1_{\left\vert \mathbf{j}\right\vert =\ell}\left[  \hbar^{j_{1}%
}b_{j_{1},\hbar}\left(  \sqrt{\hbar}x\right)  \dots\hbar^{j_{n}}b_{j_{n}%
,\hbar}\left(  \sqrt{\hbar}x\right)  \right]  =\hbar^{\ell}\mathcal{B}%
_{\ell,\hbar}\left(  \sqrt{\hbar}x\right)  \label{e.5.7}%
\end{equation}
and
\begin{align}
R_{\ell}\left(  x\right)   &  \rightarrow\sum_{\mathbf{k}\in\Lambda_{m}%
^{n},\mathbf{p\in}\Lambda_{2m}^{n}}\hat{C}\left(  n,\ell,\mathbf{k}%
,\mathbf{p}\right)  \left(  -1\right)  ^{\left\vert \mathbf{k}\right\vert
}\hbar^{\left\vert \mathbf{k}\right\vert +\frac{\left\vert \mathbf{p}%
\right\vert }{2}}\left[  \left(  \partial^{p_{1}}b_{k_{1},\hbar}\right)
\dots\left(  \partial^{p_{n}}b_{k_{n},\hbar}\right)  \right]  \left(
\sqrt{\hbar}x\right) \nonumber\\
&  =\sum_{\mathbf{k}\in\Lambda_{m}^{n},\mathbf{p\in}\Lambda_{2m}^{n}}\hat
{C}\left(  n,\ell,\mathbf{k},\mathbf{p}\right)  \left(  -1\right)
^{\left\vert \mathbf{k}\right\vert }\hbar^{\ell+\left\vert \mathbf{p}%
\right\vert }\left[  \left(  \partial^{p_{1}}b_{k_{1},\hbar}\right)
\dots\left(  \partial^{p_{n}}b_{k_{n},\hbar}\right)  \right]  \left(
\sqrt{\hbar}x\right) \nonumber\\
&  =\hbar^{\ell}\sum_{\mathbf{k}\in\Lambda_{m}^{n},\mathbf{p\in}\Lambda
_{2m}^{n}}\hat{C}\left(  n,\ell,\mathbf{k},\mathbf{p}\right)  \left(
-1\right)  ^{\left\vert \mathbf{k}\right\vert }\hbar^{\left\vert
\mathbf{p}\right\vert }\left[  \left(  \partial^{p_{1}}b_{k_{1},\hbar}\right)
\dots\left(  \partial^{p_{n}}b_{k_{n},\hbar}\right)  \right]  \left(
\sqrt{\hbar}x\right) \nonumber\\
&  =\hbar^{\ell}R_{\ell,\hbar}\left(  \sqrt{\hbar}x\right)  . \label{e.5.8}%
\end{align}
Therefore it follows that
\[
B_{\ell}\left(  x\right)  =\mathcal{B}_{\ell}\left(  x\right)  +R_{\ell
}\left(  x\right)  \rightarrow\hbar^{\ell}\left[  \mathcal{B}_{\ell,\hbar
}+R_{\ell,\hbar}\right]  \left(  \sqrt{\hbar}x\right)  =\hbar^{\ell}%
B_{\ell,\hbar}\left(  \sqrt{\hbar}x\right)
\]
where $L_{\hbar}^{n}$ is then given as in Eq. (\ref{e.5.3}).
\end{proof}

\begin{notation}
\label{not.5.4}Let
\begin{align}
\mathcal{L}_{\hbar}^{\left(  n\right)  }  &  =\sum_{\ell=0}^{mn}\left(
-\hbar\right)  ^{\ell}\partial^{\ell}\mathcal{B}_{\ell,\hbar}\left(
\sqrt{\hbar}\left(  \cdot\right)  \right)  \partial^{\ell},\text{
and}\label{e.5.9}\\
\mathcal{R}_{\hbar}^{\left(  n\right)  }  &  =\sum_{\ell=0}^{mn-1}\left(
-\hbar\right)  ^{\ell}\partial^{\ell}R_{\ell,\hbar}\left(  \sqrt{\hbar}\left(
\cdot\right)  \right)  \partial_{}^{\ell}. \label{e.5.10}%
\end{align}
as operators on $\mathcal{S}.$ Then $L_{\hbar}^{n}$ can also be written as
\begin{equation}
L_{\hbar}^{n}=\mathcal{L}_{\hbar}^{\left(  n\right)  }+\mathcal{R}_{\hbar
}^{\left(  n\right)  }\text{ on }\mathcal{S}. \label{e.5.11}%
\end{equation}

\end{notation}

\section{Operator Comparison\label{sec.6}}

The main purpose of this section is to prove Theorem \ref{the.1.18}. First
off, since the inequality symbol $\preceq_{\mathcal{S}}$ appears very often in
this section which is defined in Notation \ref{not.1.14}, let us recall its
definition. If $A$ and $B$ are symmetric operators on $\mathcal{S}$ (see
Definition \ref{def.1.12}), then we say $A\preceq_{\mathcal{S}}B$ if
\[
\left\langle A\psi,\psi\right\rangle \preceq_{\mathcal{S}}\left\langle
B\psi,\psi\right\rangle \text{ for all }\psi\in\mathcal{S}%
\]
where $\left\langle \cdot,\cdot\right\rangle $ is the usual $L^{2}\left(
m\right)  $ -- inner product as in Eq. (\ref{e.1.1}).

\begin{lemma}
\label{lem.6.1}Let $\left\{  c_{\ell}\right\}  _{\ell=0}^{M-1}\subset
\mathbb{R}$ be given constants. Then for any $\delta>0,$ there exists
$C_{\delta}<\infty$ such that
\begin{equation}
\sum_{\ell=0}^{M-1}c_{\ell}\left(  -\hbar\partial^{2}\right)  ^{\ell}%
\preceq_{\mathcal{S}}\delta\left(  -\hbar\partial^{2}\right)  ^{M}+C_{\delta
}I~\forall~\hbar>0. \label{e.6.1}%
\end{equation}

\end{lemma}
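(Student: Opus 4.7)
The plan is to reduce this operator inequality on $\mathcal{S}$ to an elementary scalar polynomial inequality using the Fourier transform. For any $\psi \in \mathcal{S}$ and any integer $\ell \geq 0$, Plancherel's theorem combined with $\widehat{\partial \psi}(\xi) = i\xi \hat{\psi}(\xi)$ (with the normalization in Eq.~(\ref{e.4.23})) gives
\[
\langle (-\hbar \partial^{2})^{\ell}\psi, \psi \rangle = 2\pi \int_{\mathbb{R}} (\hbar \xi^{2})^{\ell}\,|\hat{\psi}(\xi)|^{2}\,d\xi.
\]
Let $d\mu_{\psi}(\xi) := 2\pi |\hat{\psi}(\xi)|^{2}\,d\xi$, which is a finite non-negative measure on $\mathbb{R}$ with total mass $\|\psi\|^{2}$. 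Then the desired operator inequality is equivalent to
\[
\int_{\mathbb{R}} \sum_{\ell=0}^{M-1} c_{\ell}(\hbar \xi^{2})^{\ell}\,d\mu_{\psi}(\xi) \leq \delta \int_{\mathbb{R}} (\hbar \xi^{2})^{M}\,d\mu_{\psi}(\xi) + C_{\delta}\int_{\mathbb{R}} d\mu_{\psi}(\xi).
\]

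Next I would establish the scalar inequality: for any $\delta > 0$ there exists $C_{\delta} < \infty$ such that
\[
\sum_{\ell=0}^{M-1} c_{\ell}\, u^{\ell} \leq \delta u^{M} + C_{\delta} \qquad \forall\, u \geq 0.
\]
This is immediate since the polynomial $p(u) := \sum_{\ell=0}^{M-1} c_{\ell}u^{\ell} - \delta u^{M}$ has leading coefficient $-\delta < 0$ (and in particular $p(u) \to -\infty$ as $u \to \infty$), so $C_{\delta} := \max\{0, \sup_{u \geq 0} p(u)\} < \infty$ works.

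Finally, I would apply the scalar inequality pointwise with $u = \hbar \xi^{2} \geq 0$ and integrate against $d\mu_{\psi}$ to obtain Eq.~(\ref{e.6.1}). Since the scalar bound depends only on $\delta$ and the coefficients $\{c_{\ell}\}$, the resulting constant $C_{\delta}$ is uniform in $\hbar > 0$ and in $\psi \in \mathcal{S}$. There is essentially no obstacle here; the only thing worth noting is that the argument is uniform in $\hbar$ precisely because the substitution $u = \hbar \xi^{2}$ still sweeps over $[0, \infty)$ for any $\hbar > 0$, so a single choice of $C_{\delta}$ controls all values of $\hbar$ simultaneously.
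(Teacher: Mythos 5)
Your proof is correct and follows essentially the same route as the paper: conjugate by the Fourier transform so that $-\hbar\partial^{2}$ becomes multiplication by $\hbar\xi^{2}$, reduce the operator inequality to the scalar bound $\sum_{\ell=0}^{M-1}c_{\ell}w^{\ell}\leq\delta w^{M}+C_{\delta}$ for $w\geq 0$, and note that the constant is uniform in $\hbar$ because $w=\hbar\xi^{2}$ still ranges over $[0,\infty)$. You simply spell out the intermediate steps (the measure $d\mu_{\psi}$ and the justification of the scalar inequality) that the paper leaves to the reader.
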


\begin{proof}
By conjugating Eq. (\ref{e.6.1}) by the Fourier transform in Eq.(\ref{e.4.23})
(so that $\frac{1}{i}\partial\rightarrow\xi$) and Eq.(\ref{e.4.24}), we may
reduce Eq. (\ref{e.6.1}) to the easily verified statement; for all $\delta>0,$
there exists $C_{\delta}<\infty$ such that
\begin{equation}
\sum_{\ell=0}^{M-1}c_{\ell}w^{\ell}\leq\delta w^{M}+C_{\delta}~\forall~w\geq0.
\label{e.6.2}%
\end{equation}
Here, $w$ is shorthand for $\hbar\xi^{2}.$
\end{proof}

\begin{lemma}
\label{lem.6.2}Let $I\subset\mathbb{R}$ be a compact interval. Suppose
$\left\{  p_{k}\left(  \cdot\right)  \right\}  _{k=0}^{m_{p}}$ and $\left\{
q_{k}\left(  \cdot\right)  \right\}  _{k=0}^{m_{q}}\subset C\left(
I,\mathbb{R}\right)  $ where $m_{p}\in2\mathbb{N}_{0}$ and $m_{q}\in
\mathbb{N}_{0}$ such that
\[
p\left(  x,y\right)  =\sum_{k=0}^{m_{p}}p_{k}\left(  y\right)  x^{k}\text{ and
}q\left(  x,y\right)  =\sum_{k=0}^{m_{q}}q_{k}\left(  y\right)  x^{k}%
\]
and $\delta:=\min_{y\in I}p_{m_{p}}\left(  y\right)  >0.$ If we further assume
$m_{p}>m_{q}$ then for any $\epsilon>0,$ there exists $C_{\epsilon}>0$ such
that we have
\begin{equation}
\left\vert q\left(  x,y\right)  \right\vert \leq\epsilon p\left(  x,y\right)
+C_{\epsilon}\text{ for all }y\in I\text{ and }x\in\mathbb{R}. \label{e.6.3}%
\end{equation}

If $m_{p}=m_{q}$ there exists $D$ and $E>0$ such that we have
\begin{equation}
\left\vert q\left(  x,y\right)  \right\vert \leq Dp\left(  x,y\right)
+E\text{ for all }y\in I\text{ and }x\in\mathbb{R}. \label{e.6.4}%
\end{equation}

\end{lemma}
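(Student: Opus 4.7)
The plan is to exploit the hypothesis that $m_p$ is even with uniformly positive leading coefficient in order to show that $p(x,y)$ is coercive in $x$ uniformly in $y\in I$, and then to compare growth rates of $q$ and $p$ outside a sufficiently large ball in $x$, using compactness on the remaining region.

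First I would set up uniform bounds on the coefficients: since $I$ is compact and the $p_k,q_k$ are continuous on $I$, there exists $M<\infty$ such that $|p_k(y)|\le M$ for $0\le k<m_p$ and $|q_k(y)|\le M$ for $0\le k\le m_q$, for every $y\in I$. Together with the hypothesis $p_{m_p}(y)\ge \delta>0$ and the fact that $m_p$ is even, a crude estimate $p(x,y)\ge \delta x^{m_p}-M\sum_{k=0}^{m_p-1}|x|^k$ yields $A>0$ (independent of $y$) so that
\[
p(x,y)\ \ge\ \tfrac{\delta}{2}\,x^{m_p}\quad\text{for all }|x|\ge A,\ y\in I.
\]
Similarly there is a trivial global upper bound $|q(x,y)|\le M\sum_{k=0}^{m_q}|x|^k\le M'(1+|x|^{m_q})$ valid on all of $\mathbb{R}\times I$.

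For Case 1 ($m_p>m_q$), given $\epsilon>0$ the ratio $|q(x,y)|/p(x,y)$ is, outside $[-A,A]$, bounded by $\frac{2M'(1+|x|^{m_q})}{\delta |x|^{m_p}}$, which tends to $0$ as $|x|\to\infty$ uniformly in $y$. Thus I can enlarge $A$ to $A_\epsilon\ge A$ so that $|q(x,y)|\le \epsilon\, p(x,y)$ whenever $|x|\ge A_\epsilon$ and $y\in I$. On the compact set $[-A_\epsilon,A_\epsilon]\times I$ the continuous function $|q(x,y)|-\epsilon\,p(x,y)$ is bounded above by some constant $C_\epsilon$, and this inequality extends trivially to $|x|\ge A_\epsilon$ by the previous step. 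This gives \eqref{e.6.3}.

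For Case 2 ($m_p=m_q$), the same argument furnishes $D=2(m_p+1)M'/\delta$ such that $|q(x,y)|\le D\,p(x,y)$ for $|x|\ge\max(A,1)$ and $y\in I$; compactness of $[-\max(A,1),\max(A,1)]\times I$ then supplies $E$ bounding $|q(x,y)|-D\,p(x,y)$ there, proving \eqref{e.6.4}. There is no real obstacle; the only point requiring care is making every constant ($M$, $A$, $A_\epsilon$, $C_\epsilon$, $D$, $E$) independent of $y\in I$, which is handled uniformly by the continuity of the coefficient functions on the compact interval $I$.
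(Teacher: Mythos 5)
Your proof is correct and uses the same two inputs as the paper's (a uniform bound $M$ on all coefficients over the compact interval $I$, and the uniform positivity $\delta$ of the leading coefficient of $p$), but the organization is a bit different. The paper works with the single signed quantity $|q(x,y)| - D\,p(x,y)$, majorizes it by a $y$-independent univariate polynomial
\[
\rho_D(x) = M\sum_{k=0}^{m_q}|x|^k - D\delta|x|^{m_p} + DM\sum_{k=0}^{m_p-1}|x|^k,
\]
and observes that the negative leading term $-D\delta|x|^{m_p}$ forces $\rho_D(x)\to -\infty$ as $|x|\to\infty$ (for any $D>0$ when $m_p>m_q$, and for $D\delta>M$ when $m_p=m_q$), so that $C_\epsilon$ resp.\ $E$ can be taken as the global maximum of $\rho_D$. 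You instead first prove a coercivity bound $p(x,y)\ge \tfrac{\delta}{2}x^{m_p}$ for $|x|\ge A$ uniformly in $y$, then run a ratio comparison of $|q|$ against $p$ outside a large ball, and invoke compactness of the remaining $[-A_\epsilon,A_\epsilon]\times I$ for the additive constant. Both arguments are valid and of comparable length; the paper's is a touch more economical since it never needs a separate coercivity statement for $p$ or an explicit compact/non-compact split, while yours makes the growth comparison more transparent and could be reused directly wherever a lower bound on $p$ itself is wanted.
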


\begin{proof}
Let $M$ be an upper bound for $\left\vert p_{k}\left(  y\right)  \right\vert $
and $\left\vert q_{l}\left(  y\right)  \right\vert $ for all $y\in I,$ $0\leq
k\leq m_{p}$ and $0\leq l\leq m_{q}.$ Then for any $D>0$ we have,%
\begin{equation}
\left\vert q\left(  x,y\right)  \right\vert -Dp\left(  x,y\right)  \leq
\rho_{D}\left(  x\right)  \label{e.6.5}%
\end{equation}
where
\[
\rho_{D}\left(  x\right)  :=M\sum_{k=0}^{m_{q}}\left\vert x\right\vert
^{k}-D\delta\left\vert x\right\vert ^{m_{p}}+DM\sum_{k=0}^{m_{p}-1}\left\vert
x\right\vert ^{k}.
\]
If $m_{p}>m_{q}$ we see $\lim_{x\rightarrow\pm\infty}\rho_{D}\left(  x\right)
=-\infty$ for all $D=\varepsilon>0$ and hence $C_{\varepsilon}:=\max
_{x\in\mathbb{R}}\rho_{\varepsilon}\left(  x\right)  <\infty$ which combined
with Eq. (\ref{e.6.5}) proves Eq. (\ref{e.6.3}). If $m_{p}=m_{q}$ and $D$ is
chosen so that $D\delta>M,$ we again will have $\lim_{x\rightarrow\pm\infty
}\rho_{D}\left(  x\right)  =-\infty$ and so $E:=\max_{x\in\mathbb{R}}\rho
_{D}\left(  x\right)  <\infty$ which combined with Eq. (\ref{e.6.5}) proves
Eq. (\ref{e.6.4}).
\end{proof}

\begin{lemma}
\label{lem.6.3}Suppose that $\left\{  b_{l,\hbar}(\cdot)\right\}  _{l=0}^{m}$
and $\eta>0$ satisfy Assumption \ref{ass.1} and $c_{b_{m}}>0$ is the constant
in Eq. (\ref{e.1.13}). Let $n\in\mathbb{N}$ and $\left\{  B_{\ell,\hbar
}\right\}  _{\ell=0}^{mn}$ and $\left\{  \mathcal{B}_{\ell,\hbar}\right\}
_{\ell=0}^{mn}$ be the polynomials defined in Eqs. (\ref{e.5.4}) and
(\ref{e.5.5}) respectively. Then $\left\{  B_{\ell,\hbar}\right\}  _{\ell
=0}^{mn}$ and $\left\{  \mathcal{B}_{\ell,\hbar}\right\}  _{\ell=0}^{mn}$
satisfy items 1 and 3 of Assumption \ref{ass.1} and in particular,
\begin{equation}
B_{mn,\hbar}=\mathcal{B}_{mn,\hbar}=b_{m,\hbar}^{n}\geq\left(  c_{b_{m}%
}\right)  ^{n}. \label{e.6.11}%
\end{equation}
Moreover, if $R_{\ell,\hbar}$ is the polynomial in Eq. (\ref{e.5.6}), then for
any $\epsilon>0$ there exists $C_{\epsilon}>0$ such that
\begin{equation}
\left\vert R_{\ell,\hbar}\left(  x\right)  \right\vert \leq\epsilon
\mathcal{B}_{\ell,\hbar}\left(  x\right)  +C_{\epsilon}\text{ }\forall\text{
}x\in\mathbb{R},\mathbb{~}0<\hbar<\eta,~\&~\text{ }0\leq\ell\leq mn.
\label{e.6.12}%
\end{equation}

\end{lemma}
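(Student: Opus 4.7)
The plan is to handle the identity (6.11), the two required clauses of Assumption~\ref{ass.1}, and the estimate (6.12) in sequence; each reduces to a short bookkeeping argument once Proposition~\ref{pro.5.1} is in hand.

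First I would establish (6.11). From the defining sum (5.5), the constraint $|\mathbf{k}|=mn$ with $\mathbf{k}\in\Lambda_m^n$ forces $k_i=m$ for every $i$, so only one term survives and $\mathcal{B}_{mn,\hbar}=b_{m,\hbar}^n$. From (5.6) and the support condition $\hat{C}(n,\ell,\mathbf{k},\mathbf{p})=0$ unless $|\mathbf{p}|=2|\mathbf{k}|-2\ell>0$, one has $R_{mn,\hbar}\equiv 0$ (since $|\mathbf{k}|\leq mn=\ell$ forces $|\mathbf{p}|\leq 0$), hence $B_{mn,\hbar}=\mathcal{B}_{mn,\hbar}=b_{m,\hbar}^n\geq c_{b_m}^n$ by (1.13). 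Next, item~1 of Assumption~\ref{ass.1} for the collections $\{\mathcal{B}_{\ell,\hbar}\}$, $\{R_{\ell,\hbar}\}$, $\{B_{\ell,\hbar}\}$ is immediate: the formulas (5.5), (5.6), (5.4) express these objects as finite linear combinations of products of the $b_{l,\hbar}$ and their $x$-derivatives, so their $x$-coefficients are polynomial expressions in the $\alpha_{l,j}(\hbar)$ and therefore real continuous functions of $\hbar\in[0,\eta]$.

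Next I would verify the analogue of item~3. Set
\[
2M_\ell := \max_{\mathbf{k}\in\Lambda_m^n,\,|\mathbf{k}|=\ell}\sum_{i=1}^n 2m_{k_i}
\quad\text{and}\quad
\mathcal{K}_\ell := \bigl\{\mathbf{k}\in\Lambda_m^n : |\mathbf{k}|=\ell,\ \sum_{i=1}^n 2m_{k_i}=2M_\ell\bigr\},
\]
which is nonempty for every $0\leq\ell\leq mn$. Reading off the top-degree coefficient in (5.5) gives
\[
\mathcal{B}_{\ell,\hbar}(x) = \Bigl(\sum_{\mathbf{k}\in\mathcal{K}_\ell}\prod_{i=1}^n \alpha_{k_i,2m_{k_i}}(\hbar)\Bigr)x^{2M_\ell} + (\text{lower order in }x),
\]
and by (1.14) the bracketed sum is bounded below by $|\mathcal{K}_\ell|\,c_\alpha^n>0$ uniformly in $0<\hbar<\eta$; this, together with (6.11), gives item~3 for $\{\mathcal{B}_{\ell,\hbar}\}$. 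For $\{B_{\ell,\hbar}\}$, item~2 of Assumption~\ref{ass.1} guarantees $\deg b_{l,\hbar}\leq \deg b_{l-1,\hbar}$, so Proposition~\ref{pro.5.1}(1) applies and yields $\deg R_{\ell,\hbar}\leq\deg\mathcal{B}_{\ell,\hbar}-2$. Therefore $B_{\ell,\hbar}$ has exactly the same leading term as $\mathcal{B}_{\ell,\hbar}$, and item~3 passes from $\mathcal{B}$ to $B$ unchanged.

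Finally, (6.12) falls out of Lemma~\ref{lem.6.2} applied with $y=\hbar$, $I=[0,\eta]$, $p(x,\hbar)=\mathcal{B}_{\ell,\hbar}(x)$ and $q(x,\hbar)=R_{\ell,\hbar}(x)$: $m_p=2M_\ell\in 2\mathbb{N}_0$, the leading coefficient of $p$ is continuous in $\hbar$ and bounded below by $|\mathcal{K}_\ell|c_\alpha^n>0$ on $[0,\eta]$, and $m_q=\deg R_{\ell,\hbar}\leq 2M_\ell-2<m_p$ by the preceding paragraph. The case $m_p>m_q$ of Lemma~\ref{lem.6.2} delivers (6.12) for each fixed $\ell$, and taking the (finite) maximum of the resulting constants $C_\epsilon$ over $0\leq\ell\leq mn$ completes the proof. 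The only real step in the argument is the degree inequality $\deg R_{\ell,\hbar}<\deg\mathcal{B}_{\ell,\hbar}$, which is provided by Proposition~\ref{pro.5.1} (ultimately resting on the structure of $\hat{C}$ from Proposition~\ref{pro.3.10}); everything else is tracking positive leading coefficients through the sums (5.5) and (5.6).
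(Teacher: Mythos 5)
Your proposal is correct and follows essentially the same route as the paper's proof: verify item~1 by expressing the coefficients as polynomial expressions in the $\alpha_{l,j}(\hbar)$, verify item~3 via the positivity of the leading coefficients of $\mathcal{B}_{\ell,\hbar}$, then invoke Proposition~\ref{pro.5.1}(1) to get $\deg R_{\ell,\hbar}\leq\deg\mathcal{B}_{\ell,\hbar}-2$ and Lemma~\ref{lem.6.2} on $I=[0,\eta]$ to obtain (6.12). The only cosmetic difference is that you make the leading-coefficient computation explicit by introducing $\mathcal{K}_\ell$ and the bound $|\mathcal{K}_\ell|c_\alpha^n$, where the paper merely observes that the coefficient is a (nonnegative) sum of $n$-fold products of the $\alpha_{l,2m_l}(\hbar)$ and hence bounded below; your version is slightly more careful but yields the same conclusion. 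You also correctly extract $R_{mn,\hbar}\equiv 0$ directly from the support condition on $\hat{C}$ rather than citing the remark in Proposition~\ref{pro.3.10}. All steps are justified.
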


\begin{proof}
From Eq. (\ref{e.5.5})
\[
\mathcal{B}_{\ell,\hbar}=\sum_{\mathbf{k}\in\Lambda_{m}^{n}}1_{\left\vert
\mathbf{k}\right\vert =\ell}b_{k_{1},\hbar}b_{k_{2},\hbar}\dots b_{k_{n}%
,\hbar}
\]
from which it easily follows that $\mathcal{B}_{\ell,\hbar}$ is a real
polynomial with real valued coefficients depending continuously on $\hbar.$
Thus we have verified that the $\left\{  \mathcal{B}_{\ell,\hbar}\right\}
_{\ell=0}^{mn}$ satisfy item 1. of Assumption \ref{ass.1}.

The highest order coefficient of the polynomial $\mathcal{B}_{\ell,\hbar}$ is
a linear combination of $n$-fold products among the highest order coefficients
of $\left\{  b_{l,\hbar}(x)\right\}  _{l=0}^{m}$ and hence is still bounded
from below by a positive constant independent of $\hbar\in\left(
0,\eta\right)  .$ This observation along with the estimate, $\mathcal{B}%
_{mn,\hbar}=b_{m,\hbar}^{n}\geq c_{b_{m}}^{n},$ shows $\left\{  \mathcal{B}%
_{\ell,\hbar}\right\}  _{\ell=0}^{mn}$ also satisfies item 3 of Assumption
\ref{ass.1}.

Applying Proposition \ref{pro.5.1} with $b_{l}\left(  x\right)  \rightarrow
\hbar^{l}b_{l,\hbar}\left(  \sqrt{\hbar}x\right)  ,$ it follows (making use of
Eqs. (\ref{e.5.7}) and (\ref{e.5.8})) that
\begin{equation}
\deg R_{\ell,\hbar}\leq\deg\mathcal{B}_{\ell,\hbar}-2\text{ for }0\leq\ell\leq
mn\text{ and }0<\hbar<\eta. \label{e.6.13}%
\end{equation}

Since the leading order coefficient of $B_{\ell,\hbar}$ is a continuous
function of $\hbar$ which satisfies condition $3$ of Assumption \ref{ass.1},
we may conclude that the degree estimate above also holds at $\hbar=0$ and
$\hbar=\eta.$ We now apply Lemma \ref{lem.6.2} with $p\left(  x,\hbar\right)
=\mathcal{B}_{\ell,\hbar}\left(  x\right)  $ (note $\mathcal{B}_{\ell,\hbar}$
satisfies items 1 and 3 of Assumption \ref{ass.1}), $q\left(  x,\hbar\right)
=R_{\ell,\hbar}\left(  x\right)  ,$ and $I=\left[  0,\eta\right]  $ to
conclude Eq. (\ref{e.6.12}) holds.

Finally, let $0<\hbar<\eta$ be fixed. We learn $B_{\ell,\hbar}=\mathcal{B}%
_{\ell,\hbar}+R_{\ell,\hbar}$ from Eqs.(\ref{e.5.4}). It is clear that
$\left\{  B_{\ell,\hbar}\right\}  _{\ell=0}^{mn}$ satisfies the item $1$ of
Assumption \ref{ass.1}. From Eq. (\ref{e.6.13}), the highest order coefficient
of $B_{\ell,\hbar}$ and $\mathcal{B}_{\ell,\hbar}$ are the same and
Proposition \ref{pro.3.10} shows that $R_{mn,\hbar}=0$ which implies
$B_{mn,\hbar}=\mathcal{B}_{mn,\hbar}.$ Therefore $\left\{  B_{\ell,\hbar
}\right\}  _{\ell=0}^{mn}$ also satisfies the item 3 of Assumption \ref{ass.1}.
\end{proof}

\subsection{Estimating the quadratic form associated to $L_{\hbar}^{n}%
$\label{sec.6.1}}

\begin{theorem}
\label{the.6.4}Supposed $\left\{  b_{l,\hbar}\left(  x\right)  \right\}  $ and
$\eta>0$ satisfies Assumption \ref{ass.1} and let $L_{\hbar}$ and
$\mathcal{L}_{\hbar}^{\left(  n\right)  }$ be the operators in Eqs.
(\ref{e.1.11}) and (\ref{e.5.9}) respectively. Then for any $n\in\mathbb{N},$
there exists $C_{n}<\infty$ so that for all $0<\hbar<\eta$ and $c>C_{n}%
$\thinspace$;$%
\[
\frac{3}{2}\left(  \mathcal{L}_{\hbar}^{\left(  n\right)  }+c\right)
\succeq_{\mathcal{S}}L_{\hbar}^{n}+c\succeq_{\mathcal{S}}\frac{1}{2}\left(
\mathcal{L}_{\hbar}^{\left(  n\right)  }+c\right)
\]
and both $\mathcal{L}_{\hbar}^{\left(  n\right)  }+c$ and $L_{\hbar}^{n}+c$
are positive operators.
\end{theorem}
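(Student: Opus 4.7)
The plan is to exploit the decomposition $L_\hbar^n = \mathcal{L}_\hbar^{(n)} + \mathcal{R}_\hbar^{(n)}$ recorded in Notation \ref{not.5.4} and Proposition \ref{pro.5.3}, and to prove the single dominance estimate
\begin{equation*}
|\langle \mathcal{R}_\hbar^{(n)}\psi, \psi\rangle| \leq \tfrac{1}{2}\langle (\mathcal{L}_\hbar^{(n)} + c)\psi, \psi\rangle \qquad \forall\ \psi \in \mathcal{S},\ 0 < \hbar < \eta,
\end{equation*}
for some $c = C_n < \infty$ independent of $\hbar$. Once this is in hand, writing $\langle (L_\hbar^n + c)\psi,\psi\rangle = \langle(\mathcal{L}_\hbar^{(n)} + c)\psi,\psi\rangle + \langle\mathcal{R}_\hbar^{(n)}\psi,\psi\rangle$ and applying the triangle inequality in both directions immediately yields the sandwich $\tfrac{1}{2}(\mathcal{L}_\hbar^{(n)}+c) \preceq_\mathcal{S} L_\hbar^n + c \preceq_\mathcal{S} \tfrac{3}{2}(\mathcal{L}_\hbar^{(n)}+c)$.

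To set up the key estimate, I first integrate by parts $\ell$ times on $\mathcal{S}$ to convert each operator into an explicit quadratic form,
\begin{equation*}
\langle \mathcal{L}_\hbar^{(n)}\psi, \psi\rangle = \sum_{\ell=0}^{mn}\hbar^\ell \!\int_\mathbb{R}\! \mathcal{B}_{\ell,\hbar}(\sqrt{\hbar}x)|\partial^\ell\psi|^2 dx, \quad \langle \mathcal{R}_\hbar^{(n)}\psi, \psi\rangle = \sum_{\ell=0}^{mn-1}\hbar^\ell \!\int_\mathbb{R}\! R_{\ell,\hbar}(\sqrt{\hbar}x)|\partial^\ell\psi|^2 dx,
\end{equation*}
the factors $(-\hbar)^\ell(-1)^\ell = \hbar^\ell$ arising from the $\ell$ sign flips and the reality of the coefficients. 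Inserting the pointwise bound $|R_{\ell,\hbar}(y)| \leq \epsilon \mathcal{B}_{\ell,\hbar}(y) + C_\epsilon$ from Lemma \ref{lem.6.3} into the $\mathcal{R}$-integral, recognizing the result (modulo the missing $\ell = mn$ term) as $\epsilon \langle \mathcal{L}_\hbar^{(n)}\psi,\psi\rangle$, and exploiting the uniform positivity $\mathcal{B}_{mn,\hbar} = b_{m,\hbar}^n \geq c_{b_m}^n > 0$ from \eqref{e.6.11}, one obtains
\begin{equation*}
|\langle \mathcal{R}_\hbar^{(n)}\psi,\psi\rangle| \leq \epsilon\langle \mathcal{L}_\hbar^{(n)}\psi,\psi\rangle - \epsilon c_{b_m}^n\hbar^{mn}\|\partial^{mn}\psi\|^2 + C_\epsilon\sum_{\ell=0}^{mn-1}\hbar^\ell \|\partial^\ell \psi\|^2.
\end{equation*}

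The critical cancellation comes next. Applying Lemma \ref{lem.6.1} with $\delta := \epsilon c_{b_m}^n / C_\epsilon$ yields $C_\epsilon\sum_{\ell=0}^{mn-1}\hbar^\ell \|\partial^\ell\psi\|^2 \leq \epsilon c_{b_m}^n \hbar^{mn}\|\partial^{mn}\psi\|^2 + D_\epsilon\|\psi\|^2$, so the two top-order $\hbar^{mn}\|\partial^{mn}\psi\|^2$ contributions cancel exactly and the estimate collapses to $|\langle\mathcal{R}_\hbar^{(n)}\psi,\psi\rangle| \leq \epsilon \langle\mathcal{L}_\hbar^{(n)}\psi,\psi\rangle + D_\epsilon\|\psi\|^2$. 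Taking $\epsilon = 1/2$ and $c \geq 2D_{1/2}$ produces the key dominance estimate; taking $\epsilon = 1$ in the same inequality and using $|\langle\mathcal{R}_\hbar^{(n)}\psi,\psi\rangle| \geq 0$ forces $\langle\mathcal{L}_\hbar^{(n)}\psi,\psi\rangle \geq -D_1\|\psi\|^2$, so enlarging $C_n$ to $\max(D_1, 2D_{1/2})$ simultaneously secures $\mathcal{L}_\hbar^{(n)} + c \succeq_\mathcal{S} 0$, and hence $L_\hbar^n + c \succeq_\mathcal{S} \tfrac{1}{2}(\mathcal{L}_\hbar^{(n)} + c) \succeq_\mathcal{S} 0$, completing the positivity claim. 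I expect the main obstacle to be precisely the fact that $\langle\mathcal{L}_\hbar^{(n)}\psi,\psi\rangle$ need not be non-negative in isolation (the mid-order $\mathcal{B}_{\ell,\hbar}$ are not sign-definite), which forces the delicate choice $\delta = \epsilon c_{b_m}^n/C_\epsilon$ above: the blow-up of $C_\epsilon$ as $\epsilon \downarrow 0$ in Lemma \ref{lem.6.3} is offset only by the uniform coercivity $c_{b_m}^n$ of the leading coefficient, and without the positivity conclusion of \eqref{e.6.11} the entire cancellation would fail.
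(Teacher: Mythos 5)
Your proposal is correct and follows essentially the same route as the paper's proof: decompose $L_\hbar^n = \mathcal{L}_\hbar^{(n)} + \mathcal{R}_\hbar^{(n)}$, absorb $|R_{\ell,\hbar}|$ into $\tfrac{1}{2}\mathcal{B}_{\ell,\hbar} + C_{1/2}$ via Lemma \ref{lem.6.3}, then use Lemma \ref{lem.6.1} together with the coercivity $\mathcal{B}_{mn,\hbar}\geq c_{b_m}^n$ to dominate the constant remainder by the top-order term of $\mathcal{L}_\hbar^{(n)}$ plus a constant. The only cosmetic differences are that you explicitly split off and then reabsorb the $\ell = mn$ piece (the paper absorbs it in one step) and that you invoke $\epsilon=1$ for positivity of $\mathcal{L}_\hbar^{(n)} + c$, whereas the paper extracts positivity for free from the $\epsilon=1/2$ inequality itself (since its left side is nonnegative); both are valid.
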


\begin{proof}
Let $\psi\in\mathcal{S}$ and $0<\hbar<\eta$. From Eqs. (\ref{e.5.10}) and
(\ref{e.5.11}) we can conclude
\[
|\langle\left(  L_{\hbar}^{n}-\mathcal{L}_{\hbar}^{\left(  n\right)  }\right)
\psi,\psi\rangle|=\left\vert \left\langle \mathcal{R}_{\hbar}^{\left(
n\right)  }\psi,\psi\right\rangle \right\vert \leq\sum_{\ell=0}^{nm-1}%
\hbar^{\ell}\left\vert \left\langle R_{\ell,\hbar}\left(  \sqrt{\hbar}\left(
\cdot\right)  \right)  \partial^{\ell}\psi,\partial^{\ell}\psi\right\rangle
\right\vert .
\]
From Eq. (\ref{e.6.12}) in Lemma \ref{lem.6.3} by taking $\epsilon=\frac{1}%
{2}$ and $C_{\epsilon}=C_{\frac{1}{2}}$ we have
\begin{equation}
\left\vert R_{\ell,\hbar}\left(  x\right)  \right\vert \leq\frac{1}%
{2}\mathcal{B}_{\ell,\hbar}\left(  x\right)  +C_{\frac{1}{2}} \label{e.6.14}%
\end{equation}
for all $0\leq\ell\leq mn-1$ and $\hbar\in\left(  0,\eta\right)  .$ With the
use of Eq.(\ref{e.6.14}), we learn
\begin{align}
&  |\langle\left(  L_{\hbar}^{n}-\mathcal{L}_{\hbar}^{\left(  n\right)
}\right)  \psi,\psi\rangle|\nonumber\\
&  \quad\leq\sum_{\ell=0}^{nm-1}\hbar^{\ell}\left\langle \left(  \frac{1}%
{2}\mathcal{B}_{\ell,\hbar}\left(  \sqrt{\hbar}\left(  \cdot\right)  \right)
+C_{\frac{1}{2}}\right)  \partial^{\ell}\psi,\partial^{\ell}\psi\right\rangle
\nonumber\\
&  \quad=\frac{1}{2}\sum_{\ell=0}^{nm-1}\left\langle \left(  -\hbar\right)
^{\ell}\partial^{\ell}\mathcal{B}_{\ell,\hbar}\left(  \sqrt{\hbar}\left(
\cdot\right)  \right)  \partial^{\ell}\psi,\psi\right\rangle +C_{\frac{1}{2}%
}\left\langle \sum_{\ell=0}^{nm-1}\left(  -\hbar\right)  ^{\ell}%
\partial^{2\ell}\psi,\psi\right\rangle . \label{e.6.15}%
\end{align}
By Eq. (\ref{e.6.11}) in Lemma \ref{lem.6.3}, we have
\[
\mathcal{B}_{mn,\hbar}=b_{m,\hbar}^{n}\geq c_{b_{m}}^{n}>0.
\]
So making use of Lemma \ref{lem.6.1} by taking $\delta=c_{b_{m}}^{n}/2$ and
$c_{\ell}=C_{\frac{1}{2}}$, there exists $C_{\delta}<\infty$ such that for all
$0<\hbar<\eta$ and $\psi\in\mathcal{S},$
\begin{align*}
C_{\frac{1}{2}}  &  \left\langle \sum_{\ell=0}^{nm-1}\left(  -\hbar\right)
^{\ell}\partial^{2\ell}\psi,\psi\right\rangle \\
&  \quad\leq\left(  -\hbar\right)  ^{mn}\left\langle \delta\partial^{mn}%
\psi,\partial^{mn}\psi\right\rangle +\frac{1}{2}C_{\delta}\left\langle
\psi,\psi\right\rangle \\
&  \quad\leq\frac{1}{2}\left(  -\hbar\right)  ^{mn}\left\langle \mathcal{B}%
_{mn,\hbar}\left(  \sqrt{\hbar}\left(  \cdot\right)  \right)  \partial
^{mn}\psi,\partial^{mn}\psi\right\rangle +\frac{1}{2}C_{\delta}\left\langle
\psi,\psi\right\rangle .
\end{align*}
By combining Eqs. (\ref{e.6.15}) and (\ref{e.6.1}), we get
\begin{align}
&  |\langle\psi,\left(  L_{\hbar}^{n}-\mathcal{L}_{\hbar}^{\left(  n\right)
}\right)  \psi\rangle|\nonumber\\
&  \quad\leq\frac{1}{2}\left\langle \left(  \sum_{\ell=0}^{nm-1}\left(
-\hbar\right)  ^{\ell}\partial^{\ell}\mathcal{B}_{\ell,\hbar}\left(
\sqrt{\hbar}\left(  \cdot\right)  \right)  \partial^{\ell}+\left(
-\hbar\right)  ^{mn}\partial^{mn}\mathcal{B}_{mn,\hbar}\left(  \sqrt{\hbar
}\left(  \cdot\right)  \right)  \partial^{mn}\right)  \psi,\psi\right\rangle
\nonumber\\
&  \qquad+\frac{1}{2}C_{\delta}\left\langle \psi,\psi\right\rangle \nonumber\\
&  \quad=\frac{1}{2}\left\langle \left(  \mathcal{L}_{\hbar}^{\left(
n\right)  }+C_{\delta}\right)  \psi,\psi\right\rangle . \label{e.6.16}%
\end{align}
It is easy to conclude that
\[
\langle\left(  \mathcal{L}_{\hbar}^{\left(  n\right)  }+C_{\delta}\right)
\psi,\psi\rangle\geq0\text{.}%
\]
As a result, for all $c>C_{\delta}$, $0<\hbar<\eta$, by the Eq. (\ref{e.6.16}%
), we get
\[
\left\vert \langle\left[  \left(  L_{\hbar}^{n}+c\right)  -\left(
\mathcal{L}_{\hbar}^{\left(  n\right)  }+c\right)  \right]  \psi,\psi
\rangle\right\vert =\left\vert \langle\left(  L_{\hbar}^{n}-\mathcal{L}%
_{\hbar}^{\left(  n\right)  }\right)  \psi,\psi\rangle\right\vert \leq\frac
{1}{2}\langle\left(  \mathcal{L}_{\hbar}^{\left(  n\right)  }+c\right)
\psi,\psi\rangle
\]
and the desired result follows.
\end{proof}

\subsection{Proof of the operator comparison Theorem \ref{the.1.18}}

The purpose of this subsection is to prove Theorem \ref{the.1.18}. We begin
with a preparatory lemma whose proof requires the following notation.

\begin{notation}
\label{not.6.5}For any divergence form differential operator $L$ on
$\mathcal{S}$ described as in Eq. (\ref{e.2.1}) we may decompose $L$ into its
top order and lower order pieces, $L=L^{top}+L^{<}$ where
\begin{equation}
L^{top}:=\left(  -1\right)  ^{m}\partial^{m}M_{b_{m}}\partial^{m}\text{ and
}L^{<}:=\sum_{l=0}^{m-1}\left(  -1\right)  ^{l}\partial^{l}M_{b_{l}}%
\partial^{l}. \label{e.6.17}%
\end{equation}

\end{notation}

\begin{lemma}
\label{lem.6.6}Let $\left\{  \mathcal{B}_{\ell,\hbar}\left(  x\right)
\right\}  _{\ell=0}^{M}$ be polynomial functions depending continuously on
$\hbar$ which satisfies the conditions $1$ and $3$ of Assumption \ref{ass.1}
and so in particular,
\begin{equation}
c_{\mathcal{B}_{M}}:=\inf\left\{  \mathcal{B}_{M,\hbar}\text{$\left(
\sqrt{\hbar}x\right)  $}:x\in\mathbb{R}\text{ and }0<\hbar<\eta\right\}  >0.
\label{e.6.18}%
\end{equation}
If
\[
K_{\hbar}=\sum_{\ell=0}^{M}\left(  -\hbar\right)  ^{\ell}\partial^{2\ell
}\text{ and }\mathcal{L}_{\hbar}=\sum_{\ell=0}^{M}\left(  -\hbar\right)
^{\ell}\partial^{\ell}M_{\mathcal{B}_{\ell,\hbar}\left(  \sqrt{\hbar}\left(
\cdot\right)  \right)  }\partial^{\ell}
\]
are operators on $\mathcal{S}$ then for all $\gamma>\frac{1}{c_{\mathcal{B}%
_{M}}},$ there exists $C_{\gamma}<\infty$ such that
\begin{equation}
K_{\hbar}\preceq_{\mathcal{S}}\gamma\mathcal{L}_{\hbar}+C_{\gamma}I.
\label{e.6.19}%
\end{equation}

\end{lemma}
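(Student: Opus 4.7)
The plan is to prove the inequality as a quadratic-form estimate, extracting a positive top-order term from $\gamma\mathcal{L}_\hbar$ and using Lemma~\ref{lem.6.1} to absorb the sign-indefinite lower-order contributions into it. Since $K_\hbar$ and $\mathcal{L}_\hbar$ are symmetric on $\mathcal{S}$ (both being divergence-form operators with real polynomial coefficients), repeated integration by parts yields, for every $\psi\in\mathcal{S}$,
\[
\langle(\gamma\mathcal{L}_\hbar-K_\hbar)\psi,\psi\rangle \;=\; \sum_{\ell=0}^{M}\hbar^{\ell}\int_{\mathbb{R}}\bigl[\gamma\mathcal{B}_{\ell,\hbar}(\sqrt{\hbar}x)-1\bigr]\,\bigl|\partial^{\ell}\psi(x)\bigr|^{2}\,dx,
\]
and the task reduces to bounding this from below by $-C_\gamma\|\psi\|^2$.

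First, for $\ell=M$, the assumption $\gamma>1/c_{\mathcal{B}_M}$ combined with Eq.~(\ref{e.6.18}) gives $\gamma\mathcal{B}_{M,\hbar}(\sqrt{\hbar}x)-1\geq \varepsilon_0 := \gamma c_{\mathcal{B}_M}-1>0$, so this summand is at least $\varepsilon_0\hbar^M\|\partial^M\psi\|^2$. Next, for $\ell<M$, each $\mathcal{B}_{\ell,\hbar}$ has uniformly positive leading coefficient and uniformly bounded (continuous in $\hbar$ on $[0,\eta]$) remaining coefficients, so the argument of Remark~\ref{rem.1.16} applies verbatim and produces a constant $K<\infty$ with $\mathcal{B}_{\ell,\hbar}(y)\geq -K$ for all $0\leq \ell\leq M-1$, $0<\hbar<\eta$ and $y\in\mathbb{R}$. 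Hence $\gamma\mathcal{B}_{\ell,\hbar}(\sqrt{\hbar}x)-1\geq -(1+\gamma K)$, so the $\ell$-th summand is bounded below by $-(1+\gamma K)\hbar^\ell\|\partial^\ell\psi\|^2$. Finally, Lemma~\ref{lem.6.1} applied with $c_\ell\equiv 1+\gamma K$ and $\delta:=\varepsilon_0$ furnishes a constant $C=C(\gamma,K,\varepsilon_0)<\infty$ with
\[
(1+\gamma K)\sum_{\ell=0}^{M-1}\hbar^\ell\|\partial^\ell\psi\|^2 \;\leq\; \varepsilon_0\,\hbar^M\|\partial^M\psi\|^2 + C\|\psi\|^2\qquad\text{for every }\hbar>0.
\]
Combining the three estimates cancels the $\varepsilon_0\hbar^M\|\partial^M\psi\|^2$ terms and yields $\langle(\gamma\mathcal{L}_\hbar-K_\hbar)\psi,\psi\rangle\geq -C\|\psi\|^2$, i.e.\ Eq.~(\ref{e.6.19}) with $C_\gamma:=C$.

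The one delicate point concerns the lower-order bookkeeping: the polynomials $\mathcal{B}_{\ell,\hbar}$ are only assumed to satisfy items~1 and~3 of Assumption~\ref{ass.1}, \emph{not} the degree ordering in item~2, so for $\ell<M$ the polynomial $\mathcal{B}_{\ell,\hbar}(\sqrt{\hbar}x)$ may grow faster in $|x|$ than $\mathcal{B}_{M,\hbar}(\sqrt{\hbar}x)$. This potential growth is, however, harmless: we require only a \emph{lower} bound on $\gamma\mathcal{B}_{\ell,\hbar}(\sqrt{\hbar}x)-1$ in the integrand, and large positive values only help; the uniform one-sided bound $\mathcal{B}_{\ell,\hbar}\geq -K$ from Remark~\ref{rem.1.16} is all that is needed.
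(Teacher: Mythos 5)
Your proof is correct. The quadratic-form identity obtained by integrating by parts is valid on $\mathcal{S}$ (polynomial coefficients, rapidly decaying $\psi$), the uniform lower bound $\mathcal{B}_{\ell,\hbar}\geq -K$ for $\ell<M$ does follow from conditions 1 and 3 of Assumption \ref{ass.1} exactly as in Remark \ref{rem.1.16} (even degree, uniformly positive leading coefficient, coefficients bounded on $[0,\eta]$), and your closing remark correctly identifies why the absence of the degree-ordering condition 2 is harmless: only one-sided bounds on the lower-order coefficients enter. Your route differs from the paper's in organization. The paper works at the operator level: it splits $K_{\hbar}$ and $\mathcal{L}_{\hbar}$ into top and lower-order pieces as in Notation \ref{not.6.5}, uses the same uniform lower bound (in the form $\mathcal{L}_{\hbar}^{<}+EK_{\hbar}^{<}\succeq_{\mathcal{S}}0$) to dominate $\mathcal{L}_{\hbar}^{top}$ by $\mathcal{L}_{\hbar}+EK_{\hbar}^{<}$, and then runs a chain of inequalities (Eqs. (\ref{e.6.20})--(\ref{e.6.22})) in which $K_{\hbar}^{<}$ appears on both sides and is absorbed by a careful joint choice of $\delta$ and $\varepsilon$, the sharp constant $\gamma>\frac{1}{c_{\mathcal{B}_{M}}}$ emerging only at the end. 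You instead bound the integrand $\gamma\mathcal{B}_{\ell,\hbar}(\sqrt{\hbar}x)-1$ pointwise, term by term, so the constraint $\gamma c_{\mathcal{B}_{M}}-1>0$ appears immediately in the top-order term, and Lemma \ref{lem.6.1} is invoked exactly once to absorb the lower-order Sobolev terms; no self-referential absorption step is needed. Both arguments rest on the same two ingredients (Lemma \ref{lem.6.1} plus the uniform coefficient bounds), but yours is the more direct and, in my view, easier to check; the paper's operator-inequality formulation has the mild advantage of staying in the $\preceq_{\mathcal{S}}$ calculus used throughout Section \ref{sec.6}, which is the form in which the lemma is subsequently applied.
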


\begin{proof}
Using the conditions $1$ and $3$ of Assumption \ref{ass.1} on $\left\{
\mathcal{B}_{\ell,\hbar}\left(  x\right)  \right\}  _{\ell=0}^{M}$ where
$b_{l,\hbar}$ is replaced by $\mathcal{B}_{\ell,\hbar},$ we may choose $E>0$
such that for all $0\leq\ell<M,$
\[
c_{\ell}:=\inf\left\{  \mathcal{B}_{\ell,\hbar}\text{$\left(  \sqrt{\hbar
}x\right)  $}+E:x\in\mathbb{R}\text{ and }0<\hbar<\eta\right\}  >0
\]
and therefore,
\[
\left(  -\hbar\right)  ^{\ell}\partial^{\ell}M_{\mathcal{B}_{\ell,\hbar
}\left(  \sqrt{\hbar}\left(  \cdot\right)  \right)  }\partial^{\ell}+E\left(
-\hbar\right)  ^{\ell}\partial^{2\ell}=\left(  -\hbar\right)  ^{\ell}%
\partial^{\ell}M_{\left[  \mathcal{B}_{\ell,\hbar}\left(  \sqrt{\hbar}\left(
\cdot\right)  \right)  +E\right]  }\partial^{\ell}\geq0~\forall~\ell
\]
and in particular $\mathcal{L}_{\hbar}^{<}+EK_{\hbar}^{<}\succeq_{\mathcal{S}%
}0$ which in turn implies
\[
\mathcal{L}_{\hbar}^{top}\preceq_{\mathcal{S}}\mathcal{L}_{\hbar}%
^{top}+\mathcal{L}_{\hbar}^{<}+EK_{\hbar}^{<}=\mathcal{L}_{\hbar}+EK_{\hbar
}^{<}.
\]
Using this observation and Eq. (\ref{e.6.18}) we find,
\begin{equation}
K_{\hbar}^{top}=\left(  -\hbar\right)  ^{M}\partial^{2M}\preceq_{\mathcal{S}%
}\frac{1}{c_{\mathcal{B}_{M}}}\mathcal{L}_{\hbar}^{top}\preceq_{\mathcal{S}%
}\frac{1}{c_{\mathcal{B}_{M}}}\left(  \mathcal{L}_{\hbar}+EK_{\hbar}%
^{<}\right)  . \label{e.6.20}%
\end{equation}

By Lemma \ref{lem.6.1} and Eq. (\ref{e.6.20}), for any $\delta>0,$ there
exists $C_{\delta}<\infty$ such that for all $\hbar>0,$
\begin{equation}
K_{\hbar}^{<}\preceq_{\mathcal{S}}\delta K_{\hbar}^{top}+C_{\delta}%
I\preceq_{\mathcal{S}}\delta\frac{1}{c_{\mathcal{B}_{M}}}\left(
\mathcal{L}_{\hbar}+EK_{\hbar}^{<}\right)  +C_{\delta}I. \label{e.6.21}%
\end{equation}
Given $\varepsilon>0$ small we may use the previous equation with $\delta>0$
chosen so that $\varepsilon\geq\frac{\delta}{c_{\mathcal{B}_{M}}-\delta E}$ to
learn there exists $C_{\varepsilon}^{\prime}<\infty$ such that
\begin{equation}
K_{\hbar}^{<}\preceq_{\mathcal{S}}\varepsilon\mathcal{L}_{\hbar}%
+C_{\varepsilon}^{\prime}I. \label{e.6.22}%
\end{equation}
Combining this inequality with Eq. (\ref{e.6.20}) then shows,
\[
K_{\hbar}=K_{\hbar}^{top}+K_{\hbar}^{<}\preceq_{\mathcal{S}}\frac
{1}{c_{\mathcal{B}_{M}}}\left(  \mathcal{L}_{\hbar}+E\left(  \varepsilon
\mathcal{L}_{\hbar}+C_{\varepsilon}^{\prime}I\right)  \right)  +\varepsilon
\mathcal{L}_{\hbar}+C_{\varepsilon}^{\prime}I.
\]
Thus choosing $\varepsilon>0$ sufficiently small in this inequality allows us
to conclude for every $\gamma>\frac{1}{c_{\mathcal{B}_{M}}}$ there exists
$C_{\gamma}<\infty$ such that Eq. (\ref{e.6.19}) holds.
\end{proof}

\begin{comments}
The next lemma has been replaced by lemma \ref{lem.6.2}.

\begin{lemma}
\label{lem.6.7}Suppose $\left\{  b_{l,\hbar}\left(  x\right)  \right\}
_{l=0}^{m}\subset\mathbb{R}\left[  x\right]  $ and $\eta>0$ satisfies
Assumption \ref{ass.1}, $\left(  k_{1},k_{2},\dots,k_{n}\right)  \in
\Lambda_{m}^{n},$ and $N:=\sum_{i=1}^{n}\deg b_{k_{i},\hbar}\in2\mathbb{N}.$
Then there exists finite constants, $\,C_{1}$ and $C_{2}$ independent of
$\hbar\in\left(  0,\eta\right)  $ such that
\begin{equation}
\left\vert x\right\vert ^{\ell}\leq\left(  1+x^{N}\right)  \leq C_{1}\left(
b_{k_{1},\hbar}\dots b_{k_{n},\hbar}\right)  \left(  x\right)  +C_{2}
\label{e.6.23}%
\end{equation}
holds for all $x\in\mathbb{R},$ $0\leq\ell\leq N,$ and $0<\hbar<\eta.$
\end{lemma}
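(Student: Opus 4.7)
\textbf{Proof plan for Lemma \ref{lem.6.7}.}
The first inequality, $|x|^{\ell} \leq 1 + x^{N}$ for $0 \leq \ell \leq N$, is elementary since $N$ is a positive even integer: if $|x| \leq 1$ then $|x|^{\ell} \leq 1$, while if $|x| \geq 1$ then $|x|^{\ell} \leq |x|^{N} = x^{N}$. So the whole content of the lemma is the second inequality, which I will obtain by a direct appeal to Lemma \ref{lem.6.2}.

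Set $q_{\hbar}(x) := b_{k_{1},\hbar}(x) \cdots b_{k_{n},\hbar}(x)$. The key observation is that $q_{\hbar}$ is a polynomial in $x$ of degree exactly $N$, whose coefficients are sums of products of the continuous functions $\alpha_{l,j}(\hbar)$ from Assumption \ref{ass.1}, hence themselves continuous on the compact interval $[0,\eta]$. Its leading coefficient equals
\[
\prod_{i=1}^{n} \alpha_{k_{i},\, 2m_{k_{i}}}(\hbar),
\]
which by item (3) of Assumption \ref{ass.1} is uniformly bounded below by $c_{\alpha}^{n} > 0$ for $\hbar \in (0,\eta)$; by continuity the same bound persists at the endpoints, so the minimum of this leading coefficient over $[0,\eta]$ is strictly positive.

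With these ingredients in hand, I apply the $m_{p} = m_{q}$ case of Lemma \ref{lem.6.2}, with the roles $p(x,\hbar) := q_{\hbar}(x)$, $q(x,\hbar) := 1 + x^{N}$, and $I := [0,\eta]$. The hypotheses are verified: $p$ has degree $N$ in $x$ with coefficients continuous in $\hbar \in I$ and leading coefficient uniformly positive, while $q$ has degree $N$ as well. Lemma \ref{lem.6.2} then produces constants $D, E$, independent of $\hbar \in [0,\eta]$, such that $|1+x^{N}| \leq D\, q_{\hbar}(x) + E$. Since $N$ is even we have $|1+x^{N}| = 1 + x^{N}$, so setting $C_{1} := D$ and $C_{2} := E$ completes the proof.

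The only place where care is needed is lining up the hypotheses of Lemma \ref{lem.6.2} with Assumption \ref{ass.1}; in particular ensuring that the leading coefficient of $q_{\hbar}$ is uniformly bounded below, which is exactly the content of the constant $c_{\alpha}$ in (\ref{e.1.14}). No genuine obstacle arises beyond this bookkeeping.
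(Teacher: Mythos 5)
Your proof is correct. The first inequality is handled exactly as it should be, and the application of Lemma \ref{lem.6.2} is valid: you take $p(x,\hbar) = b_{k_1,\hbar}(x)\cdots b_{k_n,\hbar}(x)$ and $q(x,\hbar) = 1 + x^N$, both of degree $N$ in $x$, note that the coefficients extend continuously to the compact interval $[0,\eta]$, and observe that the leading coefficient $\prod_i \alpha_{k_i,2m_{k_i}}(\hbar) \geq c_\alpha^n > 0$ uniformly (with the bound persisting at the endpoints by continuity), so the $m_p = m_q$ case of Lemma \ref{lem.6.2} gives the bound with $\hbar$-independent constants.

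Your route is slightly different from the paper's. The paper's own proof is a direct estimate: using the uniform bound on the coefficients of $b_{l,\hbar}$, one shows $q_\hbar(x) \geq \alpha x^N - C(1+|x|^{N-1})$ for some $\alpha > 0$ and $C < \infty$ independent of $\hbar$, then uses elementary calculus to absorb the lower-order term, arriving at $q_\hbar(x) \geq (\alpha/2)x^N - C'$ and rearranging. Your argument delegates exactly this calculation to Lemma \ref{lem.6.2}, which is a cleaner and more modular way to say the same thing; indeed the authors note in the source that Lemma \ref{lem.6.7} has been replaced by Lemma \ref{lem.6.2}. One further advantage of your version: the paper's direct proof refers to constants $c_l$ from Assumption \ref{ass.1} that are in fact not named there (a point the authors themselves flagged), whereas you correctly identify the relevant uniform lower bound as $c_\alpha^n$ from Eq.\ (\ref{e.1.14}). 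So your appeal to Lemma \ref{lem.6.2} is both mathematically equivalent and a tightening of the bookkeeping.
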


\begin{proof}
By the assumptions on $\left\{  b_{l,\hbar}(x)\right\}  _{l=0}^{m}$ we easily
deduce there exists $C<\infty$ independent of $\hbar$ such that
\[
\left\vert \left(  b_{k_{1},\hbar}\dots b_{k_{n},\hbar}\right)  \left(
x\right)  \right\vert \geq\alpha x^{N}-C\left(  1+\left\vert x\right\vert
^{N-1}\right)  ,
\]
where $\alpha:=c_{k_{1}}\dots c_{k_{n}}>0$ and the $\left\{  c_{l}\right\}
_{l}$ are\marginpar{There are not $c_{l}$ in Assumption \ref{ass.1}?} as in
Assumption \ref{ass.1}. Simple calculus then shows there exists $C^{\prime
}<\infty$ such that $1+\left\vert x\right\vert ^{N-1}\leq\alpha\left\vert
x\right\vert ^{N}/2+C^{\prime}.$Combining this estimate with the last
displayed equation shows,
\[
\left\vert \left(  b_{k_{1},\hbar}\dots b_{k_{n},\hbar}\right)  \left(
x\right)  \right\vert \geq\frac{\alpha}{2}x^{N}-C\left(  1+C^{\prime}\right)
\]
which is equivalent to Eq. (\ref{e.6.23}).\qed

\end{proof}
\end{comments}

We are now ready to give the proof of Theorem \ref{the.1.18}.

\begin{proof}
[Proof of Theorem \ref{the.1.18}]Recall $\eta:=\min\left\{  \eta
_{\widetilde{L}},\eta_{L}\right\}  $ defined in Theorem \ref{the.1.18}. By the
assumption in Eq. (\ref{e.1.16}) of Theorem \ref{the.1.18},
\[
\left\vert \tilde{b}_{l,\hbar}\left(  x\right)  \right\vert \leq c_{1}\left(
b_{l,\hbar}\left(  x\right)  +c_{2}\right)  ~\forall~0\leq l\leq m_{\tilde{L}%
}~and~0<\hbar<\eta.
\]
Moreover, using items 1 and 3 of Assumption \ref{ass.1}, by increasing the
size of $c_{2}$ if necessary, we may further assume that $b_{l,\hbar}\left(
x\right)  +c_{2}\geq0$ for all $x\in\mathbb{R},$ $0\leq l\leq m_{L}$, and
$0<\hbar<\eta.$ Without loss of generality, we may define $\tilde{b}_{l,\hbar
}\left(  \cdot\right)  \equiv0$ for all $l>m_{\tilde{L}}$ and hence
$\mathcal{\tilde{B}}_{\ell,\hbar}\left(  \cdot\right)  =0$ for all
$\ell>m_{\tilde{L}}n.$ It then follows that there exists $E_{1},E_{2}<\infty$
such that for $0\leq\ell\leq m_{L}n$,
\begin{align}
\mathcal{\tilde{B}}_{\ell,\hbar}  &  \leq\left\vert \mathcal{\tilde{B}}%
_{\ell,\hbar}\right\vert \leq\sum_{\mathbf{k}\in\Lambda_{m_{\tilde{L}}}^{n}%
}1_{\left\vert \mathbf{k}\right\vert =\ell}\left\vert \tilde{b}_{k_{1},\hbar
}\dots\tilde{b}_{k_{n},\hbar}\right\vert \nonumber\\
&  \leq\sum_{\mathbf{k}\in\Lambda_{m_{\tilde{L}}}^{n}}1_{\left\vert
\mathbf{k}\right\vert =\ell}c_{1}^{n}(b_{k_{1},\hbar}+c_{2})\dots
(b_{k_{n},\hbar}+c_{2})\nonumber\\
&  \leq\sum_{\mathbf{k}\in\Lambda_{m_{L}}^{n}}1_{\left\vert \mathbf{k}%
\right\vert =\ell}c_{1}^{n}(b_{k_{1},\hbar}+c_{2})\dots(b_{k_{n},\hbar}%
+c_{2})\nonumber\\
&  =E_{1}\mathcal{B}_{\ell,\hbar}+E_{2}, \label{e.6.24}%
\end{align}
wherein we have used Eq.(\ref{e.6.4}) in Lemma \ref{lem.6.2} for the last
inequality by taking
\begin{align*}
p\left(  x,\hbar\right)   &  =\mathcal{B}_{\ell,\hbar}=\sum_{\mathbf{k}%
\in\Lambda_{m_{L}}^{n}}1_{\left\vert \mathbf{k}\right\vert =\ell}%
(b_{k_{1},\hbar}\dots b_{k_{n},\hbar})\text{ and}\\
q\left(  x,\hbar\right)   &  =\sum_{\mathbf{k}\in\Lambda_{m_{L}}^{n}%
}1_{\left\vert \mathbf{k}\right\vert =\ell}c_{1}^{n}(b_{k_{1},\hbar}%
+c_{2})\dots(b_{k_{n},\hbar}+c_{2}),
\end{align*}
where (by Lemma \ref{lem.6.3}) $\mathcal{B}_{\ell,\hbar}$ is an even degree
polynomial with a positive leading order coefficient. Hence if we let
$\mathcal{\tilde{L}}_{\hbar}^{\left(  n\right)  }$ and $\mathcal{L}_{\hbar
}^{\left(  n\right)  }$ be as in Eq. (\ref{e.5.9}), i.e.
\[
\mathcal{\tilde{L}}_{\hbar}^{\left(  n\right)  }=\sum_{\ell=0}%
^{m_{\widetilde{L}}n}\left(  -\hbar\right)  ^{\ell}\partial^{\ell
}\widetilde{\mathcal{B}}_{\ell,\hbar}\left(  \sqrt{\hbar}\left(  \cdot\right)
\right)  \partial^{\ell}\text{ and }\mathcal{L}_{\hbar}^{\left(  n\right)
}=\sum_{\ell=0}^{m_{L}n}\left(  -\hbar\right)  ^{\ell}\partial^{\ell
}\mathcal{B}_{\ell,\hbar}\left(  \sqrt{\hbar}\left(  \cdot\right)  \right)
\partial^{\ell},
\]
then it follows directly from Eq.(\ref{e.6.24}) that
\[
\mathcal{\tilde{L}}_{\hbar}^{\left(  n\right)  }\preceq_{\mathcal{S}}%
E_{1}\mathcal{L}_{\hbar}^{\left(  n\right)  }+E_{2}K_{\hbar}\text{ where
}K_{\hbar}:=\sum_{\ell=0}^{nm_{L}}\left(  -\hbar\right)  ^{\ell}%
\partial^{2\ell}.
\]
Because of Lemma \ref{lem.6.3}, we may apply Lemma \ref{lem.6.6} with
$M=nm_{L}$ and $\mathcal{L}_{\hbar}=\mathcal{L}_{\hbar}^{\left(  n\right)  }$
to conclude there exists $\gamma>0$ and $C<\infty$ such that $K_{\hbar}%
\preceq_{\mathcal{S}}\gamma\mathcal{L}_{\hbar}^{\left(  n\right)  }+CI$ and
thus,
\[
\mathcal{\tilde{L}}_{\hbar}^{\left(  n\right)  }\preceq_{\mathcal{S}}\left(
E_{1}+\gamma E_{2}\right)  \mathcal{L}_{\hbar}^{\left(  n\right)  }+E_{2}CI.
\]

By Theorem \ref{the.6.4}, there exists $C_{L}$ and $C_{\tilde{L}}$ such that
\begin{align*}
\frac{1}{2}\mathcal{L}_{\hbar}^{\left(  n\right)  }  &  \preceq_{\mathcal{S}%
}L_{\hbar}^{n}+C_{L}\text{ and}\\
\tilde{L}_{\hbar}^{n}  &  \preceq_{\mathcal{S}}\frac{3}{2}\left(
\tilde{\mathcal{L}}_{\hbar}^{\left(  n\right)  }+C_{\tilde{L}}\right)
\preceq_{\mathcal{S}}\frac{3}{2}\left(  \left(  E_{1}+\gamma E_{2}\right)
\mathcal{L}_{\hbar}^{\left(  n\right)  }+E_{2}CI+C_{\tilde{L}}\right)  .
\end{align*}
From these last two inequalities, it follows that $\tilde{L}_{\hbar}%
^{n}\preceq_{\mathcal{S}}C_{1}\left(  L_{\hbar}^{n}+C_{2}\right)  $ for
appropriately chosen constants $C_{1}$ and $C_{2}.$
\end{proof}

\subsection{Proof of Corollary \ref{cor.1.20}\label{sec.6.3}}

For the reader's convenience let us restated Corollary \ref{cor.1.20} here.

\begin{corollary*}
[\ref{cor.1.20}]Supposed $\left\{  b_{l,\hbar}\left(  x\right)  \right\}
_{l=0}^{m}\subset\mathbb{R}\left[  x\right]  $ and $\eta>0$ satisfies
Assumption \ref{ass.1}, $L_{\hbar}$ is the operator in the Eq. (\ref{e.1.11}),
and suppose that $C\geq0$ has been chosen so that $0\preceq_{\mathcal{S}%
}L_{\hbar}+CI$ for all $0<\hbar<\eta.$ (The existence of $C$ is guaranteed by
Corollary \ref{cor.1.19}.) Then for any $0<\hbar<\eta,$ $\bar{L}_{\hbar}+CI$
is a non-negative self-adjoint operator on $L^{2}\left(  m\right)  $ and
$\mathcal{S}$ is a core for $\left(  \bar{L}_{\hbar}+C\right)  ^{r}$ for all
$r\geq0.$
\end{corollary*}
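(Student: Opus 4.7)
The plan is to first establish essential self-adjointness of $(L_{\hbar} + CI)^{n}|_{\mathcal{S}}$ for every $n \in \mathbb{N}$, then deduce self-adjointness and non-negativity of $A := \bar{L}_{\hbar} + CI$, and finally promote $\mathcal{S}$ from a core for each integer power $A^{n}$ to a core for every real power $A^{r}$, $r \geq 0$, via a moment inequality.

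For the essential self-adjointness, fix $0 < \hbar < \eta$. By Proposition \ref{pro.5.3}, $(L_{\hbar} + CI)^{n}$ has divergence form with polynomial coefficients $\{B_{\ell,\hbar}\}_{\ell=0}^{mn}$, and Lemma \ref{lem.6.3} shows these satisfy items 1 and 3 of Assumption \ref{ass.1}. I would apply the pseudo-differential parametrix argument of Section \ref{sec.4} to $\mathbb{L} := (L_{\hbar} + CI)^{n} + \tilde{c}$ with $\tilde{c}$ sufficiently large. Theorem \ref{the.6.4} supplies the lower bound $\mathrm{Re}\,\sigma_{\mathbb{L}}(x,\xi) \geq \kappa > 0$ needed at Eq.~(\ref{e.4.19}), while Lemma \ref{lem.6.3} combined with Lemma \ref{lem.4.5} yields the symbol estimate of Corollary \ref{cor.4.9} with $\Sigma$ replaced by a suitable positive majorant built from the $\mathcal{B}_{\ell,\hbar}$'s. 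The remainder bound $\|R_{\mu}\|_{op} \to 0$ as $|\mu| \to \infty$ then follows exactly as in Lemma \ref{lem.4.19} and the dominated-convergence step at the end of Section \ref{sec.4}, and Lemma \ref{lem.4.2} produces the essential self-adjointness.

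Taking $n = 1$ in the previous step makes $\bar{L}_{\hbar} + CI$ self-adjoint, and the hypothesis $0 \preceq_{\mathcal{S}} L_{\hbar} + CI$ transfers to the closure since $\mathcal{S}$ is a core, yielding $A \geq 0$. Next, $(L_{\hbar} + CI)(\mathcal{S}) \subset \mathcal{S}$ and induction identify $A^{n}|_{\mathcal{S}}$ with $(L_{\hbar} + CI)^{n}|_{\mathcal{S}}$; uniqueness of self-adjoint extensions of the latter (essentially self-adjoint by the previous paragraph) forces $\mathcal{S}$ to be a core for each $A^{n}$. For the upgrade to real powers, pick $n \in \mathbb{N}$ with $n \geq r$; the spectral theorem yields the moment inequality $\|A^{r}\phi\| \leq \|A^{n}\phi\|^{r/n}\|\phi\|^{1-r/n}$ for $\phi \in D(A^{n})$ (by Hölder in the spectral representation). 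Given $\psi \in D(A^{r})$, the spectral projection $E_{k}$ of $A$ onto $[0,k]$ produces $E_{k}\psi \in \bigcap_{\nu} D(A^{\nu})$ with $A^{r}E_{k}\psi \to A^{r}\psi$; approximating each $E_{k}\psi$ by $\mathcal{S}$-vectors in the $A^{n}$-graph norm and invoking the moment inequality yields $A^{r}$-graph convergence, and a diagonal extraction completes the argument.

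The main obstacle is the first step: although the corollary is advertised as a consequence of Theorem \ref{the.1.10}, the hypotheses of that theorem (each $b_{l}$ pointwise non-negative on $\mathbb{R}$ or $\equiv 0$) are not literally implied by Assumption \ref{ass.1}, which only guarantees uniform positivity of the leading coefficients and of $b_{m,\hbar}$. The resolution is to invoke the \emph{method} of proof of Theorem \ref{the.1.10} rather than its statement, using the structural results of Section \ref{sec.6} (in particular Theorem \ref{the.6.4} and Lemma \ref{lem.6.3}) as replacements for the positivity-based Lemmas \ref{lem.4.6}--\ref{lem.4.7}.
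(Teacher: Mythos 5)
Your overall architecture (essential self-adjointness of the integer powers on $\mathcal{S}$, then non-negativity of the closure, then an interpolation/spectral-truncation argument to pass from integer to real powers) matches the shape of the paper's proof, and your last step is a correct, standard substitute for the paper's Lemmas \ref{lem.6.12}--\ref{lem.6.13}. The problem is the first and central step. You correctly observe that Theorem \ref{the.1.10} does not apply verbatim under Assumption \ref{ass.1}, but your proposed repair does not work as described: Theorem \ref{the.6.4} is a quadratic-form inequality, $L_{\hbar}^{n}+c\succeq_{\mathcal{S}}\frac{1}{2}\left(  \mathcal{L}_{\hbar}^{\left(  n\right)  }+c\right)  ,$ valid after pairing with $\psi\in\mathcal{S}$; it does not yield the \emph{pointwise} symbol bound $\operatorname{Re}\sigma_{\mathbb{L}}\left(  x,\xi\right)  \geq\kappa>0$ of Eq. (\ref{e.4.19}), let alone the stronger two-sided pointwise comparison $\operatorname{Re}\sigma_{L^{n}+c}\gtrsim\Sigma+c$ of Eq. (\ref{e.4.18}) which the parametrix actually uses (through Corollary \ref{cor.4.12}, Corollary \ref{cor.4.16} and Lemma \ref{lem.4.19}). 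Those pointwise estimates are proved in Lemmas \ref{lem.4.6}, \ref{lem.4.7} and \ref{lem.4.10} precisely from the hypothesis of Theorem \ref{the.1.10} that every $b_{l}$ is either identically zero or has strictly positive infimum, so that every product $b_{j_{1}}\dots b_{j_{n}}$ and every $\mathcal{B}_{\ell}$ is pointwise positive and dominates the derivative terms $T_{k}$. Under Assumption \ref{ass.1} the lower-order $b_{l,\hbar}$ may be negative on compact sets, $\Sigma\left(  x,\xi\right)  $ need not be positive, and the ``suitable positive majorant built from the $\mathcal{B}_{\ell,\hbar}$'s'' that your argument needs is exactly the missing content: Lemma \ref{lem.6.3} gives lower bounds and leading-coefficient control for $\mathcal{B}_{\ell,\hbar}$ and $R_{\ell,\hbar}$, but it does not by itself reproduce the symbol estimates of Corollary \ref{cor.4.9} or Eq. (\ref{e.4.18}), and you never construct them.

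The paper closes this gap differently, and you should be aware of the device since it avoids re-running any of Section \ref{sec.4}. One writes $L_{\hbar}^{n}$ in divergence form with coefficients $B_{\ell,\hbar}$ (Proposition \ref{pro.5.3}), uses Lemma \ref{lem.6.3} and Proposition \ref{pro.5.1} to see that these are bounded below with the correct degree ordering, and then defines $\hat{L}_{\hbar}$ by adding a single constant $C$ to all coefficients of order $\ell<mn$ so that the shifted coefficients satisfy the hypotheses of Theorem \ref{the.1.10} \emph{as stated}; hence $\overline{\hat{L}_{\hbar}}$ is self-adjoint. The difference $\hat{L}_{\hbar}-L_{\hbar}^{n}=\sum_{\ell=0}^{mn-1}\left(  -\hbar\right)  ^{\ell}C\partial^{2\ell}$ is a constant-coefficient operator which, by Lemma \ref{lem.6.1} together with Lemma \ref{lem 6.6.9} (itself an application of Theorem \ref{the.1.18} with $n=2$), is infinitesimally $\hat{L}_{\hbar}$-bounded, so the Kato--Rellich theorem gives self-adjointness of $\overline{L_{\hbar}^{n}}$ and hence that $\mathcal{S}$ is a core for $\bar{L}_{\hbar}^{n}$ (Proposition \ref{pro.6.11}); applying this with $b_{l,\hbar}$ replaced by $b_{l,\hbar}+C1_{l=0}$ yields the corollary. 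If you want to keep your route, you must actually prove pointwise analogues of Lemmas \ref{lem.4.6}--\ref{lem.4.10} under Assumption \ref{ass.1} (e.g. by comparing $\Sigma$ with $\sum_{\ell}\left(  \mathcal{B}_{\ell,\hbar}+K\right)  \xi^{2\ell}$ and absorbing the error into the top-order term); citing Theorem \ref{the.6.4} in their place is not legitimate.
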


Before proving this corollary we need to develop a few more tools. From Lemma
\ref{lem.6.3}, $\left\{  B_{\ell,\hbar}\right\}  _{\ell=0}^{mn}\subset
\mathbb{R}\left[  x\right]  $ in Eq. (\ref{e.3.1}) satisfies both items 1 and
3 of Assumption \ref{ass.1}. Therefore, $B_{\ell,\hbar}$ is bounded below for
$0\leq\ell\leq mn-1$ and $B_{mn,\hbar}>0.$ We may choose $C>0$ sufficiently
large so that
\begin{equation}
B_{\ell,\hbar}+C>0\text{ for }0\leq\ell\leq mn-1\text{ and }0<\hbar<\eta.
\label{e.6.25}%
\end{equation}

\begin{notation}
\label{not.6.8}Let $C>0$ be chosen so that Eq. (\ref{e.6.25}) holds and then
define the operator, $\hat{L}_{\hbar},$ by
\begin{align*}
\hat{L}_{\hbar}:=  &  \sum_{\ell=0}^{mn}\left(  -\hbar\right)  ^{\ell}%
\partial^{\ell}\left(  B_{\ell,\hbar}\left(  \sqrt{\hbar}\left(  \cdot\right)
\right)  +C1_{\ell<mn}\right)  \partial^{\ell}\\
=  &  \left(  -\hbar\right)  ^{mn}\partial^{mn}B_{mn,\hbar}\left(  \sqrt
{\hbar}\left(  \cdot\right)  \right)  \partial^{mn}+\sum_{\ell=0}%
^{mn-1}\left(  -\hbar\right)  ^{\ell}\partial^{\ell}\left(  B_{\ell,\hbar
}\left(  \sqrt{\hbar}\left(  \cdot\right)  \right)  +C\right)  \partial^{\ell}%
\end{align*}
with domain, $\mathcal{D}\left(  \hat{L}_{\hbar}\right)  =\mathcal{S}.$
\end{notation}

\begin{lemma}
\label{lem 6.6.9}There exists $\tilde{C}_{1}$ and $\tilde{C}_{2}>0$ such that
\[
\left\Vert \hbar\partial^{2M}\psi\right\Vert \leq\tilde{C}_{1}\left\Vert
\hat{L}_{\hbar}\psi\right\Vert +\tilde{C}_{2}\left\Vert \psi\right\Vert
\]
holds for all $0\leq M\leq mn$, $0<\hbar<\eta,$ and $\psi\in\mathcal{S}.$
\end{lemma}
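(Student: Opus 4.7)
The plan is to derive the lemma from three ingredients: (i) an algebraic extraction of $\partial^{2mn}\psi$ from $\hat L_\hbar\psi$, using the uniform positivity $B_{mn,\hbar}(\sqrt\hbar\cdot)\ge c_{b_m}^n$; (ii) a coercivity estimate from the non-negative quadratic form $\langle\hat L_\hbar\psi,\psi\rangle$ that controls intermediate derivatives; and (iii) interpolation to fill the range $0\le 2M\le 2mn$. The factor $\hbar$ on the left is absorbed using the uniform bound $\hbar<\eta$ combined with the lowest-order positive summand of $\hat L_\hbar$.

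For step (i), expand $\hat L_\hbar\psi$ by the Leibniz rule:
\[
\hat L_\hbar\psi=(-\hbar)^{mn}B_{mn,\hbar}(\sqrt\hbar\cdot)\partial^{2mn}\psi+E_\hbar\psi,
\]
where $E_\hbar\psi$ is a linear combination of terms $\hbar^{\ell+j/2}q_{\ell,j,\hbar}(\sqrt\hbar\cdot)\partial^k\psi$ with $k\le 2mn-1$ and $q_{\ell,j,\hbar}$ polynomials of uniformly bounded degree. Since $B_{mn,\hbar}(\sqrt\hbar\cdot)^{-1}$ is $L^2$-bounded by $c_{b_m}^{-n}$, this yields $\|\hbar^{mn}\partial^{2mn}\psi\|\le c_{b_m}^{-n}(\|\hat L_\hbar\psi\|+\|E_\hbar\psi\|)$.

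For steps (ii) and (iii), use the identity
\[
\langle\hat L_\hbar\psi,\psi\rangle=\sum_{\ell=0}^{mn}\hbar^\ell\langle(B_{\ell,\hbar}(\sqrt\hbar\cdot)+C\mathbf 1_{\ell<mn})\partial^\ell\psi,\partial^\ell\psi\rangle,
\]
whose summands are non-negative by Notation \ref{not.6.8} and Lemma \ref{lem.6.3}; Cauchy--Schwarz bounds each by $\|\hat L_\hbar\psi\|\|\psi\|$. Lemma \ref{lem.6.2}, which dominates $|q_{\ell,j,\hbar}(\sqrt\hbar\cdot)|$ by $B_{\ell,\hbar}(\sqrt\hbar\cdot)+\mathrm{const}$, then controls $\|E_\hbar\psi\|$ by $\|\hat L_\hbar\psi\|+\|\psi\|$, closing the top-order estimate to $\|\hbar^{mn}\partial^{2mn}\psi\|\lesssim\|\hat L_\hbar\psi\|+\|\psi\|$. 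Finally, a Gagliardo--Nirenberg inequality $\|\partial^{2M}\psi\|\lesssim\|\partial^{2mn}\psi\|^{M/mn}\|\psi\|^{1-M/mn}$ combined with Young's inequality produces the stated bound $\|\hbar\partial^{2M}\psi\|\le\tilde C_1\|\hat L_\hbar\psi\|+\tilde C_2\|\psi\|$ for $0\le M\le mn$, the $\hbar$-prefactor being absorbed via $\hbar<\eta$.

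The main obstacle is reconciling the $\hbar^{mn}$-scaling obtained from the top-order extraction with the single $\hbar$-factor on the left-hand side of the stated inequality: a naive use of the interpolation produces an excess $\hbar^{1-M}$-factor which, for $M>1$ and $\hbar$ near $0$, is unbounded. Closing the argument requires exploiting the lowest-order positive summand in $\hat L_\hbar$, which supplies a $\|\psi\|$-type bound uniform in $\hbar$, and balancing it against the interpolation exponents together with the uniform upper bound $\hbar<\eta$; ensuring that the resulting constants $\tilde C_1$ and $\tilde C_2$ remain independent of $\hbar\in(0,\eta)$ is the most delicate technical step, and is what fundamentally relies on the structure provided by Lemmas \ref{lem.6.2} and \ref{lem.6.3}.
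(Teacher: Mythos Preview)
Your approach has a genuine gap, and it also differs substantially from the paper's argument.

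First, a remark on the statement: the paper's own proof actually establishes the bound with $\hbar^{M}$ on the left, not a single $\hbar$; this is also the form in which the lemma is invoked in Proposition~\ref{pro.6.11} (where only $M=mn$ is needed). The $\hbar$-scaling difficulty you flag in your final paragraph is therefore an artifact of a typo in the stated inequality, and you should not attempt to prove the version with a bare $\hbar$.

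Even with the corrected left-hand side $\|\hbar^{M}\partial^{2M}\psi\|$, your argument does not close. In step~(i) you write $\hat L_{\hbar}\psi=(-\hbar)^{mn}B_{mn,\hbar}(\sqrt\hbar\cdot)\partial^{2mn}\psi+E_{\hbar}\psi$, and in step~(ii) you propose to control $\|E_{\hbar}\psi\|$ using the quadratic form $\langle\hat L_{\hbar}\psi,\psi\rangle$. But the Leibniz expansion of the top term $(-\hbar)^{mn}\partial^{mn}B_{mn,\hbar}(\sqrt\hbar\cdot)\partial^{mn}$ produces, in $E_{\hbar}$, summands involving $\partial^{k}\psi$ for $mn<k\le 2mn-1$. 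The quadratic form only controls weighted $L^{2}$-norms of $\partial^{\ell}\psi$ for $0\le\ell\le mn$, so it gives you no handle on these higher-order derivatives. Lemma~\ref{lem.6.2} is a pointwise polynomial bound and does nothing to lower the derivative order. Closing this would require an iteration or a separate high-order estimate that you have not supplied.

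The paper avoids this difficulty entirely by a much shorter route. It first observes, via the Fourier transform, that
\[
\|\hbar^{M}\partial^{2M}\psi\|\le\Bigl\|\sum_{\ell=0}^{M}(-\hbar)^{\ell}\partial^{2\ell}\psi\Bigr\|=:\|K_{\hbar}\psi\|,
\]
and then applies the already-proved operator comparison Theorem~\ref{the.1.18} with $n=2$, $\tilde L_{\hbar}=K_{\hbar}$, and $L_{\hbar}=\hat L_{\hbar}$ to obtain $K_{\hbar}^{2}\preceq_{\mathcal S}C_{1}\hat L_{\hbar}^{2}+C_{2}$. Since both operators are symmetric on $\mathcal S$, this reads $\|K_{\hbar}\psi\|^{2}\le C_{1}\|\hat L_{\hbar}\psi\|^{2}+C_{2}\|\psi\|^{2}$, which is the desired estimate. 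The key idea you are missing is that squaring and invoking Theorem~\ref{the.1.18} at the level of quadratic forms gives $L^{2}$-operator bounds directly, with no need for Leibniz expansions, coercivity of the first-order form, or interpolation.
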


\begin{proof}
As in Eq. (\ref{e.4.23}), let $\hat{\psi}$ denote the Fourier transform of
$\psi\in\mathcal{S}$ and recall that $\left\Vert \psi\right\Vert =\sqrt{2\pi
}\left\Vert \hat{\psi}\right\Vert .$ Hence it follows,
\begin{align}
\left\Vert \hbar^{M}\partial^{2M}\psi\right\Vert  &  =\sqrt{2\pi}\left\Vert
\hbar^{M}\xi^{2M}\hat{\psi}\left(  \xi\right)  \right\Vert \nonumber\\
&  \leq\sqrt{2\pi}\left\Vert \left(  \sum_{\ell=0}^{M}\hbar^{\ell}\xi^{2\ell
}\right)  \hat{\psi}\left(  \xi\right)  \right\Vert =\left\Vert \left(
\sum_{\ell=0}^{M}\left(  -\hbar\right)  ^{\ell}\partial^{2\ell}\right)
\psi\right\Vert . \label{e.6.26}%
\end{align}
With the same $C$ in Notation \ref{not.6.8} and using Eq. (\ref{e.6.25}), we
can see that
\[
1\leq\left(  B_{\ell,\hbar}+C1_{\ell<mn}\right)  +1~\forall~0\leq\ell\leq
mn\text{ \& }0<\hbar<\eta.
\]
Therefore applying the operator comparison Theorem \ref{the.1.18} with
$\widetilde{L}_{\hbar}=\sum_{\ell=0}^{M}\left(  -\hbar\right)  ^{\ell}%
\partial^{2\ell},$ $L_{\hbar}=\hat{L}_{\hbar},$ and $n=2,$ there exists
$C_{1}$ and $C_{2}>0$ such that for%
\[
\left\langle \left(  \sum_{\ell=0}^{M}\left(  -\hbar\right)  ^{\ell}%
\partial^{2\ell}\right)  ^{2}\psi,\psi\right\rangle \leq C_{1}\left\langle
\hat{L}_{\hbar}^{2}\psi,\psi\right\rangle +C_{2}\left\langle \psi
,\psi\right\rangle \text{ }\forall~\text{ }\psi\in\mathcal{S}\text{ \&
}0<\hbar<\eta.
\]
Combining this inequality with Eq. (\ref{e.6.26}), shows there exists other
constants $\widetilde{C}_{1}$ and $\widetilde{C}_{2}>0$ such that
\[
\left\Vert \hbar^{M}\partial^{2M}\psi\right\Vert \leq\left\Vert \left(
\sum_{\ell=0}^{M}\left(  -\hbar\right)  ^{\ell}\partial^{2\ell}\right)
\psi\right\Vert \leq\widetilde{C}_{1}\left\Vert \hat{L}_{\hbar}\psi\right\Vert
+\widetilde{C}_{2}\left\Vert \psi\right\Vert .
\]

\end{proof}

\begin{lemma}
\label{lem.6.10}Let $A$ and $B$ be closed operators on a Hilbert space
$\mathcal{K}$ and suppose there exists a subspace, $S\subseteq\mathcal{D}%
\left(  A\right)  \cap\mathcal{D}\left(  B\right)  ,$ such that $S$ is dense
and $S$ is a core of $B.$ If there exists a constant $C>0$ such that
\begin{equation}
\left\Vert A\psi\right\Vert \leq\left\Vert B\psi\right\Vert +C\left\Vert
\psi\right\Vert \text{ }\forall\text{ }\psi\in S, \label{e.6.27}%
\end{equation}
then $\mathcal{D}\left(  B\right)  \subseteq\mathcal{D}\left(  A\right)  $
and
\begin{equation}
\left\Vert A\psi\right\Vert \leq\left\Vert B\psi\right\Vert +C\left\Vert
\psi\right\Vert \text{ }\forall\text{ }\psi\in\mathcal{D}\left(  B\right)  .
\label{e.6.28}%
\end{equation}

\end{lemma}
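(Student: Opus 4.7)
The plan is a standard density/closure argument, exploiting the fact that $S$ is a core for $B$ and that $A$ is closed.

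First I would fix $\psi \in \mathcal{D}(B)$. Since $S$ is a core of $B$, by definition of core there exists a sequence $\{\psi_n\}_{n=1}^{\infty}\subseteq S$ such that $\psi_n \to \psi$ and $B\psi_n \to B\psi$ in $\mathcal{K}$. In particular both sequences $\{\psi_n\}$ and $\{B\psi_n\}$ are Cauchy.

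Next I would apply the hypothesis Eq.~(\ref{e.6.27}) to the differences $\psi_n - \psi_m \in S$ to obtain
\[
\|A\psi_n - A\psi_m\| = \|A(\psi_n - \psi_m)\| \leq \|B(\psi_n - \psi_m)\| + C\|\psi_n - \psi_m\|,
\]
and the right side tends to zero as $n,m \to \infty$ by the Cauchy property. Hence $\{A\psi_n\}$ is Cauchy in $\mathcal{K}$ and converges to some $\phi \in \mathcal{K}$. Now invoke the hypothesis that $A$ is closed: we have $\psi_n \to \psi$ and $A\psi_n \to \phi$, so $\psi \in \mathcal{D}(A)$ and $A\psi = \phi$. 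This proves $\mathcal{D}(B) \subseteq \mathcal{D}(A)$.

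Finally, the inequality Eq.~(\ref{e.6.28}) follows by passing to the limit in Eq.~(\ref{e.6.27}) along $\psi_n$: since $A\psi_n \to A\psi$, $B\psi_n \to B\psi$, and $\psi_n \to \psi$, continuity of the norm gives $\|A\psi\| \leq \|B\psi\| + C\|\psi\|$. There is no real obstacle here; the only subtle point to state cleanly is the characterization of ``core'' used in the first step (namely that $S$ is dense in $\mathcal{D}(B)$ with respect to the graph norm of $B$), which is exactly what supplies the approximating sequence.
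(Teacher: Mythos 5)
Your argument is correct and follows essentially the same route as the paper's proof: choose a graph-norm approximating sequence from the core, apply Eq.~(\ref{e.6.27}) to differences to see $\{A\psi_n\}$ is Cauchy, invoke closedness of $A$, and pass to the limit in the inequality. The only difference is that you spell out the Cauchy estimate explicitly, which the paper leaves implicit.
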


\begin{proof}
If $\psi\in\mathcal{D}\left(  B\right)  $, there exists $\psi_{k}\in S$ such
that $\psi_{k}\rightarrow\psi$ and $B\psi_{k}\rightarrow B\psi$ as
$k\rightarrow\infty$. Because of Eq. (\ref{e.6.27}) $\left\{  A\psi
_{k}\right\}  _{k=1}^{\infty}$ is Cauchy in $\mathcal{K}$ and hence
convergent. As $A$ is closed we may conclude that $\psi\in\mathcal{D}\left(
A\right)  $ and that $\lim_{k\rightarrow\infty}A\psi_{k}=A\psi.$ Therefore Eq.
(\ref{e.6.28}) holds by replacing $\psi$ in Eq. (\ref{e.6.27}) by $\psi_{k}$
and then passing to the limit as $k\rightarrow\infty.$
\end{proof}

\begin{proposition}
\label{pro.6.11}Suppose $\left\{  b_{l,\hbar}\left(  x\right)  \right\}
_{l=0}^{m}\subset\mathbb{R}\left[  x\right]  $ and $\eta>0$ satisfies
Assumption \ref{ass.1} and $L_{\hbar}$ is defined by Eq. (\ref{e.1.11}) with
$\mathcal{D}\left(  L_{\hbar}\right)  =\mathcal{S}$ for $0<\hbar<\eta.$ Then
$\bar{L}_{\hbar}$ is self-adjoint and $\mathcal{S}$ is a core for $\bar
{L}_{\hbar}^{n}$ for all $n\in\mathbb{N}$ and $0<\hbar<\eta.$ [Note $\bar
{L}_{\hbar}^{n}$ is a well defined self-adjoint operator by the spectral theorem.]
\end{proposition}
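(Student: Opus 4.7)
The plan is to establish essential self-adjointness of $L_\hbar^n|_\mathcal{S}$ for each $n \in \mathbb{N}$ and each $0 < \hbar < \eta$. Taking $n = 1$ gives self-adjointness of $\bar L_\hbar$. For general $n$, essential self-adjointness of $L_\hbar^n|_\mathcal{S}$ combined with the uniqueness of self-adjoint extensions yields $\overline{L_\hbar^n|_\mathcal{S}} = \bar L_\hbar^n$ (since the spectral $n$-th power $\bar L_\hbar^n$ is itself a self-adjoint extension of $L_\hbar^n|_\mathcal{S}$), which is exactly the statement that $\mathcal{S}$ is a core for $\bar L_\hbar^n$.

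To prove essential self-adjointness of $L_\hbar^n|_\mathcal{S}$, I would first apply Theorem \ref{the.1.10} to the modified operator $\hat L_\hbar$ from Notation \ref{not.6.8}. For each fixed $\hbar \in (0,\eta)$, the divergence-form coefficients $\hbar^\ell (B_{\ell,\hbar}(\sqrt{\hbar}\,\cdot) + C \cdot 1_{\ell<mn})$ of $\hat L_\hbar$ are real polynomials in $x$: the top coefficient satisfies $\hbar^{mn} B_{mn,\hbar}(\sqrt{\hbar}\,x) = \hbar^{mn} b_{m,\hbar}^{n}(\sqrt{\hbar}\,x) \geq \hbar^{mn} c_{b_m}^{n} > 0$ by Eq.~(\ref{e.6.11}) of Lemma \ref{lem.6.3}, and for $\ell < mn$ the shift by $C$ in Notation \ref{not.6.8} ensures that $B_{\ell,\hbar} + C$ is bounded below by a positive constant (using that Lemma \ref{lem.6.3} confirms $B_{\ell,\hbar}$ is a real polynomial with positive leading coefficient depending continuously on $\hbar$). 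The degree condition $\deg B_{\ell,\hbar} \leq \deg B_{0,\hbar}$ required by Theorem \ref{the.1.10} follows from Proposition \ref{pro.5.1} combined with the assumption $\deg b_{l,\hbar} \leq \deg b_{l-1,\hbar}$ in Assumption \ref{ass.1}. Hence Theorem \ref{the.1.10} gives that $\hat L_\hbar|_\mathcal{S}$ is essentially self-adjoint, with self-adjoint closure $\bar{\hat L}_\hbar$.

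Writing $L_\hbar^n = \hat L_\hbar - C K_\hbar^{<}$, where $K_\hbar^{<} := \sum_{\ell=0}^{mn-1}(-\hbar)^\ell \partial^{2\ell}$, Lemma \ref{lem 6.6.9} applied term by term gives $\|K_\hbar^{<}\psi\| \leq c_1\|\hat L_\hbar\psi\| + c_2\|\psi\|$ on $\mathcal{S}$, hence the forward bound $\|L_\hbar^n\psi\| \leq D_1 \|\hat L_\hbar\psi\| + D_2\|\psi\|$. For the reverse bound, I would observe that the Fourier symbol of $(K_\hbar^{<})^2$ is a polynomial in $\xi$ of degree $4(mn - 1)$, strictly lower than the degree $4mn$ of the symbol of $\tilde L_\hbar^{2n}$ with $\tilde L_\hbar := \sum_{l=0}^{m} (-\hbar)^l \partial^{2l}$; Parseval then yields the symbol-level operator inequality $(K_\hbar^{<})^2 \preceq_\mathcal{S} \epsilon\, \tilde L_\hbar^{2n} + C_\epsilon I$ for every $\epsilon > 0$. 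Theorem \ref{the.1.18} applied with $\tilde b_{l,\hbar} \equiv 1$ (whose comparison hypothesis $|\tilde b_{l,\hbar}| \leq c_1(b_{l,\hbar} + c_2)$ holds because each $b_{l,\hbar}$ is uniformly bounded below by Assumption \ref{ass.1}) gives $\tilde L_\hbar^{2n} \preceq_\mathcal{S} C'(L_\hbar^{2n} + C'')$. Combining these and using $\|L_\hbar^n\psi\|^2 = \langle L_\hbar^{2n}\psi, \psi\rangle$ on $\mathcal{S}$ yields $\|K_\hbar^{<}\psi\| \leq \sqrt{\epsilon C'}\,\|L_\hbar^n\psi\| + M_\epsilon\|\psi\|$; substituting $L_\hbar^n = \hat L_\hbar - CK_\hbar^{<}$ and rearranging shows that $CK_\hbar^{<}$ is $\hat L_\hbar$-bounded with relative bound that can be made arbitrarily small by choosing $\epsilon$ small.

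Finally, Lemma \ref{lem.6.10} applied in both directions gives $\mathcal{D}(\overline{L_\hbar^n|_\mathcal{S}}) = \mathcal{D}(\bar{\hat L}_\hbar)$, and the Kato--Rellich perturbation theorem (available thanks to the small relative bound) shows that $\bar{\hat L}_\hbar - CK_\hbar^{<}$, extended by continuity to $\mathcal{D}(\bar{\hat L}_\hbar)$, is self-adjoint; since this extension agrees with $L_\hbar^n$ on the core $\mathcal{S}$, it must coincide with $\overline{L_\hbar^n|_\mathcal{S}}$, establishing essential self-adjointness. The main obstacle is the reverse graph-norm bound, which requires the Fourier-symbol estimate combined with Theorem \ref{the.1.18} and the bootstrapping step turning a relative bound in terms of $L_\hbar^n$ into an arbitrarily small relative bound in terms of $\hat L_\hbar$ suitable for Kato--Rellich.
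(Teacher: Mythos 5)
Your proof is correct and follows the paper's overall strategy: introduce the auxiliary operator $\hat L_\hbar$ from Notation \ref{not.6.8}, establish its essential self-adjointness via Theorem \ref{the.1.10}, show that $\hat L_\hbar - L_\hbar^n = CK_\hbar^{<}$ is $\hat L_\hbar$-bounded with relative bound less than one, and invoke the Kato--Rellich theorem. The spectral-theory observation deriving ``$\mathcal{S}$ is a core for $\bar L_\hbar^n$'' from essential self-adjointness of $L_\hbar^n|_{\mathcal{S}}$ is also the same as the paper's.

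Where you diverge is in producing the arbitrarily small relative bound. You estimate $(K_\hbar^{<})^2 \preceq_{\mathcal{S}} \epsilon\,\tilde L_\hbar^{2n} + C_\epsilon I$ on the Fourier side, apply Theorem \ref{the.1.18} with $\tilde b_{l,\hbar}\equiv 1$ to get $\tilde L_\hbar^{2n}\preceq_{\mathcal{S}} C'(L_\hbar^{2n}+C'')$, pass to a bound $\|K_\hbar^{<}\psi\|\leq\sqrt{\epsilon C'}\,\|L_\hbar^n\psi\|+M_\epsilon\|\psi\|$, and then bootstrap this through $L_\hbar^n=\hat L_\hbar - CK_\hbar^{<}$ to recover a small bound against $\hat L_\hbar$. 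This is sound (the absorption works because $C\sqrt{\epsilon C'}<1$ for small $\epsilon$), but it is more roundabout than necessary. The paper's proof goes directly: Lemma \ref{lem.6.1} gives $(K_\hbar^{<})^2\preceq_{\mathcal{S}}\delta\,\hbar^{2mn}\partial^{4mn}+C_\delta I$, hence $\|K_\hbar^{<}\psi\|\leq\sqrt{\delta}\,\|\hbar^{mn}\partial^{2mn}\psi\|+\sqrt{C_\delta}\,\|\psi\|$, and then Lemma \ref{lem 6.6.9} bounds $\|\hbar^{mn}\partial^{2mn}\psi\|$ by $\tilde C_1\|\hat L_\hbar\psi\|+\tilde C_2\|\psi\|$; combining gives the relative bound $\sqrt{\delta}\,\tilde C_1$ directly against $\hat L_\hbar$, with no detour through $L_\hbar^{2n}$ and no algebraic bootstrap. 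You already cited Lemma \ref{lem 6.6.9} in your ``forward bound'' step, so you had the tool in hand; pairing it with Lemma \ref{lem.6.1} would have short-circuited the reverse-bound machinery. In short: correct, but the paper's route to the relative bound is the one to internalize, since it isolates the top-order term $\hbar^{mn}\partial^{2mn}$ as the sole quantity that must be compared to $\hat L_\hbar$.
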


\begin{proof}
Recall that $L_{\hbar}^{n}$ may be written in divergence form as in Eq.
(\ref{e.5.3}) where $B_{\ell,\hbar}=\mathcal{B}_{\ell,\hbar}+R_{\ell,\hbar}$
and $\mathcal{B}_{\ell,\hbar}\in\mathbb{R}\left[  x\right]  $ and
$R_{\ell,\hbar}\in\mathbb{R}\left[  x\right]  $ are as in Eqs. (\ref{e.5.5})
and (\ref{e.5.6}) respectively. By Assumption \ref{ass.1}, $\deg\left(
b_{l-1}\right)  \leq\deg\left(  b_{l}\right)  ,$ which used in combination
with the item 1 in Proposition \ref{pro.5.1} and the definition of
$\mathcal{B}_{\ell}$ in the Eq. (\ref{e.5.5}) implies,
\begin{align*}
\text{$\deg$}\left(  B_{\ell,\hbar}\right)   &  =\text{$\deg$}\left(
\mathcal{B}_{\ell,\hbar}\right)  \leq\max\left\{  \text{$\deg$}\left(
b_{0,\hbar}^{n}\right)  ,0\right\} \\
&  \leq\max\left\{  \text{$\deg$}\left(  \mathcal{B}_{0,\hbar}\right)
,0\right\}  =\max\left\{  \text{$\deg$}\left(  B_{0,\hbar}\right)  ,0\right\}
.
\end{align*}
Each term in $\hat{L}_{\hbar}$ defined in Notation \ref{not.6.8} is a positive
operator and by Theorem \ref{the.1.10}, $\overline{\hat{L}_{\hbar}}$ is
self-adjoint. [Recall that $\mathcal{D}\left(  \hat{L}_{\hbar}\right)
:=\mathcal{S}$.] Moreover by Lemma \ref{lem.6.1}, for all $\delta>0$ there
exists $C_{\delta}<\infty$ such that
\begin{equation}
\left(  \hat{L}_{\hbar}-L_{\hbar}^{n}\right)  ^{2}=\left(  \sum_{\ell
=0}^{nm-1}(-\hbar)^{\ell}C\partial^{2\ell}\right)  ^{2}\preceq_{\mathcal{S}%
}\delta\left(  -\hbar\right)  ^{mn}\partial^{4mn}+C_{\delta}I \label{e.6.29}%
\end{equation}
which implies,%
\[
\left\Vert \left(  \hat{L}_{\hbar}-L_{\hbar}^{n}\right)  \psi\right\Vert
\leq\delta\left\Vert \left(  \hbar\right)  ^{mn}\partial^{2nm}\psi\right\Vert
+C_{\delta}\left\Vert \psi\right\Vert \text{ }\forall~\psi\in\mathcal{S}.
\]
This inequality along with Lemma \ref{lem 6.6.9} then gives%
\begin{align*}
\left\Vert \left(  \hat{L}_{\hbar}-L_{\hbar}^{n}\right)  \psi\right\Vert  &
\leq\delta\left(  C_{1}\left\Vert \hat{L}_{\hbar}\psi\right\Vert
+C_{2}\left\Vert \psi\right\Vert \right)  +C_{\delta}\left\Vert \psi
\right\Vert \\
&  \leq\delta C_{1}\left\Vert \hat{L}_{\hbar}\psi\right\Vert +\left(  \delta
C_{2}+C_{\delta}\right)  \left\Vert \psi\right\Vert \text{ }\forall~\psi
\in\mathcal{S}.
\end{align*}
Therefore for any $a>0$ we may take $\delta>0$ so that $a:=\delta C_{1}$ and
then let $C_{a}:=\left(  \delta C_{2}+C_{\delta}\right)  <\infty$ in the
previous estimate in order to show,%
\begin{equation}
\left\Vert \left(  \hat{L}_{\hbar}-L_{\hbar}^{n}\right)  \psi\right\Vert \leq
a\left\Vert \hat{L}_{\hbar}\psi\right\Vert +C_{a}\left\Vert \psi\right\Vert
\text{ }\forall~\psi\in\mathcal{S}. \label{e.6.30}%
\end{equation}
As a consequence of this inequality with $a<1$ and a variant of the
Kato-Rellich theorem (see \cite[Theorem X.13, p. 163]{Reed1975}), we may
conclude $\overline{L_{\hbar}^{n}}$ is self-adjoint. As this holds for $n=1,$
we conclude that $\bar{L}_{\hbar}$ is self-adjoint. By the spectral theorem,
$\bar{L}_{\hbar}^{n}$ is also self-adjoint. Since $L_{\hbar}^{n}\subset\bar
{L}_{\hbar}^{n},$ we know that $\overline{L_{\hbar}^{n}}\subset\bar{L}_{\hbar
}^{n}$ and therefore $\overline{L_{\hbar}^{n}}=\bar{L}_{\hbar}^{n}$ as both
operators are self-adjoint. Finally, $L_{\hbar}^{n}=\bar{L}_{\hbar}%
^{n}|_{\mathcal{S}}$ and hence $\overline{\bar{L}_{\hbar}^{n}|_{\mathcal{S}}%
}=\overline{L_{\hbar}^{n}}=\bar{L}_{\hbar}^{n}$ which shows $\mathcal{S}$ is a
core for $\bar{L}_{\hbar}^{n}.$

\begin{comments}
There is a gap in this proof namely it shows $\overline{\hat{L}_{\hbar}%
}-\overline{\hat{L}_{\hbar}-L_{\hbar}^{n}}$ is self-adjoint on $\mathcal{D}%
\left(  \overline{\hat{L}_{\hbar}}\right)  $ but one still has to show
$\overline{\hat{L}_{\hbar}}-\overline{\hat{L}_{\hbar}-L_{\hbar}^{n}}%
=\overline{L_{\hbar}^{n}}.$ Taking $A=\left(  \overline{\hat{L}_{\hbar
}-L_{\hbar}^{n}}\right)  $ and $B=\overline{\hat{L}_{\hbar}}$ in Lemma
\ref{lem.6.10}, Eq. (\ref{e.6.30}) implies $\mathcal{D}(\overline{\hat
{L}_{\hbar}})\subseteq\mathcal{D}(\overline{\hat{L}_{\hbar}-L_{\hbar}^{n}})$
and
\[
\left\Vert \left(  \overline{\hat{L}_{\hbar}-L_{\hbar}^{n}}\right)
\psi\right\Vert \leq a\left\Vert \overline{\hat{L}_{\hbar}}\psi\right\Vert
+C_{a}\left\Vert \psi\right\Vert \text{ for }\psi\in\mathcal{D}(\overline
{\hat{L}_{\hbar}}).
\]
By the Kato-Rellich theorem or Wust's theorem, see \citep[ p.162]{Reed1975}
and \citep[ p.164]{Reed1975} respectively, we may conclude by taking
$A=\overline{\hat{L}_{\hbar}}$ and $B=\overline{\hat{L}_{\hbar}-L_{\hbar}^{n}%
}$ in Wust's theorem in \citep[ p.164]{Reed1975}, we have $\overline{L_{\hbar
}^{n}}$ is self adjoint with a core $\mathcal{S}$.
\end{comments}

\begin{comments}
$n=1$ proof. Let $C>0$ be chosen so that Eq. (\ref{e.6.25}) holds and then
define the operator, $\hat{L}_{\hbar},$ by
\begin{align*}
\hat{L}_{\hbar}:=  &  \sum_{\ell=0}^{m}\left(  -\hbar\right)  ^{\ell}%
\partial^{\ell}\left(  b_{\ell,\hbar}\left(  \sqrt{\hbar}\left(  \cdot\right)
\right)  +C1_{\ell<m}\right)  \partial^{\ell}\\
=  &  \left(  -\hbar\right)  ^{m}\partial^{m}b_{m,\hbar}\left(  \sqrt{\hbar
}\left(  \cdot\right)  \right)  \partial^{m}+\sum_{\ell=0}^{m-1}\left(
-\hbar\right)  ^{\ell}\partial^{\ell}\left(  b_{\ell,\hbar}\left(  \sqrt
{\hbar}\left(  \cdot\right)  \right)  +C\right)  \partial^{\ell}%
\end{align*}
with domain, $\mathcal{D}\left(  \hat{L}_{\hbar}\right)  =\mathcal{S}.$
\end{comments}
\end{proof}

\begin{lemma}
\label{lem.6.12}If $A$ is any essentially self-adjoint operator on a Hilbert
space $K$ and $q:\mathbb{R\rightarrow C}$ is a measurable function such that,
for some constants $C_{1}$ and $C_{2},$%
\[
\left\vert q\left(  x\right)  \right\vert \leq C_{1}\left\vert x\right\vert
+C_{2}~\forall~x\in\mathbb{R},
\]
then $\mathcal{D}\left(  A\right)  $ is a core for $q\left(  \bar{A}\right)
.$
\end{lemma}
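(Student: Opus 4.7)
The plan is to use the spectral theorem for $\bar{A}$ together with two successive approximation steps: first approximate an arbitrary $\psi\in\mathcal{D}(q(\bar{A}))$ by spectrally truncated vectors lying in $\mathcal{D}(\bar{A})$, and then use that $\mathcal{D}(A)$ is already known to be a core for $\bar{A}$ (by essential self-adjointness) to approximate those truncations by elements of $\mathcal{D}(A)$ in the graph norm of $q(\bar{A})$.

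The crucial analytic ingredient is the following pointwise/spectral bound. From $|q(x)|\leq C_1|x|+C_2$ and the elementary inequality $(a+b)^2\leq 2a^2+2b^2$, one obtains $|q(x)|^2\leq 2C_1^2x^2+2C_2^2$, so via the spectral calculus for $\bar{A}$,
\[
\|q(\bar{A})\xi\|^2 \;\leq\; 2C_1^2\|\bar{A}\xi\|^2 + 2C_2^2\|\xi\|^2 \qquad\forall\,\xi\in\mathcal{D}(\bar{A}).
\]
In particular $\mathcal{D}(\bar{A})\subseteq\mathcal{D}(q(\bar{A}))$, and convergence in the graph norm of $\bar{A}$ implies convergence in the graph norm of $q(\bar{A})$.

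Next, I would fix $\psi\in\mathcal{D}(q(\bar{A}))$ and let $E$ denote the spectral measure of $\bar{A}$. Set $\psi_n := E([-n,n])\psi$. Since $1_{[-n,n]}$ is bounded, $\psi_n \in \mathcal{D}(\bar{A})$, and by dominated convergence in the spectral representation one gets $\psi_n\to\psi$ and $q(\bar{A})\psi_n\to q(\bar{A})\psi$ in $K$, i.e.\ $\psi_n\to\psi$ in the graph norm of $q(\bar{A})$. Because $A$ is essentially self-adjoint, $\mathcal{D}(A)$ is a core for $\bar{A}$, so for each $n$ I can choose $\phi_{n,k}\in\mathcal{D}(A)$ with $\phi_{n,k}\to\psi_n$ and $\bar{A}\phi_{n,k}\to\bar{A}\psi_n$ as $k\to\infty$. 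Applying the spectral inequality above to $\xi=\phi_{n,k}-\psi_n\in\mathcal{D}(\bar{A})$ yields $q(\bar{A})\phi_{n,k}\to q(\bar{A})\psi_n$ in $K$ as well.

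Finally, a standard diagonal extraction selects $k=k(n)$ so that $\phi_{n,k(n)}\to\psi$ and $q(\bar{A})\phi_{n,k(n)}\to q(\bar{A})\psi$, showing $\mathcal{D}(A)$ is a core for $q(\bar{A})$. There is no serious obstacle in this argument; the only thing to be mildly careful about is not confusing $q(A)$ (which is not even defined since $q$ is merely measurable) with $q(\bar{A})$ — we only use functional calculus for the self-adjoint operator $\bar{A}$, and the approximating vectors $\phi_{n,k}\in\mathcal{D}(A)\subseteq\mathcal{D}(\bar{A})$ automatically lie in $\mathcal{D}(q(\bar{A}))$ by the inclusion established above.
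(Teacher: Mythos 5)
Your proof is correct and takes essentially the same route as the paper: the linear-growth bound on $q$ is used to transfer graph-norm convergence from $\bar{A}$ to $q\left(\bar{A}\right)$ (so the core property of $\mathcal{D}\left(A\right)$ for $\bar{A}$ carries over), and a truncation step shows $\mathcal{D}\left(\bar{A}\right)$ is a core for $q\left(\bar{A}\right)$. The only difference is presentational: you use the abstract spectral projections $E\left(\left[-n,n\right]\right)$ together with a diagonal extraction, whereas the paper passes to the multiplication-operator model $\bar{A}=M_{f}$ and truncates with $1_{\left\{\left\vert f\right\vert \leq n\right\}}$.
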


\begin{proof}
To prove this we may assume by the spectral theorem that $\mathcal{K}%
=L^{2}\left(  \Omega,\mathcal{B},\mu\right)  $ and $\bar{A}=M_{f}$ where
$\left(  \Omega,\mathcal{B},\mu\right)  $ is a $\sigma$ -- finite measure
space and $f:\Omega\rightarrow\mathbb{R}$ is a measurable function. Of course
in this model, $q\left(  \bar{A}\right)  =M_{q\circ f}.$ In this case,
$\mathcal{D}:=\mathcal{D}\left(  A\right)  \subset\mathcal{D}\left(
M_{f}\right)  $ is a dense subspace of $L^{2}\left(  \mu\right)  $ such that
for all $g\in\mathcal{D}\left(  M_{f}\right)  $ there exists $g_{n}%
\in\mathcal{D}$ such that $g_{n}\rightarrow g$ and $fg_{n}\rightarrow fg$ in
$L^{2}\left(  \mu\right)  $ as $n\rightarrow\infty.$ For this same sequence we
have
\[
\left\Vert q\left(  \bar{A}\right)  g_{n}-q\left(  \bar{A}\right)
g\right\Vert _{2}=\left\Vert q\left(  f\right)  \left[  g_{n}-g\right]
\right\Vert _{2}\leq C_{1}\left\Vert f\left[  g_{n}-g\right]  \right\Vert
_{2}+C_{2}\left\Vert g_{n}-g\right\Vert \rightarrow0
\]
as $n\rightarrow\infty.$ This shows that
\begin{equation}
q\left(  \bar{A}\right)  |_{\mathcal{D}\left(  M_{f}\right)  }\subset
\overline{q\left(  \bar{A}\right)  |_{\mathcal{D}}}\subset q\left(  \bar
{A}\right)  . \label{e.6.31}%
\end{equation}

For $g\in\mathcal{D}\left(  q\left(  \bar{A}\right)  \right)  $ (i.e. both $g$
and $g\cdot q\circ f$ are in $\in L^{2}\left(  \mu\right)  $ ), let
$g_{n}:=g1_{\left\vert f\right\vert \leq n}\in\mathcal{D}\left(  M_{f}\right)
.$ Then $g_{n}\rightarrow g$ in $L^{2}\left(  \mu\right)  $ as $n\rightarrow
\infty$ by DCT. Moreover
\[
\left\vert g_{n}q\circ f-gq\circ f\right\vert =\left(  g1_{\left\vert
f\right\vert \leq n}-g\right)  q\circ f\leq2\left\vert g\right\vert \left\vert
q\circ f\right\vert \in L^{2}\left(  \mu\right)  ,
\]
and so $\left\Vert g_{n}q\circ f-gq\circ f\right\Vert _{2}\rightarrow0$ as
$n\rightarrow\infty$ by DCT as well. This shows that $q\left(  \bar{A}\right)
=\overline{q\left(  \bar{A}\right)  |_{\mathcal{D}\left(  M_{f}\right)  }}$
and hence it now follows from Eq. (\ref{e.6.31}) that
\[
q\left(  \bar{A}\right)  =\overline{q\left(  \bar{A}\right)  |_{\mathcal{D}%
\left(  M_{f}\right)  }}\subset\overline{q\left(  \bar{A}\right)
|_{\mathcal{D}}}\subset q\left(  \bar{A}\right)  .
\]

\end{proof}

\begin{lemma}
\label{lem.6.13}Let $B$ be a non-negative self-adjoint operator on a Hilbert
space, $\mathcal{K}.$ If $S$ is a core for $B^{n}$ for some $n\in
\mathbb{N}_{0},$ then $S$ is a core for $B^{r}$ for any $0\leq r\leq n.$ [By
the spectral theorem, $B^{r}$ is again a non-negative self-adjoint operator on
$\mathcal{K}$ for any $0\leq r<\infty.]$
\end{lemma}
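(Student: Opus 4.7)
The plan is to reduce to the multiplication-operator model via the spectral theorem and then exploit an elementary pointwise domination to transport convergence in the $B^{n}$-graph norm into convergence in the $B^{r}$-graph norm. By the spectral theorem we may assume $\mathcal{K}=L^{2}(\Omega,\mathcal{B},\mu)$ and $B=M_{f}$ where $f:\Omega\to[0,\infty)$ is measurable, so that $B^{s}=M_{f^{s}}$ with $\mathcal{D}(B^{s})=\{g\in L^{2}(\mu):f^{s}g\in L^{2}(\mu)\}$ for every $s\geq0$.

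The key observation is the pointwise bound $f^{2r}\leq 1+f^{2n}$ for all $f\geq 0$ and $0\leq r\leq n$ (split into the cases $f\leq 1$ and $f\geq 1$). Integrating against $|h|^{2}$ yields the operator-type inequality
\begin{equation*}
\|B^{r}h\|^{2}\leq \|h\|^{2}+\|B^{n}h\|^{2}\qquad\forall\,h\in\mathcal{D}(B^{n}).
\end{equation*}
Consequently, if $\{h_{k}\}\subset\mathcal{D}(B^{n})$ satisfies $h_{k}\to h$ and $B^{n}h_{k}\to B^{n}h$ in $\mathcal{K}$, then applying the bound to $h_{k}-h_{j}$ shows $\{B^{r}h_{k}\}$ is Cauchy; since $B^{r}$ is closed, $h\in\mathcal{D}(B^{r})$ and $B^{r}h_{k}\to B^{r}h$. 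In particular, convergence in the $B^{n}$-graph norm forces convergence in the $B^{r}$-graph norm.

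Next I would show $\mathcal{D}(B^{n})$ is itself a core for $B^{r}$. Given $g\in\mathcal{D}(B^{r})$, set $g_{k}:=g\cdot 1_{\{f\leq k\}}$. Then $f^{n}g_{k}$ is bounded pointwise by $k^{n}|g|\in L^{2}(\mu)$, so $g_{k}\in\mathcal{D}(B^{n})$. By dominated convergence (with dominating functions $|g|$ and $|f^{r}g|$ respectively), $g_{k}\to g$ and $B^{r}g_{k}=f^{r}g_{k}\to f^{r}g=B^{r}g$ in $L^{2}(\mu)$, establishing the claim.

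Finally I would combine the two steps. Fix $g\in\mathcal{D}(B^{r})$ and $\varepsilon>0$. Choose $k$ so that $\|g-g_{k}\|+\|B^{r}g-B^{r}g_{k}\|<\varepsilon$. Since $g_{k}\in\mathcal{D}(B^{n})$ and $S$ is a core for $B^{n}$, pick $s\in S$ with $\|s-g_{k}\|+\|B^{n}s-B^{n}g_{k}\|<\varepsilon$. Applying the displayed inequality to $h=s-g_{k}\in\mathcal{D}(B^{n})$ gives $\|B^{r}s-B^{r}g_{k}\|\leq \|s-g_{k}\|+\|B^{n}s-B^{n}g_{k}\|<\varepsilon$, and the triangle inequality then yields $\|s-g\|+\|B^{r}s-B^{r}g\|<3\varepsilon$. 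This proves $S$ is dense in $\mathcal{D}(B^{r})$ in the $B^{r}$-graph norm, i.e. $S$ is a core for $B^{r}$. There is no real obstacle here beyond being careful that $B^{r}$ is closed (so that the domination argument closes up a Cauchy sequence into a genuine graph-norm limit); everything else is a soft spectral-theoretic assembly.
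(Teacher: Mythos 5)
Your proof is correct and takes essentially the same approach as the paper: the paper proves a slightly more general Lemma~\ref{lem.6.12} (that $\mathcal{D}(A)$ is a core for $q(\bar{A})$ whenever $|q(x)|\leq C_1|x|+C_2$) and applies it with $A=B^n|_S$ and $q(x)=|x|^{r/n}$, which amounts to exactly your three steps — spectral reduction to $M_f$, the pointwise domination $f^r\lesssim 1+f^n$ to transfer graph-norm Cauchyness, and truncation by $1_{\{f\leq k\}}$ with DCT to show $\mathcal{D}(B^n)$ is itself a core for $B^r$.
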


\begin{proof}
Let $A=B^{n}|_{S}$ so that by assumption $\bar{A}=B^{n},$ i.e. $A$ is
essentially self-adjoint. The proof is then finished by applying Lemma
\ref{lem.6.12} with $q\left(  x\right)  =\left\vert x\right\vert ^{r/n}$ upon
noticing, $q\left(  \bar{A}\right)  =q\left(  B^{n}\right)  =\left\vert
B^{n}\right\vert ^{r/n}=B^{r}.$
\end{proof}

\begin{proof}
[Proof of Corollary \ref{cor.1.20}]Let $C\geq0$ be the constant in the
statement of Corollary \ref{cor.1.20}. It is simple to verify that $\left\{
b_{l,\hbar}+C1_{l=0}\right\}  _{l=0}^{m}$ and $\eta>0$ satisfies Assumption
\ref{ass.1}, and therefore applying Proposition \ref{pro.6.11} with $\left\{
b_{l,\hbar}\right\}  _{l=0}^{m}$ replaced by $\left\{  b_{l,\hbar}%
+C1_{l=0}\right\}  _{l=0}^{m}$ shows, $\bar{L}_{\hbar}+C$ is self-adjoint and
$\mathcal{S}$ is core for $\left(  \bar{L}_{\hbar}+C\right)  ^{n}$ for all
$n\in\mathbb{N}$ and $0<\hbar<\eta.$ It then follows from Lemma \ref{lem.6.13}
that $S=\mathcal{S}$ is a core for $\left(  \bar{L}_{\hbar}+C\right)  ^{r}$
for all $0\leq r\leq n$ and $0<\hbar<\eta.$ As $n\in\mathbb{N}$ was arbitrary,
the proof is complete.
\end{proof}

\subsection{Proof of Corollary \ref{cor.1.21}\label{sec.6.4}}

In order to prove Corollary \ref{cor.1.21}, we will need a lemma below.

\begin{lemma}
\label{lem.6.14} Let $A$ and $B$ be non-negative self-adjoint operators on a
Hilbert Space $\mathcal{K}.$ Suppose $S$ is a dense subspace of $\mathcal{K}$
so that $S\subseteq\mathcal{D}\left(  A\right)  \cap\mathcal{D}\left(
B\right)  ,$ $AS\subseteq S$ and $BS\subseteq S.$ If we further assume that
for each $n\in\mathbb{N}_{0},$ $S$ is a core of $B^{n}.$ Then the following
are equivalent:

\begin{enumerate}
\item For any $n\in\mathbb{N}_{0}$ there exists $C_{n}>0$ such that
$A^{n}\preceq_{S}C_{n}B^{n}.$

\item For each $r\geq0,$ there exists $C_{r}$ such that $A^{r}\leq C_{r}%
B^{r}.$

\item For each $v\geq0,$ there exists $C_{v}$ such that $A^{v}\preceq
C_{v}B^{v}.$
\end{enumerate}

Recall the different operator inequality notations, $\preceq_{S},$ $\preceq$
and $\leq,$ were defined in Notation \ref{not.1.14}.
\end{lemma}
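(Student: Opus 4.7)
The plan is to prove the cycle $(1)\Rightarrow(2)\Rightarrow(3)\Rightarrow(1)$. A few standing observations simplify everything. First, since $AS\subseteq S$ and $BS\subseteq S$, induction gives $S\subseteq\mathcal{D}(A^{n})\cap\mathcal{D}(B^{n})$ for every $n\in\mathbb{N}_{0}$. Second, because $A^{n}$ and $B^{n}$ are self-adjoint (by the spectral theorem), symmetry on any subspace of their domains is automatic, so the ``symmetry'' parts of $\preceq_{S}$ and $\preceq$ never need separate verification. Third, the hypothesis that $S$ is a core of $B^{n}$ for every $n$ combined with Lemma~\ref{lem.6.13} shows $S$ is a core for $B^{r}$ for every $r\geq 0$. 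Finally, for any non-negative self-adjoint operator $T$ and any $\psi\in\mathcal{D}(T)$, the spectral theorem gives the form--norm identity $\langle T\psi,\psi\rangle=\|T^{1/2}\psi\|^{2}$, which I will use repeatedly to pass between $\preceq$-type and $\leq$-type inequalities.

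The heart of the argument is $(1)\Rightarrow(2)$. Starting from $A^{n}\preceq_{S}C_{n}B^{n}$ and applying the form--norm identities at $T=A^{n},B^{n}$ rewrites the hypothesis as $\|A^{n/2}\psi\|^{2}\leq C_{n}\|B^{n/2}\psi\|^{2}$ for all $\psi\in S$. Since $S$ is a core of $B^{n/2}$, for any $\psi\in\mathcal{D}(B^{n/2})$ I choose $\psi_{k}\in S$ with $\psi_{k}\to\psi$ and $B^{n/2}\psi_{k}\to B^{n/2}\psi$; the inequality then forces $\{A^{n/2}\psi_{k}\}$ to be Cauchy, and the closedness of $A^{n/2}$ (self-adjointness) places $\psi\in\mathcal{D}(A^{n/2})$ with $A^{n/2}\psi_{k}\to A^{n/2}\psi$. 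Passing to the limit yields $A^{n}\leq C_{n}B^{n}$ in the Löwner sense of Notation~\ref{not.1.14}(3). To obtain (2) for an arbitrary $r\geq 0$, I pick an integer $n\geq r$, set $t:=r/n\in[0,1]$, and apply the Löwner--Heinz inequality (Theorem~\ref{the.1.17}) to $A^{n}\leq C_{n}B^{n}$, obtaining $A^{r}=(A^{n})^{t}\leq(C_{n}B^{n})^{t}=C_{n}^{t}B^{r}$.

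The remaining two implications are cosmetic. For $(2)\Rightarrow(3)$, the inequality $A^{v}\leq C_{v}B^{v}$ (case $r=v$ of (2)) says $\mathcal{D}(B^{v/2})\subseteq\mathcal{D}(A^{v/2})$ and $\|A^{v/2}\psi\|^{2}\leq C_{v}\|B^{v/2}\psi\|^{2}$. Applying (2) at $r=2v$ instead gives $\mathcal{D}(B^{v})\subseteq\mathcal{D}(A^{v})$, and on this smaller domain the form--norm identities convert the previous norm bound into $\langle A^{v}\psi,\psi\rangle\leq C_{v}\langle B^{v}\psi,\psi\rangle$, which is exactly $A^{v}\preceq C_{v}B^{v}$. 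For $(3)\Rightarrow(1)$, specialize (3) at $v=n\in\mathbb{N}_{0}$ and restrict the form inequality from $\mathcal{D}(B^{n})$ down to $S\subseteq\mathcal{D}(B^{n})$. The essential difficulty lies solely in the step $(1)\Rightarrow(2)$: because $x\mapsto x^{r}$ fails to be operator monotone for $r>1$ (as noted before Theorem~\ref{the.1.18}), one cannot hope to climb from $n=1$ up to arbitrary integer $n$ by monotonicity; instead one must first establish the integer inequality for every $n$ (which is where hypothesis (1) is used in full strength) and then \emph{descend} via Löwner--Heinz to fractional $r$. The core/closure step is what makes the whole descent rigorous by turning the $S$-level form bound into a genuine Löwner inequality on the full domain of $B^{n/2}$.
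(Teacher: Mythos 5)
Your proof is correct and follows essentially the same route as the paper's: pass from the form bound $A^{n}\preceq_{S}C_{n}B^{n}$ to the L\"owner inequality $A^{n}\leq C_{n}B^{n}$ via a core/closedness argument (the paper packages this step as Lemma~\ref{lem.6.10} applied to $\sqrt{A^{n}}$ and $\sqrt{C_{n}B^{n}}$, using Lemma~\ref{lem.6.12} to see $S$ is a core of $\sqrt{C_{n}B^{n}}$), then descend to fractional powers by L\"owner--Heinz, and handle $(2)\Rightarrow(3)$ and $(3)\Rightarrow(1)$ by unwinding the definitions. The one place you are actually more explicit than the paper is in $(2)\Rightarrow(3)$: the paper asserts ``it is easy to verify'' that $\mathcal{D}(B^{v})\subseteq\mathcal{D}(A^{v})$, whereas you correctly pin this down as item~(2) applied at exponent $2v$.
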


\begin{proof}
$\left(  1\Rightarrow2\right)  $ $A^{n}\preceq_{\mathcal{S}}C_{n}B^{n}$
implies for all $\psi\in S$ we have
\[
\left\Vert \sqrt{A^{n}}\psi\right\Vert ^{2}=\left\langle A^{n}\psi
,\psi\right\rangle \leq C_{n}\left\langle B^{n}\psi,\psi\right\rangle
=\left\Vert \sqrt{C_{n}B^{n}}\psi\right\Vert ^{2}.
\]

Note $S$ is a core of $C_{n}B^{n}$ and hence $S$ is also a core of
$\sqrt{C_{n}B^{n}}$ by taking $q\left(  x\right)  =\sqrt{\left\vert
x\right\vert }$ in Lemma \ref{lem.6.12}. By using Lemma \ref{lem.6.10} with
$C=0$ we have $\mathcal{D}\left(  \sqrt{B^{n}}\right)  =\mathcal{D}\left(
\sqrt{C_{n}B^{n}}\right)  \subseteq\mathcal{D}\left(  \sqrt{A^{n}}\right)  $
and
\[
\left\Vert \sqrt{A^{n}}\psi\right\Vert \leq\left\Vert \sqrt{C_{n}B^{n}}%
\psi\right\Vert \text{ for all }\psi\in\mathcal{D}\left(  \sqrt{C_{n}B^{n}%
}\right)  ,
\]
i.e. $A^{n}\leq C_{n}B^{n}.$ It then follows by the L\"{o}wner-Heinz
inequality (Theorem \ref{the.1.17}) that $A^{nr}\leq C_{n}^{r}B^{nr}$ for all
$0\leq r\leq1.$ Since $n\in\mathbb{N}$ was arbitrary, we have verified the
truth of item 2.

$\left(  2\Rightarrow3\right)  $ Given item 2., it is easy to verify that
$\mathcal{D}\left(  B^{v}\right)  =\mathcal{D}\left(  C_{v}B^{v}\right)
\subseteq\mathcal{D}\left(  A^{v}\right)  $ for all $v\geq0.$ In particularly,
we have $\mathcal{D}\left(  B^{v}\right)  \subseteq\mathcal{D}\left(
A^{v}\right)  \cap\mathcal{D}\left(  \sqrt{B^{v}}\right)  $ for any $v\geq0.$
Hence, by taking $r=v$ in item 2,%
\[
\left\langle A^{v}\psi,\psi\right\rangle =\left\Vert \sqrt{A^{v}}%
\psi\right\Vert ^{2}\leq\left\Vert \sqrt{C_{v}B^{v}}\psi\right\Vert
^{2}=\left\langle C_{v}B^{v}\psi,\psi\right\rangle ~\forall~\psi\in
\mathcal{D}\left(  B^{v}\right)  ,
\]
i.e. $A^{v}\preceq C_{v}B^{v}.$

$\left(  3\Rightarrow1\right)  $ The assumption that $S\subseteq
\mathcal{D}\left(  A\right)  \cap\mathcal{D}\left(  B\right)  ,$ $AS\subseteq
S$ and $BS\subseteq S$ follows that $S\subseteq\mathcal{D}\left(
B^{n}\right)  \cap\mathcal{D}\left(  A^{n}\right)  $ for all $n\in
\mathbb{N}_{0}.$ By taking $v=n,$ we learn that $A^{n}\preceq C_{n}B^{n}$
which certainly implies $A^{n}\preceq_{S}C_{n}B^{n}.$
\end{proof}

\begin{proof}
[Proof of the Corollary \ref{cor.1.21}]{We first observe that the
coefficients, }$\left\{  b_{l,\hbar}\left(  \cdot\right)  +C1 _{l=0}\right\}
_{l=0}^{m_{L}}$ and $\left\{  \tilde{b}_{l,\hbar}\left(  \cdot\right)
+\tilde{C}1_{l=0}\right\}  _{l=0}^{m_{\tilde{L}}}$ still satisfy Assumption
\ref{ass.1}. Using this observation along with the inequalities, $L_{\hbar
}+C\succeq_{\mathcal{S}}I$ and $\tilde{L}_{\hbar}+\tilde{C}\succeq
_{\mathcal{S}}0,$ we may use Corollary \ref{cor.1.20} to conclude both
$\bar{L}_{\hbar}+C$ and $\overline{\tilde{L}_{\hbar}}+\tilde{C}$ are
non-negative self adjoint operators and $\mathcal{S}$ is a core for $\left(
\bar{L}_{\hbar}+C\right)  ^{r}$ for all $r\geq0$ and all $0<\hbar<\eta.$ By
the operator comparison Theorem \ref{the.1.18} with $b_{l,\hbar}$ replaced by
$b_{l,\hbar}+C1_{l=0}$ and $\tilde{b}_{l,\hbar}$ replaced by $\tilde
{b}_{l,\hbar}+\tilde{C}1_{l=0},$ for any $n\in\mathbb{N}_{0},$ there exists
$C_{1}$ and $C_{2}>0$ such that
\begin{equation}
\left(  \tilde{L}_{\hbar}+\tilde{C}\right)  ^{n}\preceq_{\mathcal{S}}%
C_{1}\left(  \left(  L_{\hbar}+C\right)  ^{n}+C_{2}\right)  . \label{e.6.32}%
\end{equation}
Because $\left(  L_{\hbar}+C\right)  ^{n}\succeq_{\mathcal{S}}I,$ we may
conclude from Eq. (\ref{e.6.32}) that
\[
\left(  \tilde{L}_{\hbar}+\tilde{C}\right)  ^{n}\preceq_{\mathcal{S}}%
C_{n}\left(  L_{\hbar}+C\right)  ^{n}\text{ ~}\forall~\text{ }n\in
\mathbb{N}_{0},
\]
where $C_{n}=C_{1}\left(  C_{2}+1\right)  .$ By taking $A=\overline{\tilde
{L}_{\hbar}}+\tilde{C}$ and $B=\left(  \bar{L}_{\hbar}+C\right)  $ and
$S=\mathcal{S}$ in Lemma \ref{lem.6.14}, we may conclude that for any
$v\geq0,$ there exists $C_{v}>0$ such that Eq. (\ref{e.1.18}) holds,
i.e.$\left(  \overline{\tilde{L}_{\hbar}}+\tilde{C}\right)  ^{r}\preceq
C_{r}\left(  \bar{L}_{\hbar}+C\right)  ^{r}~\forall~0<\hbar<\eta.$
\end{proof}

\section{Discussion of the 2nd condition in Assumption \ref{ass.1}%
\label{sec.7}}

We try to relax conditions $2$ in Assumption \ref{ass.1}. The degree
restriction Eq. (\ref{e.1.12}) allows the choice of $\eta$ independent of a
power $n$ in both Theorem \ref{the.6.4} and Theorem \ref{the.1.18}. If a
weaker condition of the degree restriction is assumed, which is
\[
\deg(b_{l,\hbar})\leq\deg(b_{l-1,\hbar})+2\text{ for all }0<\hbar<\eta\text{
and }0\leq l\leq m,
\]
then Theorems \ref{the.7.2} and \ref{the.7.3} are resulted where now $\eta$
does depend on $n.$

\begin{lemma}
\label{lem.7.1} Supposed there exists $\eta>0$ such that $\deg(b_{l,\hbar
})\leq\deg(b_{l-1,\hbar})+2$ for all $0<\hbar<\eta$ and $0\leq k\leq m.$ Let
$\mathcal{B}_{\ell,\hbar}\left(  x\right)  $ and $R_{\ell,\hbar}\left(
x\right)  $ be in Eqs. (\ref{e.5.5}) and (\ref{e.5.6}) respectively. Then we
have
\begin{equation}
\operatorname{deg}_{x}\left(  R_{\ell,\hbar}\right)  \leq\operatorname{deg}%
_{x}\left(  \mathcal{B}_{\ell,\hbar}\right)  \text{ for }\hbar\in\left(
0,\eta\right)  \text{ and }0\leq\ell\le mn. \label{e.7.1}%
\end{equation}

\end{lemma}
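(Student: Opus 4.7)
The plan is to deduce Lemma \ref{lem.7.1} directly from the unscaled statement already proved in part 2 of Proposition \ref{pro.5.1}, using the fact that the $\hbar$--rescaling that defines $L_{\hbar}$ from $L$ preserves degree in $x$. Concretely, I would fix $\hbar \in (0,\eta)$ and apply Proposition \ref{pro.5.1}(2) to the family $\{\tilde b_l(x) := \hbar^{l} b_{l,\hbar}(\sqrt{\hbar}\,x)\}_{l=0}^{m}$, i.e.\ to the actual coefficients appearing when $L_\hbar$ is written in the divergence form of Eq.~(\ref{e.2.1}). For each fixed $\hbar>0$, the substitution $x \mapsto \sqrt{\hbar}\,x$ is a nonzero dilation and multiplication by $\hbar^l$ is a nonzero scalar, so $\deg_x(\tilde b_l) = \deg_x(b_{l,\hbar})$. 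Hence the hypothesis of Lemma \ref{lem.7.1}, namely $\deg_x(b_{l,\hbar}) \leq \deg_x(b_{l-1,\hbar}) + 2$, transfers verbatim to $\deg(\tilde b_l) \leq \deg(\tilde b_{l-1}) + 2$, which is exactly what is needed to invoke Proposition \ref{pro.5.1}(2).

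The second step is to transport the resulting inequality back to the scaled coefficients. By the identities (\ref{e.5.7}) and (\ref{e.5.8}) derived in the proof of Proposition \ref{pro.5.3}, the unscaled and scaled objects are related by
\[
\tilde{\mathcal{B}}_{\ell}(x) = \hbar^{\ell}\,\mathcal{B}_{\ell,\hbar}(\sqrt{\hbar}\,x), \qquad \tilde R_{\ell}(x) = \hbar^{\ell}\,R_{\ell,\hbar}(\sqrt{\hbar}\,x).
\]
Again since $\hbar > 0$, taking degrees in $x$ gives $\deg_x(\mathcal{B}_{\ell,\hbar}) = \deg_x(\tilde{\mathcal{B}}_{\ell})$ and $\deg_x(R_{\ell,\hbar}) = \deg_x(\tilde R_{\ell})$. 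Combining this with the conclusion $\deg_x(\tilde R_\ell) \leq \deg_x(\tilde{\mathcal{B}}_\ell)$ from Proposition \ref{pro.5.1}(2) yields Eq.~(\ref{e.7.1}), uniformly in $\hbar \in (0,\eta)$.

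There is essentially no obstacle; the only thing to verify carefully is that every intermediate quantity retains a well-defined polynomial degree in $x$ for every $\hbar \in (0,\eta)$, which is immediate from item 1 of Assumption \ref{ass.1} (the coefficients $\alpha_{l,j}(\hbar)$ are continuous on $[0,\eta]$ and the leading coefficient is bounded below away from zero). If one preferred an intrinsic proof working directly from Eq.~(\ref{e.5.6}), one would observe that each surviving term satisfies $0 < |\mathbf{p}| = 2|\mathbf{k}| - 2\ell$, pick any $\mathbf{j} \leq \mathbf{k}$ with $|\mathbf{j}| = \ell$ (such a $\mathbf{j}$ exists since $|\mathbf{k}| \geq \ell$), and run the telescoping estimate
\[
\sum_{i=1}^{n}\deg(b_{k_i,\hbar}) - |\mathbf{p}| \;\leq\; \sum_{i=1}^{n}\bigl[\deg(b_{j_i,\hbar})+2(k_i-j_i)\bigr] - |\mathbf{p}| \;=\; \sum_{i=1}^{n}\deg(b_{j_i,\hbar}) \;\leq\; \deg_x(\mathcal{B}_{\ell,\hbar}),
\]
which is precisely the $\hbar$-parametrized analogue of the bound used to establish Proposition \ref{pro.5.1}(2).
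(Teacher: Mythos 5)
Your proof is correct and is essentially the paper's own argument, expanded: the paper's proof likewise consists of applying Proposition~\ref{pro.5.1}(2) to the rescaled coefficients $b_l(x)\to\hbar^l b_{l,\hbar}(\sqrt{\hbar}x)$ for fixed $\hbar$, then reading off the degree inequality via Eqs.~(\ref{e.5.7})--(\ref{e.5.8}). Your extra observations (degree invariance under dilation and nonzero scalar multiplication, and the self-contained telescoping variant) are correct but simply make explicit what the paper leaves implicit.
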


\begin{proof}
Eq. (\ref{e.7.1}) follows immediately if we apply the item $2$ in Proposition
\ref{pro.5.1} with $b_{l}\left(  x\right)  \rightarrow\hbar^{l}b_{l,\hbar
}\left(  \sqrt{\hbar}x\right)  $ where $\hbar$ is fixed.
\end{proof}

\begin{theorem}
\label{the.7.2} Let $L_{\hbar}$ be an operator in the Eq. (\ref{e.1.11}).
Supposed $b_{l,\hbar}\left(  x\right)  $ satisfies the conditions $1$ and $3$
in Assumption \ref{ass.1} and we assume
\begin{equation}
\deg(b_{l,\hbar})\leq\deg(b_{l-1,\hbar})+2\text{ for all }0<\hbar<\eta\text{
and }0\leq l\leq m, \label{e.7.2}%
\end{equation}
where $\eta$ is the $\eta$ in Assumption \ref{ass.1}. Then for any
$n\in\mathbb{N}_{0}$, there exists $C_{n}$ and $\eta_{n}$ such that for all
$0<\hbar<\eta_{n}$ and $c>C_{n}$
\[
\frac{3}{2}\left(  \mathcal{L}_{\hbar}^{\left(  n\right)  }+c\right)
\succeq_{\mathcal{S}}L_{\hbar}^{n}+c\succeq_{\mathcal{S}}\frac{1}{2}\left(
\mathcal{L}_{\hbar}^{\left(  n\right)  }+c\right)  .
\]

\end{theorem}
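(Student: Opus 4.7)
The plan is to follow the proof of Theorem \ref{the.6.4} essentially verbatim, modifying only the remainder estimate of Lemma \ref{lem.6.3}(\ref{e.6.12}). In that proof, the crucial input was that $|R_{\ell,\hbar}(x)|\le \epsilon\,\mathcal{B}_{\ell,\hbar}(x)+C_\epsilon$ for every prescribed $\epsilon>0$; under the original Assumption \ref{ass.1}, this came for free because $\deg R_{\ell,\hbar}\le \deg\mathcal{B}_{\ell,\hbar}-2$ let us invoke Eq.~\eqref{e.6.3} of Lemma \ref{lem.6.2}. Under the weaker degree hypothesis \eqref{e.7.2}, Lemma \ref{lem.7.1} only gives $\deg R_{\ell,\hbar}\le\deg\mathcal{B}_{\ell,\hbar}$, so Lemma \ref{lem.6.2} supplies only the non-small estimate \eqref{e.6.4}. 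The whole point will be that the missing small factor is furnished, for free, by the parameter $\hbar$ itself.

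First I would exploit the fact that by inspection of Eq.~\eqref{e.5.6}, every term of $R_{\ell,\hbar}$ carries a factor $\hbar^{|\mathbf{p}|}$ with $|\mathbf{p}|=2|\mathbf{k}|-2\ell\ge 2$, so we may write
\begin{equation*}
R_{\ell,\hbar}(x)=\hbar^{2}\,\hat R_{\ell,\hbar}(x),
\end{equation*}
where the coefficients of $\hat R_{\ell,\hbar}$ are polynomials in $x$ whose $x$-coefficients are continuous in $\hbar\in[0,\eta]$, and where, by Lemma \ref{lem.7.1}, $\deg_x\hat R_{\ell,\hbar}\le \deg_x\mathcal{B}_{\ell,\hbar}$ uniformly in $\hbar\in(0,\eta)$. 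Next I would note that the portion of Lemma \ref{lem.6.3} showing $\{\mathcal{B}_{\ell,\hbar}\}_{\ell=0}^{mn}$ still satisfies items (1) and (3) of Assumption \ref{ass.1} (in particular $\mathcal{B}_{\ell,\hbar}$ has even degree with uniformly positive leading coefficient on $(0,\eta)$) used only those items for $\{b_{l,\hbar}\}$, not the degree hypothesis; so that part carries over unchanged. Applying Lemma \ref{lem.6.2} on the compact $\hbar$-interval $[0,\eta]$ with $p(x,\hbar)=\mathcal{B}_{\ell,\hbar}(x)$ and $q(x,\hbar)=\hat R_{\ell,\hbar}(x)$ then produces finite constants $D_\ell,E_\ell$, independent of $\hbar$ and $x$, with
\begin{equation*}
|\hat R_{\ell,\hbar}(x)|\le D_\ell\,\mathcal{B}_{\ell,\hbar}(x)+E_\ell,\qquad x\in\mathbb{R},\ 0<\hbar<\eta,\ 0\le\ell\le mn.
\end{equation*}
Multiplying by $\hbar^2$ and setting $D_n:=\max_\ell D_\ell$, $E_n:=\max_\ell E_\ell$, I would, for a fixed $\epsilon>0$ (to be chosen equal to $1/2$ in a moment), define
\begin{equation*}
\eta_n:=\min\!\left\{\eta,\sqrt{\tfrac{\epsilon}{D_n}}\right\},
\end{equation*}
so that for $0<\hbar<\eta_n$ the substitute estimate
\begin{equation*}
|R_{\ell,\hbar}(x)|\le \epsilon\,\mathcal{B}_{\ell,\hbar}(x)+\hbar^2 E_n\le \epsilon\,\mathcal{B}_{\ell,\hbar}(x)+C_\epsilon
\end{equation*}
holds with $C_\epsilon=\eta^2 E_n$, thereby recovering the conclusion \eqref{e.6.12} of Lemma \ref{lem.6.3} on the restricted range $\hbar\in(0,\eta_n)$.

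With this substitute in hand, the proof of Theorem \ref{the.6.4} goes through unchanged: using $\epsilon=1/2$ one obtains
\begin{equation*}
|\langle(L_\hbar^n-\mathcal{L}_\hbar^{(n)})\psi,\psi\rangle|\le\tfrac{1}{2}\langle(\mathcal{L}_\hbar^{(n)}+C_n)\psi,\psi\rangle\qquad\forall\,\psi\in\mathcal{S},\ 0<\hbar<\eta_n,
\end{equation*}
for a suitable constant $C_n<\infty$ obtained, as in Theorem \ref{the.6.4}, by combining the substitute Lemma \ref{lem.6.3} with Lemma \ref{lem.6.1} applied to absorb the lower-order pure-derivative terms into $(-\hbar)^{mn}\partial^{mn}\mathcal{B}_{mn,\hbar}(\sqrt\hbar(\cdot))\partial^{mn}$ (whose coefficient is bounded below by $c_{b_m}^n>0$). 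Adding $c>C_n$ to both sides of the above inequality then yields the two-sided comparison claimed in the theorem.

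The only genuinely delicate point, and the one worth double-checking, is that the constants $D_\ell,E_\ell$ produced by Lemma \ref{lem.6.2} are allowed to depend on $n$ (because $\mathcal{B}_{\ell,\hbar}$ and $\hat R_{\ell,\hbar}$ depend on $n$), and this dependence is precisely what forces $\eta_n$ to shrink with $n$; but since the statement of Theorem \ref{the.7.2} permits $\eta_n$ to depend on $n$, no obstruction arises. All other steps are purely mechanical adaptations of the proof of Theorem \ref{the.6.4}.
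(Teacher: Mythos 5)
Your proposal is correct and takes essentially the same route as the paper's own proof: both exploit the factor $\hbar^{\left\vert \mathbf{p}\right\vert }$ (with $\left\vert \mathbf{p}\right\vert \geq2$) in $R_{\ell,\hbar}$ to recover the remainder bound $\left\vert R_{\ell,\hbar}\right\vert \leq\frac{1}{2}\mathcal{B}_{\ell,\hbar}+C$ on a shrunken range $0<\hbar<\eta_{n}$ and then rerun the proof of Theorem \ref{the.6.4}, your appeal to Eq. (\ref{e.6.4}) of Lemma \ref{lem.6.2} merely making explicit what the paper asserts in words. The only (harmless) slip is the step $\hbar^{2}D_{n}\mathcal{B}_{\ell,\hbar}\leq\epsilon\mathcal{B}_{\ell,\hbar}$, which fails pointwise where $\mathcal{B}_{\ell,\hbar}<0$; since $\mathcal{B}_{\ell,\hbar}$ satisfies items 1 and 3 of Assumption \ref{ass.1} and is therefore uniformly bounded below (Remark \ref{rem.1.16}), this only adds a bounded constant to $C_{\epsilon}$.
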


\begin{proof}
Let $\psi\in\mathcal{S}$ and $0<\hbar<\eta$, we have
\[
|\langle\left(  L_{\hbar}^{n}-\mathcal{L}_{\hbar}^{\left(  n\right)  }\right)
\psi,\psi\rangle|=\left\vert \left\langle \mathcal{R}_{\hbar}^{\left(
n\right)  }\psi,\psi\right\rangle \right\vert \leq\sum_{\ell=0}^{nm-1}%
\left\vert \left\langle R_{\ell,\hbar}\partial^{\ell}\psi,\partial^{\ell}%
\psi\right\rangle \right\vert .
\]
where $\mathcal{R}_{\hbar}^{\left(  n\right)  }$ and $R_{\ell,\hbar}$ are
still defined in the same way as Eqs. (\ref{e.5.10}) and (\ref{e.5.6})
respectively. From Lemma \ref{lem.7.1}, $\deg_{x}(\mathcal{B}_{\ell,\hbar
})\geq\deg_{x}\left(  R_{\ell,\hbar}\right)  $ where $\mathcal{B}_{\ell,\hbar
}$ is defined in Eq. (\ref{e.5.5}). Note $\left|  p\right|  >0$ in
$R_{\ell,\hbar}$ from Eq. (\ref{e.5.6}). Although $\deg\left(  R_{\ell,\hbar
}\right)  $ can be the same as $\deg(\mathcal{B}_{\ell,\hbar})$, the extra
$\hbar^{\left\vert \mathbf{p}\right\vert }$ factor in the $R_{\ell}\left(
\hbar\right)  $ makes $\left\vert R_{\ell,\hbar}\right\vert $ decrease more
rapidly than $\mathcal{B}_{\ell,\hbar}$ as $\hbar$ decrease to $0$. As a
result, there exist constants $\eta_{n}>0$ and $C$ such that
\[
\left\vert R_{\ell,\hbar}\left(  x\right)  \right\vert \leq\frac{1}%
{2}\mathcal{B}_{\ell,\hbar}\left(  x\right)  +C
\]
for all $0\leq\ell\leq mn-1$ and $0<\hbar<\eta_{n}$. Therefore
\begin{align}
|\langle\left(  L_{\hbar}^{n}-\mathcal{L}_{\hbar}^{\left(  n\right)  }\right)
\psi,\psi\rangle|  &  \leq\sum_{\ell=0}^{nm-1}\hbar^{\ell}\left\langle \left(
\frac{1}{2}\mathcal{B}_{\ell,\hbar}\left(  \sqrt{\hbar}\left(  \cdot\right)
\right)  +C\right)  \partial^{\ell}\psi,\partial^{\ell}\psi\right\rangle
\nonumber\\
&  =\frac{1}{2}\sum_{\ell=0}^{nm-1}\left\langle \left(  -\hbar\right)  ^{\ell
}\partial^{\ell}\mathcal{B}_{\ell,\hbar}\partial^{\ell}\psi,\psi\right\rangle
+C\left\langle \sum_{\ell=0}^{nm-1}\left(  -\hbar\right)  ^{\ell}%
\partial^{2\ell}\psi,\psi\right\rangle . \label{e.7.3}%
\end{align}
Then by following the argument in Theorem \ref{the.6.4}, we can conclude that
there exists $C_{n}>0$ such that for all $0<\hbar<\eta_{n}$ and $c>C_{n}$ we
have
\[
\frac{1}{2}\sum_{\ell=0}^{nm-1}\left\langle \partial^{\ell}\mathcal{B}%
_{\ell,\hbar}\partial^{\ell}\psi,\psi\right\rangle +C\left\langle \sum
_{\ell=0}^{nm-1}\left(  -\hbar\right)  ^{\ell}\partial^{2\ell}\psi
,\psi\right\rangle \leq\frac{1}{2}\left\langle \left(  \mathcal{L}_{\hbar
}^{\left(  n\right)  }+c\right)  \psi,\psi\right\rangle .
\]
The result follows immediately by combing the above inequality and Eq.
(\ref{e.7.3}).
\end{proof}

As a result, the operator comparison theorem now have choice of $\eta$
depending on a power $n$.

\begin{theorem}
\label{the.7.3} Let
\[
\tilde{L}_{\hbar}=\sum_{\ell=0}^{m_{\tilde{L}}}(-\hbar)^{k}\partial^{k}%
\tilde{b}_{k,\hbar}(\sqrt{\hbar}x)\partial^{k}\text{ and }L_{\hbar}=\sum
_{\ell=0}^{m_{L}}(-\hbar)^{k}\partial^{k}b_{k,\hbar}(\sqrt{\hbar}%
x)\partial^{k}%
\]
be operators on $\mathcal{S}$ satisfies conditions in Theorem \ref{the.7.2}.
Denote $\eta_{\tilde{L}}$ and $\eta_{L}$ as the $\eta$ of $\tilde{L}_{\hbar}$
and $L_{\hbar}$ in Assumption \ref{ass.1} respectively. If $m_{\tilde{L}}\leq
m_{L}$ and there exists $c_{1}$ and $c_{2}$ such that
\[
|\tilde{b}_{l,\hbar}\left(  x\right)  |\leq c_{1}\left(  b_{l,\hbar}\left(
x\right)  +c_{2}\right)  \text{ for all }0\leq\ell\leq m_{\tilde{L}}\text{ and
}0<\hbar<\min\{\eta_{\tilde{L}},\eta_{L}\},
\]
then for any $n$, there exists $C_{1}$, $C_{2}$ and $\eta_{n}$ such that
\[
\left(  \tilde{L}_{\hbar}\right)  ^{n}\preceq_{\mathcal{S}}C_{1}\left(
L_{\hbar}^{n}+C_{2}\right)
\]
for all $0<\hbar<\eta_{n}.$
\end{theorem}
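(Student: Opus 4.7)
The plan is to adapt the proof of Theorem \ref{the.1.18} line by line, substituting Theorem \ref{the.7.2} for Theorem \ref{the.6.4} at the one place where the original uniform-in-$\hbar$ divergence-form comparison was used. First, I would extend $\tilde{b}_{l,\hbar}\equiv 0$ for $m_{\tilde L}<l\le m_L$ and, without loss of generality, enlarge $c_2$ so that $b_{l,\hbar}(x)+c_2\ge 0$ for all $0\le l\le m_L$, $x\in\mathbb{R}$, and $0<\hbar<\min\{\eta_{\tilde L},\eta_L\}$. Raising the pointwise bound $|\tilde b_{l,\hbar}|\le c_1(b_{l,\hbar}+c_2)$ to an $n$-fold product and summing over $|\mathbf{k}|=\ell$ gives
\[
|\tilde{\mathcal{B}}_{\ell,\hbar}(x)|\le \sum_{\mathbf{k}\in\Lambda_{m_L}^n}1_{|\mathbf{k}|=\ell}\,c_1^n(b_{k_1,\hbar}+c_2)\dots(b_{k_n,\hbar}+c_2).
\]
The key point is that Lemma \ref{lem.6.3}'s proof that $\{\mathcal{B}_{\ell,\hbar}\}$ satisfies conditions $1$ and $3$ of Assumption \ref{ass.1} nowhere invoked the degree restriction (condition $2$), so the leading coefficient of $\mathcal{B}_{\ell,\hbar}$ is bounded below uniformly in $\hbar$. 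Lemma \ref{lem.6.2} then delivers constants $E_1,E_2$ independent of $\hbar$ with $|\tilde{\mathcal{B}}_{\ell,\hbar}|\le E_1\mathcal{B}_{\ell,\hbar}+E_2$, whence
\[
\tilde{\mathcal{L}}^{(n)}_\hbar\preceq_{\mathcal{S}} E_1\mathcal{L}^{(n)}_\hbar+E_2 K_\hbar,\qquad K_\hbar:=\sum_{\ell=0}^{nm_L}(-\hbar)^\ell\partial^{2\ell}.
\]

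Next, I would invoke Lemma \ref{lem.6.6}, whose hypotheses only require items $1$ and $3$ of Assumption \ref{ass.1} on the coefficients $\{\mathcal{B}_{\ell,\hbar}\}$ (which still hold, as noted above), to obtain $\gamma>0$ and $C<\infty$ with $K_\hbar\preceq_{\mathcal{S}}\gamma\mathcal{L}^{(n)}_\hbar+C I$ for every $\hbar\in(0,\min\{\eta_{\tilde L},\eta_L\})$. Combining the two inequalities gives $\tilde{\mathcal{L}}^{(n)}_\hbar\preceq_{\mathcal{S}}(E_1+\gamma E_2)\mathcal{L}^{(n)}_\hbar+CE_2 I$, a comparison between the two divergence-form surrogates that is uniform in $\hbar$ on $(0,\min\{\eta_{\tilde L},\eta_L\})$.

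Finally, I would apply Theorem \ref{the.7.2} separately to $\tilde L_\hbar$ and $L_\hbar$. This produces an $\eta_n>0$ (shrunk to ensure the weakened degree hypothesis is compensated by the $\hbar^{|\mathbf{p}|}$ factors in the remainders $R_{\ell,\hbar}$ and $\tilde R_{\ell,\hbar}$) and constants $C_L,C_{\tilde L}$ such that, for all $0<\hbar<\eta_n$,
\[
\tfrac{1}{2}\bigl(\mathcal{L}^{(n)}_\hbar+C_L\bigr)\preceq_{\mathcal{S}} L^n_\hbar+C_L\quad\text{and}\quad \tilde L^n_\hbar+C_{\tilde L}\preceq_{\mathcal{S}}\tfrac{3}{2}\bigl(\tilde{\mathcal{L}}^{(n)}_\hbar+C_{\tilde L}\bigr).
\]
Chaining these with the inequality from the previous paragraph yields constants $C_1,C_2$ so that $\tilde L^n_\hbar\preceq_{\mathcal{S}}C_1(L^n_\hbar+C_2)$ for all $0<\hbar<\eta_n$, which is the claimed inequality.

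The main obstacle, and the only reason the range of $\hbar$ contracts to $(0,\eta_n)$, is the step translating between the divergence-form operator $\mathcal{L}^{(n)}_\hbar$ and the true power $L_\hbar^n$: under the relaxed degree hypothesis the remainder polynomials $R_{\ell,\hbar}$ can match $\mathcal{B}_{\ell,\hbar}$ in degree, so one must exploit the compensating $\hbar^{|\mathbf{p}|}>0$ factors to make $|R_{\ell,\hbar}|\le \tfrac{1}{2}\mathcal{B}_{\ell,\hbar}+C$, and this is precisely the $\hbar$-dependent mechanism already installed in Theorem \ref{the.7.2}. All earlier steps (the coefficient comparison and the absorption of $K_\hbar$) remain uniform in $\hbar$ because they use only the unaltered conditions $1$ and $3$ of Assumption \ref{ass.1}.
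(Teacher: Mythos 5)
Your proposal is correct and is exactly the paper's argument: the paper proves Theorem \ref{the.7.3} by running the proof of Theorem \ref{the.1.18} verbatim with Theorem \ref{the.7.2} substituted for Theorem \ref{the.6.4}, which is precisely what you do. Your added observations — that Lemmas \ref{lem.6.2}, \ref{lem.6.3} and \ref{lem.6.6} only use items 1 and 3 of Assumption \ref{ass.1}, so those steps stay uniform in $\hbar$, while the $\hbar$-restriction $\eta_n$ enters only through the surrogate comparison of Theorem \ref{the.7.2} — are accurate and simply make explicit what the paper leaves implicit.
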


\begin{proof}
The exact same proof as Theorem \ref{the.1.18} with the use of Theorem
\ref{the.7.2} instead of Theorem \ref{the.6.4}.
\end{proof}

\appendix

\section{Operators Associated to Quantization\label{sec.A}}

Let $\mathcal{A}$ denote the algebra of linear differential operator on
$\mathcal{S}$ which have polynomial coefficients. Remark \ref{rem.1.7} shows
that the $\dagger$ operation on $\mathcal{A}$ defined in Eq. (\ref{e.1.4}) is
an involution of $\mathcal{A}.$ For $\hbar>0$ (following on p.204 in
\citep{Reed1975} or Hepp \citep{Hepp1974}), let $a_{\hbar}\in\mathcal{A}$ and
its formal adjoint, $a_{\hbar}^{\dag},$ be the annihilation and creation
operators respectively given by%
\begin{equation}
a_{\hbar}=\sqrt{\frac{\hbar}{2}}\left(  M_{x}+\partial_{x}\right)  \text{ and
}a_{\hbar}^{\dagger}:=\sqrt{\frac{\hbar}{2}}\left(  M_{x}-\partial_{x}\right)
\text{ on }\mathcal{S}. \label{e.A.1}%
\end{equation}
These operators satisfy the commutation relation $\left[  a_{\hbar},a_{\hbar
}^{\dag}\right]  =\hbar I$ on $\mathcal{S}.$

Let $\mathbb{\mathbb{R}}\left\langle \theta,\theta^{\ast}\right\rangle $ be
the space of non-commutative polynomials over $\mathbb{R}$ in two
indeterminants $\left\{  \theta,\theta^{\ast}\right\}  .$ Thus, given
$H\left(  \theta,\theta^{\ast}\right)  \in\mathbb{\mathbb{R}}\left\langle
\theta,\theta^{\ast}\right\rangle ,$ there exists $d\in\mathbb{N}$ (the degree
of $H\left(  \theta,\theta^{\ast}\right)  $ in $\theta$ and $\theta^{\ast}$)
and coefficients,
\[
\cup_{k=0}^{d}\left\{  C_{k}\left(  \mathbf{b}\right)  \in\mathbb{R}%
:\mathbf{b}\in\left\{  \theta,\theta^{\ast}\right\}  ^{k}\right\}
\]
such that $H\left(  \theta,\theta^{\ast}\right)  =\sum_{k=0}^{d}H_{k}\left(
\theta,\theta^{\ast}\right)  $ where
\begin{equation}
H_{k}\left(  \theta,\theta^{\ast}\right)  :=\sum_{\mathbf{b}=\left(
b_{1},\dots,b_{k}\right)  \in\left\{  \theta,\theta^{\dagger}\right\}  ^{k}%
}C_{k}\left(  \mathbf{b}\right)  b_{1}\dots b_{k}\in\mathbb{\mathbb{R}%
}\left\langle \theta,\theta^{\ast}\right\rangle . \label{e.A.2}%
\end{equation}
We let $H\left(  \theta,\theta^{\ast}\right)  ^{\ast}\in\mathbb{\mathbb{R}%
}\left\langle \theta,\theta^{\ast}\right\rangle $ be defined by $H\left(
\theta,\theta^{\ast}\right)  ^{\ast}=\sum_{k=0}^{d}H_{k}\left(  \theta
,\theta^{\ast}\right)  ^{\ast}$ where
\begin{equation}
H_{k}\left(  \theta,\theta^{\ast}\right)  ^{\ast}:=\sum_{\mathbf{b}=\left(
b_{1},\dots,b_{k}\right)  \in\left\{  \theta,\theta^{\ast}\right\}  ^{k}}%
C_{k}\left(  \mathbf{b}\right)  b_{k}^{\ast}\dots b_{1}^{\ast} \label{e.A.3}%
\end{equation}
and for $b\in\left\{  \theta,\theta^{\ast}\right\}  ,$%
\[
b^{\ast}:=\left\{
\begin{array}
[c]{ccc}%
\theta^{\ast} & \text{if} & b=\theta\\
\theta & \text{if} & b=\theta^{\ast}%
\end{array}
.\right.
\]
The operation, $H\left(  \theta,\theta^{\ast}\right)  \rightarrow H\left(
\theta,\theta^{\ast}\right)  ^{\ast}$ defines an involution on
$\mathbb{\mathbb{R}}\left\langle \theta,\theta^{\ast}\right\rangle $ and we
say that $H\left(  \theta,\theta^{\ast}\right)  \in$ $\mathbb{\mathbb{R}%
}\left\langle \theta,\theta^{\ast}\right\rangle $ is \textbf{symmetric} if
$H\left(  \theta,\theta^{\ast}\right)  =H\left(  \theta,\theta^{\ast}\right)
^{\ast}.$ If $H\left(  \theta,\theta^{\ast}\right)  \in$ $\mathbb{\mathbb{R}%
}\left\langle \theta,\theta^{\ast}\right\rangle $ is symmetric, then $H\left(
a_{\hbar},a_{\hbar}^{\dagger}\right)  $ is a symmetric linear differential
operator with polynomial coefficients as in Definition \ref{def.1.6}.

In the following lemmas and theorem let $\mathbb{R}\left[  x\right]  $ and
$\mathbb{R}\left[  \sqrt{\hbar},x\right]  $ be as in Notation \ref{not.5.2}.

\begin{lemma}
\label{lem.A.1} If $\hbar>0$ and $H\in\mathbb{R}\left\langle \theta
,\theta^{\ast}\right\rangle $ is a noncommutative polynomial with degree $d$,
then $H\left(  a_{\hbar},a_{\hbar}^{\dagger}\right)  $ can be written as a
linear differential operator
\[
H\left(  a_{\hbar},a_{\hbar}^{\dagger}\right)  =\sum_{l=0}^{d}\hbar^{\frac
{l}{2}}G_{l}\left(  \sqrt{\hbar},\sqrt{\hbar}x\right)  \partial_{x}^{l}%
\]
where $G_{l}\left(  \sqrt{\hbar},x\right)  \in\mathbb{R}\left[  \sqrt{\hbar
},x\right]  $ is a polynomial of $\sqrt{\hbar}$ and $x$ for $0\leq l\leq d.$
\end{lemma}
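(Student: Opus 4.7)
The plan is to verify the stated form first for $a_{\hbar}$ and $a_{\hbar}^{\dagger}$ themselves, then show that the class of operators of the advertised shape is closed under linear combinations and composition, and finally run an induction on the word length of a monomial in $\theta,\theta^{\ast}$. Fix $\hbar>0$ throughout and let $\mathcal{C}$ denote the collection of operators on $\mathcal{S}$ which admit a (necessarily finite) representation
\[
T=\sum_{l=0}^{d}\hbar^{l/2}G_{l}\left(\sqrt{\hbar},\sqrt{\hbar}x\right)\partial_{x}^{l},\qquad G_{l}\in\mathbb{R}[\sqrt{\hbar},x].
\]
Writing $M_x=\hbar^{-1/2}M_{\sqrt{\hbar}x}$ in the definitions of $a_\hbar$ and $a_\hbar^\dagger$ gives
\[
a_{\hbar}=\tfrac{1}{\sqrt{2}}\,M_{\sqrt{\hbar}x}+\hbar^{1/2}\cdot\tfrac{1}{\sqrt{2}}\,\partial_{x},\qquad a_{\hbar}^{\dagger}=\tfrac{1}{\sqrt{2}}\,M_{\sqrt{\hbar}x}-\hbar^{1/2}\cdot\tfrac{1}{\sqrt{2}}\,\partial_{x},
\]
which exhibits $a_{\hbar},a_{\hbar}^{\dagger}\in\mathcal{C}$ with $d=1$ and $G_{0}(\sqrt{\hbar},y)=\pm y/\sqrt{2}\in\mathbb{R}[\sqrt{\hbar},y]$, $G_{1}(\sqrt{\hbar},y)=\pm 1/\sqrt{2}$. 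The identity operator is also in $\mathcal{C}$.

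Closure of $\mathcal{C}$ under addition and real scalar multiples is immediate from the definition. For closure under composition, suppose $T=\sum_{l}\hbar^{l/2}G_{l}(\sqrt{\hbar},\sqrt{\hbar}x)\partial^{l}$ and $S=\sum_{k}\hbar^{k/2}F_{k}(\sqrt{\hbar},\sqrt{\hbar}x)\partial^{k}$ both lie in $\mathcal{C}$. The product rule applied $l$ times to $F_{k}(\sqrt{\hbar}x)\cdot(\,\cdot\,)$ gives, as operators,
\[
\partial^{l}\,M_{F_{k}(\sqrt{\hbar}\,\cdot\,)}=\sum_{j=0}^{l}\binom{l}{j}\,M_{\partial_{x}^{j}[F_{k}(\sqrt{\hbar}x)]}\,\partial^{l-j}=\sum_{j=0}^{l}\binom{l}{j}\hbar^{j/2}\,M_{(\partial_{y}^{j}F_{k})(\sqrt{\hbar},\sqrt{\hbar}x)}\,\partial^{l-j},
\]
using the chain rule $\partial_{x}^{j}[F_{k}(\sqrt{\hbar}x)]=\hbar^{j/2}(\partial_{y}^{j}F_{k})(\sqrt{\hbar},\sqrt{\hbar}x)$. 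Multiplying by $\hbar^{l/2}G_{l}(\sqrt{\hbar},\sqrt{\hbar}x)$ on the left and by $\hbar^{k/2}\partial^{k}$ on the right, one obtains
\[
TS=\sum_{l,k}\sum_{j=0}^{l}\binom{l}{j}\hbar^{(l-j+k)/2}\cdot\hbar^{j}\bigl[G_{l}\cdot(\partial_{y}^{j}F_{k})\bigr](\sqrt{\hbar},\sqrt{\hbar}x)\,\partial^{l-j+k}.
\]
Since $\hbar^{j}=(\sqrt{\hbar})^{2j}$ and $\partial_{y}^{j}F_{k}\in\mathbb{R}[\sqrt{\hbar},y]$, the coefficient function $(\sqrt{\hbar})^{2j}\binom{l}{j}G_{l}\cdot(\partial_{y}^{j}F_{k})$ is again a real polynomial in the two variables $\sqrt{\hbar}$ and $y$. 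Collecting the terms by the order $l-j+k$ of the derivative and renaming, we see $TS\in\mathcal{C}$ with coefficients still in $\mathbb{R}[\sqrt{\hbar},y]$, and moreover the top order of $TS$ is at most (top order of $T$) $+$ (top order of $S$).

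With closure under addition, scalar multiplication and composition established, a straightforward induction on the length $k$ of a monomial $b_{1}b_{2}\cdots b_{k}$ with $b_{i}\in\{\theta,\theta^{\ast}\}$ shows that substituting $\theta\mapsto a_{\hbar}$ and $\theta^{\ast}\mapsto a_{\hbar}^{\dagger}$ produces an operator in $\mathcal{C}$ whose derivative order is at most $k$. Applying this to each monomial summand in the decomposition $H(\theta,\theta^{\ast})=\sum_{k=0}^{d}H_{k}(\theta,\theta^{\ast})$ from Eq.~(\ref{e.A.2}) and summing then yields $H(a_{\hbar},a_{\hbar}^{\dagger})\in\mathcal{C}$ with top order at most $d$, which is exactly the assertion of the lemma. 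The only technical care needed is the $\hbar$-bookkeeping in the composition step above, and the chain-rule identity $\partial_{x}^{j}[F(\sqrt{\hbar}x)]=\hbar^{j/2}(\partial_{y}^{j}F)(\sqrt{\hbar}x)$ handles this cleanly.
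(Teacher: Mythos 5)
Your proof is correct, and it takes a genuinely different route from the paper. The paper decomposes $H=\sum_{k}H_{k}$ by word length, substitutes $a_{\hbar}=\sqrt{\hbar/2}\,(M_{x}+\partial_{x})$ directly so that each length-$k$ monomial carries an overall $\hbar^{k/2}$ prefactor, expands into the form $H_{k}(a_{\hbar},a_{\hbar}^{\dagger})=\hbar^{k/2}\sum_{l}g_{l,k}(x)\,\partial_{x}^{l}$, and then relies on the degree bound $\deg_{x}(g_{l,k})\leq k-l$ (their Eq.~(\ref{e.A.5})) to distribute the surplus powers of $\sqrt{\hbar}$ onto each $x$ to form $\sqrt{\hbar}x$. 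You instead observe at the outset that $a_{\hbar}=\tfrac{1}{\sqrt{2}}M_{\sqrt{\hbar}x}+\tfrac{\sqrt{\hbar}}{\sqrt{2}}\partial_{x}$ already has the advertised shape, define the class $\mathcal{C}$ of such operators, and prove $\mathcal{C}$ is a subalgebra using the chain-rule identity $\partial_{x}^{j}\!\left[F(\sqrt{\hbar},\sqrt{\hbar}x)\right]=\hbar^{j/2}(\partial_{y}^{j}F)(\sqrt{\hbar},\sqrt{\hbar}x)$; after that, the induction on word length is formal. What each approach buys: the paper's direct expansion produces explicit coefficient formulas (the $g_{l,k}$) and makes the degree structure $\deg_{x}(g_{l,k})\leq k-l$ visible, which is occasionally useful; your structural argument encodes the $\sqrt{\hbar}$-scaling into the coefficient representation from the start, so the degree bound never needs to be stated and the $\hbar$-bookkeeping is automatic, at the price of being less explicit about what the resulting coefficients actually are.
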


\begin{proof}
Let $H\left(  \theta,\theta^{*}\right)  =\sum_{k=0}^{d}H_{k}\left(
\theta,\theta^{*}\right)  $ with $H_{k}\left(  \theta,\theta^{*}\right)  $ be
as in Eq. (\ref{e.A.2}). We then have $H\left(  a_{\hbar},a_{\hbar}^{\dagger
}\right)  =\sum_{k=0}^{d}H_{k}\left(  a{}_{\hbar},a_{\hbar}^{\dagger}\right)
$ where
\[
H_{k}\left(  a_{\hbar},a_{\hbar}^{\dagger}\right)  =\left(  \hbar\right)
^{k/2}\sum_{\mathbf{b}\in\left\{  \theta,\theta^{\ast}\right\}  ^{k}}%
C_{k}\left(  \mathbf{b}\right)  \hat{b}_{1}\dots\hat{b}_{k},
\]
and (as in \citep{BruceDriver20152nd}) for $b\in\left\{  \theta,\theta
^{*}\right\}  ,$
\[
\hat{b}:=\left\{
\begin{array}
[c]{lll}%
a & \text{if} & b=\theta\\
a^{\dagger} & \text{if} & b=\theta^{*}.
\end{array}
\right.
\]
Using the definition of $a_{\hbar}$ and $a_{\hbar}^{\dag}$ in Eq.
(\ref{e.A.1}), there exists
\[
\left\{  \tilde{C}_{k}\left(  \mathbf{\varepsilon}\right)  \in\mathbb{R}%
:\mathbf{\varepsilon}=\left(  \varepsilon_{1},\dots,\varepsilon_{k}\right)
\in\left\{  \pm1\right\}  ^{k}\right\}
\]
such that
\[
H_{k}\left(  a_{\hbar},a_{\hbar}^{\dagger}\right)  =\left(  \hbar\right)
^{k/2}\sum_{\mathbf{\varepsilon}\in\left\{  \pm1\right\}  ^{k}}C\left(
\mathbf{\varepsilon}\right)  \left(  x+\varepsilon_{1}\partial_{x}\right)
\dots\left(  x+\varepsilon_{k}\partial_{x}\right)  .
\]
From the previous equation it is easy to see
\begin{equation}
H_{k}\left(  a_{\hbar},a_{\hbar}^{\dagger}\right)  =\left(  \hbar\right)
^{k/2}\sum_{l=0}^{k}g_{l,k}\left(  x\right)  \partial_{x}^{l} \label{e.A.4}%
\end{equation}
where $g_{l,k}\in\mathbb{R}\left[  x\right]  $ with
\begin{equation}
\operatorname{deg}_{x}\left(  g_{l,k}\right)  \leq k-l. \label{e.A.5}%
\end{equation}
Summing Eq. (\ref{e.A.4}) on $k$ and then switching two sums shows
\begin{equation}
H\left(  a_{\hbar},a_{\hbar}^{\dagger}\right)  =\sum_{k=0}^{d}\sum_{l=0}%
^{k}\left(  \hbar\right)  ^{k/2}g_{l,k}\left(  x\right)  \partial_{x}^{l}%
=\sum_{l=0}^{d}\hbar^{\frac{l}{2}}\left(  \sum_{k=l}^{d}\hbar^{\frac{k-l}{2}%
}g_{l,k}\left(  x\right)  \right)  \partial_{x}^{l}. \label{e.A.6}%
\end{equation}
There exists $G_{l}\left(  \sqrt{\hbar},x\right)  \in\mathbb{R}\left[
\sqrt{\hbar},x\right]  $ such that
\[
G_{l}\left(  \sqrt{\hbar},\sqrt{\hbar}x\right)  =\sum_{k=l}^{d}\hbar
^{\frac{k-l}{2}}g_{l,k}\left(  x\right)
\]
because each monomial of $x$ in $g_{l,k}\left(  x\right)  $ can be multiplied
with enough $\sqrt{\hbar}$ from $\hbar^{\frac{k-l}{2}}$ by using Eq.
(\ref{e.A.5}).
\end{proof}

\begin{theorem}
\label{the.A.2}If $\hbar>0$ and $H\in\mathbb{R}\left\langle \theta
,\theta^{\ast}\right\rangle $ is a \textbf{symmetric} noncommutative
polynomial with degree $d$, then there exits $m\in\mathbb{N}_{0}$ and
$\left\{  f_{l}\right\}  _{l=0}^{m}\subset\mathbb{R}\left[  \sqrt{\hbar
},x\right]  $ such that
\[
H\left(  a_{\hbar},a_{\hbar}^{\dagger}\right)  =\sum_{l=0}^{m}\left(
-\hbar\right)  ^{l}\partial^{l}f_{l}\left(  \sqrt{\hbar},\sqrt{\hbar}x\right)
\partial^{l}\text{ on }\mathcal{S}.
\]

\end{theorem}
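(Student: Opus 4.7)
The plan is to combine the standard--form representation from Lemma~\ref{lem.A.1} with the divergence--form structure theorem for symmetric differential operators (Proposition~\ref{pro.2.2}) and the explicit inversion formula from Theorem~\ref{the.2.7}, then track powers of $\hbar$ to recognize the resulting coefficients as having the required shape $\hbar^{l}f_{l}(\sqrt{\hbar},\sqrt{\hbar}x)$.

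The first step is to verify that $H(a_{\hbar},a_{\hbar}^{\dagger})$ is symmetric as a differential operator on $\mathcal{S}$. By construction $a_{\hbar}$ and $a_{\hbar}^{\dagger}$ are each other's formal adjoints, and by Remark~\ref{rem.1.7} the dagger operation is an involutive anti-homomorphism on the algebra of polynomial-coefficient differential operators. Hence applying $\dagger$ to any monomial $\hat b_{1}\cdots \hat b_{k}$ (with $\hat b_{i}\in\{a_{\hbar},a_{\hbar}^{\dagger}\}$) reverses its order and swaps $a_{\hbar}\leftrightarrow a_{\hbar}^{\dagger}$, which is precisely the $*$-operation on the corresponding monomial of $\mathbb{R}\langle\theta,\theta^{\ast}\rangle$. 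Consequently, the assumption $H=H^{\ast}$ gives $H(a_{\hbar},a_{\hbar}^{\dagger})^{\dagger}=H(a_{\hbar},a_{\hbar}^{\dagger})$. With symmetry in hand, I apply Lemma~\ref{lem.A.1} to write $H(a_{\hbar},a_{\hbar}^{\dagger})=\sum_{k=0}^{d}a_{k}(x)\partial_{x}^{k}$ with $a_{k}(x)=\hbar^{k/2}G_{k}(\sqrt{\hbar},\sqrt{\hbar}x)$, and then invoke Proposition~\ref{pro.2.2} to obtain an even effective order $2m\le d$ (with $a_{k}\equiv0$ for $k>2m$) and real smooth coefficients $\{b_{l}\}_{l=0}^{m}$ putting the operator in divergence form $\sum_{l=0}^{m}(-1)^{l}\partial^{l}b_{l}\partial^{l}$.

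For the final step I use Theorem~\ref{the.2.7}, which gives
\[
(-1)^{r}b_{r}(x)=a_{2r}(x)+\sum_{r<s\le m}K_{m}(r,s)\,\partial^{2(s-r)}a_{2s}(x).
\]
Substituting $a_{2s}(x)=\hbar^{s}G_{2s}(\sqrt{\hbar},\sqrt{\hbar}x)$ and applying the chain rule (each $\partial_{x}$ falling on $G_{2s}(\sqrt{\hbar},\sqrt{\hbar}x)$ produces an extra factor of $\sqrt{\hbar}$), one obtains
\[
\partial_{x}^{2(s-r)}a_{2s}(x)=\hbar^{2s-r}\,(\partial_{y}^{2(s-r)}G_{2s})(\sqrt{\hbar},\sqrt{\hbar}x).
\]
Because $2s-r\ge r$ with equality exactly when $s=r$, a common factor of $\hbar^{r}$ can be pulled out of the entire right-hand side, and the remaining bracketed expression is a polynomial in $\sqrt{\hbar}$ and $\sqrt{\hbar}x$. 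Setting
\[
f_{r}(\tau,y):=(-1)^{r}\!\left[G_{2r}(\tau,y)+\sum_{r<s\le m}K_{m}(r,s)\,\tau^{4(s-r)}\,(\partial_{y}^{2(s-r)}G_{2s})(\tau,y)\right]\in\mathbb{R}[\tau,y]
\]
(using $\tau^{4(s-r)}=\hbar^{2(s-r)}$ when $\tau=\sqrt{\hbar}$) then yields $b_{r}(x)=\hbar^{r}f_{r}(\sqrt{\hbar},\sqrt{\hbar}x)$. Absorbing the sign $(-1)^{l}$ into $(-\hbar)^{l}$ gives the claimed representation.

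The only nontrivial ingredient is the power bookkeeping in the chain-rule step: the key numerical coincidence is that each derivative in $\partial^{2(s-r)}$ contributes a $\sqrt{\hbar}$ so that $a_{2s}$ (originally $\hbar^{s}$) becomes $\hbar^{2s-r}=\hbar^{r}\cdot\hbar^{2(s-r)}$, which exactly matches the $\hbar^{r}$ required to fit the divergence-form factor $(-\hbar)^{r}$. Once this is observed, the symmetry argument at the start and the two structural appeals (Proposition~\ref{pro.2.2} and Theorem~\ref{the.2.7}) leave nothing but polynomial algebra in $\sqrt{\hbar}$ and $x$.
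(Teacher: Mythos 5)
Your proof is correct and follows essentially the same route as the paper's: deduce symmetry of $H(a_{\hbar},a_{\hbar}^{\dagger})$ from $H=H^{\ast}$, invoke Lemma~\ref{lem.A.1} for the standard-form coefficients $a_{k}(x)=\hbar^{k/2}G_{k}(\sqrt{\hbar},\sqrt{\hbar}x)$, pass to divergence form via Proposition~\ref{pro.2.2}, apply the inversion formula of Theorem~\ref{the.2.7}, and track the $\sqrt{\hbar}$ factors produced by the chain rule to pull out $\hbar^{r}$. Your bookkeeping of signs (placing $(-1)^{r}$ inside the definition of $f_{r}$ so that $b_{r}=\hbar^{r}f_{r}(\sqrt{\hbar},\sqrt{\hbar}x)$ and the divergence form reads exactly $\sum(-\hbar)^{l}\partial^{l}f_{l}\partial^{l}$) is in fact slightly cleaner than the paper's concluding display, which lands on $\sum\hbar^{l}\partial^{l}f_{l}\partial^{l}$ and implicitly relies on re-absorbing a sign into $f_{l}$.
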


\begin{proof}
Since $H$ is symmetric, $H\left(  a_{\hbar},a_{\hbar}^{\dagger}\right)  $ is
symmetric, see Definition \ref{def.1.6}. So by Proposition \ref{pro.2.2},
$d=2m$ for some $m\in\mathbb{N}_{0}$ and $H\left(  a_{\hbar},a_{\hbar
}^{\dagger}\right)  $ in Eq. (\ref{e.A.6}) may be written in a divergence
form
\begin{equation}
H\left(  a_{\hbar},a_{\hbar}^{\dagger}\right)  =\sum_{l=0}^{m}\left(
-1\right)  ^{l}\partial_{x}^{l}M_{b_{l}}\partial_{x}^{l}. \label{e.A.7}%
\end{equation}
By substituting $a_{l}\left(  x\right)  =h^{\frac{l}{2}}G_{l}\left(
\sqrt{\hbar,}\sqrt{\hbar}x\right)  $ for $0\leq l\leq d=2m$ and $r=l$ in Eq.
(\ref{e.2.9}) from Theorem \ref{the.2.7}, for all $0\leq l\leq m,$ we have
\begin{align*}
\left(  -1\right)  ^{l}b_{l}  &  \frac{1}{\hbar^{l}}=\left[  h^{l}%
G_{2l}\left(  \sqrt{\hbar,}\sqrt{\hbar}x\right)  +\sum_{l<s\leq m}%
K_{m}(l,s)h^{s}\partial^{2(s-l)}G_{2s}\left(  \sqrt{\hbar,}\sqrt{\hbar
}x\right)  \right]  \times\frac{1}{\hbar^{l}}\\
&  =G_{2l}\left(  \sqrt{\hbar,}\sqrt{\hbar}x\right)  +\sum_{l<s\leq m}%
K_{m}(l,s)\hbar^{2\left(  s-l\right)  }\left(  \partial^{2(s-l)}G_{2s}\right)
\left(  \sqrt{\hbar,}\sqrt{\hbar}x\right)
\end{align*}
By using Lemma \ref{lem.A.1}, it follows that the R.H.S. in the above equation
is a polynomial of $\sqrt{\hbar}$ and $\sqrt{\hbar}x$ . Therefore, there
exists $f_{l}\left(  \sqrt{\hbar},x\right)  \in\mathbb{R}\left[  \sqrt{\hbar
},x\right]  $ such that
\[
\left(  -1\right)  ^{l}b_{l}=\hbar^{l}f_{l}\left(  \sqrt{\hbar},\sqrt{\hbar
}x\right)
\]
and hence, using the above equation along with Eq. (\ref{e.A.7}), we can
conclude that
\[
H\left(  a_{\hbar},a_{\hbar}^{\dagger}\right)  =\sum_{l=0}^{m}\left(
-1\right)  ^{l}\partial_{x}^{l}M_{b_{l}}\partial_{x}^{l}=\sum_{l=0}^{m}%
\hbar^{l}\partial_{x}^{l}f_{l}\left(  \sqrt{\hbar},\sqrt{\hbar}x\right)
\partial_{x}^{l}.
\]

\end{proof}

\begin{remark}
\label{rem.A.3}The functions, $f_{l}\left(  \sqrt{\hbar},x\right)  ,$ in
Theorem \ref{the.A.2} are examples of the functions, $b_{l,\hbar}\left(
x\right)  ,$ appearing in Eq. (\ref{e.1.10}).
\end{remark}

\begin{example}
\label{exa.A.4} Let $H^{\text{cl}}(x,\xi)=x^{2}\xi^{2}$ be a classical
Hamiltonian where $x$ is position and $\xi$ is momentum on a state space
$\mathbb{R}^{2}$. We would like to lift this to a symmetric polynomial in two
symmetric indeterminate $\hat{q}=\frac{\theta+\theta^{*}}{\sqrt{2}}$ and
$\hat{p}=\frac{\theta-\theta^{*}}{i\sqrt{2}}.$ The Weyl lift of $x^{2}\xi^{2}$
is given by
\begin{align*}
H\left(  \theta,\theta^{*}\right)  =  &  \frac{1}{4!}\left(  \hat{q}^{2}%
\hat{p}^{2}+\text{ all permutations}\right) \\
&  =\frac{1}{4!}\cdot2!\cdot2!\left[
\begin{array}
[c]{c}%
\hat{q}^{2}\hat{p}^{2}+\hat{q}\hat{p}^{2}\hat{q}+\hat{p}^{2}\hat{q}^{2}\\
+\hat{p}\hat{q}^{2}\hat{p}+\hat{p}\hat{q}\hat{p}\hat{q}+\hat{q}\hat{p}\hat
{q}\hat{p}%
\end{array}
\right] \\
&  =\frac{1}{3!}\left[
\begin{array}
[c]{c}%
\hat{q}^{2}\hat{p}^{2}+\hat{q}\hat{p}^{2}\hat{q}+\hat{p}^{2}\hat{q}^{2}\\
+\hat{p}\hat{q}^{2}\hat{p}+\hat{p}\hat{q}\hat{p}\hat{q}+\hat{q}\hat{p}\hat
{q}\hat{p}%
\end{array}
\right]  \in\mathbb{R}\left< \theta,\theta^{*}\right>  .
\end{align*}
Making the substitutions
\[
\hat{q}\rightarrow\frac{a_{\hbar}+a_{\hbar}^{\dagger}}{\sqrt{2}}=\sqrt{\hbar
}M_{x}\text{ and }\hat{p}\rightarrow\frac{a_{\hbar}-a_{\hbar}^{\dagger}%
}{i\sqrt{2}}=\frac{\sqrt{\hbar}}{i}\partial.
\]
above gives the Weyl quantization of $x^{2}\xi^{2}$ to be
\[
H\left(  a_{\hbar},a_{\hbar}^{\dagger}\right)  =-\frac{\hbar^{2}}{3!}\left(
x^{2}\partial^{2}+\partial^{2}x^{2}+x\partial x\partial+\partial x\partial
x+x\partial^{2}x+\partial x^{2}\partial\right)  \text{ on }\mathcal{S}
\]
which after a little manipulation using the product rule repeatedly may be
written as
\begin{align*}
H\left(  a_{\hbar},a_{\hbar}^{\dagger}\right)   &  =-\hbar^{2}\partial
x^{2}\partial-\frac{1}{2}\hbar^{2}\\
&  =-\hbar\partial b_{1,\hbar}\left(  \sqrt{\hbar}x\right)  \partial
+b_{0,\hbar}\left(  \sqrt{\hbar}x\right)
\end{align*}
where $b_{1,\hbar}\left(  x\right)  =x^{2}$ and $b_{0,\hbar}\left(  x\right)
=-\frac{1}{2}\hbar^{2}.$
\end{example}

\bibliographystyle{plain}
\bibliography{semi-classical+opinequal}

\end{document}